\numberwithin{equation}{section}
\numberwithin{figure}{section}
\theoremstyle{plain}
\newtheorem{thm}{Theorem}[section]
\newtheorem{lem}[thm]{Lemma}
\newtheorem{prop}[thm]{Proposition}
\newtheorem{cor}[thm]{Corollary}
\newtheorem{Asse}{Assertion}[section]
\newtheorem*{Asse0}{Claim}
\newtheorem{Clm}{Claim}
\theoremstyle{definition}
\newtheorem{deff}[thm]{Definition}
\newtheorem{rem}[thm]{Remark}
\newcommand{\hh}{\mathbb H}
\newcommand{\dhr}{\partial}
\newcommand{\Pcal}{\mathcal{P}}
\newcommand{\Acal}{\mathcal{A}}
\newcommand{\Vcal}{\mathcal{V}}
\newcommand{\vh}[1]{\left\langle #1\right\rangle}
\newcommand{\td}[1]{\left\| #1\right\|}
\newcommand{\Hcal}{\mathcal{H}}
\newcommand{\Nbb}{\mathbb{N}}
\DeclareMathOperator{\tr}{tr}
\newcommand{\Index}{\operatorname{Index}}
\newcommand{\PSLhR}{\widetilde{{\rm PSL}}_2(\Rbb,\tau)}
\newcommand{\Ebb}{\mathbb{E}}
\newcommand{\Rbb}{\mathbb{R}}
\newcommand{\Hbb}{\mathbb{H}}
\newcommand{\Hbbh}{\mathbb{H}^2}
\newcommand{\HbbhR}{\mathbb{H}^2\times\Rbb}
\newcommand{\Sbb}{\mathbb{S}}
\newcommand{\Mbb}{\mathbb{M}}
\newcommand{\Nsf}{\mathsf{N}}
\newcommand{\Tsf}{\mathsf{T}}
\newcommand{\Nil}{{\rm Nil}}
\newcommand{\Sol}{{\rm Sol}}
\newcommand{\Ccal}{\mathcal{C}}
\newcommand{\Ebbbkt}{\Ebb^3(\kappa,\tau)}
\newcommand{\chuan}[1]{\left\| #1 \right\|}
\newcommand{\nablabar}{\overline{\nabla}}
\newcommand{\dhrinfty}{\partial_\infty}
\newcommand{\dhriHh}{\partial_\infty\mathbb{H}^2}
\newcommand{\Div}{\operatorname{div}}
\newcommand{\dist}{\operatorname{dist}}
\newcommand{\Gr}{\operatorname{Gr}}
\newcommand{\Ric}{\operatorname{Ric}}
\newcommand{\Area}{\operatorname{Area}}
\newcommand{\dhrou}[2]{\frac{\partial #1}{\partial #2}}
\newcommand{\dhro}[1]{\frac{\partial}{\partial #1}}
\newcommand{\Sigmas}{\Sigma^{\mathrm{s}}}
\newcommand{\Sigmau}{\Sigma^{\mathrm{u}}}
\newcommand{\Lrm}{\operatorname{L}}
\newcommand{\Fcal}{\mathcal{F}}
\newcommand{\Sigmatilde}{\widetilde{\Sigma}}
\newcommand{\Sigmacheck}{\check{\Sigma}}
\newcommand{\Acalcheck}{\check{\mathcal{A}}}
\newcommand{\ellH}[1]{\ell_{\mathbb{H}^2}\left(#1\right)}
\newcommand{\Scal}{\mathcal{S}}
\newcommand{\Fcalh}{\mathcal{F}^{h}}
\newcommand{\vc}{\infty}
\newcommand{\Sigmacheckvc}{\check{\Sigma}_\infty}
\newcommand{\Sigmachecks}{\check{\Sigma}^\mathrm{s}}
\newcommand{\Sigmachecku}{\check{\Sigma}^\mathrm{u}}
\newcommand{\Gammacheck}{\check{\Gamma}}
\newcommand{\pcheck}{\check{p}}
\newcommand{\pcheckvc}{\check{p}_\infty}
\newcommand{\vvvc}{w_\infty}
\newcommand{\Gammacheckvc}{\check{\Gamma}_\infty}
\newcommand{\Dist}{\operatorname{dist}}
\newcommand{\Sigmatildevc}{\widetilde{\Sigma}_\infty}
\newcommand{\mutilde}{\widetilde{\mu}}
\newcommand{\ella}[2]{\ell_{#1}\left(#2\right)}
\newcommand{\Distaa}[2]{\operatorname{dist}_{#1}\left(#2\right)}
\newcommand{\Scals}{\mathcal{S}^\mathrm{s}}
\newcommand{\gammahat}{\widehat{\gamma}}
\newcommand{\Omegabar}{\overline{\Omega}}
\newcommand{\Rest}[2]{\left.#1\right|_{#2}}
\newcommand{\zsf}{\mathsf{z}}
\newcommand{\Omegahat}{\widehat{\Omega}}
\newcommand{\etahat}{\widehat{\eta}}
\begin{document}

\title{\bfseries \LARGE Construction of minimal annuli in $\PSLhR$ via a variational method}
\author{Pascal COLLIN, Laurent HAUSWIRTH, Minh Hoang NGUYEN}
\date{}
\maketitle

\begin{abstract}
We construct complete, embedded minimal annuli asymptotic to vertical planes in the Riemannian $3$-manifold $\PSLhR$. The boundary of these annuli consists of $4$ vertical lines at infinity. 
They are constructed by taking the limit of a sequence of compact minimal annuli. The compactness is obtained from an estimate of curvature which uses foliations by minimal surfaces. This estimate is  independent of the index of the surface. We also prove the existence of a one-periodic family of Riemann's type examples. The difficulty of the construction comes from the lack of symmetry of the ambient space $\PSLhR$.
\end{abstract}

%	\tableofcontents

\section{Introduction}
Recently, much attention was drawn to surfaces in simply connected homogeneous 3-maniflods. Simply connected homogenous manifolds with a 4-dimensional isometry group is a two-parameter family denoted by 
$\Ebb(\kappa, \tau)$. When $\tau=0$ and $\kappa =-1$, we  get the product space $\Hbbh \times \Rbb$, where  $\Hbbh$ is the hyperbolic plane. When $\tau= 1/2$ and $\kappa = 0$, $\Ebb(0, 1/2)$ is the 3-dimensional Heisenberg group $\Nil_3$ endowed with a left-invariant metric; otherwise for $\kappa =-1$ and  $\tau \neq 0$, we obtain the universal cover of the Lie group ${\rm PSL}_2(\Rbb,\tau)$ endowed with a left-invariant metric; we will denote it by  $\PSLhR$. One of the main property of these spaces is the existence of a projection $\pi :\Ebb(\kappa, \tau) \to \Mbb  (\kappa)$ which is a Riemannian submersion over a simply connected 2-dimensional surface with constant curvature $\kappa$. Each fiber of the submersion $\pi$ is a vertical geodesic of $\Ebb(\kappa, \tau)$. If $\gammahat$ denote a complete geodesic of $\Mbb  (\kappa)$, the lift $\pi^{-1} (\gammahat)$ is a minimal vertical plane that we will denote by $\gammahat \times \Rbb$ in the following.

In \cite{Pyo11} and \cite{MR12}, the authors constructed complete horizontal minimal annuli of finite total curvature in the Riemannian product manifold  $\HbbhR$. These annuli are asymptotic to vertical planes $\gammahat_1 \times \Rbb$ and $\gammahat_2 \times \Rbb$ where $ \gammahat_1$ and $\gammahat_2 $ are close enough ultraparallel geodesic of $\Hbbh$ (without common point at infinity).  These constructions use minimal graphs  in $ \HbbhR$ over unbounded polygonal domains (see \cite{CR10}), the existence of a conjugate minimal surface to these graphes (see \cite{HST}) and a Schwarz reflection principle. Higher topological examples of finite total curvature in $ \HbbhR$ has been constructed by Martin, Mazzeo and Rodriguez in \cite{MMR} by attaching handles to a finite number of vertical planes using gluing method. All these methods use, at some point, symmetries of the product ambient space $\HbbhR$ and Alexandrov moving plane technique to reduce the difficulty of the problem.

The isometry group of $\PSLhR$ is 4-dimensional and has two connected components: the one of isometries that preserve the orientation of the fibers and of the base of the fibration, and the one of isometries that reverse both orientations. 
Comparatively to the case of $\HbbhR$, isometry groups of $\Nil_3$ and $\PSLhR$ have empty negative component i.e. there is no isometry that reverses the orientation of the fiber and preserve the orientation of the base
or preserve the orientation of the fiber, and reverse the orientation of the base. This lack of symmetry  involves new arguments in view to construct horizontal annuli asymptotic to vertical planes at infinity. In $\Nil_3$, Daniel and Hauswirth(\cite{DH09}) constructed  horizontal embedded annuli using  harmonic Gauss maps, Weierstrass formula and by solving a period problem. However, this method is specific to the case of minimal surface in $\Nil_3$. 

In general manifolds, variational methods or degree theory is more adapted to construct complete examples. In this article, we show the existence of properly embedded horizontal minimal annuli asymptotic to vertical planes in $\PSLhR$. Moreover, we prove the existence of a family of one-periodic properly embedded minimal surface of genus zero with an infinite number of ends asymptotic to vertical planes. These examples can be view as a staircase Riemann's example in $\PSLhR$. Additionally, we construct an embedded genus zero minimal surface with three flat ends asymptotic to vertical planes which is not isotopic to a trinoid.

To construct these examples, we consider a sequence of compact annuli $\Sigma_n$ bounded by compact boundary  $\Gamma_n$. For a subsequence of boundary  $\Gamma_{n_k}$ going to infinity, we can find a subsequence of surface $\Sigma_{n_k}$ converging to a complete proper example. We need to prove that the limit is an annulus. The topology may vanish in two ways. A first one is when the curvature blows up. To overcome this difficulty and get an a priori estimate of the curvature on $\Sigma_{n_k}$, we provide a general geometric argument independent of the index of the surface using a blow up of the surface $\Sigma_{n_k}$ accompanied by three minimal foliations transverse to each other.
The second way to loose the topology is when the "neck" of the annulus disappears at infinity. We use a sequence of compact annuli of index one and a careful study of its limit to retain the neck in a compact domain. 
%In $\PSLhR$ the difficulties we face are the lack of symmetry and the change of sign of the sectional curvature of the space.

For fixed number $\tau \in \Rbb$ , the space $\PSLhR$ can be viewed as the smooth 3-manifold

\begin{equation*}
\PSLhR=\left\{(x,y,z)\in\Rbb^3 ; x^2+y^2<4\right\}
\end{equation*}
endowed with the Riemannian metric
\begin{equation}\label{equ4.2.1}
g_{\PSLhR}=\lambda^2\left(dx^2+dy^2\right)+\left(\tau\lambda(ydx-xdy)+dz\right)^2
\end{equation}  
with $\lambda=\frac{4}{4-(x^2+y^2)}$, see \cite[Subsection 2.4]{Daniel07} and \cite[\S 4]{Scott83}.
This model of $\PSLhR$ is called \emph{cylinder model} and is related to $\Hbbh \times \Rbb$ with
$$\Hbbh=(\{ (x,y) \in \Rbb^2 ; x^2+y^2 <4 \} , g_{\Hbbh} =  \lambda ^2 (d x^2 + d y^2))$$
by  Riemannian submersions
\[
\begin{array}{l}
\pi : \PSLhR \to \Hbbh, (x,y,z) \mapsto (x,y) \\
\zsf:  \PSLhR \to \Rbb, (x,y,z) \mapsto z.
\end{array}
\]
Furthermore, $\partial _z$ is a unit Killing vector field and simirlarly to the space $\HbbhR$ we can define points at infinity of $\PSLhR$:
\begin{deff}
The set $\partial_\infty \PSLhR : = \{x^2+y^2 = 4 \} \times \Rbb$ is called boundary points at infinity of the cylinder model. For a geodesic $\gammahat$, the set $\gammahat \times \Rbb:=\pi^{-1}(\gammahat)$ is a vertical plane and $\partial_{\infty}( \gammahat \times \Rbb) = \{\widehat{p},\widehat{q}\}\times \Rbb$ is two vertical lines at infinity where $\widehat{p}$ and $\widehat{q}$ are the endpoints of $\gammahat$ at infinity of $\Hbbh$. In the following we will use the cylinder model to define the product $\gammahat \times [-n,n] := (\gammahat \times \Rbb)\cap \{-n \leq z \leq n  \}$.
\end{deff}

Let $\gammahat_1, \gammahat_2$ be two ultraparallel complete geodesics of $\Hbbh$ and $\Omegahat \subset \Hbbh$ be the ideal quadrilateral domain whose ideal boundary $\partial \Omegahat$ is composed by the four complete geodesics $\gammahat_1, \etahat_1, \gammahat_2, \etahat_2$ in this order together with four distincts endpoints $\widehat{p}_1,\widehat{q}_1,\widehat{p}_2,\widehat{q}_2$ in this order at infinity of the cylinder model (see Figure \ref{fig:def1}). The main result of this article is
	\begin{figure}[h!]
		\centering
	\includegraphics[width=0.2\linewidth]{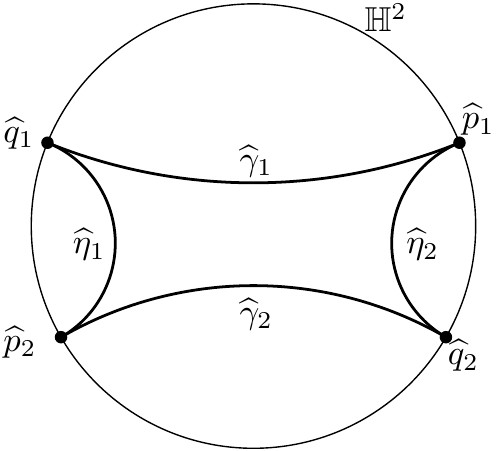}
		\qquad\qquad
		\caption{Ideal domain $\Omegahat$.}
		\label{fig:def1}
	\end{figure}
\begin{thm}
\label{thm1}
If $0<\dist_{\Hbbh} ( \gammahat_1, \gammahat_2) < 2 \ln (\sqrt{2}+1)$, then there exists in $\PSLhR$
\begin{description}
\item{ a)} a complete embedded minimal annulus $\Sigma_1$  whose boundary at infinity is the four vertical lines passing through the end-points of $\gammahat_1, \gammahat_2$,
%and outside a compact set, $\Sigma_1$ is uniformly asymptotic to $\gammahat_1 \times \Rbb$ and $\gammahat_2 \times \Rbb$.

\item{b)} a complete embedded minimal annulus $\Sigma_2$ whose boundary $\partial \Sigma_2 = \{p_1\} \times \Rbb$
with $p_1$ a point of $\gammahat_1$  and the boundary at infinity is three vertical lines passing through  the end-points $\widehat{q}_1$ of $\gammahat_1$ and $\widehat{p}_2,\widehat{q}_2$ of $\gammahat_2$, 
\item{c)} a complete embedded minimal annulus $\Sigma_3$  whose boundary $\partial \Sigma_3 = \{p_1,p_2\} \times \Rbb$
with $p_1$ (resp. $p_2$) is a point of $\gammahat_1$ (resp. $\gammahat_2$) and the boundary at infinity is two vertical lines passing through  the end-points $\widehat{q}_1$  of $\gammahat_1$ and $\widehat{q}_2$ of  $\gammahat_2$ (or alternatively the point at infinity can be
the end-points $\widehat{q}_1$  of $\gammahat_1$ and $\widehat{p}_2$ of  $\gammahat_2$),
\end{description}

\noindent
such that $\Sigma_1, \Sigma_2, \Sigma_3$ are contained in $\Omegahat \times \Rbb$ and for each geodesic $\gammahat$ that is ultraparallel to both $\gammahat_1$ and 
$\gammahat_2$, the intersection of the annuli $\Sigma_1, \Sigma_2, \Sigma_3$ with $\gammahat \times \Rbb$ is compact. Outside a compact set, these minimal annuli converge uniformly as normal graph to $(\gammahat_1 \times \Rbb) \cup (\gammahat_2 \times \Rbb)$.

In case b), the surface $\Sigma_2$ can be extended using a Schwarz reflection along its  boundary $\{p_1\} \times \Rbb$ to a complete surface without boundary $\widetilde{ \Sigma}_2$ in such a way that $\Sigma_2$ is the quotient
$$\Sigma_2=\widetilde{ \Sigma}_2 / \sigma_1$$
where $\sigma_1$ is a rotation around the vertical geodesic $\{p_1\} \times \Rbb$ and $\widetilde{ \Sigma}_2$ is a surface of genus zero with three ends asymptotic to vertical planes.

In case c), the surface $\Sigma_3$ can be extended using  Schwarz reflections along its boundary $\{p_1\} \times \Rbb$ and $\{p_2\} \times \Rbb$  to a complete surface without boundary $\widetilde{ \Sigma}_3$ in such a way that $\Sigma_3$ is the quotient
$$\Sigma_3=\widetilde{ \Sigma}_3 / \Gamma$$
where $\Gamma$ is the infinite group of isometry generated by $\sigma_1$ and $\sigma_2$ rotations around the vertical geodesics $\{p_1\} \times \Rbb$ and $\{p_2\} \times \Rbb$.
The surface $\widetilde{\Sigma}_3$ is a one-periodic minimal surface of genus zero with countable number of flat ends asymptotic to vertical planes.
\end{thm}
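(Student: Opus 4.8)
The plan is to construct the three annuli as limits of sequences of compact minimal annuli, following the strategy outlined in the introduction, and then to extract the additional structure (Schwarz reflections, periodicity, genus and ends) in a final step. First I would set up the compact approximation: for each $n$, let $\Gamma_n$ be a compact boundary configuration in $\Omegahat\times\Rbb$ converging to the prescribed asymptotic data as $n\to\infty$. For part a), $\Gamma_n$ consists of four arcs on the vertical planes $\gammahat_i\times\Rbb$ capping off $\gammahat_i\times[-n,n]$ together with horizontal bridges at heights $\pm n$; for part b), one leaves $\{p_1\}\times\Rbb$ free and imposes a symmetric boundary so that the surface meets this geodesic orthogonally; for part c), two such free vertical geodesics are used. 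Standard Plateau-type / minimizing or min–max arguments (using index one in case the neck would otherwise degenerate, as the introduction indicates) produce compact minimal annuli $\Sigma_n$ with $\partial\Sigma_n=\Gamma_n$, all contained in the mean-convex slab $\Omegahat\times\Rbb$ by the maximum principle applied against the minimal foliation $\{\gammahat\times\Rbb\}$ by vertical planes ultraparallel to $\gammahat_1,\gammahat_2$.

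The heart of the argument is the passage to the limit. Here I would invoke the curvature estimate described in the introduction — a bound on $|\II_{\Sigma_n}|$ at interior points, uniform in $n$ and independent of the index, obtained by a blow-up argument: a curvature blow-up would produce a complete minimal surface tangent to one of three mutually transverse minimal foliations of $\PSLhR$, contradicting the transversality (this is the key place where the lack of ambient symmetry is circumvented by a purely geometric foliation argument). With uniform curvature bounds and the containment in $\Omegahat\times\Rbb$, a subsequence of $\Sigma_n$ converges smoothly on compact sets to a complete, properly embedded minimal surface $\Sigma_i$. One then shows $\Sigma_i$ is an annulus: by the containment, each slice $\Sigma_i\cap(\gammahat\times\Rbb)$ is compact, so no topology escapes horizontally; the index-one hypothesis plus a careful analysis of the limit of the "neck" (e.g. showing the neck curve cannot pinch, using that a pinching would drop the index or contradict the curvature estimate) prevents the annulus from degenerating into two planes. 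Finally, the asymptotic behaviour — convergence outside a compact set as a normal graph over $(\gammahat_1\times\Rbb)\cup(\gammahat_2\times\Rbb)$, and hence the identification of the four (resp. three, two) vertical lines at infinity — follows from the curvature estimate together with a standard removable-singularity / graphical-at-infinity argument for minimal surfaces asymptotic to vertical planes in $\Ebb(\kappa,\tau)$.

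For the last two paragraphs of the statement I would argue as follows. In case b), by construction $\Sigma_2$ meets the vertical geodesic $\{p_1\}\times\Rbb$ orthogonally along its boundary; since rotation by $\pi$ about a vertical geodesic (the map $\sigma_1$) is an orientation-preserving isometry of $\PSLhR$ fixing that geodesic pointwise, the Schwarz reflection principle for minimal surfaces applies and $\Sigma_2\cup\sigma_1(\Sigma_2)=\widetilde{\Sigma}_2$ is a smooth complete minimal surface without boundary, with $\Sigma_2=\widetilde{\Sigma}_2/\sigma_1$. Doubling an annulus across one boundary circle yields a sphere with the corresponding ends doubled: the one free boundary produces no new end, and the remaining three asymptotic vertical lines of $\Sigma_2$ give three ends of $\widetilde{\Sigma}_2$ asymptotic to vertical planes, so $\widetilde{\Sigma}_2$ has genus zero and three ends. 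In case c), the two vertical geodesics $\{p_1\}\times\Rbb$, $\{p_2\}\times\Rbb$ give two $\pi$-rotations $\sigma_1,\sigma_2$; the group $\Gamma=\langle\sigma_1,\sigma_2\rangle$ they generate is infinite (the product $\sigma_1\sigma_2$ is a screw motion of infinite order, since $\gammahat_1,\gammahat_2$ are distinct vertical axes), and reflecting repeatedly tiles $\PSLhR$ along a staircase, producing a complete minimal surface $\widetilde{\Sigma}_3$ with $\Sigma_3=\widetilde{\Sigma}_3/\Gamma$; unwrapping shows $\widetilde{\Sigma}_3$ is $\sigma_1\sigma_2$-periodic (one-periodic), of genus zero, with a countable family of flat ends asymptotic to vertical planes — the "staircase Riemann example."

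I expect the main obstacle to be the second degeneration: ruling out that the neck of the limiting annulus escapes to infinity or pinches. Controlling the neck requires both the index-one constraint on the approximating sequence and a delicate localization argument showing the neck curve stays in a fixed compact region; this is precisely the step the introduction flags as requiring "a careful study of its limit to retain the neck in a compact domain," and it is where the construction genuinely uses more than the curvature estimate alone.
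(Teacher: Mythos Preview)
Your overall architecture is right, and the Schwarz reflection conclusions are fine. But there is a genuine gap in the place you yourself flag as the main obstacle: you give no mechanism at all for excluding neck degeneration, and the actual mechanism in the paper is quite specific and not what you sketch.

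First, the paper does not work with a single sequence of compact annuli converging directly to the ideal boundary data. It uses a \emph{two-stage} limit: for each bounded convex quadrilateral $\Omega_n\subset\Omegahat$ it first proves Theorem~\ref{thm3.3.1}, obtaining a proper annulus $\Acal_n$ in $\Omega_n\times\Rbb$ bounded by four vertical lines; then it lets $\Omega_n\nearrow\Omega_\infty$ and passes to a second limit. The intermediate objects $\Acal_n$ already have the full vertical-line boundary, which is what makes the second limit tractable. For cases b) and c) the paper does \emph{not} use a free-boundary condition: it simply fixes one (resp.\ two) of the vertices of $\Omega_n$ at the finite points $p_1$ (resp.\ $p_1,p_2$) and lets the others escape to infinity.

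Second, and more importantly, the neck control is not an ``index-one plus localization'' argument in the way you suggest. In the bounded-domain step the paper shows (Proposition~\ref{pro3.3.14}) that the stable and unstable sequences $\Sigmas_n,\Sigmau_n$ cannot \emph{both} have $h^+-h^-\to+\infty$: if they did, both would converge (after translation) to the same limit, which is either $(\eta_1\times\Rbb)\cup(\eta_2\times\Rbb)$ or a Scherk graph on $\Omega$; one then builds a nonnegative bounded Jacobi field on that limit, vanishing on the boundary and equal to $1$ at an interior point, and derives a contradiction by a direct computation (the potential equals $-1$ on vertical strips) or a flux/divergence argument (on the Scherk graph). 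This Jacobi-field step is the real content behind ``retaining the neck in a compact domain.'' In the second (unbounded) limit the argument is different again: if $h^-(\Acalcheck_n)\to-\infty$ the limit $\Acalcheck_\infty$ is shown to be a vertical graph on $\Omega_\infty$ with $\pm\infty$ boundary values, and the Jenkins--Serrin condition forces $\dist(\gammahat_1,\gammahat_2)=\dist(\etahat_1,\etahat_2)$, contradicting Lemma~\ref{lem5.1}. Neither of these is captured by ``a pinching would drop the index or contradict the curvature estimate.''

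Finally, your description of the curvature estimate is too vague to be usable: the blow-up limit is analyzed via a precise tangency count (at most two tangencies with each of three transverse minimal foliations, Proposition~\ref{pro3.1}), which feeds into the Mo--Osserman theorem to force finite total curvature, and the contradiction comes from an end of the limit surface being trapped in a wedge. You should at least indicate these ingredients, since the foliation tangency bound is exactly the ``geometric argument independent of the index'' advertised in the introduction.
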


The condition $\dist_{\Hbbh} ( \gammahat_1, \gammahat_2) < 2 \ln (\sqrt{2}+1)$ is equivalent to 
$\dist_{\Hbbh} (\gammahat_1, \gammahat_2) < \dist_{\Hbbh} (\etahat_1, \etahat_2)$. 
This condition is necessary to reach the conclusion of Theorem 1.2. In section 2.2 we prove the Proposition 1.3.

\begin{prop}
\label{propo1.3}
If $\dist_{\Hbbh} ( \gammahat_1, \gammahat_2) \geq 2 \ln (\sqrt{2}+1)$, there does not exist any complete minimal annulus in $\PSLhR$ whose  boundary, finite or at infinity, is contains in the closure of $\gammahat_1\times \Rbb$ and $\gammahat_2\times \Rbb$ such that for each geodesic $\gammahat$ that is ultraparallel to $\gammahat_1, \gammahat_2$  , the intersection of this annulus with $\gammahat \times \Rbb$ is compact.
\end{prop}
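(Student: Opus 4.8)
The plan is to derive a contradiction from the existence of such an annulus $\Sigma$ by projecting it to the base $\Hbbh$ via $\pi$ and using the maximum principle against the minimal vertical planes over the geodesics $\etahat_1,\etahat_2$. First I would observe that the hypothesis $\dist_{\Hbbh}(\gammahat_1,\gammahat_2)\ge 2\ln(\sqrt2+1)$ forces $\dist_{\Hbbh}(\gammahat_1,\gammahat_2)\ge\dist_{\Hbbh}(\etahat_1,\etahat_2)$, so that in the ideal quadrilateral $\Omegahat$ the "opposite" pair of sides $\etahat_1,\etahat_2$ is at least as close as $\gammahat_1,\gammahat_2$; the number $2\ln(\sqrt2+1)$ is exactly the threshold at which a minimal catenoid-type neck spanning the two planes $\gammahat_i\times\Rbb$ can no longer stay trapped. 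I would set up the geometry so that $\Sigma$, having boundary (finite or at infinity) contained in $\overline{\gammahat_1\times\Rbb}\cup\overline{\gammahat_2\times\Rbb}$ and meeting every ultraparallel $\gammahat\times\Rbb$ in a compact set, is forced to lie in a slab of the form $W\times\Rbb$ where $W\subset\Hbbh$ is the region between $\gammahat_1$ and $\gammahat_2$; this uses that $\pi(\Sigma)$ cannot cross a vertical plane $\gammahat\times\Rbb$ transversally and stay compact there unless it does not cross it at all, which by an exhaustion argument confines $\pi(\Sigma)$ between $\gammahat_1$ and $\gammahat_2$.

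Next I would run a sweepout by the minimal vertical planes $\{P_t\}$ foliating $W\times\Rbb$, where $P_t=\mu_t\times\Rbb$ and $\{\mu_t\}$ is the foliation of $W$ by curves equidistant to $\gammahat_1$ (equivalently the geodesics through a fixed ideal point, depending on which foliation is available): since $\dist_{\Hbbh}(\gammahat_1,\gammahat_2)$ is large, there is a one-parameter family of \emph{complete minimal} surfaces — here the relevant ones are the vertical planes $\etahat\times\Rbb$ over geodesics $\etahat$ ultraparallel to both $\gammahat_i$ and separating them, or rather the stable minimal annuli appearing in the construction of $\Sigma_1$ at the critical distance. The key point is a maximum principle / Alexandrov-type argument: push a family of these barrier surfaces from one end of the slab toward $\Sigma$; a first interior or boundary contact point yields, by the strong maximum principle for minimal surfaces (and the boundary version, since $\partial\Sigma$ lies on vertical planes which the barriers are transverse to), that $\Sigma$ coincides with a barrier — but a barrier is a vertical plane $\etahat\times\Rbb$, which is not an annulus with the prescribed boundary behavior, a contradiction. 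Alternatively, and more robustly, I would compare $\Sigma$ with the boundary of a suitable \emph{mean-convex} region: the slab $W\times\Rbb$ at distance $\ge 2\ln(\sqrt2+1)$ can be exhausted by regions whose boundary has the right sign of mean curvature, so no minimal annulus with compact horizontal slices can be embedded inside — the neck would have to escape to $z=\pm\infty$, contradicting that $\Sigma\cap(\gammahat\times\Rbb)$ is compact for the central geodesic $\gammahat$.

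The main obstacle I anticipate is making precise the confinement step — showing that $\pi(\Sigma)$ actually lies in the region $W$ between $\gammahat_1$ and $\gammahat_2$ — since $\Sigma$ is only assumed complete and proper with compact ultraparallel slices, and a priori $\pi(\Sigma)$ could spiral or accumulate at the ideal boundary in a complicated way. I would handle this by a halfspace-type argument in $\PSLhR$: for each geodesic $\gammahat$ ultraparallel to $\gammahat_1,\gammahat_2$ and disjoint from $W$, compactness of $\Sigma\cap(\gammahat\times\Rbb)$ together with the properness of $\Sigma$ and the fact that $\gammahat\times\Rbb$ is a complete minimal surface forces (by sliding $\gammahat\times\Rbb$ to infinity and applying the maximum principle at the last contact) that $\Sigma$ does not meet the component of $(\Hbbh\setminus\gammahat)\times\Rbb$ not containing $W$. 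Intersecting over all such $\gammahat$ on both sides gives $\pi(\Sigma)\subset W$. Once this is established, the contradiction with the threshold $2\ln(\sqrt2+1)$ follows from the non-existence of a trapped minimal neck, which is essentially the sharpness statement complementary to Theorem~\ref{thm1}: a catenoid-type annulus joining $\gammahat_1\times\Rbb$ to $\gammahat_2\times\Rbb$ and contained in $W\times\Rbb$ exists, as a barrier, precisely up to this distance, and beyond it any such annulus is pushed off to infinity.
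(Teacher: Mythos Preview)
Your proposal has a genuine gap: neither barrier argument works, and the decisive tool is missing.

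For the sweepout by vertical planes $\mu_t\times\Rbb$: the hypothesis says precisely that $\Sigma$ \emph{does} meet each plane $\gammahat\times\Rbb$ with $\gammahat$ ultraparallel to $\gammahat_1,\gammahat_2$ --- the intersection is compact, not empty. Hence $\Sigma$ crosses these planes, and a first-contact maximum-principle argument yields no contradiction: there is no position of the barrier with $\Sigma$ entirely on one side. (Also, equidistant curves to $\gammahat_1$ are not geodesics, so their vertical cylinders are not minimal and cannot serve as barriers.) Your alternative ``catenoid-type'' or ``mean-convex slab'' argument is not made precise: you do not identify the barriers, their asymptotics, or why $2\ln(\sqrt{2}+1)$ governs their existence; as written there is nothing to check.

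The paper's proof uses a completely different barrier: a Jenkins--Serrin Scherk graph. The condition $\dist_{\Hbbh}(\gammahat_1,\gammahat_2)\ge 2\ln(\sqrt{2}+1)$, equivalently $\dist_{\Hbbh}(\gammahat_1,\gammahat_2)\ge\dist_{\Hbbh}(\etahat_1,\etahat_2)$, is exactly what Theorem~\ref{thm4.2.6} needs to produce a minimal graph $u:\Omegahat\to\Rbb$ with $u=+\infty$ on $\gammahat_1\cup\gammahat_2$ and $u=0$ (or $u=-\infty$ at equality) on $\etahat_1\cup\etahat_2$. One then translates $\Gr(u)$ vertically from above: because $u=+\infty$ on $\gammahat_i$ the graph cannot touch $\partial\Sigma$, and because $\Sigma\cap(L\times\Rbb)$ is compact for any ultraparallel geodesic $L$, a first interior contact must occur, contradicting the maximum principle. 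The details (restricting first to a segment $L$, approximating by solutions $v_{i,n}$ on compact subdomains) are exactly as in Proposition~\ref{pro3.3.2}. The threshold $2\ln(\sqrt{2}+1)$ enters only through the Jenkins--Serrin existence condition for $u$, not through any catenoid geometry.
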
 

The main step to construct surfaces of the Theorem \ref{thm1} is to construct proper minimal annuli bounded by four vertical lines
into $\Omega \times \Rbb$, where $\Omega\subset \Hbb^2$ is a convex compact quadrilateral domain.

\begin{thm}\label{thm3.3.1}
Let $\Omega\subset\Hbb^2$ be a \emph{bounded} convex quadrilateral domain whose boundary is composed of open geodesic arcs $\gamma_1,\eta_1,\gamma_2, \eta_2$ in this order together with their endpoints
and suppose that 
\begin{equation}\label{equ3.3.1}
	\ellH{\gamma_1}+\ellH{\gamma_2}>\ellH{\eta_1}+\ellH{\eta_2}
\end{equation}
where $\ellH{\cdot}$ is the length of a segment in $\Hbb^2$. Then there exists a proper minimal annulus in $\PSLhR$ whose boundary consists of the four vertical lines passing through the vertex of $\Omega$ such that for each
 complete geodesic $\gammahat$ that meets  $\eta_1$ and $\eta_2$, the intersection of this annulus with $\gammahat \times\Rbb$ is compact. Moreover, this minimal annulus is asymptotic to two vertical slab $ \gamma_1\times\Rbb$ and $\gamma_2\times\Rbb$.
\end{thm}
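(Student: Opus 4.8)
The plan is to obtain $\Sigma$ as a limit of compact minimal annuli $\Sigma_n$, following the scheme of the introduction. Write the vertices of $\Omega$ cyclically as $v_1,\dots,v_4$, with $\gamma_1$ joining $v_1,v_2$ and $\gamma_2$ joining $v_3,v_4$, and for $n\in\Nbb$ set $\Gamma_n=\partial(\gamma_1\times[-n,n])\sqcup\partial(\gamma_2\times[-n,n])\subset\overline\Omega\times\Rbb$, the boundaries of the two vertical rectangles over $\gamma_1$ and $\gamma_2$ (each of area $2n$ times its $\Hbbh$-length, since the induced metric on a vertical plane is flat). Since every side of $\partial\Omega$ extends to a complete geodesic whose vertical plane is minimal, the maximum principle confines any minimal annulus with boundary $\Gamma_n$ to $\overline\Omega\times\Rbb$. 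The delicate point is the choice of $\Sigma_n$: a least-area annulus spanning $\Gamma_n$ is not suitable, because the surface
\[
S_n=(\eta_1\times[-n,n])\cup(\eta_2\times[-n,n])\cup(\Omega\times\{n\})\cup(\Omega\times\{-n\})
\]
is an annulus with $\partial S_n=\Gamma_n$ and area $2n(\ellH{\eta_1}+\ellH{\eta_2})+2A_0$, where $A_0=\Area(\Omega\times\{0\})<\infty$ is independent of $n$; by \eqref{equ3.3.1} this is smaller than the area $2n(\ellH{\gamma_1}+\ellH{\gamma_2})$ of the two vertical $\gamma$-rectangles once $n>A_0/(\ellH{\gamma_1}+\ellH{\gamma_2}-\ellH{\eta_1}-\ellH{\eta_2})$, so a least-area annulus can degenerate onto the two planes $\eta_1\times\Rbb$, $\eta_2\times\Rbb$ and lose its topology. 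Instead I would take $\Sigma_n$ to be an index-one minimal annulus spanning $\Gamma_n$, produced by a mountain-pass construction; hypothesis \eqref{equ3.3.1} is exactly the condition under which the configuration near the two $\gamma$-rectangles sits above the $\eta$-configuration and hence gives rise to such a saddle, and conversely its failure is what underlies the non-existence statement of Proposition \ref{propo1.3}.

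Two uniform estimates then govern the passage $n\to\infty$. The first is a curvature bound $\sup_{\Sigma_n}\tdoi{\II_{\Sigma_n}}\le C$ independent of $n$ and of the index --- this is precisely the curvature estimate of this paper, obtained from a blow-up together with three mutually transverse minimal foliations --- together with a local area bound (e.g.\ from the competitor $S_n$). The second, and the step I expect to be the main obstacle, is confinement of the neck. Because $\partial_z$ is a Killing field one may translate $\Sigma_n$ vertically so that a generator $c_n$ of $\pi_1(\Sigma_n)$ meets $\overline\Omega\times\{0\}$; the claim is that $c_n\subset\overline\Omega\times[-R,R]$ for an $R$ independent of $n$. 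Were this to fail, one could follow the escaping neck by further vertical translations and extract, besides the ``bulk'' limit $(\gamma_1\times\Rbb)\cup(\gamma_2\times\Rbb)$ of two stable planes, a second complete non-flat minimal annulus carrying its own instability, so that the total index would be at least two --- contradicting the index-one normalization. Moreover the curvature bound keeps $c_n$ at distance bounded below from $\partial\Sigma_n$, so the horizontal caps of $\Gamma_n$ recede to infinity as $n\to\infty$.

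With these estimates, $\Sigma_n$ subconverges smoothly on compact subsets of $\PSLhR$ to a properly embedded minimal surface $\Sigma\subset\overline\Omega\times\Rbb$ whose boundary is the union of the four vertical lines $\{v_j\}\times\Rbb$. The limit curve $c_\infty=\lim c_n\subset\overline\Omega\times[-R,R]$ is essential, so $\Sigma$ is not simply connected; being properly embedded in $\overline\Omega\times\Rbb$ with a global curvature bound (hence of finite topology), it is an annulus. For a complete geodesic $\gammahat$ meeting both $\eta_1$ and $\eta_2$, outside $\{\tdoi{z}\le R\}$ each horizontal slice of $\Sigma$ is a pair of arcs hugging $\gamma_1$ and $\gamma_2$ respectively and so disjoint from $\gammahat$; hence $\Sigma\cap(\gammahat\times\Rbb)\subset\{\tdoi{z}\le R\}$ is compact. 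Finally, as $z\to\pm\infty$ these two families of slices converge to $\gamma_1$ and to $\gamma_2$, so for $\tdoi{z}$ large each end of $\Sigma$ is a minimal normal graph over $\gamma_1\times\Rbb$, resp.\ $\gamma_2\times\Rbb$, vanishing on its two bounding vertical lines and bounded, hence exponentially asymptotic to that plane by comparison with its vertical translates --- which gives all the assertions of the theorem.
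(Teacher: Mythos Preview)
Your overall architecture matches the paper's --- compact annuli $\Sigma_n$ bounded by $\Gamma_n$, uniform curvature estimate via foliations, then a subsequential limit --- and your area comparison $S_n$ versus the two $\gamma$-rectangles is exactly the paper's computation. But you have inverted its role: that inequality is the \emph{Douglas criterion}, and it is what guarantees that a least-area annulus bounded by $\Gamma_n$ \emph{exists} for each large $n$ (it does not degenerate to the two disks). The paper keeps this stable annulus $\Sigma^{\mathrm s}_n$ and then produces, by White's degree theory, a second annulus $\Sigma^{\mathrm u}_n$ with the same boundary; both sequences are carried forward.

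The substantive gap is your neck-confinement step. Your index-counting argument (``if the neck escaped one would extract a non-flat annulus carrying its own instability, hence total index $\ge 2$'') does not go through, because the paper shows precisely what the limits look like when $h^+(\Sigma_n)-h^-(\Sigma_n)\to\infty$: translating to the neck level one obtains $(\eta_1\times\Rbb)\cup(\eta_2\times\Rbb)$, and translating to the cap level one obtains a Scherk-type vertical \emph{graph} over $\Omega$ with $\pm\infty$ on $\eta_1\cup\eta_2$. All of these limits are stable (flat strips or graphs), so there is no non-flat piece available to absorb the instability you want to count. The paper's mechanism is different and genuinely uses \emph{both} sequences: if $\Sigma^{\mathrm s}_n$ and $\Sigma^{\mathrm u}_n$ simultaneously had diverging necks, they would converge to the \emph{same} limit $\check\Sigma_\infty$, and the normalized difference (or, in the stable-unstable case, the first eigenfunction) converges to a bounded nonnegative Jacobi field $w_\infty$ on $\check\Sigma_\infty$ with $w_\infty=0$ on $\partial\check\Sigma_\infty$ and $w_\infty(\check p_\infty)=1$. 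One then rules this out case by case: on $\eta_i\times\Rbb$ the Jacobi operator is $\Delta-1$, so $w_\infty$ is subharmonic with an interior maximum, a contradiction; on the Scherk graph a flux computation with the Killing Jacobi field $N_3=\langle\partial_z,N\rangle$ gives the contradiction. Thus at least one of the two sequences has $h^+-h^-$ bounded, and that is the one used in the limit. A separate flux argument (divergence of $\partial_z$ over $\check\Sigma_\infty^+(-t)$) is then needed to show that for this good sequence $n-h^+(\Sigma_n)\to\infty$, i.e.\ the caps recede; your claim that ``the curvature bound keeps $c_n$ at bounded distance from $\partial\Sigma_n$'' does not by itself give this.
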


%The proof of Theorem \ref{thm3.3.1} proceeds as follows. For each positive integer $n$, for $i=1,2$, we define $\Gamma^i_n$ as a closed curve lying in  $ \gamma_i\times\Rbb $ and converging to the two vertical geodesics that pass through the endpoints $p_i,q_i$ of $ \gamma_i $. 
%We then show that for $n$ large enough, there exists a stable minimal annulus $\Sigmas_n$ and hence, via variational method, there exists another  minimal annulus $\Sigmau_n$ with the the same boundary  $\Gamma^1_n\cup\Gamma^2_n$ . We use here the Douglas criterion for the existence of a compact annulus with a given boundary which is a pair of curves $ \Gamma^1_n,\Gamma^2_n $. The Global Analysis theory, in particular the degree theory gives us the existence of $ \Sigmau_n $ by studying a Fredhom map and the stability of minimal annuli (see Proposition \ref{pro4.3.6}).  We will prove in subsection \ref{subsect3.3.3} an uniform bound for the curvature of the sequences of minimal surfaces $\Sigmas_n$, $\Sigmau_n$.
%Some geometric properties of these sequences are stated in
%subsection \ref{subsect3.3.4} which imply the convergence of a subsequence of the sequence  $\Sigmas_n$ or $\Sigmau_n$ to the desired minimal annulus (see subsection \ref{subsect3.1.6}).

The content is organized as follows. Section 2 is devoted to the construction of compact minimal annuli by variational methods.
% and degree theory. 
For each positive integer $n$, for $i=1,2$, we define $\Gamma^i_n$ as a closed curve lying in  $ \gamma_i\times\Rbb $ and converging to the two vertical geodesics that pass through the endpoints $p_i,q_i$ of $ \gamma_i $. We then show that for $n$ large enough, there exists a stable minimal annulus $\Sigmas_n$ and hence, via 
degree theory, there exists another  minimal annulus $\Sigmau_n$ with the the same boundary  $\Gamma^1_n\cup\Gamma^2_n$ (see Proposition \ref{pro4.3.6}).
We prove in Section 3, estimate of curvature and some geometric property  of the sequences of minimal surfaces $\Sigmas_n$, $\Sigmau_n$. 
We study in Section 4 the possible limit of these subsequences of compact annuli and conclude to the existence of complete properly embedded minimal annuli bounded by four vertical lines which prove Theorem \ref{thm3.3.1}. Finally, complete minimal annuli of Theorem \ref{thm1}  are constructed in Section 5.

\section{Minimal annuli on bounded domains}
\label{sect1}
\subsection{The Riemannian manifold $\PSLhR$ }

The unit vector field $\xi:=\dhr_z$ in $\PSLhR=\{ (x,y,z) \in \Rbb ; x^2+y^2 <4 \}$ is a complete Killing vector field, called \emph{vertical} and the isometries $\Tsf_h:(x,y,z)\mapsto (x,y,z+h)$ with $h\in\Rbb$ are called \emph{vertical translations}. More generally, for each point $q\in\PSLhR$, we denote by $\Tsf_{q}: \PSLhR\to\PSLhR$ the isometry which preserves the orientation of the fiber which sends $q$ to the origin $O$ of the cylinder model. We will call this isometry a translation.

The vertical vector field $\xi$ satisfies the following equation
\begin{equation}\label{equtau}
	\nablabar_X\xi= \tau X\times\xi
\end{equation}
for all vector fields $X$ in $\PSLhR$. Here $\nablabar$ is the Riemannian connection of $\PSLhR$ and $\times$ is the cross product.

The disk $\left\{(x,y)\in\Rbb^2 ; x^2+y^2<4  \right\}$ endowed with the metric $\lambda^2\left(dx^2+dy^2\right)$ is 
a model of the hyperbolic plane $\Hbbh$. 
The projection $\pi:\PSLhR\to\Hbbh,(x,y,z)\mapsto (x,y)$ is a Riemannian submersion, i.e., if a tangent vector $V$ of $\PSLhR$ satisfies $V\bot \xi$ then $\chuan{V}_{\PSLhR}=\chuan{d\pi(V)}_{\Hbbh}$.
Each fiber of the submersion $\pi$ is called \emph{vertical fiber}.
A tangent vector $V$ of $\PSLhR$ is called \emph{vertical} (resp. \emph{horizontal}) if $V$ is parallel  (resp. orthogonal) to $\xi$. 
Any tangent vector of $\PSLhR$ can be decomposed into  vertical and horizontal parts: $V=V^\Vcal+V^\Hcal$. 
Since $\xi$ is a unit vector field, $V^\Vcal=\vh{V,\xi}\xi$. 
%and $V^\Hcal=V-V^\Vcal=V-\vh{V,\xi}\xi$.
For each vector field $X$ on $\Hbbh$, there exists a unique horizontal vector field $\overline{X}$ of $\PSLhR$ such that $\overline{X}$ is $\pi$-related to $X$. Then $\overline{X}$ is called a \emph{basic horizontal lift} of $X$.
%In the following, let us compute the Riemannian connection of $\PSLhR$, based on the formula \eqref{equtau} and several properties of the Riemannian submersion $\PSLhR\to\Hbbh$.
%Let $X,Y$ be two tangent vectors of $\Hbbh$,  $\overline{X},\overline{Y}$ be the corresponding basic horizontal lifts of $X, Y$.
%We have the following formulas
%\begin{equation}\label{equ1.1.2as}
%	\nablabar_{\overline{X}}\overline{Y}=\overline{\nabla_XY}+\tau \vh{\overline{Y}\times\xi,\overline{X}}\xi,\quad \nablabar_\xi\overline{X}=\tau\overline{X}\times\xi
%\end{equation}
%where $\nabla$ is the Riemannian connection of $\Hbbh$. 
Each orthonormal frame $(e_1,e_2)$ of $\Hbbh$ induces an orthonormal frame $(E_1,E_2,E_3)$ of $\PSLhR$ with $E_1,E_2$  the corresponding basic horizontal lifts of $e_1,e_2$ and $E_3:=\xi$. Let $X_1,X_2$ be two tangent vectors of $\Hbbh$. 
%and $\overline{X_1},\overline{X_2}$ be their basic horizontal lifts.
We have the following formulas
\begin{equation}\label{equ1.1.2as}
	\nablabar_{\overline{X_1}}\overline{X_2}=\overline{\nabla_{X_1}X_2}+\tau \vh{\overline{X_2}\times\xi,\overline{X_1}}\xi,\quad \nablabar_\xi\overline{X_1}=\tau\overline{X_1}\times\xi
\end{equation}
where $\nabla$ is the Riemannian connection of $\Hbbh$.
%It follows from \eqref{equtau} and \eqref{equ1.1.2as}  
%the following expressions of the Riemannian connection $\nablabar$ in the frame $(E_1,E_2,E_3)$:
%\begin{align*}
%	&\nablabar_{E_1}E_1=\overline{\nabla_{e_1}e_1},&& \nablabar_{E_1}E_2=\overline{\nabla_{e_1}e_2}+\tau E_3, && \nablabar_{E_1}E_3=-\tau E_2,\\
%	&\nablabar_{E_2}E_1=\overline{\nabla_{e_2}e_1}-\tau E_3, && \nablabar_{E_2}E_2=\overline{\nabla_{e_2}e_2}, && \nablabar_{E_2}E_3=\tau E_1, \\
%	&\nablabar_{E_3}E_1=-\tau E_2, && \nablabar_{E_3}E_2=\tau E_1,  && \nablabar_{E_3}E_3=0.
%\end{align*}
%We now give an explicit formula to compute the sectional curvature of $\PSLhR$.
%\begin{prop}\label{pro1.2.8}
%	Let $P$ be a $2$-dimensional subspace of $T_p\PSLhR$. The sectional curvature of $P$ is
%	\begin{equation}
%	K(P)=\left(1+4\tau^2\right)a-\left(1+3\tau^2\right)
%	\end{equation}
%	where $a$ is square of the length of the orthogonal  projection of the vector $\xi_p$ on $P$.
%	Therefore,
%	\begin{itemize}
%	\item $K(P)\le \tau^2$ with equality if and only if $\xi_p\in P$.
%	\item $K(P)\ge -\left(1+3\tau^2\right)$ with equality if and only if $\xi_p\bot P$.
%	\end{itemize}
%\end{prop}
%\begin{proof}
%	Let $X,Y$ be an orthonormal basis of $P$. By definition, $K(P)=\vh{R(X,Y)X,Y}$ with $R$ the Riemannian connection of $\PSLhR$. By \cite[Prop 2.1]{Daniel07}, we have
%	\begin{equation*}
%	\vh{R(X,Y)X,Y}=\left(\kappa-3\tau^2\right)-\left(\kappa-4\tau^2\right)\left(\vh{\xi,X}^2+\vh{\xi,Y}^2\right)
%	\end{equation*}
%	with $\kappa=-1$. Then we obtain the formula of $K(P)$.
%\end{proof}
Let $P$ be a $2$-dimensional subspace of $T_p\PSLhR$ and $X,Y$ be an orthonormal basis of $P$. By definition the curvature $K(P)=\vh{R(X,Y)X,Y}$ with $R$ the Riemannian connection of $\PSLhR$. By \cite[Prop 2.1]{Daniel07}, we have
\begin{equation*}
\vh{R(X,Y)X,Y}=\left(\kappa-3\tau^2\right)-\left(\kappa-4\tau^2\right)\left(\vh{\xi,X}^2+\vh{\xi,Y}^2\right)
\end{equation*}
with $\kappa=-1$. We obtain the formula of $K(P)$:
$$	K(P)=\left(1+4\tau^2\right)a-\left(1+3\tau^2\right)$$
where $a$ is square of the length of the orthogonal  projection of the vector $\xi$ on $P$ at $p$.
When $\tau=0$, $\PSLhR$ is a model of the Riemannian product manifold $\Hbbh \times \Rbb$  and the sectional curvature of $\Hbbh \times \Rbb$  is non-positive ($a\leq 1$), while for $\tau \neq 0$, the sectional curvature of $\PSLhR$ changes sign. In particular
\begin{itemize}
\item $K(P)\le \tau^2$ with equality if and only if $\xi\in P$.
\item $K(P)\ge -\left(1+3\tau^2\right)$ with equality if and only if $\xi \bot P$.
\end{itemize}

\subsection{The Jenkins-Serrin theorem}
Let $\Omega\subset \Hbbh $ be a domain. Each function $u:\Omega\to\Rbb$ gives a vertical graph $\Gr(u)$ in $\PSLhR$.
The graph of a $C^2$ function $u$ is minimal if and only if $u$ satisfies the quasilinear elliptic PDE of second order
of divergence form, called \emph{minimal surface equation}  $ \Div_{\Hbbh}(X_u)=0 $
where $-X_u$ is the projection of the upward unit normal $\Nsf$ to $\Gr(u)$ onto $\Omega$.
Jenkins-Serrin type problem is a Dirichlet problem for the minimal surface equation with possible infinite boundary data.
A Jenkins-Serrin type theorem was proved on the bounded domains by Younes \cite{You10} and on some unbounded domains by Melo \cite{Mel14}. Such a theorem for particular case $\HbbhR=\widetilde{\mathrm{PSL}}_2(\Rbb,0)$ had been proved by Nelli-Rosenberg \cite{NR02} for bounded domains and by Collin-Rosenberg \cite{CR10} for some unbounded domains, see more in \cite{MRR11}.

We will recall the Jenkins-Serrin type theorem in $ \PSLhR$. Let $\Omega\subset\hh^2$ be a domain whose boundary $\dhr \Omega$ in $\Hbbh\cup\dhriHh$ consists of a finite number of {open  geodesic arcs} $A_i,B_i$, a finite number of {open, convex   arcs} $C_i$ {(convex towards $\Omega$)} together with their endpoints, which are called the vertices of $\Omega$ (and those in $\dhr_\infty\hh^2$ are called ideal vertices of $\Omega$). We assign to
the edges $A_i$ the data $+\infty$, to the edges $B_i$ the data $-\infty$ and assign arbitrary continuous data $f_i$ on the arcs $C_i$.  Suppose that neither of the two edges $A_i$ and neither of the two edges $B_i$ meet in a convex corner. 
We call a domain $\Omega$ a \emph{Scherk domain}.

A \emph{polygonal domain} $\Pcal$ in $\hh^2$ is a domain whose boundary $\dhr\Pcal$ is composed of finitely many {open  geodesic arcs} in $\hh^2$ together with their endpoints, which are called the vertices of $\Pcal$.
A  polygonal domain $\Pcal$ is said to be inscribed in a Scherk domain $\Omega$ if $\Pcal\subset\Omega$ and its vertices are among the vertices of $\Omega$. We notice that a vertex may be in {$\dhr_\infty\hh^2$}
and an edge may be one of the $A_i$ or $B_i$. Given a bounded polygonal domain $\Pcal$ inscribed in $\Omega$, we denote by $\ell(\Pcal)$ the  perimeter of $\dhr\Pcal$ and by $a(\Pcal)$ and $b(\Pcal)$ the {sum of the}  lengths of the edges $A_i$ and $B_i$ lying in $\dhr\Pcal$, respectively. In the case where $\Pcal$ is an ideal domain (unbounded domain), see Collin and Rosenberg\cite{CR10}   
or Melo\cite{Mel14} to get the adapted definition of $\ell(\Pcal), a(\Pcal),b(\Pcal)$.

\begin{thm}[Jenkins-Serrin type theorem in $\PSLhR$ \cite{You10,Mel14}] \label{thm4.2.6}
	Let $\Omega$ be a Scherk domain in $\hh^2$ with the families $\{A_i \},\{B_i \},\{C_i \}$.
	\begin{enumerate}
	\item If the family $\{C_i\}$ is nonempty, there exists a solution $u: \Omega \to \Rbb$ to the Dirichlet problem if and only if
	\begin{equation}\label{cond1}
	\ell(\Pcal)-2a(\Pcal)>0,\qquad \ell(\Pcal)-2b(\Pcal)>0
	\end{equation}
	for every  polygonal domain {$\Pcal$} inscribed in $\Omega$. Moreover, such a solution $u$ is unique if it exists.
	\item If the family $\{ C_i\}$ is empty, there exists a solution $u: \Omega \to \Rbb$ to the Dirichlet problem if and only if
	\begin{equation}\label{cond2}
b(\Pcal)-a(\Pcal)=0
	\end{equation}
	when $\Pcal=\Omega$ and the inequalities in \eqref{cond1} hold for all other  polygonal domains {$\Pcal$} inscribed in $\Omega$. Such a solution $u$ is unique up to an additive constant, if it exists.\end{enumerate}
\end{thm}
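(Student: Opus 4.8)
I would follow the classical Jenkins--Serrin scheme (Jenkins--Serrin; Nelli--Rosenberg \cite{NR02}; Collin--Rosenberg \cite{CR10}; Younes \cite{You10}; Melo \cite{Mel14}), the only ambient inputs being the ones recorded above: a vertical graph $\Gr(u)$ is minimal iff $\Div_{\Hbbh}(X_u)=0$ with $\td{X_u}<1$ everywhere, vertical translations are isometries of $\PSLhR$, and the vertical plane over a geodesic is minimal.

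\emph{Necessity.} For a solution $u$ and a piecewise $C^1$ arc $\Gamma\subset\overline{\Omega}$ with unit normal $\nu$, put $F_u(\Gamma)=\int_\Gamma\vh{X_u,\nu}$. Since $\Div_{\Hbbh}X_u=0$, this flux vanishes on the boundary of any subdomain compactly contained in $\Omega$, hence, by continuity, on $\dhr\Pcal$ for every inscribed polygonal domain $\Pcal$. A local analysis near the edges shows that $X_u$ tends to the outer conormal along each $A_i$ and to the inner conormal along each $B_i$, so $F_u$ equals $+$(length) on the $A_i$-edges of $\dhr\Pcal$ and $-$(length) on the $B_i$-edges, whereas $\td{X_u}<1$ forces $|F_u|$ to be \emph{strictly} less than the length along every $C_i$-edge and along every edge interior to $\Omega$ (there $\td{X_u}\equiv1$ would make $u=\pm\infty$). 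Inserting this into $F_u(\dhr\Pcal)=0$ yields the strict inequalities \eqref{cond1} as soon as $\dhr\Pcal$ carries a $C_i$-edge or an interior edge --- which is automatic when $\{C_i\}\neq\varnothing$ --- and, in the case $\{C_i\}=\varnothing$ with $\Pcal=\Omega$, the equality $a(\Omega)=b(\Omega)$, that is \eqref{cond2}. For ideal $\Pcal$ one truncates with horocycles and equidistant lines and passes to the limit, as in \cite{CR10,Mel14}.

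\emph{Existence.} I would first use solvability of the Dirichlet problem with \emph{continuous} boundary data on a Scherk domain: barriers come from the minimal vertical planes over the geodesic edges $A_i,B_i$, from the strictly convex arcs $C_i$, and, at admissible corners and ideal vertices, from Scherk-type minimal graphs over geodesic quadrilaterals of $\PSLhR$ (built by Younes \cite{You10}). Then run a doubly monotone approximation: let $u_{m,n}$ solve the problem with data $m$ on every $A_i$, $-n$ on every $B_i$, $f_i$ on every $C_i$; by the comparison principle $u_{m,n}$ is nondecreasing in $m$ and nonincreasing in $n$. Condition \eqref{cond1}, applied via the flux argument to the \emph{divergence set} --- whose boundary is a union of complete geodesics --- keeps $u_{\infty,n}:=\lim_m u_{m,n}$ finite on compact subsets of $\Omega$; \eqref{cond1} again keeps $u:=\lim_n u_{\infty,n}$ finite in $\Omega$. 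The interior gradient estimate for the minimal surface equation in $\PSLhR$ upgrades these monotone limits to $C^\infty_{\mathrm{loc}}$ limits, so $u$ is a solution. Barriers near the $C_i$ that are uniform in $m,n$ give $u=f_i$ on $C_i$; and on a small half-disk $L$ along $A_i$, comparing $u_{m,n}$ with $\min(\beta_i,m)$ --- where $\beta_i$ is a barrier equal to $+\infty$ on $A_i\cap\dhr L$ and finite on the remaining part of $\dhr L$ --- and then letting $m\to\infty$ forces $u\to+\infty$ on $A_i$; symmetrically $u\to-\infty$ on $B_i$. When $\{C_i\}=\varnothing$ one normalizes $u$ at an interior point and uses \eqref{cond2} in place of the $\Pcal=\Omega$ inequality.

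\emph{Uniqueness and main obstacle.} For case 1, if $u,v$ solve the same problem, apply the flux identity to $E=\{u>v\}$: $\int_{\dhr E}\vh{X_u-X_v,\nu}=0$. On $\dhr E\cap\Omega$ one has $u=v$ and $\nabla(u-v)$ points into $E$, so by monotonicity of the minimal surface operator ($\vh{X_u-X_v,\nabla(u-v)}\ge0$, strict where $\nabla u\neq\nabla v$) the integrand is $\le0$ there; along the $A_i,B_i$ it $\to0$ (same conormal limit); along the $C_i$ it is $\le0$. Hence $E=\varnothing$, so $u\le v$ and, symmetrically, $u=v$; in case 2 the same argument on $\{u>v+c\}$ gives $u-v\equiv\mathrm{const}$. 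I expect the genuinely $\PSLhR$-specific difficulty to be the barrier constructions --- the Scherk-type graphs over geodesic quadrilaterals and the barriers at (ideal) vertices --- together with the interior gradient estimate, since $\PSLhR$ lacks the reflections of $\Hbbh\times\Rbb$ and the $\tau$-term perturbs the equation; the flux bookkeeping and the structure of the divergence set then go through as in the product case.
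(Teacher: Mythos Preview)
The paper does not give its own proof of this theorem: it is quoted from Younes \cite{You10} (bounded domains) and Melo \cite{Mel14} (unbounded domains) and used as a black box. Your proposal is a faithful outline of the classical Jenkins--Serrin scheme those references carry out in $\PSLhR$ --- flux necessity, monotone approximation for existence, and flux/comparison for uniqueness --- and you correctly flag the $\PSLhR$-specific work (barrier constructions and interior gradient estimate) as the genuine content; so there is nothing to compare against in the present paper, and your sketch matches the cited proofs.
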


%%%%%%%%%%%%%%%%%%%%%%%%%%%%%%%%%%

%Our first goal is to prove the following theorem.
%\begin{thm}\label{thm3.3.1}
%Let $\Omega\subset\Hbb^2$ be a \emph{bounded} convex quadrilateral domain whose boundary is composed of open geodesic arcs $\gamma_1,\eta_1,\gamma_2$ and $\eta_2$ in that order together with their endpoints
%and suppose that 
%\begin{equation}\label{equ3.3.1}
%	\ellH{\gamma_1}+\ellH{\gamma_2}>\ellH{\eta_1}+\ellH{\eta_2}.
%\end{equation}
%Then there exists a proper minimal annulus in $\PSLhR$ whose boundary consists of the four vertical lines passing through the vertex of $\Omega$ such that for each
% complete geodesic $\gammahat$ that meets  $\eta_1$ and $\eta_2$, the intersection of this annulus with $\gammahat \times\Rbb$ is compact. Moreover, this minimal annulus is asymptotic to two vertical slab $ \gamma_1\times\Rbb$ and $\gamma_2\times\Rbb$.
%\end{thm}

We point out that in the case where $\Omega$ is the quadrilateral domain of Theorem \ref{thm3.3.1} the condition (\ref{equ3.3.1})
is sharp. Theorem \ref{thm4.2.6} imply that there is no minimal annulus bounded by four vertical line in $\Omega \times \Rbb$:

%We point out in the case that the distance between $ \gamma_1 $ and $ \gamma_2 $ are close, such an minimal annulus does not exist thanks to using Jenkin-Serrin type solution stated above. More concretely, we state  the following proposition:

% we show that if the condition \eqref{equ3.3.1} is replaced by the condition \eqref{equ3.3.2}, there does not exist any annulus satisfying the conditions {{{}}} of the above theorem. 

\begin{prop}\label{pro3.3.2}
	If the quadrilateral domain $\Omega$ satisfies
	\begin{equation}\label{equ3.3.2}
		\ellH{\gamma_1}+\ellH{\gamma_2}\le\ellH{\eta_1}+\ellH{\eta_2}
	\end{equation}
	then there does not exist any minimal annulus in $\PSLhR$ whose boundary consists of the four vertical geodesics passing through the endpoints of $\gamma_1$ and $\gamma_2$, such that for each complete geodesic $\gammahat$ that meets both $\eta_1$ and $\eta_2$, the intersection of this annulus with $\gammahat \times\Rbb$ is compact.
\end{prop}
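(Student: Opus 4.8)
The plan is to argue by contradiction, using the putative minimal annulus to build barriers that force the existence of a Jenkins--Serrin graph over $\Omega$ (or over a slightly modified domain), and then to apply the necessary condition in Theorem \ref{thm4.2.6}(2), whose failure is exactly the hypothesis \eqref{equ3.3.2}. More precisely, suppose $\Sigma\subset\PSLhR$ is a minimal annulus whose boundary is the four vertical lines through the endpoints of $\gamma_1,\gamma_2$, and which meets every $\gammahat\times\Rbb$ (for $\gammahat$ crossing both $\eta_1$ and $\eta_2$) in a compact set. The compactness-of-slices hypothesis, together with the fact that the four boundary lines lie over the endpoints of $\gamma_1,\gamma_2$, should force $\Sigma$ to be contained in $\Omega\times\Rbb$ (using the vertical planes $\etahat_i\times\Rbb$ through $\eta_i$ and $\etahat$'s close to them as barriers: a minimal surface touching such a plane from one side and meeting nearby parallel planes compactly must stay on the correct side).

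The core step is to reinterpret $\Sigma$ as (a piece of) a multigraph over $\Omega$. First I would show, using the maximum principle against the foliation of $\Omega\times\Rbb$ by horizontal-slice-type minimal surfaces is not available, so instead I would use that $\Sigma$ is an annulus with two ends, each end asymptotic (after capping by a vertical half-line over an endpoint) to a quarter of a vertical plane $\gamma_i\times\Rbb$; a flux/separation argument then shows $\Sigma$ separates $\Omega\times\Rbb$ into two components, each of which is a Killing graph with respect to $\partial_z$ over $\Omega$ away from the boundary. Concretely, I expect $\Sigma$ to be the union of the graphs of two functions $u^+ \ge u^-$ on $\Omega$, each solving the minimal surface equation, with $u^\pm\to\pm\infty$ along $\gamma_1\cup\gamma_2$ and with prescribed (possibly $\mp\infty$, but more likely finite or $\pm\infty$ matching the geometry) behaviour along $\eta_1,\eta_2$. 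The decisive point: along $\eta_1$ and $\eta_2$, the graph $u^+$ cannot have $+\infty$ data (the two $+\infty$ edges $\gamma_1,\gamma_2$ already abut $\eta_i$ at the vertices, so a third $+\infty$ edge adjacent to either of them would violate the Scherk corner condition), hence $u^+$ has data $-\infty$ on $\eta_1\cup\eta_2$, and symmetrically for $u^-$; thus $u:=u^+$ is a Jenkins--Serrin solution on $\Omega$ with $A$-edges $\gamma_1,\gamma_2$ and $B$-edges $\eta_1,\eta_2$ and no convex edges.

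Once we are in that situation, Theorem \ref{thm4.2.6}(2) applies with the empty family $\{C_i\}$: a solution exists only if $b(\Pcal)-a(\Pcal)=0$ for $\Pcal=\Omega$, i.e. $\ellH{\eta_1}+\ellH{\eta_2}=\ellH{\gamma_1}+\ellH{\gamma_2}$, and only if $\ell(\Pcal)-2a(\Pcal)>0$ for every inscribed polygonal $\Pcal$. Under the strict inequality $\ellH{\gamma_1}+\ellH{\gamma_2}<\ellH{\eta_1}+\ellH{\eta_2}$ the equality $b(\Omega)=a(\Omega)$ fails outright; under the borderline equality $\ellH{\gamma_1}+\ellH{\gamma_2}=\ellH{\eta_1}+\ellH{\eta_2}$ one takes $\Pcal=\Omega$ and checks that $\ell(\Omega)-2a(\Omega)=\ellH{\eta_1}+\ellH{\eta_2}-\ellH{\gamma_1}-\ellH{\gamma_2}=0$, again violating the strict inequality required. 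Either way no such $u$ exists, contradicting the previous paragraph; hence no such minimal annulus exists.

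The main obstacle is the second step — rigorously passing from the abstract minimal annulus (with only the compact-slices and boundary-at-infinity hypotheses) to the statement that each complementary region is a genuine $\partial_z$-graph over $\Omega$ with the claimed $\pm\infty$ boundary data. This requires (i) confinement of $\Sigma$ to $\Omega\times\Rbb$ via barriers $\etahat\times\Rbb$, (ii) a separation argument showing $\Sigma$ disconnects $\Omega\times\Rbb$ into exactly two pieces, each a graph, and (iii) identification of the boundary data of these graphs along $\eta_1,\eta_2$ using the Scherk corner condition at the four vertices to rule out the $+\infty/+\infty$ and $-\infty/-\infty$ possibilities. Steps (ii)–(iii) are where one must be careful: a priori $\Sigma$ could be knotted or fail to be a graph, so one likely invokes that a stable or area-minimizing representative in the isotopy class is a graph, or uses the annulus structure directly (two ends, each a half-plane end) to get the graph property on an exhausting sequence of subdomains and then passes to the limit, controlling the boundary values via the maximum principle against translates of $\gamma_i\times\Rbb$ and $\eta_i\times\Rbb$.
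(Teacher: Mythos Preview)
Your strategy reverses the paper's use of the Jenkins--Serrin theorem, and this reversal introduces two genuine gaps.

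First, the decomposition of the annulus $\Sigma$ into two vertical graphs $\Gr(u^+)\cup\Gr(u^-)$ over $\Omega$ is not justified and is in fact generically false. A horizontal minimal annulus bounded by four vertical lines necessarily has interior points where the tangent plane is vertical (the ``neck''), so $\Sigma$ cannot be written globally as a union of $\partial_z$-graphs. You flag this as the main obstacle and then gesture at separation and exhaustion arguments, but none of those produce graphs over all of $\Omega$; at best they give graphs over subdomains bounded away from the neck, which is not enough to invoke Theorem~\ref{thm4.2.6}(2) on $\Omega$ itself.

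Second, your treatment of the equality case misreads Theorem~\ref{thm4.2.6}(2). When the family $\{C_i\}$ is empty, the condition for $\Pcal=\Omega$ is the \emph{equality} $a(\Omega)=b(\Omega)$, and the strict inequalities \eqref{cond1} are required only for the \emph{other} inscribed polygons. So finding $\ell(\Omega)-2a(\Omega)=0$ is exactly the existence condition, not a violation of it; in the borderline case the Jenkins--Serrin graph with $+\infty$ on $\gamma_1\cup\gamma_2$ and $-\infty$ on $\eta_1\cup\eta_2$ genuinely exists, and your contradiction evaporates.

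The paper's argument runs in the opposite direction: the hypothesis \eqref{equ3.3.2} is precisely what guarantees the \emph{existence} of a Jenkins--Serrin solution $u$ on $\Omega$ with $u=+\infty$ on $\gamma_1\cup\gamma_2$ and $u=0$ (or $-\infty$ in the equality case) on $\eta_1\cup\eta_2$. One then slides the graph $\Gr(u)$ vertically until it touches the putative annulus $\Acal$ from above; since $\Acal$ meets each transversal vertical plane compactly, a first point of interior contact exists, and the maximum principle gives the contradiction. The delicate point is that $\Gr(u)$ is unbounded over $\gamma_i$, so one approximates $u$ on subdomains $\Omega_{i,n}$ cut off by geodesics $\gamma_{i,n}\to\gamma_i$, applies the maximum principle to the bounded pieces, and passes to the limit. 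No graph structure on $\Acal$ itself is ever needed.
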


\begin{proof}
	Suppose instead that there is a  minimal annulus $ \Acal $ in $ \PSLhR $ asymptotic to the vertical strips $ \gamma_1\times\Rbb $ and $ \gamma_2\times\Rbb $. By Theorem \ref{thm4.2.6} in $ \PSLhR $, there is a function $ u : \Omega \to \Rbb $ solution of the minimal graph equation with boundary data $u=+\infty$ on $ \gamma_1\cup \gamma_2 $ and $ u=0 $ on $ \eta_1\cup\eta_2 $ ($ u=-\infty $ on $ \eta_1\cup\eta_2 $ if the equality occurs in (\ref{equ3.3.2})).
	
	Let $ L $ be an open geodesic segment in $\Omega$ connecting $ \eta_1 $ to $ \eta_2 $ (see Figure \ref{fig:psl-h21}) and let be $ \varphi:=\Rest{u}{L} $.
	Let $ \bar{h}\in\Rbb $ be a positive constant such that the graph $ \Tsf_{\bar{h}}\left(\Gr({\varphi})\right)$ is above the minimal annulus $ \Acal $ with a touching tangent point at $ q\in\Acal $. The number $ \bar{h} $ exists since $ \Acal\cap (L\times\Rbb) $ is compact. We point out that for any $h\ge \bar{h} $, the minimal graph $ \Tsf_{h}(\Gr(\varphi)) $ is above the minimal 
annulus $\Acal$.

	\begin{figure}[h!]
		\centering
	\includegraphics[width=0.2\linewidth]{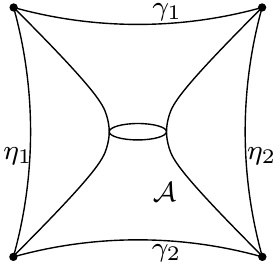}
		\qquad\qquad
	\includegraphics[width=0.2\linewidth]{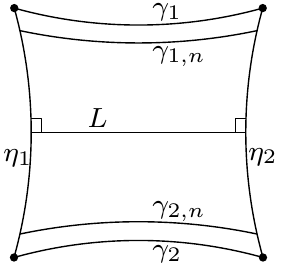}
		\caption{The quadrilateral domain $\Omega$.}
		\label{fig:psl-h21}
	\end{figure}
	
We claim that $ \Tsf_{\bar{h}}(\Gr({u}))$ is above $\Acal $ with a touching point at $ q $, contradicting the maximum principle.
	For each $n$, we denote by $ \gamma_{1,n} $ and $ \gamma_{2,n}$ a pair of geodesics in $ \Omega$ 
converging to $ \gamma_1$ and $ \gamma_2 $ respectively. For $ i=1,2 $, we denote by $ \Omega_{i,n} $ the domain in $\Hbbh $ bounded by $\eta_1,\eta_2,L $ and $ \gamma_{i,n} $ (see Figure \ref{fig:psl-h21}).
Hence, the sequence of domains $\Omega_{i,n} $ increases as $n\to +\infty $ and for $ i=1,2 $, $ \Omega_{i}:=\bigcup_{i=1}^{\infty}\Omega_{i,n}$ are the two components of $ \Omega\setminus L$.
By  Theorem \ref{thm4.2.6}, there is a minimal solution $v_{i,n} $, for $ i=1,2$ on $ \Omega_{i,n}$ such that $ v_{i,n}=\varphi $ on $ L $, $ v_{i,n}=+\infty $ on $ \gamma_{i,n} $ and $ v_{i,n}=0 $ on $ \eta_1\cup\eta_2 $ ($ v_{i,n}=-\infty $ on $ \eta_1\cup\eta_2 $ if the equality occurs in (\ref{equ3.3.2})). Using the maximum principle  and comparing the values at the boundary of the solutions we see that  $v_{i,n}\ge v_{i,n+1}\ge u$ on $ \Omega_{i,n}$ for $ i=1,2 $.
Therefore, the decreasing sequence  $ v_{i,n} $  converges to a minimal solution equal to $ u $ on $ \dhr\Omega_{i,n} $.By the uniqueness of Jenkin-Serrin solutions in $ \PSLhR $, the sequence of minimal graphs
$\Gr(v_{i,n}) $ converges to $\Gr ({\Rest{u}{\Omega_{i}}})$, for $ i=1,2$.
	
	From the hypothesis, the part of $ \Acal $ above $ \Omega_{i,n} $ is bounded and for $ i,n $ fixed,  $ \Tsf_{h}\left(\Gr ({v_{i,n}})\right)$ is above $ \Acal$ for $h$ large enough. From this position, we decrease $h$ until it touches $\Acal $.
Since this minimal graph cannot be contained in  $\Acal $, by the maximum principle, this graph touches $ \Acal $ at a point in its boundary, hence, over $L$ at $q$. Then, for $ i=1,2 $ and $ n \gg 0$, we have $ \Tsf_{\bar{h}}\left(\Gr ({v_{i,n}})\right) $ is above $ \Acal $. Passing to the limit, we have $ \Tsf_{\bar{h}}\left(\Gr({u})\right) $ is above 
$ \Acal $ with a contact at $q$, contradicting the maximum principle.
\end{proof}

\noindent
{\it Proof of the Proposition \ref{propo1.3}.} 
In the case where $\dist_{\Hbbh} ( \gammahat_1, \gammahat_2) = 2 \ln (\sqrt{2}+1)$, there exists a minimal graph $\Gr(u)$ of type Scherk with infinite value on the boundary of $\Omegahat$. The ideal domain in this case has the symmetry of a square and correspond to the case (2)  of Theorem \ref{thm4.2.6}. In the case where  $\dist_{\Hbbh} ( \gammahat_1, \gammahat_2)  >2 \ln (\sqrt{2}+1)$, we can apply the case (1) of Theorem \ref{thm4.2.6} for ideal Scherk domain to find a function $u: \Omegahat \to \Rbb$ with $u=+\vc$ on $\gammahat_1 \cup \gammahat_2$ and $u=0$ on $\etahat_1 \cup \etahat_2$, for $i=1,2$. We can use this function $u$ to get a contradiction exactly as in the proof of the Proposition \ref{pro3.3.2}.
$\hfill \square$

%%%%%%%%%%%%%%%%%%%%%%%%%%%%%%%%%%

\subsection{Compact minimal annuli}
\label{subsect3.3.1}

Let $\Omega\subset\Hbb^2$ be a \emph{bounded} convex quadrilateral domain whose boundary is composed of open geodesic arcs $\gamma_1,\eta_1,\gamma_2$ and $\eta_2$ in this order as well as their endpoints.  For simplicity, we assume the two complete geodesics $\gammahat_i$ containing $\gamma_i$ are ultraparallel (no common point at infinity). We denote by $\Omegahat$ the ideal quadrilateral domain of $\Hbbh$ whose vertices are the endpoints at infinity of the geodesics $\gammahat_1$ and $\gammahat_2$ and whose sides are $\gammahat_1$, $\gammahat_2$, $\etahat_1$, $\etahat_2$. We also assume that the two complete geodesics containing $\eta_i$ are ultraparallel (see Figure \ref{fig3.2.1}(a)) and
\begin{equation*}
\ellH{\gamma_1}+\ellH{\gamma_2}>\ellH{\eta_1}+\ellH{\eta_2}.
\end{equation*}
%The set $\gamma_i\times[-n,n]$ will denote the compact domain
%$(\gamma_i\times \Rbb) \cap \{-n \leq z \leq n\}$ in the cylinder model.
The curve $\widetilde \Gamma^i_n$ will denote the boundary of $\gamma_i\times[-n,n]$ in the cylinder model. For each  $i=1,2$, let $\Gamma^i_n\subseteq \gamma_i\times\Rbb,n\ge 1$  be smooth closed curves satisfying the following conditions (see Figure \ref{fig3.2.1}(b).)
\begin{enumerate}
	\item $\Gamma^i_n$ is the boundary of a smooth convex domain obtained after smoothing the four corners of the convex domain 
$\gamma_i\times[-n,n]$ in the same manner for all $n$.  Hence, the geometry of $\Gamma^i_n$ is uniformly bounded. $\Gamma^i_n$ can be chosen as close as we want to $\widetilde \Gamma^i_n$  by smoothing the corner with small arc.
	
	\item $\Gamma^i_n$ is contained in the minimal disk  delimited by $\Gamma^i_{n+1}$ for all $n\ge 0$.
	\item As $n\to\infty$, $\Gamma^i_n$ converges smoothly to the boundary of $\gamma_i\times\Rbb$.
\end{enumerate}

	\begin{figure}[h!]
		\centering
		\includegraphics[width=0.8\linewidth]{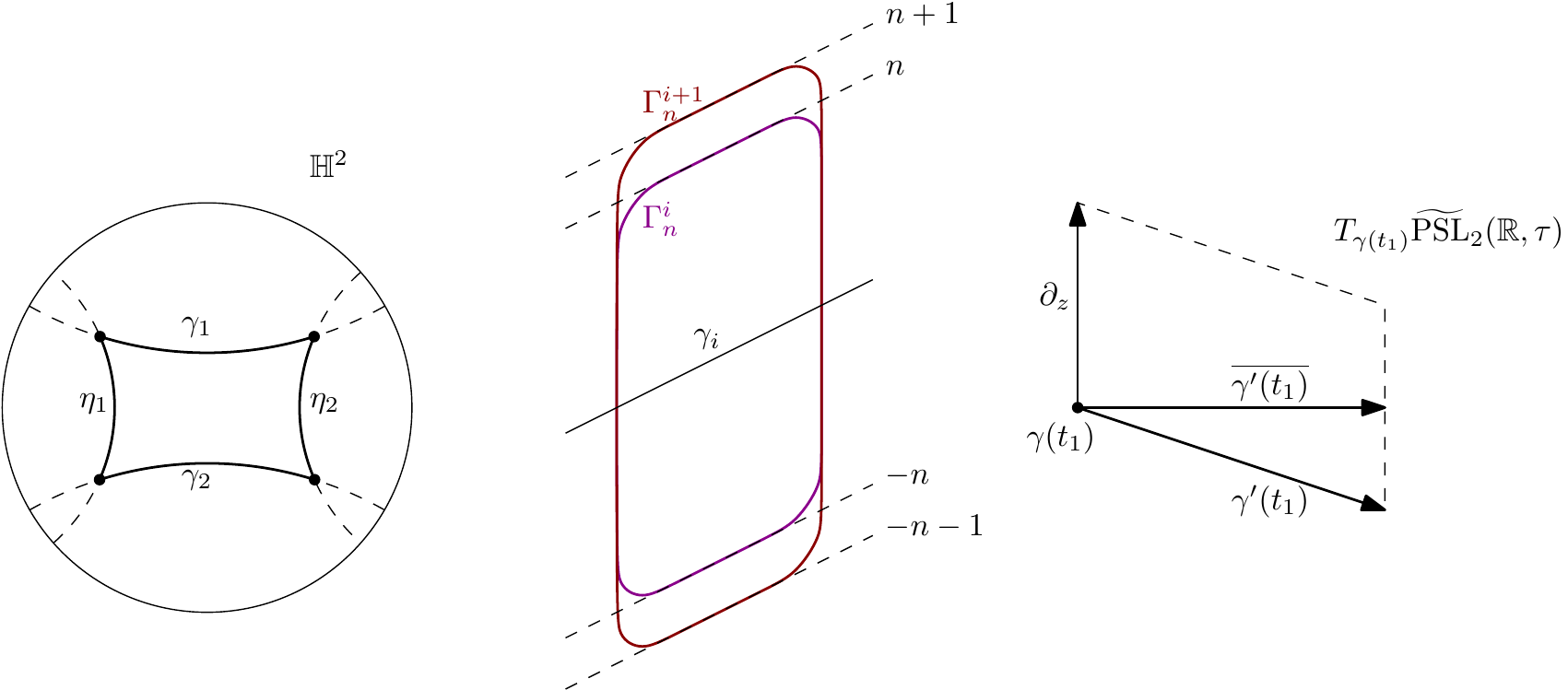}
		\caption{(a) The domain $ \Omega$. (b) $ \Gamma^i_n $. (c) Parallelogram generated by $\dhr_z$ and $\gamma'(t_1)$.}\label{fig3.2.1}.
	\end{figure}

	\noindent
We use Douglas criterion to construct a sequence of compact minimal annuli whose boundary is $ \Gamma^1_n\cup\Gamma^2_n$.

\begin{prop}\label{pro4.2.6}
Let $\Omega\subset\Hbbh$ be a bounded convex quadrilateral domain whose boundary is composed of open geodesic arcs $\gamma_1,\eta_1, \gamma_2, \eta_2$ in this order, together with their endpoints. We assume that 
	\begin{equation}\label{cond4.2.6}
	\ell_{\Hbbh}(\gamma_1)+\ell_{\Hbbh}(\gamma_2)> \ell_{\Hbbh}(\eta_1)+\ell_{\Hbbh}(\eta_2).
	\end{equation}
Then, for $n$ large enough, there exists a least area minimal annulus in $\PSLhR$ with boundary 
$\Gamma^1_n \cup \Gamma^2_n$.
\end{prop}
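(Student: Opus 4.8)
The plan is to apply the Douglas criterion (Douglas condition) for the existence of a least-area minimal annulus spanning the boundary curve $\Gamma^1_n\cup\Gamma^2_n$. Recall that the Douglas condition asks one to compare the infimum $\mu_A$ of areas over all annuli with boundary $\Gamma^1_n\cup\Gamma^2_n$ with the infimum $\mu_D$ of areas over all pairs of disks bounding $\Gamma^1_n$ and $\Gamma^2_n$ separately. If a strict inequality $\mu_A<\mu_D$ holds, then the annular infimum is achieved by a smooth embedded minimal annulus (using that $\PSLhR$ has, for each boundary component, an isoperimetric/area-minimizing disk, and that the ambient manifold is an appropriate homogeneous target with bounded geometry). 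So the entire proof reduces to verifying this strict inequality for $n$ large.

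First I would produce an \emph{upper bound} for $\mu_A$. Here is where hypothesis \eqref{cond4.2.6} enters. By Theorem \ref{thm4.2.6} (Jenkins--Serrin in $\PSLhR$), since $\ellH{\gamma_1}+\ellH{\gamma_2}>\ellH{\eta_1}+\ellH{\eta_2}$, there is a Jenkins--Serrin graph $u:\Omega\to\Rbb$ over the quadrilateral $\Omega$ with boundary values $+\infty$ on $\gamma_1\cup\gamma_2$ and $0$ on $\eta_1\cup\eta_2$ (after checking the polygonal inequalities \eqref{cond1}, which for a quadrilateral reduce to \eqref{cond4.2.6} and the convexity assumptions). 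Truncating this graph at heights $\pm n$ and capping off near $\gamma_i\times\{\pm n\}$ with small annular collars that match the smoothed curves $\Gamma^i_n$, one obtains an annulus $A_n$ with $\partial A_n=\Gamma^1_n\cup\Gamma^2_n$. The key point is that as $n\to\infty$ the area of the truncated Jenkins--Serrin graph over $\Omega$ grows \emph{sublinearly} compared to the capping disks: more precisely, the graph is asymptotic to the two vertical planes $\gamma_1\times\Rbb$ and $\gamma_2\times\Rbb$, so $\Area(A_n)\le \ellH{\gamma_1}\cdot 2n+\ellH{\gamma_2}\cdot 2n + o(n)$ (the two vertical strips plus a bounded error from the compact transition region). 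Thus $\mu_A\le 2n(\ellH{\gamma_1}+\ellH{\gamma_2})+C$ for a constant $C$ independent of $n$.

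Next I would produce a \emph{lower bound} for $\mu_D$. A disk $D_n^i$ with boundary $\Gamma^i_n$ lies in $\PSLhR$ and projects under $\pi:\PSLhR\to\Hbbh$ to a set whose image must cover the geodesic-bigon region swept by $\gamma_i\times[-n,n]$; using the Riemannian submersion property $\chuan{d\pi(V)}_{\Hbbh}\le\chuan V_{\PSLhR}$ together with a coarea/monotonicity argument (slice $D_n^i$ by the level sets of the height function $\zsf$ and bound the length of each slice below by roughly $2\ellH{\gamma_i}$, since each slice must separate the two vertical lines through the endpoints of $\gamma_i$ inside the strip $\gamma_i\times\Rbb$), one gets $\Area(D_n^i)\ge 2n\cdot 2\ellH{\gamma_i}-C'=4n\ellH{\gamma_i}-C'$. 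Hence $\mu_D\ge 4n(\ellH{\gamma_1}+\ellH{\gamma_2})-C''$. Comparing, $\mu_A\le 2n(\ellH{\gamma_1}+\ellH{\gamma_2})+C< 4n(\ellH{\gamma_1}+\ellH{\gamma_2})-C''\le\mu_D$ for all $n$ sufficiently large, which is precisely the Douglas condition. The minimizer it produces is then a smooth minimal annulus; embeddedness follows since it is least-area among annuli in a mean-convex region (the curves $\Gamma^i_n$ bound convex domains in the vertical planes, which one can use together with the barriers $\gamma_i\times\Rbb$ to confine the annulus to $\Omega\times\Rbb$).

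The main obstacle I expect is the \emph{lower bound on disk area} $\mu_D$: one must rule out that a disk spanning $\Gamma^i_n$ can be much cheaper than the obvious flat disk in $\gamma_i\times\Rbb$. The clean way is the coarea inequality applied to $\zsf$ restricted to such a disk, noting that for $z$ in the bulk of $(-n,n)$ the $z$-level set of the disk is a $1$-cycle in $\PSLhR$ whose projection to $\Hbbh$ must link the two ideal endpoints of $\gamma_i$, forcing its $\Hbbh$-length (hence its $\PSLhR$-length) to be at least $2\ellH{\gamma_i}$ minus a controlled error from the smoothing. Making this linking/separation argument rigorous in the cylinder model — and checking that the $o(n)$ and $C,C',C''$ error terms really are independent of $n$ — is the technical heart; everything else is a standard application of the Douglas criterion and the Jenkins--Serrin existence already recorded in Theorem \ref{thm4.2.6}.
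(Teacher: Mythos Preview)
Your overall strategy (Douglas criterion) is right, but the actual area comparison fails.

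The coarea lower bound for the disks is off by a factor of $2$. For $t$ in the bulk of $(-n,n)$, the curve $\Gamma^i_n\cap\{z=t\}$ consists of \emph{two points} (one above each endpoint $p_i,q_i$ of $\gamma_i$), so the slice $D^i_n\cap\{z=t\}$ is an arc joining these two points, and its $\PSLhR$-length is bounded below by $\ellH{\gamma_i}$, not $2\ellH{\gamma_i}$. Coarea then gives $\Area(D^i_n)\ge 2n\,\ellH{\gamma_i}-C'$, hence $\mu_D\ge 2n\bigl(\ellH{\gamma_1}+\ellH{\gamma_2}\bigr)-C''$. In fact this is sharp: the flat disk bounded by $\Gamma^i_n$ inside the minimal plane $\gamma_i\times\Rbb$ is the least-area disk (use the foliation by parallel vertical planes and the maximum principle), and its area is exactly $2n\,\ellH{\gamma_i}$ up to the smoothing error.

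With the correct $\mu_D$, your comparison annulus does not beat it. A Jenkins--Serrin graph with $u=+\infty$ on $\gamma_1\cup\gamma_2$ is asymptotic to the vertical strips $\gamma_i\times\Rbb$, so any annulus built from it (together with a downward copy and collars) has area $2n\bigl(\ellH{\gamma_1}+\ellH{\gamma_2}\bigr)+O(1)$ --- the \emph{same} leading term as $\mu_D$. No strict inequality follows. Notice also that hypothesis \eqref{cond4.2.6} never enters your area comparison; you invoke it only to produce the Jenkins--Serrin graph, which is a red herring here.

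The fix, and what the paper does, is to build the comparison annulus along the \emph{other} pair of sides. Take
\[
\widetilde{\Acal}_n \;=\; \bigl(\overline{\Omega}\times\{n\}\bigr)\cup\bigl(\overline{\Omega}\times\{-n\}\bigr)\cup\bigl(\eta_1\times[-n,n]\bigr)\cup\bigl(\eta_2\times[-n,n]\bigr),
\]
the boundary of the box $\overline{\Omega}\times[-n,n]$ with the two $\gamma$-faces removed. Its area is $2\Area(\Omega)+2n\bigl(\ellH{\eta_1}+\ellH{\eta_2}\bigr)$. Comparing with $\mu_D=2n\bigl(\ellH{\gamma_1}+\ellH{\gamma_2}\bigr)$ (up to the smoothing correction, which cancels exactly once you add the thin collars between $\widetilde\Gamma^i_n$ and $\Gamma^i_n$), the difference is
\[
2\Area(\Omega)+2n\bigl(\ellH{\eta_1}+\ellH{\eta_2}-\ellH{\gamma_1}-\ellH{\gamma_2}\bigr),
\]
which is negative for $n$ large precisely by \eqref{cond4.2.6}. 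This is where the hypothesis is really used.
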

\begin{proof}
Let $D^i_n$ be the least area minimal disk with boundary $\Gamma^i_n$. Then, $D^i_n\subset \gamma_i \times[-n,n]$. By the Douglas criteria \cite{Jos85} (see also \cite[Theorem 1]{MY82c}), it is sufficient to show that for $n$ large enough, there is an annulus $\Acal$ (not necessarily minimal) with boundary $\Gamma^1_n\cup \Gamma^2_n$ having area less than $\sum_{i=1}^{2}\Area_{\PSLhR}(D^i_n)$.
In order to compute these areas, let us compute the area of $\gamma \times[0,h]$.

Using a parametrization of $\gamma$, we have by definition $$\mathrm{Area}_{\PSLhR}(\gamma \times [0,h])=\int_{\gamma \times[0,h]}A(t_1,t_2)\;dt_1dt_2$$
where $A(t_1,t_2)$ is the area of the parallelogram whose adjacent sides are $\gamma'(t_1)$ and $\dhr_z$ with respect to the metric of $\PSLhR$ (see Figure \ref{fig3.2.1}(c)). Since  $A(t_1,t_2)=1\cdot \chuan{\overline{\gamma}'(t_1)}_{\PSLhR}=\chuan{\gamma'(t_1)}_{\Hbbh}$ with $ \overline{\gamma}'(t_1) $ the horizontal lift of $ \gamma'(t_1) $ in $\PSLhR$, we have
	\begin{equation*}
		\mathrm{Area}_{\PSLhR}(\gamma\times[0,h])=h\int_{\gamma}\chuan{\gamma'(t_1)}_{\Hbbh}\; dt_1=h\ell_{\Hbbh}(\gamma)
	\end{equation*}

%-------------------------------------------------------
%\begin{lem}\label{pro4.2.5}
%	Let $\gamma\subset \Hbbh$ be a curve and $\bar \gamma$ its horizontal lift in $\PSLhR$.
%	The area of $\gamma \times[0,h]$ is 
%	$$\mathrm{Area}_{\PSLhR}(\gamma \times[0,h])=h\ell_{\Hbbh}(\gamma)$$
%	where $\ell_{\Hbbh}(\gamma)$ is the length of $\gamma$ in $\Hbbh$.
%\end{lem}
%\begin{proof}
%	Let us compute the area of $\gamma \times[0,h]$. We parametrize $\gamma$ by arclength in $\Hbbh$.
%
%We have by definition $$\mathrm{Area}_{\PSLhR}(\gamma \times [0,h])=\int_{[0,\ell(\gamma)]\times[0,h]}A(t_1,t_2)\;dt_1dt_2$$
%where $A(t_1,t_2)$ is the area of the parallelogram whose adjacent sides are $\gamma'(t_1)$ and $\dhr_z$ with respect the metric of $\PSLhR$, (see Figure \ref{fig3.2.1}(c)). 
%Since  $A(t_1,t_2)=1\cdot \chuan{\overline{\gamma}'(t_1)}_{\PSLhR}=\chuan{\gamma'(t_1)}_{\Hbbh}$ with $ \overline{\gamma}'(t_1) $ the horizontal lift of $ \gamma'(t_1) $, we have
%	\begin{equation*}
%		\mathrm{Area}_{\PSLhR}(\gamma\times[0,h])=h\int_{0}^{1}\chuan{\gamma'(t_1)}_{\Hbbh}\; dt_1=h\ell_{\Hbbh}(\gamma).
%	\end{equation*}
%\end{proof}
\noindent
and we deduce that
	\begin{equation*}
	 \mathrm{Area}_{\PSLhR}(\gamma_i\times[-n,n])= 2n\ell_{\Hbbh}(\gamma_i).
	\end{equation*}	
	Consider the annulus $\widetilde{\Acal}_n=\Tsf_n\left(\overline{\Omega}\times\{0\}\right)\cup \Tsf_{-n}\left(\overline{\Omega}\times \{0\}\right)\cup\left(\bigcup_{i=1}^2(\eta_i\times[-n,n])\right)\subset\PSLhR$.
Its boundary is $\widetilde{\Gamma}^1_n \cup \widetilde{\Gamma}^2_n$
and its area is
	\begin{equation*}
	\mathrm{Area}_{\PSLhR}(\widetilde{\Acal}_n)=2\mathrm{Area}_{\PSLhR}(\overline{\Omega})+2n(\ell_{\Hbbh}(\eta_1)+\ell_{\Hbbh}(\eta_2)).
	\end{equation*}
It follows from these calculations and the hypothesis \ref{cond4.2.6}, that for $n$ large enough
	\begin{align*}
	&\mathrm{Area}_{\PSLhR}(\widetilde{\Acal}_n)-\sum_{i=1}^{2}\Area_{\PSLhR}{(\gamma_i\times[-n,n])}\\
	=& 2\mathrm{Area}_{\PSLhR}(\Omega)+2n(\ell_{\Hbbh}(\eta_1)+\ell_{\Hbbh}(\eta_2)-\ell_{\Hbbh}(\gamma_1)-\ell_{\Hbbh}(\gamma_2)) <0,
	\end{align*}
Now we can add to $\widetilde{\Acal}_n$ the part of $\gamma_i \times [-n,n]$ between 
$\widetilde{\Gamma}^i_n$ and $\Gamma^i_n$ to obtain an annulus ${\Acal}_n$ bounded by $\Gamma^1_n \cup \Gamma^2_n$ with
$$\mathrm{Area}_{\PSLhR}({\Acal}_n)-\sum_{i=1}^{2}\Area_{\PSLhR}(D^i_n) =\mathrm{Area}_{\PSLhR}(\widetilde{\Acal}_n)-\sum_{i=1}^{2}\Area_{\PSLhR}{(\gamma_i\times[-n,n])}<0.$$
This completes the proof for the existence of a least area minimal annulus bounded by $\Gamma^1_n \cup \Gamma^2_n$.
\end{proof}

Let $\Scal_n$ be the set of  minimal annuli whose boundaries are $\Gamma^1_n\cup\Gamma^2_n$ and $\Scals_n$ a subset of $\Scal_n$ consisting of the stable minimal surfaces. Proposition \ref{pro4.2.6}
imply that $\Scals_n \neq \emptyset$ for $n$ large enough. We define a partially ordered relation $\preceq$ on $\Scal_n$ as follows. Let $\Acal_1,\Acal_2\in\Scal_n$. Then $\Acal_1\preceq\Acal_2$ if $\Acal_1$ is contained in the closure of the bounded component of $\left(\Omegabar\times\Rbb\right)\setminus \Acal_2$. We next point out the existence of a "largest" minimal annulus with boundary $ \Gamma^1_n\cup\Gamma^2_n $.

\begin{prop}\label{pro4.3.2}
	For $n$ large enough, there exists a stable, embedded minimal annulus $\Sigmas_{n}$ whose boundary is $\Gamma^1_{n}\cup\Gamma^2_{n}$. Moreover, for each minimal annulus $\Acal$ with boundary $\Gamma^1_n\cup\Gamma^2_n$, $\Acal$ is contained in the closure of the bounded component of $\left(\Omegabar\times\Rbb\right)\setminus \Sigmas_n$.
\end{prop}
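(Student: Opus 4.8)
The plan is to produce $\Sigmas_n$ as a supremum, with respect to the order $\preceq$, of the set $\Scals_n$ of stable minimal annuli with boundary $\Gamma^1_n\cup\Gamma^2_n$, which is nonempty for $n$ large by Proposition \ref{pro4.2.6}. First I would show that $\Scal_n$, and hence $\Scals_n$, is \emph{bounded} in the relevant sense: every minimal annulus $\Acal$ with boundary $\Gamma^1_n\cup\Gamma^2_n$ is contained in $\overline\Omega\times[-n,n]$. This follows from the maximum principle applied with the foliation of $(\Hbbh\setminus\Omega)\times\Rbb$ by vertical planes $\gammahat\times\Rbb$ over geodesics $\gammahat$ disjoint from $\overline\Omega$ (together with the slices $\Hbbh\times\{t\}$, $|t|>n$, which are minimal since $\xi$ is Killing of constant length), so no minimal annulus with that boundary can escape the convex hull $\overline\Omega\times[-n,n]$. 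In particular any increasing $\preceq$-chain is confined to a fixed compact region.

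Next I would take a sequence $\Acal_k\in\Scals_n$ that is $\preceq$-increasing and ``maximizing'', i.e. chosen so that the bounded component of $(\overline\Omega\times\Rbb)\setminus\Acal_k$ exhausts the union of all such bounded components over $\Scals_n$; concretely one can run the construction of Proposition \ref{pro4.2.6} inside shrinking subdomains to produce least-area (hence stable) annuli that are pushed successively outward, or simply use a countable cofinal family in $\Scals_n$ and replace it by the ``outermost'' annulus among the first $k$ of them. Because the $\Acal_k$ are stable and have uniformly bounded area (bounded by, say, the area of $\widetilde\Acal_n$ plus the boundary collars, as computed in Proposition \ref{pro4.2.6}) and fixed boundary of uniformly bounded geometry, the curvature estimates for stable minimal surfaces (Schoen-type, valid in the ambient $\PSLhR$ which has bounded geometry) give uniform interior curvature bounds away from $\Gamma^1_n\cup\Gamma^2_n$, and standard boundary regularity gives bounds up to the boundary. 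Hence a subsequence converges smoothly, with multiplicity one by the monotone nesting, to a minimal surface $\Sigmas_n$ with boundary $\Gamma^1_n\cup\Gamma^2_n$; $\Sigmas_n$ is stable as a smooth limit of stable surfaces, embedded since it is a monotone limit of embedded nested surfaces, and it is an annulus because the nesting prevents the ``neck'' from pinching in the compact region (a pinch would force one of the $\Acal_k$ to have larger bounded complementary region, contradicting maximality).

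Finally I would verify the maximality statement: given any minimal annulus $\Acal$ with boundary $\Gamma^1_n\cup\Gamma^2_n$, one shows $\Acal\preceq\Sigmas_n$. Suppose not; then $\Acal$ meets the unbounded component of $(\overline\Omega\times\Rbb)\setminus\Sigmas_n$. Since both surfaces are confined to $\overline\Omega\times[-n,n]$ and share the same boundary, we may translate $\Sigmas_n$ vertically far up so it lies entirely above $\Acal$, then decrease the translation parameter until first contact; by the maximum principle (interior or boundary version) first contact at an interior point or a boundary point forces $\Acal=\Tsf_h(\Sigmas_n)$, which is impossible at $h\ne0$ by the confinement and is excluded at $h=0$ by the assumption that $\Acal$ escapes the bounded side — more cleanly, one replaces $\Sigmas_n$ in the construction above by the outermost annulus in $\Scals_n\cup\{\Acal\}$, i.e. absorbs $\Acal$ into the maximizing sequence, and concludes $\Acal\preceq\Sigmas_n$ directly from the nesting. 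The main obstacle I expect is precisely this last point: ensuring that the limiting ``outermost'' object remains a genuine embedded annulus (no neck pinching, no extra components, multiplicity one) rather than degenerating into a pair of disks or a surface of lower genus; this is controlled by the strict inequality \eqref{cond4.2.6}, which via the area comparison of Proposition \ref{pro4.2.6} keeps the annular competitor strictly cheaper than two disks throughout the nested family, so the annular topology cannot be lost in the compact region.
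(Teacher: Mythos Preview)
Your overall architecture---take an outermost stable annulus via compactness of stable minimal surfaces, then argue it dominates everything---matches the paper's, and your confinement and convergence steps are essentially correct. But there is a genuine gap at the two places where you need to compare annuli.

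First, to build a $\preceq$-increasing ``maximizing'' sequence in $\Scals_n$ you implicitly assume that $(\Scals_n,\preceq)$ is directed: given two stable annuli you can find a stable annulus dominating both. You wave at ``the outermost annulus among the first $k$ of them'', but you never explain how to produce it. Second, for the final maximality claim over \emph{all} of $\Scal_n$, your translation argument fails (vertical translation moves the boundary off $\Gamma^1_n\cup\Gamma^2_n$, so the boundary maximum principle does not apply at first contact), and your fallback---``absorb $\Acal$ into the maximizing sequence''---does not work either, since $\Acal$ need not be stable and you have given no mechanism to dominate an unstable annulus by a stable one.

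The paper fills both gaps with a single tool you are missing: the Geometric Dehn's lemma of Meeks--Yau \cite{MY82c}. Given any two minimal annuli $\Acal_1,\Acal_2\in\Scal_n$, let $W$ be the closure of the unbounded component of $(\Omegabar\times\Rbb)\setminus(\Acal_1\cup\Acal_2)$; then $\Gamma^1_n,\Gamma^2_n$ are homotopic but nontrivial in $W$, so there is a least-area (hence stable, embedded) annulus $\Acal\subset W$ with $\Acal_1,\Acal_2\preceq\Acal$. This simultaneously makes $\Scals_n$ directed (so Zorn applies to give a maximal element) and shows that any $\Acal'\in\Scal_n$ not dominated by the maximal stable annulus would yield a strictly larger element of $\Scals_n$, a contradiction. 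Your area-comparison remark about avoiding neck-pinching is not needed here; the paper simply cites compactness of stable embedded minimal annuli with fixed boundary \cite{Anderson85,Whi87} (cf.\ \cite[Lemma~2.1]{MW91}) to pass to the limit.
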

\begin{proof}
Let $\Acal_1,\Acal_2 \in \Scal_n$ be two non-flat minimal annuli. 
The complement of $\Acal_1\cup\Acal_2$ in $\Omegabar\times\Rbb$ has only one unbounded component. Denote by $W$ the closure of this component.The curves $\Gamma^1_n$ and $\Gamma^2_n$ are homotopic in $W$ but homotopically nontrivial in $W$.
Hence, by Geometric Dehn's lemma \cite{MY82c}, $\Gamma^1_n\cup\Gamma^2_n$ is the boundary of a least-area embedded annulus 
$\Acal$ in $W$ and $\Acal_1$,  $\Acal_2$ are contained in the closure of the bounded component of $\left(\Omegabar\times\Rbb\right)\setminus  \Acal$.
It follows that
\begin{itemize}
	\item For each $\Acal\in \Scal_n$, there exists $\Acal'\in \Scals_n$ satisfying $\Acal\preceq \Acal'$.
	\item If $\Scals_n$ has a maximal element, then it is the greatest element of $\Scal_n$.
	\end{itemize}

Let $\Acal_j$ be an increasing sequence of elements of $\Scals_n$.
By compactness of the set of stable embedded minimal annuli with boundary $\Gamma^1_n\cup\Gamma^2_n$, see \cite{Anderson85} and \cite{Whi87}, the sequence $\Acal_j$ has a subsequence converging to a stable, embedded minimal annulus $\Acal_0$, see \cite[Lemma 2.1]{MW91} for a similar argument. Hence, $\Acal_0\in\Scals_n$ and $\Acal_j\preceq\Acal_0$ for all $n$. By Zorn's lemma, $\Scals_n$ has a maximal element. If an annulus $\Acal' \in \Scal_n$ is not contains in $\Acal_0$, then by Geometric Dehn's lemma, there is $\Acal^* \in \Scals_n$ with $\Acal_0 \preceq\Acal^*$, a contradiction. Hence $\Acal_0$ is the greatest element and contains any annulus of $\Scal_n$
\end{proof}

%The above minimal annulus is stable. We recall that a minimal surface $ \Sigma $ in a Riemannian $ 3 $-manifold $ M $ is stable if it is expressed analytically by the requirement that on each compact domain of $ \Sigma $, the first eigenvalue of Jacobi operator $ L=\Delta_\Sigma+|A|^2+\Ric_M(N,N) $ be non-negative, where $ \Ric_M $ is the Ricci tensor of $ M $, $ |A|^2 $ is the square of the length of the second fundamental form of $ M $.
%Notice that if this first eigenvalue is strictly positive, $ \Sigma $ is called strictly stable. 
%In the case that it is equal to $ 0 $, $ \Sigma $ is called stable-unstable.

We recall that a compact minimal surface $\Sigma$ in a Riemannian $3 $-manifold $M$ is stable if the first eigenvalue of Jacobi operator $L=\Delta_\Sigma+|A|^2+\Ric_M(N,N) $ is non-negative.  If this first eigenvalue is strictly positive, $\Sigma$ is called strictly stable and if it is equal to $0$, $\Sigma$ is called stable-unstable. The surface $\Sigmas_n$ is strictly stable or stable-unstable.

In the case that $\Sigmas_n$ is strictly stable, we use the degree theory of White \cite{White87a,White89} to prove the existence of a second minimal annulus with boundary $\Gamma^1_n\cup\Gamma^2_n$.

\begin{prop}\label{pro4.3.6}
If $\Sigmas_n$ is strictly stable, then there exists another minimal annulus $\Sigmau_n$ having the same boundary as $\Sigmas_n$. 
\end{prop}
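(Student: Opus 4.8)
The plan is to use White's degree theory for minimal surfaces with prescribed boundary in the spirit of \cite{White87a,White89}. Consider the space of annuli with boundary $\Gamma^1_n\cup\Gamma^2_n$ (parametrized maps of a fixed annulus into $\PSLhR$ sending the two boundary circles to $\Gamma^1_n$ and $\Gamma^2_n$, modulo reparametrization), and the area functional on it. White's theory attaches to a generic boundary configuration an integer degree obtained by counting the nondegenerate critical points of the area functional with signs $(-1)^{\Index}$, and this degree is invariant under isotopies of the boundary through configurations for which the area functional is proper (no area concentration / no loss of the annulus to the boundary). First I would check properness along a suitable family: the boundary curves $\Gamma^i_n$ lie in $\gamma_i\times\Rbb$ inside the convex region $\Omegabar\times\Rbb$, which is mean-convex, so by the maximum principle every minimal annulus with this boundary stays in $\Omegabar\times\Rbb$, and the barriers $\gamma_i\times[-n,n]$ together with the Jenkins--Serrin comparison used earlier prevent the area of a minimizing sequence from escaping; hence the degree is well defined for each such $n$.

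The second step is to compute this degree and show it is not equal to $\pm 1$; in fact I would argue it is $0$. To see this, deform the boundary configuration: shrink $\Gamma^1_n$ and $\Gamma^2_n$ along the vertical strips $\gamma_i\times\Rbb$ until they become very short round curves that bound tiny disks far apart inside $\Omegabar\times\Rbb$. For such a configuration there is \emph{no} minimal annulus at all: the two boundary curves are too short and too far apart, so any connecting annulus would have area bounded below by (roughly) twice the distance between the strips, while the area functional, being proper along the deformation by the mean-convexity barrier argument, would have to attain a minimum — but a least-area annulus would violate the Douglas criterion (the two spanning disks have strictly smaller total area), so in fact the minimizer is a pair of disks and no annular critical point exists. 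Thus for the degenerate end of the deformation the count of annular critical points is $0$, so the degree is $0$ for all $n$ in the range.

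Now fix $n$ large and suppose, toward a contradiction, that $\Sigmas_n$ is the \emph{only} minimal annulus with boundary $\Gamma^1_n\cup\Gamma^2_n$. Since $\Sigmas_n$ is assumed strictly stable it is a nondegenerate critical point of index $0$, contributing $+1$ to the degree; if it were the unique critical point the degree would be $+1$, contradicting the computation that it is $0$. (If $\Gamma^1_n\cup\Gamma^2_n$ is not a generic boundary, perturb it slightly: strict stability of $\Sigmas_n$ persists under small perturbation, still contributing $+1$, while the degree is still $0$; and a minimal annulus for the perturbed boundary converges, as the perturbation tends to zero, to a minimal annulus for $\Gamma^1_n\cup\Gamma^2_n$ distinct from $\Sigmas_n$, using the curvature estimates and compactness of Section 3.) Therefore there must be at least one further minimal annulus $\Sigmau_n\neq\Sigmas_n$ with the same boundary.

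The main obstacle is the properness of the area functional along the chosen deformation of the boundary: one must rule out both interior area concentration (handled by the curvature estimates of Section 3, which are uniform and index-independent) and the degeneration of the annulus into two disks joined by a thin neck that runs off to the boundary $\partial_\infty$ — this is precisely where the mean-convexity of $\Omegabar\times\Rbb$, the vertical barriers $\gamma_i\times[-n,n]$, and the Jenkins--Serrin comparison established in Proposition \ref{pro3.3.2} are needed to keep a minimizing sequence in a fixed compact region. Once properness is in hand, White's degree is well defined and invariant along the deformation, and the conclusion follows by the counting argument above.
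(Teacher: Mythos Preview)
Your approach is essentially the same as the paper's: both apply White's degree theory \cite{White87a,White89} to the boundary map $\Pi:\mathcal{M}\to\Ccal$, observe that a single strictly stable annulus would force $\deg\Pi=1$, and derive a contradiction from $\deg\Pi=0$ by exhibiting boundary curves that bound no minimal annulus.

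The one point worth flagging is your treatment of properness. You describe it as ``the main obstacle'' and invoke the curvature estimates of Section~3 to rule out interior concentration and neck degeneration. But Section~3 is proved \emph{after} this proposition, so appealing to it here is at best a forward reference and at worst circular in presentation. More to the point, it is unnecessary: the paper simply cites White's Global Structure Theorem for the statement that $\Pi$ is a proper Fredholm map of index $0$, and that is all that is needed. Your argument for $\deg\Pi=0$ (shrinking the $\Gamma^i_n$ to short curves far apart so that Douglas' criterion fails) is a correct and more explicit version of the paper's one-line claim that some boundary configuration in $\Ccal$ bounds no annulus. Likewise your perturbation remark for non-regular boundary values is fine but superfluous once properness of $\Pi$ is taken from White: the degree is already well defined at every value, not only regular ones.
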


\begin{proof}
Suppose that $\Scal_n=\{\Sigmas_n\}$. Let $\Ccal$ be the space of pairs of smooth simple closed curves $(\alpha_1,\alpha_2)$ with $\alpha_i\subset\gammahat_i\times\Rbb$  where $\gammahat_i\subset\Hbbh$ is the complete geodesic containing $\gamma_i$.
Let $\mathcal{M}$ be the space of embedded minimal annuli with boundary curves in $\Ccal$. These annuli are transverse to $\gammahat_i\times\Rbb$ by the maximum principle at the boundary.
By \cite[Global structure theorem]{White87a} and \cite{White89}, $\mathcal{M}$ and $\Ccal$ are  Banach manifolds 
%(see \cite{Lang99} or \cite{AMR88} for general theory about Banach manifolds) and the map
	\begin{equation*}
	\Pi: \mathcal{M}\to\Ccal,\qquad \Sigma\mapsto \dhr\Sigma
	\end{equation*}
	is a proper smooth Fredholm map of index $0$.
	%	, i.e., $\Pi$ is a proper smooth mapping and for each $\Sigma\in\mathcal{M}$, the linear map $d\Pi_\Sigma:T_\Sigma\mathcal{M}\to T_{\dhr\Sigma}\Ccal$ satisfies: the dimension of its kernel is finite and codimension of its image is finite, equal to the dimension of its kernel.
The boundary of $\Sigmas_n$ is an element of $\Ccal$ and by  the boundary maximum principle, $\Sigmas_n$ is not tangent to $\gammahat_1\times\Rbb$ and ${\gammahat_2}\times\Rbb$, thus $\Sigmas_n\in\mathcal{M}$.
	Since $\Sigmas_n$ is strictly stable, $d\Pi_{\Sigmas_n}:T_{\Sigmas_n}\mathcal{M}\to T_{(\Gamma^1_n,\Gamma^2_n)}\Ccal$ is an isomorphism.
	It follows that $\Sigmas_n$ is a regular point of $\Pi$.
	If  $\Sigmas_n$ is the unique minimal annulus bounded by  $\Gamma^1_n\cup\Gamma^2_n$, so $\left(\Gamma^1_n,\Gamma^2_n\right)\in\Ccal$ is a regular value of $\Pi$.
By \cite{White87a}, there exists an integer, denoted by  $\deg \Pi$, such that for each regular value  $(\alpha_1,\alpha_2)$ of $\Pi$, we have the equality
	\begin{equation*}
	\deg \Pi=\sum_{\Sigma\in\mathcal{M},\dhr\Sigma=\alpha_1\cup\alpha_2}(-1)^{\Index \Sigma}
	\end{equation*}	
	where $\Index\Sigma$ is the Morse index of the compact minimal surface $\Sigma$.
	Therefore, $\deg\Pi= (-1)^{\Index \Sigmas_n}=1$.
On the other hand, there exists curves $(\alpha_1,\alpha_2)\in\Ccal$ which doesn't bound any minimal annulus, hence $\deg \Pi=0$ which is a contradiction. This proves that $\Scal_n$ is not reduce to the single element $\{\Sigmas_n\}$. 
\end{proof}

\begin{deff}\label{rem3.3.5}
We denote by $\Sigmas_n$ the stable minimal annulus with boundary $\Gamma^1_n\cup\Gamma^2_n$ constructed in proposition \ref{pro4.3.2}.  $\Sigmau_n$ is defined to be $\Sigmas_n$ if $\Sigmas_n$ is stable-unstable and by the annulus of the proposition \ref{pro4.3.6} if $\Sigmas_n$ is strictly stable.
\end{deff}

\begin{rem}  The degree theory of B. White \cite{White87a,White89} consider Banach manifolds ${\cal C}$  of ${\cal C}^{k,\alpha}$ immersed curve and the Banach space ${\cal M}$ of ${\cal C}^{k+2,\alpha}$ minimal immersion which extend curves of ${\cal C}$ in a ${\cal C}^{k,\alpha}$ way. If the boundary $\Gamma^1_n \cup \Gamma^2_n$ are of class ${\cal C}^{k,\alpha}$, then the least area minimizing annulus $\Sigmas_n$  is ${\cal C}^{k,\alpha}$ up to its boundary. Hence annuli constructed by degree theory is a surface ${\cal C}^{k,\alpha}$ up to its boundary.
\end{rem}

\section{Tangent plane and uniform curvature estimate}

\subsection{Tangent geometry of horizontal compact minimal annuli}\label{subsect3.3.2}

Our goal is to prove a uniform bound on the curvature of the sequences 
$\Sigmas_n$ and $\Sigmau_n$. To estimate the curvature,  we first determine some geometric properties of these surfaces, in particular the upper bound on the number of tangent points of these minimal annuli with certain  minimal foliations of $\PSLhR$. The properties we construct 
in the sequel do not depend on the stability of the annuli, so, without loss of generality, 
we denote by $\Sigma_n$ a minimal annulus in $\PSLhR$ whose boundary 
is $\Gamma^1_n\cup\Gamma^2_n$ for $n\gg 0$. The foliations we consider are listed bellow.

\begin{enumerate}	
\item Foliation $\Fcal$ where each leaf is of the form $\alpha\times\Rbb$ where $\alpha$ is a complete geodesic of $\Hbbh$. We can consider for example, foliation where each leaf is perpendicular to a given complete geodesic $\beta$ of $\Hbbh$. We will denote this foliation by $\Fcal^{\beta}$  if this is the case.
	\item Foliation $\Fcalh$ where each leaf is a level set of the coordinate function $\zsf:\PSLhR\to\Rbb, (x,y,z)\mapsto z$. Since each level set is minimal, $\Fcalh$ is a  minimal foliation of $\PSLhR$.
	\item Foliation $\Fcal^{\Gr(w)}$ where each leaf is a vertical translation of a graph $\Gr(w)$ where $w$ is a solution of the minimal graph equation define on a domain  $\Omega\subset\Hbbh$. Hence, this is a minimal foliation of $\Omega\times\Rbb$.
More precisely we only consider a special case. 
For $\Omega\subset\Hbbh$ a bounded quadrilateral domain whose boundary consists of compact geodesics 
$\gamma_1,\eta_1,\gamma_2,\eta_2$ with their endpoints $p_1,q_1,p_2,q_2$, we consider the solution of Dirichlet problem with the following boundary data. 

 On $\gamma_i$, the upper part of the curve $\Gamma^i_n$ is the graph of a function $f_{i,n}$ and  we remark that for $n>n_0$ we have $f_{i,n}=\Tsf_{n-n_0}(f_{i,n_0})$. Now we extend these functions $f_{i,n_0}$ to a continuous function on $\dhr \Omega$ by two functions $g_i : \eta_i \to \Rbb$. For $n_0$ large enough, we define by Theorem \ref{thm4.2.6}, the function $w:\Omega \to \Rbb$ such that
$\Gr(w)$ is a minimal surface with boundary  data

\[
\left\{
\begin{array}{lll}
w=g_i & \hbox{ on } \eta_i \hbox{ for } i=1,2 \\
w=f_{i,n_0} & \hbox{ on } \gamma_{i} \hbox{ for } i=1,2.  \\
\end{array}\right.
\]

We remark that $\Gr(w)$ is bounded by a curve without vertical segment. Hence by construction, there are two real numbers $a>b$ satisfying: $\Gamma^i_n$ does not intersect $\Tsf_h(\Gr(w))$ if $h\notin [b,a]$,  $\Gamma^i_n\cap \Tsf_h(\Gr(w))$ is nonempty and connected if $h=a$ or $h=b$, finally, for each $a>h>b$, $\Gamma^i_n$ intersects transversally $\Tsf_h(\Gr(w))$ at two distinct points. Notice that the same construction is available for the lower part of $\Gamma_n^i$.
\end{enumerate}

All these foliations satisfy the hypothesis of the following proposition:

\begin{prop}
\label{pro3.1}
Consider a foliation $ \Fcal=\bigcup_{t\in[0,1]} \Lambda_t $ such that each leaf is a simply connected minimal surface. If a leaf has no empty boundary we assume that $\partial \Lambda_t \subset \partial {\Omega} \times \Rbb$  and  assume that
	\begin{enumerate}
		\item $ {\Omega} \times [-n,n]  $ is entirely contained in one side of $ \Lambda_0 $ and $ \Lambda_1 $.  $\Lambda_0$ and $ \Lambda_1 $ doesn't intersect $\Sigma_n \setminus \partial \Sigma_n$ and eventually contains $\{1\hbox{ segment}\}$ or $\{1 \hbox{ point}\}$  of each boundary curves $ \Gamma^1_n $ and $ \Gamma^2_n $.		
		\item There exists two constants $ 0\le a<b\le1 $ (possibly equal to $ 0 $ and $ 1 $) such that
		\begin{enumerate}
			\item $ 0<t<a $ or $ b<t<1 $: $ \Lambda_t $ intersects only one of the boundary curve $ \Gamma^1_n$ or $ \Gamma^2_n  $ in exactly $ 2 $ points ($\partial \Sigma_n \cap \Lambda_t=\{2 \hbox{ points}\}$).
			\item $ \Lambda_a $ and $ \Lambda_b $ intersects  both $\Gamma^1_n$ and 
			$\Gamma^2_n$. One in $\{1\hbox{ segments}\}$ or $\{1 \hbox{ point}\}$ and the other one in $\{2 \hbox{ point}\}$.
\item $ a<t<b $: $ \Lambda_t $ intersects each connected component $ \Gamma^1_n $ and $ \Gamma^2_n  $ in exactly $ 2 $ points 
			($\partial \Sigma_n \cup \Lambda_t=
			\{4 \hbox{ points}\}$).
			
		\end{enumerate}
	\end{enumerate}
Hence	the annulus $ \Sigma_n $ meets $ \Fcal $ in a tangential way at most at $2$ points.
\end{prop}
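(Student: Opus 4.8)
The natural strategy is an Euler–characteristic / maximum-principle count, analogous to the classical argument (going back to Meeks–Yau and used extensively in the Colding–Minicozzi circle of ideas) that bounds the number of interior tangencies of a minimal surface with a minimal foliation. The idea is to consider the function on $\Sigma_n$ that measures, at each point, which leaf $\Lambda_{t}$ of $\Fcal$ passes through it — call it $\tau\colon\Sigma_n\to[0,1]$ — and to study its critical points. Since each leaf $\Lambda_t$ is a minimal surface and $\Sigma_n$ is a minimal surface, a tangency point between $\Sigma_n$ and $\Lambda_t$ is exactly a critical point of $\tau$, and the maximum principle for minimal surfaces forces every such critical point to be a saddle (two minimal surfaces cannot touch with one locally on one side of the other, so $\tau$ has no local maxima or minima in the interior). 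Moreover the intersection $\Sigma_n\cap\Lambda_t$ near a tangency looks like two arcs crossing transversally, so the negative index of each interior critical point is at least $1$; counting multiplicities this says $\sum_{\text{crit }p}\bigl(1-\tfrac12 m(p)\bigr)$ controls things where $m(p)$ is the number of local branches.

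First I would set up the level-set picture carefully: for generic $t$ the level set $\Sigma_n\cap\Lambda_t$ is a $1$-manifold with boundary in $\partial\Sigma_n\cap\Lambda_t$, and the hypotheses (2a)–(2c) of Proposition \ref{pro3.1} tell us exactly how many boundary points there are as $t$ ranges over the three regimes: $2$ points for $0<t<a$ and $b<t<1$, $4$ points for $a<t<b$, with the transitional leaves $\Lambda_a,\Lambda_b$ each meeting $\partial\Sigma_n$ in "$2$ points plus one segment/point." Hypothesis (1) guarantees that $\Lambda_0$ and $\Lambda_1$ bound $\Sigma_n$ on one side and meet it only along (part of) its boundary, so near $t=0$ and $t=1$ the level sets $\Sigma_n\cap\Lambda_t$ are single arcs joining the two boundary points — in particular $\tau$ has no critical points for $t$ near $0$ or $1$. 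I would then run a Morse-theory / combinatorial argument on how the topology of the level sets $\{\tau\le t\}$ changes as $t$ crosses critical values: each interior tangency is a saddle that either merges two components, splits one, or changes the genus/number of boundary components; the boundary behaviour is prescribed. Tracking the number of boundary components of $\Sigma_n\cap\Lambda_t$ together with the fact that $\Sigma_n$ is an annulus (so $\chi(\Sigma_n)=0$) pins down the total count.

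Concretely: by the hypotheses the level set has one arc for $t<a$, must transition to two arcs for $a<t<b$ (the crossing of $t=a$, where a segment of boundary is involved, accounts for this without an interior tangency), then back to one arc for $t>b$. Interior tangencies in the ranges $0<t<a$, $a<t<b$, $b<t<1$ are the ones we must bound. Each interior saddle changes $\chi$ of the sublevel set by $-1$ and changes the number of arc-components by $\pm1$. Using $\chi(\Sigma_n)=0$ and the fact that we start and end with a single arc (contractible annulus cut open), a parity/counting argument à la Morse inequalities on a surface with boundary forces the total number of interior saddles to be at most $2$ — one "in each half" of the $[a,b]$ interval, or equivalently governed by the jump from a $2$-boundary-point configuration to a $4$-boundary-point configuration and back. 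I would phrase this as: the alternating sum $\#\{\text{tangencies}\}$ is bounded by the change in the number of boundary components plus $|\chi(\Sigma_n)|$, which here is $2+0=2$.

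\textbf{Main obstacle.} The delicate point is not the generic Morse theory but the behaviour at the transitional leaves $\Lambda_a$ and $\Lambda_b$, where $\Lambda_t$ meets a boundary curve $\Gamma^i_n$ in a segment or a single point rather than transversally: one must check that crossing $t=a$ (resp. $t=b$) contributes the expected change in the number of boundary arcs of the level set \emph{without} hiding an interior tangency, and that no degenerate boundary tangency between $\Sigma_n$ and $\Lambda_a$ occurs (this is where the boundary maximum principle and the specific construction of $\Gamma^i_n$ and of the foliation — in particular that $\Gr(w)$ is bounded by a curve with no vertical segment in case (3) — are used). Handling these endpoint leaves rigorously, and making sure the count is not off by a boundary contribution, is the part that requires care; the interior argument is then a standard consequence of the maximum principle plus $\chi(\Sigma_n)=0$.
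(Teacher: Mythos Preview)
Your Morse--theoretic strategy is a natural alternative to what the paper does, but as written it has a genuine gap at the step you yourself flag as the ``main obstacle,'' and the headline inequality you invoke is not actually established.

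\textbf{What the paper does differently.} The paper does \emph{not} run an Euler--characteristic count. Instead it proves two combinatorial lemmas by direct maximum--principle arguments on the nodal set $\Gamma(t)=\Sigma_n\cap\Lambda_t$:
\begin{itemize}
\item A lemma controlling tangencies on sub-disks and sub-annuli whose boundary meets $\Lambda_t$ in at most $2$ points, one segment, or not at all (no tangencies possible), and on sub-annuli whose boundary meets $\Lambda_t$ in exactly $2$ points on one component (at most one tangency).
\item A lemma showing that a \emph{single} leaf $\Lambda_t$ meets $\Sigma_n$ tangentially in at most two points, by treating $\Gamma(t)$ as a graph with $\le 4$ boundary vertices and at most one nontrivial cycle, and counting edges.
\end{itemize}
The proof then picks the \emph{first} tangential leaf $\Lambda_{t_0}$, looks at whether $\Sigma_n\setminus\Lambda_{t_0}$ has $3$ or $4$ components, and in each case applies the first lemma to the pieces to exclude further tangencies beyond one more. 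This is a hands-on case analysis, not a global Morse count.

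\textbf{Where your argument is incomplete.} The sentence ``\#\{tangencies\} is bounded by the change in the number of boundary components plus $|\chi(\Sigma_n)|$'' is not a theorem you have proved, and it is not a standard Morse inequality in this form. To make a Morse argument work on a surface with boundary you need to classify the boundary critical points of $\tau$ by the sign of the inward normal derivative $\partial_\nu\tau$, and the contribution of the critical points at $t=a$ and $t=b$ depends precisely on this sign --- which is \emph{not} determined by the hypotheses of the proposition. Depending on whether $\nabla\tau$ points into or out of $\Sigma_n$ at the interior boundary extrema (the min of $\tau|_{\Gamma^2_n}$ at $t=a$ and the max of $\tau|_{\Gamma^1_n}$ at $t=b$), the Euler--characteristic bookkeeping gives different bounds, and you have not argued why the unfavourable configurations are excluded. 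Relatedly, your description ``$1\to 2\to 1$ arcs with the jumps at $t=a,b$ accounted for by boundary transitions alone'' presupposes that the new arc appearing at $t=a$ is disconnected from the old one, which already uses information (no closed level curves, no interior tangency exactly at $t=a$) that you have not justified.

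In short: the philosophy is sound and could probably be pushed through with additional work on the boundary contributions, but the proof as sketched does not close, and the paper's route --- cruder but self-contained --- avoids exactly the boundary bookkeeping you identify as delicate by never invoking a global index formula.
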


\begin{proof}
	By the maximum principle, $ \Sigma_n $ is in one side of $ \Lambda_0 $ and $ \Lambda_1 $ and cannot have an interior point of contact with these leaves.
Consider $ t\in (0,1) $ such that $ \Lambda_{t} $ is tangent to $ \Sigma_n $ at $ p $. The local intersection of $ \Sigma_n $ and $ \Lambda_t $ near $ p $ consists of $ k $ curves through $ p $, $ k\ge 2 $, meeting at angles $ \frac{\pi}{k} $ at $ p $. The intersection $ \Sigma_n\cap\Lambda_t $ cannot bound a component $\Sigma  $ such that $ \dhr\Sigma\cap \dhr\Sigma_n=\emptyset $ by  the maximum principle. We have the following Lemma:

	\begin{lem}\label{lem1.haus}
		\begin{enumerate}[label=\rm(\alph*)] 
			\item If $ D\subset\Sigma_n $ is a minimal disk which satisfies that $ \forall t\in[0,1] $, $ \Lambda_t\cap\dhr D$ 
			is either $\{2 \hbox{ points}\}$ or  $\{1 \hbox{ segment}\}$ or an empty set, then $ \Lambda_t $ cannot be tangent to $ D $ at an interior point.
			\item If $ A\subset\Sigma_n $ is a minimal annulus which satisfies that $ \forall t\in [0,1] $, $ \Lambda_t\cap\dhr A= \emptyset$, then $ \Lambda_t $ cannot be tangent to $ A $ at an interior points.
			\item
			If $ A\subset\Sigma_n $ is a topological annulus with $ \dhr A=\Gamma_1\cup\Gamma_2 $ satisfies that for all $ t\in[0,1] $, $ \Lambda_t\cap\Gamma_1 =\{2\hbox{ points}\}$  and $ \Lambda_t\cap\Gamma_2= \emptyset$ (or vice-versa), then there is only one leaf which can be tangent to $ A $ at only one point. If $ \Lambda_t\cap\Gamma_1 =\{1 \hbox{ point}\}$ or $\{1 \hbox{ segment}\}$ and $ \Lambda_t\cap\Gamma_2= \emptyset$ (or vice-versa), then $ \Lambda_t $ cannot be tangent to $ A $ at an interior points.
		\end{enumerate}
	\end{lem}
\begin{proof}[Proof of Lemma]
(a) Assume that $ \Lambda_t $ intersect $ D $ at an interior point $q$ in a tangential way. Then $ \Gamma (t) =\Sigma_n\cap\Lambda_t $ has at least two curves at $q$ with at least $4$ endpoints. If two endpoints of $ \Gamma (t) $ are at the same point of the boundary or in a segment of $\Lambda_t \cap \partial D$, 
then $ \Gamma (t) $ bounds a topological disk $ \Sigma \subset D $, with $ \dhr\Sigma\subset\Lambda_t $, a contradiction with the maximum principle. Hence, 
there is at most two curves with endpoints into $ \dhr D $ and there remains at least two other branches which have to connect together. These two branches bound a compact connected domain $\Sigma \subset D$ with $ \dhr\Sigma\subset\Lambda_t $, a contradiction too.

(b) If $ \Lambda_t\cap\dhr A=\emptyset $ and $ \Lambda_t $ meets $ A $ tangentially at $q$ then $ \Gamma (t)=\Lambda_t\cap A $ has at least two curves which form a figure eight in the annulus.
\begin{figure}[h!]
	\centering
	\includegraphics[width=0.45\linewidth]{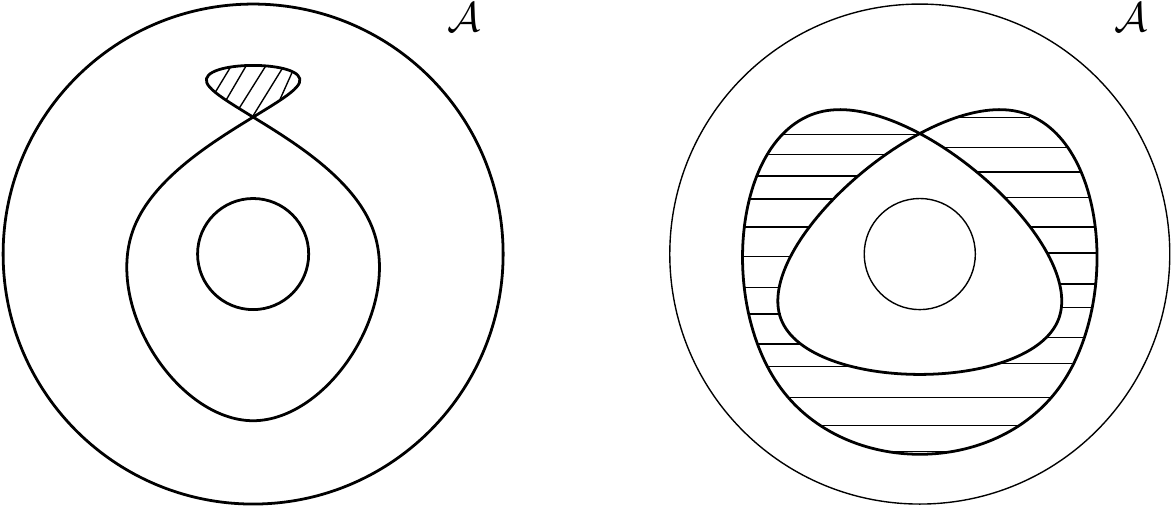}
	\caption{(Left) $\Gamma (t)$ bound one disk. \hskip 0.3cm (Right) $\Gamma (t)$ contains two non trivial loop.}
	\label{fig3.4.1}
\end{figure}
Since $ \Gamma (t) $ cannot connect the boundary, $\Gamma (t)$ bound at least one disk or one annulus (see Figure \ref{fig3.4.1}). A contradiction with the maximum principle.

(c) By the maximum principle,  $A \setminus \Lambda_{t}$  has connected components which exist if and only if their boundary contains connected component of $ \dhr\Sigma_n\setminus\Lambda_t$.  If $ \Lambda_{t} \cap \dhr A $ consists of two points in $\Gamma_1 $ and no intersection point with $\Gamma_2$, then $\Lambda_{t}$ disconnect $\dhr A$ into three connected components. If $ \Lambda_{t} \cap \dhr A $ consists of one point or one segment in $\Gamma_1 $ and no intersection point with $\Gamma_2$, then $\Lambda_{t}$ disconnect $\dhr A$ into two connected components. Hence $A \setminus \Lambda_{t}$ has two or three connected components.

Locally at a tangent point  $q$, there are at least four local connected components of $A \setminus \Lambda_{t_0}$ . Then there is a curve $\alpha_0 \subset (A \setminus \Lambda_{t_0})$ joining two points $x,y$ of different local connected component $\Sigma_1$ and $\Sigma_2$ in a neighborhood of $q$ outside $\Lambda_{t_0}$. 
Then join  $x$ to $y$ by a local path $\beta_0$ going through $q$ to  obtain a  compact cycle $(\alpha_0 \cup \beta_0) \subset (A \setminus \Lambda_{t_0})$ except at $q$; see Figure  \ref{fig3.4.2}-(left). The cycle $(\alpha_0 \cup \beta_0)$ separate $A$ into two connected components say $A_1$ and $A_2$ and each of them contains at least two branches of $\Gamma (t_0)= \Lambda_{t_0} \cap  A $. Then for $i=1,2$, $A_i\cap \partial A \neq \emptyset$ by the  maximum principle. If $\Gamma_1 \subset A_1$ and $\Gamma_2 \subset A_2$, then the branches contained in $A_2$ have to connect together to form a cycle $\gamma $ in $A$. The cycle $ \gamma $ cannot bound a disk hence it's not trivial in $ A $ (see Figure \ref{fig3.4.2}-(Right)). If two branches of $ \Gamma (t_0)\cap A_1$ connect together, then it would produce an other non trivial cycle $\gamma'$ in $A$, but $\gamma \cup \gamma'$ will bound a subannulus with boundary contained in $\Lambda_{t_0}$, a contradiction with the maximum principle (see Figure \ref{fig3.4.1}-(Right)).

Hence two branches of $ \Gamma (t_0)\cap A_1$ has to connect $\Gamma_1$ into two points and produce necessarily three connected components $A \setminus \Lambda_{t}$ . Such a configuration cannot occur in the case where $\Lambda_{t} \cap \dhr A $ consists of one point or one segment in $\Gamma_1$ and no point in $\Gamma_2$ ($A \setminus \Lambda_{t}$ has only two connected components).

\begin{figure}[h!]
	\centering
	\includegraphics[width=0.4\linewidth]{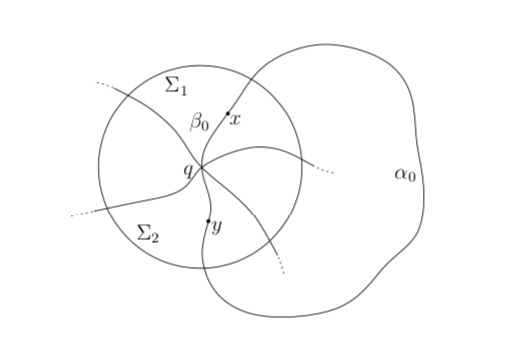}
	\hskip  1cm
	\includegraphics[width=0.3\linewidth]{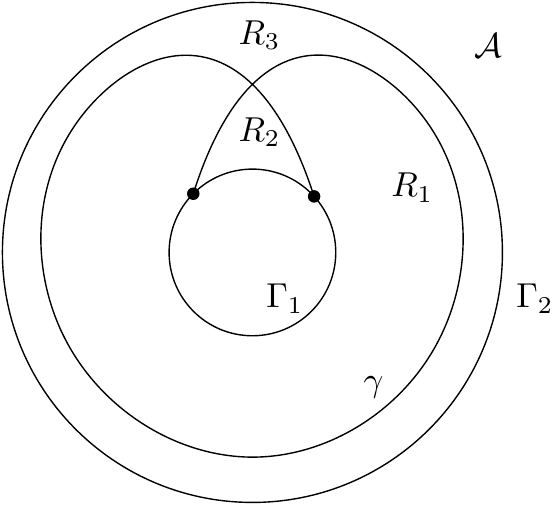}
	\caption{(Left)The cycle $(\alpha_0 \cup \beta_0)\subset A \setminus \Lambda_{t_0}$. \hskip 0.5cm (Right)$A \setminus \Gamma(t)$ has three connected components.}
	\label{fig3.4.2}
\end{figure} 

Now the curve $\Gamma (t_0) $ disconnects the annulus in $3$ connected components having in its boundary a component of $ \dhr A \setminus \Lambda_{t_0} $.  Two of these components $R_1$ and $R_2$ are topological disk and each leaf of the foliation with $ t\ne t_0 $ meet the boundary of these disc at most in $ 2 $ points or $1$ segment which imply that there is no other leaf tangent to $A$ at points of $R_1$ and $R_2$.
The other component $ R_3 $ is an annulus where any leaf $ \Lambda_t $ with $ t\ne t_0 $ will meet the boundary in an emptyset. Applying the case $(b)$, such a leaf cannot touch tangentially $R_3$. In conclusion beside $ \Lambda_{t_0} $, there is no other leaf of $ \Fcal $ tangent to an interior point of $ A $.

\end{proof}

 We remark that the hypothesis of proposition \ref{pro3.1} say that $ \Lambda_t \cap \dhr\Sigma_n$ has at most $ 4 $ connected components. The set $ \Lambda_t \cap (\Gamma^1_n\cup\Gamma^2_n)$ can be
 
 \begin{itemize}
\item $\{2 \hbox{ points}\}\subset\Gamma^1_n$  and  $\emptyset$ in $\Gamma^2_n $ (and vice-versa)
\item $\{2 \hbox{ points}\} \subset\Gamma^1_n $
and $\{1 \hbox{ segment}\} $ or $\{1 \hbox{ point}\}  \subset\Gamma^2_n $ (and vice-versa)
\item $\{2 \hbox{ points}\}  \subset\Gamma^1_n $
and $\{2 \hbox{ points}\} \subset\Gamma^2_n $
\item $\{1 \hbox{ segment}\} \subset\Gamma^1_n $ and  $\emptyset$ in $\Gamma^2_n $ (and vice-versa).
\item  $\{1 \hbox{ segment}\}$ or $\{1 \hbox{ point}\} \subset\Gamma^1_n $
and $\{1 \hbox{ segment}\}$  or $\{1 \hbox{ point}\} \subset
\Gamma^2_n $.
 \end{itemize}
 This prove 
\vskip 0.2cm
 {\it Claim 1.}  In all the cases, $ \dhr\Sigma_n \setminus\Lambda_t$ has at most $ 4 $ connected components.
 
 \begin{lem}\label{lem2.haus}
a) A leaf $ \Lambda_t $ tangent to $ \Sigma_n $ at $ q $ defines at most $ 4 $ connected components in $ \Sigma_n\setminus \Lambda_t $ and cannot touch $ \Sigma_n $ in a tangential way at more than two interior points. 

b) If $ \Lambda_t $ is tangent to $ \Sigma_n $ at exactly two points then $ \Sigma_n\setminus\Lambda_t  $ has exactly $4$ connected components. 

c) If $ \Sigma_n\setminus\Lambda_t  $ has $4$ connected components, each one is a topological disk.
 \end{lem}
 
 \begin{proof}[Proof of Lemma]
The set of curves $ \Gamma(t)=\Sigma_n \cap\Lambda_t$ consists locally in $ k $ curves passing through $ q $, $ k\ge 2$. $ \Gamma (t) $ disconnects $ \Sigma_n $ into several connected components. 
%Since $ \Gamma (t)$ cannot bound a disk in $ \Sigma_n $, the curves of $ \Gamma (t) $ have to connect the boundary $ \dhr\Sigma_n $ and $ \Gamma (t) $ disconnects $ \Sigma_n $ into several connected components. 
Since these components exist if and only if their boundary contains connected component of $ \dhr\Sigma_n\setminus\Lambda_t $, there is at most $ 4 $ connected components by Claim 1.

%If $ \Lambda_t $ is tangent to $ \Sigma_n $ in at least $ 3 $ points, then $ \Gamma (t) $ has at least $ 4 $ curves. One curve passes through each tangential point and one curve passes through the $ 3 $ tangential points (see Figure \ref{fig3.4.3}).
%\begin{figure}[h!]
%	\centering
%	\includegraphics[width=0.35\linewidth]{"PSL H18"}
%	\caption{}
%	\label{fig3.4.3}
%\end{figure}
%Hence $ \Gamma $ is a set having at least $ 8 $ endpoints with only $ 4 $ that will be connected at the boundary of $ \Sigma_n $. If two endpoints connect together, this defines a cycle in $ \Sigma_n $, which cannot be the boundary of a disk. Then this cycle has to be homotopic to the boundary. Next there remains two other endpoints which cannot connect $ \dhr\Sigma_n $. 
%If these endpoints connect together, again in a non trivial curve in the annulus. Hence $ \Gamma (t) $ contains two cycles which bound an annulus with boundary contains in $ \Lambda_t $, a contradiction. This proves a).

If $ \Lambda_t $ is tangent to $ \Sigma_n $ in $s$ points, then $ \Gamma (t) $ has a graph structure which at least $4$ edges at each interior vertices. The $b$ end points of $\Gamma (t)$ are boundary vertices on $\partial \Sigma_n$. In the case where the boundary is one segment, we identify this boundary component with one boundary vertex by removing an equal number of boundary edges and boundary vertices contains in $\partial \Sigma_n$. By the maximum principle any cycle of $\Gamma (t)$ has to be homotopic to the boundary of $\Sigma_n$. Two cycles would bound an annulus which is a contradiction too. Hence, $\Gamma (t)$ is a tree or contains at most one cycle. If $E$ denote the number of edges and $V$ the number of vertices, we have $2E=2V-2 \geq 4s+b$ when $\Gamma (t)$ is a tree and $2E=2V \geq 4s+b$ if $\Gamma (t)$ contains a cycle. Since  $V=s+b$ and $b \leq 4$ we have that $s \leq 2$ and $ \Lambda_t $ is tangent to $ \Sigma_n $ at most at two points.

In the case where $ \Lambda_t $ is tangential to  the annulus in exactly two points, then $ \Gamma (t) $ contains a non trivial cycle and two curves with 4 endpoints in 
$\dhr\Sigma_n $. Hence $\partial \Sigma_n \setminus \Gamma (t)$ has $ 4 $ connected components and the two boundary $\Gamma_{1,n}$ and $\Gamma_{2,n}$ are separated by the cycle.
Hence $\Sigma_n \setminus \Gamma (t)$ has 4 connected components.

If $\Sigma_n \setminus \Gamma (t)$ has 4 connected components and  one of the component is an annulus $R_1$, then $ \dhr R_1 $ contains $ \Gamma^1_n $ or $ \Gamma^2_n $ which is entirely contains in $ \dhr R_1 $. This forces the three other components to have points in one of boundary curves say $ \Gamma^1_n $, a contradiction since $ \Gamma^1_n\setminus \Lambda_t $ has at most two connected components. This prove that each connected components of $\Sigma_n \setminus \Gamma (t)$ is topologically a disc with
4 different connected components of $\partial \Sigma_n \setminus \Gamma (t)$ in its boundary.
 
 \end{proof} 
 
 Now we prove the proposition \ref{pro3.1}. We consider $t_0 \in [0,1]$ such that $\Lambda _{t_0}$ is tangent to $\Sigma_n \setminus \partial \Sigma_n$ at $q$
 and other leaves $\Lambda_t$ are tranverse to $\Sigma_n$, for $t \in [0,t_0[$.
 
 \vskip 0.5cm
 We remark that from the Lemma \ref{lem2.haus}, if $ \Sigma_n \setminus \Lambda_t $ has $ 4 $ connected components, each of them is a topological disk and by Lemma \ref{lem1.haus}-a), there is no other leaf in the foliation $ \Fcal $ that touches tangentially the annulus.  The reason is that each connected component is a disk with boundary 
 contains in $ \Lambda_t $ and in only one component of $ \dhr\Sigma_n $, i.e. $ \Gamma^1_n $ or $ \Gamma^2_n $. 
 Hence any other leaf will meet the boundary of these disks in $ \{2\hbox{ points}\}$ or $\{1\hbox{ segment}\}$, contradicting Lemma \ref{lem1.haus}-a).

There remains to study the case where $ \Sigma_n\setminus \Lambda_t$ has only $3$ connected components. In this case, $ \Lambda_t $ meet tangentially $ \Sigma_n $ in only one point $ p $. (If $ \Lambda_t $ touches in $ 2 $ points then $ \Sigma_n\setminus\Lambda_t $ has $ 4 $ connected components by Lemma \ref{lem1.haus} b)). In the case where $ \Sigma_n\setminus \Lambda_t $ has $ 3 $ connected components, there are two cases:

Case 1: $ \dhr\Sigma_n\setminus\Gamma (t) $ has $ 3$ connected components, each one are in one components $ R_1, R_2, R_3 $ of $ \Sigma_n\setminus\Gamma (t) $. If $ R_1, R_2, R_3 $ are disks, then the foliation cannot touch in tangent way $\Sigma_n  $ in an other leaf. It can happen that one of the components, say $R_3$ is a topological annulus which contains in its boundary one of the boundary curve  $ \Gamma^1_n $ or $ \Gamma^2_n $ (see Figure \ref{fig3.4.4}(Left)).

\begin{figure}[h!]

\hskip 3cm
	\includegraphics[width=0.3\linewidth]{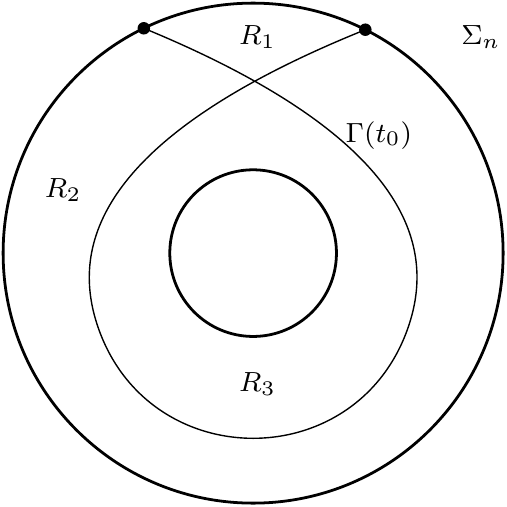}
	\hskip 2cm
	\includegraphics[width=0.45\linewidth]{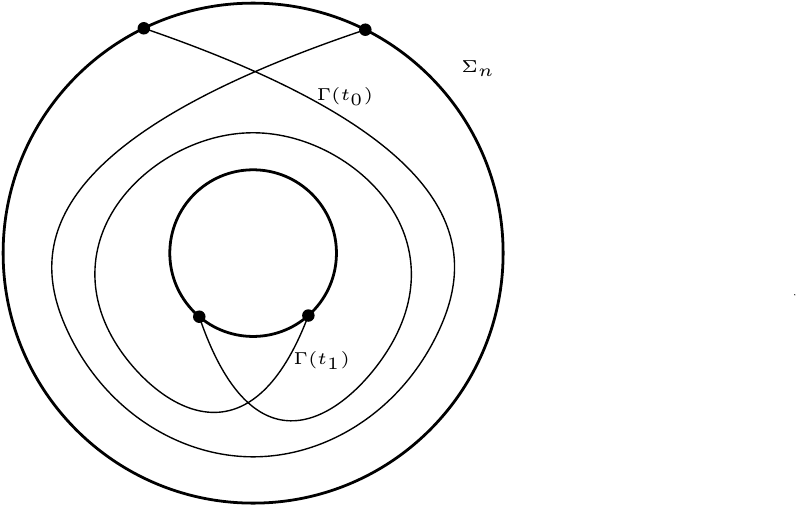}
	\caption{(Left) $\Sigma_n \setminus \Gamma$ has 3 connected component .
	\hskip 0.3cm (Right) $\Sigma_n \setminus (\Gamma (t_0) \cup \Gamma (t_1))$.}
	\label{fig3.4.4}
\end{figure}

In this case $ R_1 $ and $ R_2 $ cannot meet the foliation tangentially in an other leaf. In an other way, since $ R_3 $ is an annulus, any leaf meeting $ R_3 $ will intersect the boundary of $ R_3 $ at most at two points or $ 1 $ segment. In this case, one can find other tangential leaf $\Lambda_{t_1}$ that will disconnect $ R_3 $ along $\Gamma (t_1)$ with disks and annulus satisfying the condition of Lemma \ref*{lem1.haus} (See Figure \ref{fig3.4.4} (Right)) and there is at most two point of tangency with the foliation.

\begin{figure}[h!]

\hskip 3cm
	\includegraphics[width=0.3\linewidth]{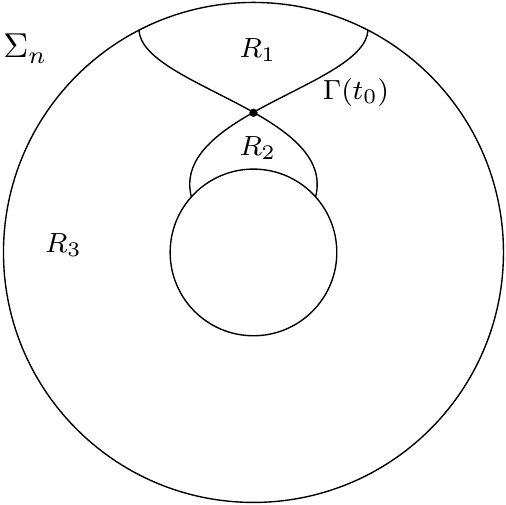}
	\hskip 2cm
	\includegraphics[width=0.3\linewidth]{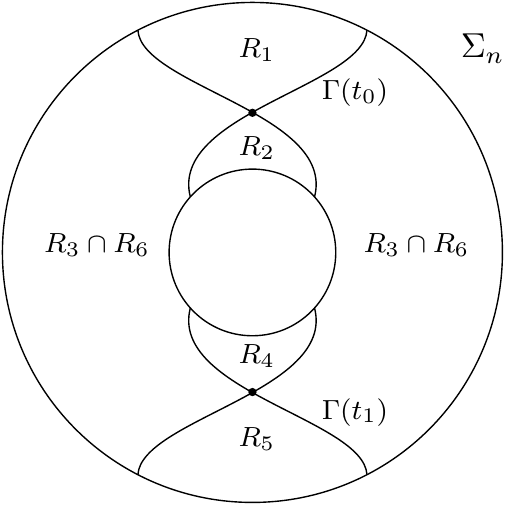}
	\caption{(Left) $\partial \Sigma_n \setminus \Gamma$ has 4 connected components .
	\hskip 0.3cm (Right) $\Sigma_n \setminus (\Gamma (t_0) \cup \Gamma (t_1))$.}
	\label{fig3.4.5}
\end{figure}

 %\begin{figure}[h!]
 %	\centering
%
% 	\includegraphics[width=0.45\linewidth]{"PSL H20"}
% 	\caption{}
% 	\label{fig3.4.5}
% \end{figure} 

Case 2: $ \dhr\Sigma_n\setminus \Gamma (t) $ has $ 4 $ connected components. Two components $ R_1 $ and $R_2 $ have only one connected component of $ \dhr\Sigma_n\setminus \Gamma (t) $ in their boundary and $R_3 $ has in its boundary two connected components of $ \dhr\Sigma_n\setminus\Gamma (t)$ (see Figure \ref{fig3.4.5} ). All components are topological disk. In this case $ R_1 $ and $ R_2 $ are not tangent to $ \Fcal $. The component $R_3 $
can meet the foliation tangentially in an other leaf. We consider $t_1 \in ]t_0, 1]$ the value such that $\Lambda_{t_1}$ meet $R_3$ tangentially
and $\Lambda_t$ is transverse to $R_3$ for $t_0<t<t_1$. 
Since $R_3 $ is a disk, $\Gamma (t_1)$ will disconnect $R_3$ in different connected components each one a disk  (there is no cycles in $\Gamma (t_1)$) .
Since $\Gamma (t_1)$ has at least 4 endpoints, there will be 4 connected components in $R_3 \setminus \Gamma (t_1)$ and $\Gamma (t_1)$ is disconnecting
$\Sigma_n$ in 3 connected components. By same argument as before, there is two discs $R_4$ and $R_5$ which cannot be tangent to an other leaf and $R_6$
a component having two components of $\partial \Sigma_n \setminus \Gamma_1$. The intersection $R_3 \cap R_6$ are then disjoint from $R_4$ and $R_5$, and
$\Lambda_t \cap (R_3 \cap R_6)= \emptyset$ if $t <t_0$ or $t> t_1$. Then there is no other leaf than $\Lambda_{t_0}$ and $\Lambda_{t_1}$ which are tangent to $\Sigma_n$.

 \end{proof}

%\section{Minimal annulus on a bounded domain}
%\label{sect3.3}
%

%Denote by $\gamma_{1|2}$ (resp., $\eta_{1|2}$) the complete geodesic of $\Hbbh$ perpendicular to $\gamma_1$ and $\gamma_2$ (resp., $\eta_1$ and $\eta_2$).

\subsection{Curvature estimate}
\label{subsect3.3.3}

For each $n\gg 0$, denote by $\Sigma_n$ a minimal annulus whose boundary is $\Gamma^1_n\cup\Gamma^2_n$.
The main purpose of this section is to show a uniform curvature bound for the sequence $\Sigma_n$ without stability
assumption.
If there is a sequence of points $ p_n\in\Sigma_n $ such that the norm of the second funndamental form ${\chuan{A_{\Sigma_{n}}(p_n)}}\to+\infty $ then we study in the following the limit of subsequence of blow up annuli $ \lambda_n\Tsf_{p_n}(\Sigma_n) $ where $ \lambda_n:= \chuan{A_{\Sigma_{n}}(p_n)}$ and $ \Tsf_{p_n} $ is the translation in $ \PSLhR $ which sends $ p_n$ to the origin $O$. To understand this limit, we also consider the intersection of $ \lambda_n\Tsf_{p_n}(\Sigma_n)$ with $ \lambda_n \Fcal $ where $ \Fcal $ is a foliation described in the previous section.
More precisely, we state and prove the following proposition.

\begin{thm}\label{pro4.4.4}
	The sequence of minimal surfaces  $\left\{\Sigma_{n}\right\}_n$ has uniformly bounded second fundamental form
	\begin{equation}\label{equ4.4.2}
	\sup_{n}\sup_{\Sigma_{n}}\chuan{A_{\Sigma_{n}}}<
	+\infty.
	\end{equation}
\end{thm}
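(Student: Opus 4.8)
The plan is to argue by contradiction: assume there is a subsequence (still denoted $\Sigma_n$) and points $p_n\in\Sigma_n$ with $\lambda_n:=\chuan{A_{\Sigma_n}(p_n)}\to+\infty$, where we may choose $p_n$ to realize (or nearly realize, up to a factor $2$) the maximum of $\chuan{A_{\Sigma_n}}$ on $\Sigma_n$ by a standard point-selection argument. We then translate by $\Tsf_{p_n}$ so that $p_n\mapsto O$, and rescale the ambient metric by $\lambda_n$. Because $\PSLhR$ is homogeneous under the group of translations and the rescaled metrics converge smoothly on compact sets to the flat metric on $\Rbb^3$ (the curvature of $\PSLhR$ is bounded, so $\lambda_n^{-2}$ times it tends to $0$), the rescaled surfaces $\Sigma_n':=\lambda_n\Tsf_{p_n}(\Sigma_n)$ have $\chuan{A_{\Sigma_n'}}\le 2$ with $\chuan{A_{\Sigma_n'}(O)}=1$, and uniformly bounded geometry. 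First I would extract a subsequential limit $\Sigma_\infty$ which is a complete, properly embedded minimal surface in $\Rbb^3$, nonflat because $\chuan{A_{\Sigma_\infty}(O)}=1$, and of finite total curvature or at least with bounded curvature; moreover, one should argue the boundaries $\Gamma^i_n$ escape to infinity after rescaling (since the $\Gamma^i_n$ have uniformly bounded geometry in $\PSLhR$, after multiplying by $\lambda_n\to\infty$ their curvature tends to $0$, so either they disappear to infinity or they converge to a straight line — in the latter "boundary" case one works with a half-space limit but the argument below still applies).

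The heart of the argument is to use the three mutually transverse minimal foliations $\Fcal^\beta$, $\Fcalh$, $\Fcal^{\Gr(w)}$ of the previous section together with Proposition~\ref{pro3.1}. Each of these foliations, after rescaling by $\lambda_n$ and translating, converges to a foliation of $\Rbb^3$ by planes (respectively: a family of parallel vertical planes, the horizontal planes $\{z=\mathrm{const}\}$, and a third family of planes transverse to both). The key point is that Proposition~\ref{pro3.1} gives a \emph{uniform} bound — at most $2$ — on the number of interior tangency points of $\Sigma_n$ with each foliation $\Fcal$, independent of $n$ and independent of the index/stability of $\Sigma_n$. This bound is scale-invariant: tangency points of $\Sigma_n$ with a leaf of $\Fcal$ correspond exactly to tangency points of $\Sigma_n'$ with a leaf of $\lambda_n\Fcal$. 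Passing to the limit, $\Sigma_\infty$ is tangent to each plane of each of the three limiting plane-foliations of $\Rbb^3$ in at most $2$ points. Equivalently, each of the three coordinate-type height functions (the two "horizontal" linear functions coming from $\Fcal^\beta$ and $\Fcal^{\Gr(w)}$, and the linear function $z$ coming from $\Fcalh$) restricted to $\Sigma_\infty$ has at most $2$ critical points — more precisely, no level set has more than a bounded number of critical points, and the normal of $\Sigma_\infty$ hits any of three independent directions only finitely often.

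From this I would derive that the Gauss map of $\Sigma_\infty$ omits or covers-finitely three linearly independent directions, so it has finite multiplicity, hence $\Sigma_\infty$ has finite total curvature and is a plane or a finite collection of ends modeled on catenoids/Enneper-type surfaces. But a nonflat complete minimal surface of finite total curvature in $\Rbb^3$ has Gauss image missing at most a finite set and, crucially, its height function in \emph{every} direction has arbitrarily many critical points as one goes out the ends unless the surface is extremely degenerate; more cleanly, a catenoid (the simplest candidate) is tangent to a generic plane in the foliation orthogonal to its axis in a whole circle, and tangent to planes of a transverse foliation in more than $2$ points once the foliation direction is suitably chosen — contradicting the "$\le 2$ tangency points" property inherited from Proposition~\ref{pro3.1}. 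Running this for all three transverse foliations simultaneously forces $\Sigma_\infty$ to be a plane, contradicting $\chuan{A_{\Sigma_\infty}(O)}=1$. This contradiction establishes \eqref{equ4.4.2}.

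\textbf{Main obstacle.} The delicate step is the limiting/compactness argument: one must ensure the rescaled boundaries genuinely leave every compact set (or carefully handle the half-space case), control that no topology is lost in the limit so that $\Sigma_\infty$ is genuinely a (nonempty, nonflat, complete) minimal surface, and — most importantly — verify that the "at most two tangency points with the foliation" property of Proposition~\ref{pro3.1} passes to the limit. This last passage requires that the three ambient foliations rescale to honest (smooth, transverse) plane foliations of $\Rbb^3$ and that tangency is a closed condition stable under the convergence; transversality of the three limit foliations is what ultimately pins down $\Sigma_\infty$ and must be checked from the explicit descriptions of $\Fcal^\beta$, $\Fcalh$, $\Fcal^{\Gr(w)}$.
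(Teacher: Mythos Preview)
Your overall architecture (blow up at curvature maxima, use the three transverse minimal foliations together with Proposition~\ref{pro3.1}, pass the ``at most two tangencies'' bound to the limit, and invoke a Mo--Osserman type statement to force finite total curvature) matches the paper's approach and is essentially correct up to that point. The paper does exactly this, with the additional care --- which you flag but do not carry out --- of handling the case where $p_n$ stays close to $\partial\Sigma_n$ so that $\partial\Sigmatilde_\infty$ is a straight line $L$; there one must choose the three foliations depending on the location of $q_\infty=\lim\pi(p_n)$ on $\partial\Omega$ so that the limiting plane foliations are $\sigma_L$-invariant, and then reflect.

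The genuine gap is your final contradiction. Your claim that a catenoid violates the ``$\le 2$ tangent points per foliation'' bound is false. For the standard catenoid with axis $e_3$ the Gauss map is a conformal diffeomorphism onto $\Sbb^2\setminus\{\pm e_3\}$ of degree one, so for any direction $v\neq\pm e_3$ there is exactly one point with normal $v$ and one with normal $-v$: precisely two tangencies with the foliation by planes orthogonal to $v$, and zero tangencies with the horizontal foliation. Thus the catenoid (and only the catenoid, among nonflat complete embedded minimal surfaces of finite total curvature) is fully compatible with the tangency bound, and your argument does not exclude it.

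The paper closes the argument by a different, geometric mechanism that uses the ambient constraint $\Sigma_n\subset\Omega\times\Rbb$. Since $\Sigmatilde_\infty$ is nonflat, complete, embedded and of finite total curvature, it has at least two ends; take a homotopically nontrivial loop $\widetilde\mu\subset\Sigmatilde_\infty$ separating one end, and pull it back to nontrivial loops $\mu_n\subset\Sigma_n$ with $\ell_{\PSLhR}(\mu_n)\to 0$. Then $\pi(\mu_n)$ shrinks to a point in $\overline\Omega$ and in particular stays at distance $\ge d>0$ from, say, $\gamma_1$. The subannulus of $\Sigma_n$ bounded by $\Gamma^1_n$ and $\mu_n$ therefore lies in $\triangle_n\times\Rbb$ for a geodesic triangle $\triangle_n$ with vertex angle in $[\theta,\pi-\theta]$. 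After rescaling, this forces an entire end of $\Sigmatilde_\infty$ into a wedge between two half-planes meeting at angle $<\pi$, which is impossible for a planar or catenoidal end. That wedge argument --- not a further appeal to the tangency count --- is what actually produces the contradiction.
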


%If $\Sigma\subset\Rbb^3$ is an oriented surface, denote its Gauss map by $N_\Sigma:\Sigma\to\Sbb^2$ where $\Sbb^2$ is the unit sphere of $\Rbb^3$.

\begin{proof}[\bf Proof of Theorem \ref{pro4.4.4}]
%Let $p_{n}$ be a point of $\Sigma_n$ 
%satisfying $\chuan{A_{\Sigma_{n}}(p_n)}=\sup_{\Sigma_{n}}\chuan{A_{\Sigma_{n}}}= \lambda_{n}$.	

Suppose at the contrary that  \eqref{equ4.4.2} does not hold and without loss of generality, we assume that there exists a point $p_n$ of $\Sigma_n$ such that
	
\begin{equation}
\lambda_{n}:=\chuan{A_{\Sigma_{n}}(p_n)}=\sup_{p \in\Sigma_{n}}\chuan{A_{\Sigma_{n}}(p)}\to+\infty,
\end{equation}
as $ n\to+\infty$.

	Therefore, $\Tsf_{p_n}(\Sigma_n)\subset\PSLhR$ is a minimal surface whose curvature reaches the maximum value  $\lambda_n$ at $O$.
	For each $n$, the scalar multiplication by $\lambda_n$ gives a diffeomorphism
	\begin{equation}\label{equ3.3.5}
	\Rbb^3\to\Rbb^3,\quad (x,y,z)\mapsto (\lambda_nx,\lambda_ny,\lambda_nz).
	\end{equation}
	Denote by $\Sigmatilde_n=\lambda_n\Tsf_{p_n}(\Sigma_n)$.	For each $\kappa<0$ and $\tau\in\Rbb$, define $\Ebbbkt$ to be the smooth $3$-manifold $ D^2(\kappa)\times\Rbb $ endowed with the following Riemannian metric
	
\begin{equation*}
	\lambda^2\left(dx^2+dy^2\right)+\left(\tau\lambda(ydx-xdy)+dz\right)^2
\end{equation*} 
with $\lambda=\frac{1}{1+\frac{\kappa}{4}(x^2+y^2)}$.
We have $\PSLhR=\Ebb^3(-1,\tau)$.
Moreover, the restriction of the diffeomorphism \eqref{equ3.3.5} of $\Rbb^3$ gives us a conformal diffeomorphism $\PSLhR\to\Ebb^3\left(\frac{-1}{\lambda_n^2},\frac{\tau}{\lambda_n}\right)$  with the conformal factor $\lambda_n$.
	Therefore, $\Sigmatilde_n$ is a minimal surface in $\Ebb^3_n:=\Ebb^3\left(\frac{-1}{\lambda_n^2},\frac{\tau}{\lambda_n}\right)$ where the curvature reaches the maximum value  $1$ at the  point $O$. 
Since $\lambda_n$ tends to $+\infty $ as $n\to+\infty$, the sequence of Riemannian manifolds $\Ebb^3_n$ converges smoothly to the Euclidean space $\Rbb^3$ as $n\to +\infty$.

We will consider in the following $\Sigmatilde_n$ as a sequence of surfaces in $\Rbb^3$ (non necessarily minimal) with bounded second fundamental form for the Euclidean metric satisfying some equation of mean curvature type and converging to $\Sigmatilde_\infty$ a minimal surface for the Euclidean metric.
We study in the following proposition  \ref{lem3.3.12} possible limit of any subsequence of $\Sigmatilde_n$ to $\Sigmatilde_\infty$:

	\begin{prop}\label{lem3.3.12}
	There exists a subsequence $\left\{\Sigmatilde_{k} \right\}_{k}$ of the sequence $\left\{\Sigmatilde_n \right\}_n$ and a minimal surface $\Sigmatilde_\infty$ in $\Rbb^3$ satisfying the following conditions:
	\begin{enumerate}
	\item $\Sigmatilde_\infty $ is embedded in $\Rbb^3$;
	\item $\Sigmatilde_\infty $ is contained in the accumulation set of  $\left\{\Sigmatilde_k \right\}_k$;
	\item $O\in \Sigmatilde_\infty$ and $\td{A_{\Sigmatilde_\infty }(O)}=\lim\limits_{k\to+\infty }\td{A_{\Sigmatilde_k}(O)}=1 $ where $A_{\Sigmatildevc}$ and $A_{\Sigmatilde_k}$ are respectively the second fundamental form of $\Sigmatildevc$ in $\Rbb^3$ and of $\Sigmatilde_k$ in $ \Ebb^3_k $. 
	
	\item $\Sigmatilde_\vc$ is complete, i.e., any divergent path in $\Sigmatilde_\vc$ has infinite length;
	\item The surface $\Sigmatilde_\vc$ has finite total curvature, without boundary.
	\end{enumerate} 
	\end{prop}

Assuming this proposition we first prove the theorem. 	Since $\chuan{A_{\Sigmatildevc}(O)}=1$,
	$\Sigmatildevc$ is not a flat plane.
	It follows from \cite[Theorem 3.1]{HK97} that $\Sigmatildevc$ has at least  $2$ ends.
	Let $\mutilde \subset\Sigmatildevc$ be a smooth Jordan curve which is the boundary of an end of $\Sigmatildevc$.
	Thus $\mutilde$ is homotopically nontrivial and  $\mutilde$ divides $\Sigmatildevc$ into two parts which contain at least one end.
	Let $\mutilde_n\subset\Sigmatilde_n$ be a sequence of smooth Jordan curves such that $\mutilde_n$ converges to $\mutilde$ as $n\to+\vc$.
	Thus for $n\gg 0$, $\mutilde_n$ is homotopically nontrivial in $\Sigmatilde_n$.
	Indeed, assume that there exists a subsequence of $\mutilde_n$, denoted by $\mutilde_{n_k}$, being homotopically trivial, i.e., $\mutilde_{n_k}$ is the boundary of a disk  $D_{n_k}\subset \Sigmatilde_{n_k}$. 
	Since $D_{n_k}$ stays in a fix ball around $\mutilde_{n_k}$, there exists a subsequence of $D_{n_k}$ converging to a disk in $\Sigmatildevc$ whose boundary is $\mutilde$. This contradicts the fact that $\mutilde$ is homotopically nontrivial.
	
	For each $n$, define a smooth Jordan curve $\mu_n\subset \Sigma_n$ such that $\mutilde_n$ is the image of $\mu_n$ under  the canonical diffeomorphism $\Sigma_n\to \Sigmatilde_n$, i.e., $\lambda_n\Tsf_{p_n}(\mu_n)=\mutilde_n$.
	We see that $\lambda_n\ella{\PSLhR}{\mu_n}=\ella{\Ebb^3_n}{\mutilde_n}$ converges to $\ella{\Rbb^3}{\mutilde}$ as $n\to+\vc$, then $\ella{\PSLhR}{\mu_n}\to 0$ as $n\to+\vc$.
	Since the projection $\pi:\PSLhR\to\Hbbh$ is a Riemannian submersion, the length of the curve $\pi(\mu_n)\subset\Hbbh$ is smaller than or equal to the length of $\mu_n$ in $\PSLhR$, then $\ellH{\pi(\mu_n)}\to 0$ as $n\to+\vc$.
	Hence $\lim_{n \to +\infty} \Distaa{\Hbbh}{\pi(\mu_n),\gamma_1}+\Distaa{\Hbbh}{\pi(\mu_n),\gamma_2} \geq \Distaa{\Hbbh}{\gamma_1,\gamma_2}$.
	Without loss of generality, we assume that there is $d>0$ such that 
	$\Distaa{\Hbbh}{\pi(\mu_n),\gamma_1}\ge d$ for each $n\gg 0$.
	
	\begin{figure}[!h]
	\centering
	\definecolor{qqqqff}{rgb}{0.,0.,1.}
	\definecolor{uuuuuu}{rgb}{0.26666666666666666,0.26666666666666666,0.26666666666666666}
	\begin{tikzpicture}[line cap=round,line join=round,>=triangle 45,x=1.0cm,y=1.0cm]
	\clip(1.8133333333333272,-17.175555555555867) rectangle (6.253333333333334,-12.775555555555783);
	\draw(4.,-1.) circle (2.cm);
	\draw [shift={(4.,-5.013776276774779)}] plot[domain=4.190770683288303:5.234007277481076,variable=\t]({1.*3.48*cos(\t r)+0.*3.48*sin(\t r)},{0.*3.48*cos(\t r)+-1.*3.48*sin(\t r)});
	\draw [shift={(1.312703075574585,-1.)}] plot[domain=-0.3437325085206018:0.343732508520602,variable=\t]({1.*1.919536725832275*cos(\t r)+0.*1.919536725832275*sin(\t r)},{0.*1.919536725832275*cos(\t r)+1.*1.919536725832275*sin(\t r)});
	\draw [shift={(6.687296924425415,-1.)}] plot[domain=-0.3437325085206018:0.343732508520602,variable=\t]({-1.*1.919536725832275*cos(\t r)+0.*1.919536725832275*sin(\t r)},{0.*1.919536725832275*cos(\t r)+1.*1.919536725832275*sin(\t r)});
	\draw [shift={(4.,3.0137762767747796)}] plot[domain=4.456725923684318:4.9680520370850605,variable=\t]({1.*3.48*cos(\t r)+0.*3.48*sin(\t r)},{0.*3.48*cos(\t r)+1.*3.48*sin(\t r)});
	\draw [shift={(4.,-5.01377627677478)}] plot[domain=1.3151332700945253:1.826459383495268,variable=\t]({1.*3.48*cos(\t r)+0.*3.48*sin(\t r)},{0.*3.48*cos(\t r)+1.*3.48*sin(\t r)});
	\draw [rotate around={10.007979801441351:(3.97,-0.8788888888888926)}] (3.97,-0.8788888888888926) ellipse (0.1838516480714256cm and 0.06325684546792722cm);
	\draw [shift={(-2.3802301687837533,4.5569103684212475)}] plot[domain=5.419639477607387:5.600315482814881,variable=\t]({1.*8.221106364068397*cos(\t r)+0.*8.221106364068397*sin(\t r)},{0.*8.221106364068397*cos(\t r)+1.*8.221106364068397*sin(\t r)});
	\draw [shift={(10.443703602566467,4.646648671355426)}] plot[domain=3.8276196024623053:4.006217762089784,variable=\t]({1.*8.331023786753224*cos(\t r)+0.*8.331023786753224*sin(\t r)},{0.*8.331023786753224*cos(\t r)+1.*8.331023786753224*sin(\t r)});
	\draw [domain=1.8133333333333272:6.253333333333334] plot(\x,{(-40.-0.*\x)/5.});
	\draw(4.,-15.) circle (2.cm);
	\draw [shift={(4.,-19.013776276774777)}] plot[domain=4.456725923684318:4.9680520370850605,variable=\t]({1.*3.48*cos(\t r)+0.*3.48*sin(\t r)},{0.*3.48*cos(\t r)+-1.*3.48*sin(\t r)});
	\draw [shift={(4.,-10.986223723225221)}] plot[domain=4.190770683288303:5.234007277481076,variable=\t]({1.*3.48*cos(\t r)+0.*3.48*sin(\t r)},{0.*3.48*cos(\t r)+1.*3.48*sin(\t r)});
	\draw [shift={(1.312703075574585,-15.)}] plot[domain=-0.3437325085206018:0.343732508520602,variable=\t]({1.*1.919536725832275*cos(\t r)+0.*1.919536725832275*sin(\t r)},{0.*1.919536725832275*cos(\t r)+-1.*1.919536725832275*sin(\t r)});
	\draw [shift={(6.687296924425415,-15.)}] plot[domain=-0.3437325085206018:0.343732508520602,variable=\t]({-1.*1.919536725832275*cos(\t r)+0.*1.919536725832275*sin(\t r)},{0.*1.919536725832275*cos(\t r)+-1.*1.919536725832275*sin(\t r)});
	\draw [rotate around={-10.007979801441351:(3.97,-15.121111111111107)}] (3.97,-15.121111111111107) ellipse (0.1838516480714256cm and 0.06325684546792722cm);
	\draw [shift={(-2.3802301687837533,-20.55691036842125)}] plot[domain=5.419639477607387:5.600315482814881,variable=\t]({1.*8.221106364068397*cos(\t r)+0.*8.221106364068397*sin(\t r)},{0.*8.221106364068397*cos(\t r)+-1.*8.221106364068397*sin(\t r)});
	\draw [shift={(10.443703602566467,-20.646648671355425)}] plot[domain=3.8276196024623053:4.006217762089784,variable=\t]({1.*8.331023786753224*cos(\t r)+0.*8.331023786753224*sin(\t r)},{0.*8.331023786753224*cos(\t r)+-1.*8.331023786753224*sin(\t r)});
	\draw (3.76,-14) node[anchor=north west] {$\gamma_1$};
	\draw (3.65,-14.42) node[anchor=north west] {$\triangle_n$};
	\begin{scriptsize}
	\draw [fill=uuuuuu] (3.119953348679835,-0.3531092115512158) circle (1.0pt);
	\draw [fill=uuuuuu] (3.119953348679835,-1.6468907884487842) circle (1.0pt);
	\draw [fill=uuuuuu] (4.880046651320165,-0.3531092115512159) circle (1.0pt);
	\draw [fill=uuuuuu] (4.880046651320165,-1.6468907884487842) circle (1.0pt);
	\draw [fill=uuuuuu] (5.037490673968971,-1.69202777614632) circle (1.0pt);
	\draw[color=uuuuuu] (7.973333333333336,-12.882222222222453) node {$D$};
	\draw [fill=uuuuuu] (2.96140238291898,-1.6923737131288683) circle (1.0pt);
	\draw[color=uuuuuu] (5.893333333333333,-13) node {$\Hbbh$};
	\draw [fill=uuuuuu] (3.9974163578378987,-0.6307880259340667) circle (1.0pt);
	\draw[color=uuuuuu] (7.16,-12.882222222222453) node {$F$};
	\draw [fill=qqqqff] (2.,-8.) circle (1.0pt);
	\draw [fill=qqqqff] (7.,-8.) circle (1.0pt);
	\draw [fill=uuuuuu] (3.119953348679835,-15.646890788448784) circle (1.0pt);
	\draw [fill=uuuuuu] (4.880046651320165,-15.646890788448784) circle (1.0pt);
	\draw [fill=uuuuuu] (4.880046651320165,-14.353109211551216) circle (1.0pt);
	\draw [fill=uuuuuu] (5.037490673968971,-14.30797222385368) circle (1.0pt);
	\draw[color=uuuuuu] (5.173333333333332,-14.148888888889143) node {$p'_1$};
	\draw [fill=uuuuuu] (3.119953348679835,-14.353109211551216) circle (1.0pt);
	\draw [fill=uuuuuu] (2.96140238291898,-14.307626286871132) circle (1.0pt);
	\draw[color=uuuuuu] (3,-14.148888888889143) node {$q'_1$};
	\draw [fill=uuuuuu] (3.9974163578378987,-15.369211974065934) circle (1.0pt);
	\draw[color=uuuuuu] (4.22,-15.37) node {$\xi_n$};
	\end{scriptsize}
	\end{tikzpicture}
	\caption{The triangular domain $\triangle_n$ with vertices $p'_1,q'_1$ and $\xi_n$.}\label{fig3.3.4a}
	\end{figure}
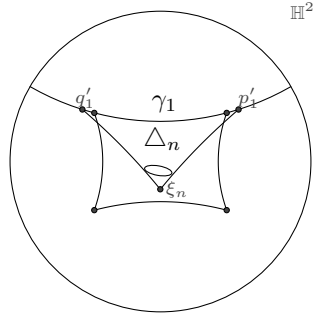

	For $n\gg 0$, the smooth Jordan curve $\mu_n\subset\Sigma_n$ is homotopically nontrivial, then it divides $\Sigma_n$ into two components.
	Denote by $\Acal_n$ the compact sub-annulus of $\Sigma_n$ whose boundary is $\Gamma^1_n$ and $\mu_n$.
	Let two points $p'_1,q'_1$ such that $\gamma_1$ is properly contained in  the geodesics joining $p'_1$ and $q'_1$.
	Since $\pi(\mu_n)$ is contained in the convex bounded  polygon $\Omega$ and $\ellH{\pi(\mu_n)}\to 0$ as $n\to+\vc$, then for $n\gg 0$, there exists a compact geodesic triangular domain of $\Hbbh$ containing $\pi(\mu_n)$ and its vertices are $p'_1,q'_1,\xi_n$. There is a vertex $\xi_n$
such that this domain that we denote by $\triangle_n$
is minimum. Its mean that $\triangle_n$ is the intersection of all such triangular domains, (see Figure \ref{fig3.3.4a}).
Since $\Distaa{\Hbbh}{\xi_n,\gamma_1}>\Distaa{\Hbbh}{\pi(\mu_n),\gamma_1}\ge d$, the value of the angle $\theta_n$ at vertex $\xi_n$ of $\triangle_n$ satisfies $\theta_n<\pi-\theta$ for some $\theta>0$.
We can choose $\theta$ small enough so that $\theta_n\ge \theta$.

	Since $\Acal_n$ is a minimal annulus with  boundary $\Gamma^1_n\cup \mu_n$, $\Acal_n\subset \triangle_n\times\Rbb$ and  $\theta\le \theta_n\le\pi-\theta$ for all $n$ with $\mutilde_n\to \mutilde$ as $n\to +\vc$. Then there exists a subsequence of $\lambda_n\Tsf_{p_n}\left(\triangle_n\times\Rbb\right)$ converging to the closed subset  $S$ of $\Rbb^3$ delimited by two vertical half-planes. The angle of the two half-planes is in the interval $[\theta,\pi-\theta]$. The sequence $\lambda_n\Tsf_{p_n}(\Acal_n)$ converges to a part of $\Sigmatildevc$ delimited by $\mutilde$
and the wedge $S$ contains at least one end of $\Sigmatildevc$.
	This contradicts the fact that an end of a complete embedded minimal surface {\color{black}of finite total curvature} is asymptotic to a plane or an end of a catenoid.	
\end{proof}

We now prove the proposition \ref{lem3.3.12}:

\begin{proof}[\bf Proof of Proposition \ref{lem3.3.12}]

 Let $q_n \in \Sigmatilde_n$, a sequence of points such that $q_n \to q \in \Rbb^3$ as $n \to +\infty$. In a first case,
assume that $\dist_{\Sigmatilde_n} (q_n, \partial \Sigmatilde_n) \geq 1$ for all $n$. Then there exists a disk $B_n \subset \Sigmatilde_n$ of radius $1$ centered at $q_n$ . This disk $B_n$ is locally the euclidean graph with bounded slope of a function $u_n$ on a disc $D_n$ of radius $c_0 > 0$ on its tangent plane. By the Uniform graph lemma and the Arzela-Ascoli theorem, there is a subsequence $B_{n_k}$ which converges to a minimal disk $B_\infty (q)$ in $\Rbb^3$ passing through $q$.

We now consider the case that $\dist_{\Sigmatilde_n} (q_n, \partial \Sigmatilde_n)\leq 1$ for all $n$. In the neighborhood of $q_n$, $\Sigmatilde_n$ is the graph of a function $u_n$ on a domain $U_n$ of its tangent plane where $||\nabla u_n ||$ and $|| \nabla ^2 u_n ||$ are uniformly bounded. Let $D_n$ be the disc of radius $c_0>0$ on its tangent plane at $q_n$. We can insure that  $U_n \subset D_n$ and the boundary of $U_n$ consists of a part of $\partial D_n$ together with a curve $\beta_n$. The graph of $u_n$ on $\beta_n$ is a part of the boundary of $\Sigmatilde_n$. Hence $\beta_n$ is a smooth curve whose curvature goes to $0$ as $n \to +\infty$. By Uniform graph lemma, Arzela-Ascoli theorem and Schauder boundary estimates \cite[Corollary 6.7]{GT01},  there is a subsequence $u_{n_k}$ which converges to a graph of the function $u_{\infty}$ define on a domain $U \subset D_{\infty} (0)$, the tangent plane of a surface 
passing through $q$. The domain $U$ is delimited by the chord $\beta$ of an uniform circle around $q$. Since $u_{n|\beta_n}$ is a part of $\partial \Sigmatilde_n$ which has bounded geometry, the graph of $u$ is linear on $\beta$.

Now, starting at $O \in \Sigmatilde_n$ for all $n$, by analytic continuation and diagonal process, we obtain up to a subsequence, a limit surface $\Sigmatilde_{\infty}$ of $\Rbb^3$. Due to the geometry of $\partial \Sigmatilde_n= T_{p_n} (\Gamma^1_n \cup \Gamma^2_n)$, the limit $\Sigmatilde_{\infty}$ is eventually bounded by a straight line coming from
$\partial \Sigmatilde_n$.

So we have constructed an immersed surface $\Sigmatilde_{\infty}$ in $\Rbb^3$ satisfying the condition 2,3,4 in proposition \ref{lem3.3.12}. Without loss of generality, we assume that the sequence of surfaces $\Sigmatilde_n, n \gg 0$ converges to $\Sigmatilde_{\infty}$ of $\Rbb^3$. Since the sequence $\Gamma^i_n$ has uniformly bounded curvature, if $\partial \Sigmatilde_{\infty}$ is non empty, it is necessarily a straight line.

In order to control the total curvature of $\Sigmatilde_{\infty}$, we need to understand the following lemma and its intersection with some foliations.

\begin{lem}\label{lem2.3.18}
	Let $ N_n $ be a sequence of Riemannian $ 3 $-manifolds converging smoothly to the Riemannian $ 3 $-manifold  $ N $.
	For each $ n $, denote by $ M_n $ a minimal surface in $ N_n $ and by 
	$ \Fcal_{n} $ a minimal foliation of $ N_{n} $
	such that the sequence $ M_n $ converges to a minimal surface  $ M $ in $ N $ with multiplicity $ s $,
	and the sequence $ \Fcal_{n} $ converges to a minimal foliation $ \Fcal $ of $ N $.
	Assume $M$ is tangent to a leaf of the foliation $\Fcal$ at a point $O$, moreover $M$ and this leaf don't coincide in any neighborhood of $O$.
	Then, in any neighborhood of $O$, for $n$ large enough, $M_n$ is tangent to $\Fcal_{n}$ at least at $s$ points.

	%	Let $N$ be a Riemannian $3$-manifold and $\Fcal$ be a minimal foliation of dimension $2$ of $N$.
	%	Denote by $M_n$ a sequence of minimal surfaces converging to a minimal surface $M$ in $N$ with multiplicity $s$.
	%	Assume $M$ is tangent to a leaf the foliation $\Fcal$ at a point $O\in M$, moreover $M$ and this leaf don't coincide in any neighborhood of $O$.
	%	Then, in any neighborhood of $O$, for $n$ large enough, $M_n$ is tangent to $\Fcal$ at least at $s$ points.
	%	

\end{lem}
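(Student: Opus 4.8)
The plan is to argue by a local degree / intersection-multiplicity count near $O$, transported along the convergence $M_n\to M$. First I would set up local coordinates: since $M$ is tangent to a leaf $\Lambda$ of $\Fcal$ at $O$ but does not coincide with it near $O$, I write both $M$ and $\Lambda$ locally as graphs over $T_OM=T_O\Lambda$, say $M=\Gr(v)$ and $\Lambda=\Gr(w)$ with $v(O)=w(O)$, $\nabla v(O)=\nabla w(O)=0$, and $v-w\not\equiv 0$. Because $v$ and $w$ both solve minimal–surface–type equations in $N$, the difference $\varphi:=v-w$ satisfies a linear elliptic equation (the linearization along a path of minimal graphs), hence $\varphi$ vanishes at $O$ to finite order $m\ge 2$ and its zero set near $O$ is, after a local homeomorphism, that of a homogeneous harmonic polynomial of degree $m$: $2m$ rays emanating from $O$. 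Equivalently, $O$ is an isolated tangency and the "local tangency number" of $M$ with $\Fcal$ at $O$ — counted as the number of branches of $M\cap\Lambda$ minus one, or more robustly as a winding/degree invariant of the normal deviation — is a well-defined positive integer $\mu_O\ge 1$.

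Next I would exploit the multiplicity-$s$ convergence $M_n\to M$. Fix a small ball $B=B_\varepsilon(O)$ so small that $O$ is the only tangency of $M$ with $\Fcal$ in $\overline B$ and $M\cap\partial B$ is transverse to every leaf near $\Lambda$. Since $M_n\to M$ with multiplicity $s$, for $n$ large $M_n\cap B$ consists (after passing to the subsequence giving the convergence, which we may assume is the whole sequence) of $s$ graph-like sheets over a piece of $M$, each a small perturbation of $M$; write the $j$-th sheet as the graph of $v_n^{(j)}$ over $T_OM$ and likewise the nearby leaves of $\Fcal_n$ as graphs of $w_n(\,\cdot\,,t)$, with $v_n^{(j)}\to v$ and $w_n(\cdot,t)\to w(\cdot,t)$ in $C^2$. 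A point where the $j$-th sheet is tangent to $\Fcal_n$ is a point where, for the unique leaf through it, $\nabla\bigl(v_n^{(j)}-w_n\bigr)=0$; i.e. it is a critical point of the function $\psi_n^{(j)}:=v_n^{(j)}-w_n(\cdot,t(\,\cdot\,))$ obtained by eliminating the foliation parameter. The key point is that $\psi_n^{(j)}\to\varphi=v-w$ in $C^1$ on $\overline B$, and $\varphi$ has a single critical point $O$ in $\overline B$ with local index (the degree of $\nabla\varphi/|\nabla\varphi|$ on $\partial B$) equal to $1-m\le -1$, in particular nonzero. By stability of the topological degree under $C^1$-small perturbations, $\deg(\nabla\psi_n^{(j)},B)=\deg(\nabla\varphi,B)\neq 0$ for $n$ large; hence $\psi_n^{(j)}$ has at least one critical point in $B$, producing at least one tangency of the $j$-th sheet of $M_n$ with $\Fcal_n$ inside $B$. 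Running this over all $j=1,\dots,s$ gives at least $s$ tangency points of $M_n$ with $\Fcal_n$ in $B$ — and since $B$ was an arbitrarily small neighborhood of $O$, the claim follows. (Distinct sheets contribute distinct points because the sheets are pairwise disjoint for $n$ large, as $M_n$ is embedded.)

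The main obstacle I anticipate is making the "eliminate the foliation parameter" step rigorous and uniform in $n$: one must check that near $O$ the foliation $\Fcal_n$ really can be written, with bounds uniform in $n$, as level sets of a single $C^2$ function $F_n$ with $\nabla F_n\neq 0$ (so that "tangent to $\Fcal_n$" is exactly "$\nabla(v_n^{(j)})$ proportional to the projected $\nabla F_n$", a well-behaved equation), and that $F_n\to F$ in $C^2$; this is where one uses that $\Lambda$ is a smooth leaf through $O$ and the smooth convergence $\Fcal_n\to\Fcal$, $N_n\to N$. A secondary technical point is handling the boundary case: if $O$ lies on $\partial M$ (a straight line, by the earlier analysis), one instead works in a half-ball and uses a reflection or a boundary version of the degree argument, but the structure is the same. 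Once the uniform graphical description of $\Fcal_n$ near $O$ is in hand, the degree-stability argument is routine.
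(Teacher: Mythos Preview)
Your proposal is correct and follows essentially the same strategy as the paper: both reduce to a local degree/winding-number argument showing that each of the $s$ graphical sheets of $M_n$ near $O$ must be tangent to $\Fcal_n$ at least once, by stability of the degree under $C^1$ convergence. The paper's formulation is slightly more direct than yours: instead of eliminating the foliation parameter and working with $\nabla\psi_n^{(j)}$, it chooses coordinates in which $\Fcal$ is the foliation by level sets of $z$, and defines $f_n:M\to\Rbb^2$ by
\[
f_n(p)=\bigl(dx(\Nsf_{M_n}-\Nsf_{\Fcal_n}),\ dy(\Nsf_{M_n}-\Nsf_{\Fcal_n})\bigr)\Big|_{(p,u_n(p))},
\]
so that tangency of the sheet $\Gr(u_n)$ with $\Fcal_n$ is exactly the equation $f_n=0$; this sidesteps precisely the obstacle you anticipated about uniformly parametrizing $\Fcal_n$ as level sets of a single $C^2$ function. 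Conversely, your version is more explicit on the one point the paper leaves implicit, namely \emph{why} the winding number $W(f|_\alpha,(0,0))$ is nonzero: your computation of the index of $\nabla\varphi$ as $1-m\le -1$ via the harmonic-polynomial model of $\varphi=v-w$ is exactly the missing justification.
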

\begin{proof}
	Using appropriate charts, without loss of generality, assume that $N  $ is  $ (U,g) $ with $ U\subset \Rbb^3
	$ an open neighborhood of $O$  and $ N_n=\left(U,g_{n}\right) $ where $ g,g_{n} $ are Riemannian metrics satisfying the condition that $ g_{n} $ converges smoothly to $ g $ as $ n\to +\infty $.
	Assume in addition $\Fcal$ is the foliation of level sets of the coordinate function $z  :\Rbb^3\to\Rbb$.
	
	Since $M_n$ converges to $M$ with multiplicity $s$, there is a tubular neighborhood $W_M(O,r,\epsilon)\subset N$ of $O$ such that for $n\gg 0$, $M_n\cap W_M(O,r,\epsilon)$ consists of $s$ normal graphs on the disk $D_M(O,r)$, moreover, these graphs converge to $D_M(O,r)$ as $n\to+\infty$ (Notice that $D_M(O,r)$ is the disk of the radius $r$ about $O$ in $M$ and $W_M(O,r,\epsilon)\subset N$ is the tubular neighborhood of the radius $\epsilon $ about the disk $D_M(O,r)$).
Thus we can assume that $M_n$ is a normal graph on $M$ of a function $u_n$. We need to show that in any neighborhood of $O$, $M_n$ is tangent to $\Fcal_{n}$ for all $n$ large enough.
	
	Denote by $\Nsf_M, \Nsf_{M_n},\Nsf_{\Fcal}, \Nsf_{\Fcal_{n}}$ respectively the unit normal vector fields of $M,M_n$ and $\Fcal,\Fcal_{n}$ for metric $g,g_n$.
	Assume that $d z \left(\Nsf_\Fcal\right)>0$ on $\Rbb^3$, $\Nsf_M=\Nsf_\Fcal$ at $O$ and $\Nsf_{M_n}(p,u_n(p))$ converges uniformly to $\Nsf_M(p)$ as $n\to
+\infty$.
	Define the functions $f,f_n:M\to\Rbb^2$ by
	\begin{align*}
	f(p)&=\left(dx\left(\Nsf_M(p)-\Nsf_\Fcal(p)\right),dy\left(\Nsf_M(p)-\Nsf_\Fcal(p)\right)\right)\\
	f_n(p)&=\left(dx\left(\Nsf_{M_n}(p,u_n(p))-\Nsf_{\Fcal_{n}}(p,u_n(p))\right),dy\left(\Nsf_{M_n}(p,u_n(p))-\Nsf_{\Fcal_{n}}(p,u_n(p))\right)\right).
	\end{align*}

%We will show that there is a point $p\in U$ such that $M_n$ is tangent to $\Fcal_{n}$ at $(p,u_n(p))$ for all $n$ large enough.
After shrinking $U$ if necessary, we may assume that $d z \left(\Nsf_{M_n}\right)>0$ on $U$ for all $n$ large enough.
	Thus, it suffices to show that for $n$ large enough, there is $p\in U$ so that $f_n(p)=(0,0)$.
	We do it by using winding number as follows.
	Let be a small disk $D\subset\subset U$ about $O$.
	Denote by $\alpha$ the boundary of $D$.
	By hypothesis, $M$ is tangent to $\Lambda_{t_0} \in \Fcal$ at $O$. Then, $f(O)=(0,0)$ and if $\alpha$ is small then $f(p)\ne (0,0)$, for all $p\in\alpha$ and $W\left(\Rest{f}{\alpha},(0,0)\right)\ne 0$ where $W\left(\Rest{f}{\alpha},(0,0)\right)$ represents the total number of times the curve $ \Rest{f}{\alpha} $ travels counterclockwise around the point $ (0,0) $.
	Since $M_n$ converges to $M$, $u_n,f_n$ converge respectively to $0$ and $f$ as $n\to+\infty$.
%	By \cite[Chapter 3]{Fulton95},
Hence for $n\gg 0$, we have $W\left(\Rest{f_n}{\alpha},(0,0)\right)\ne 0$ and  there is $p\in D$ such that $f_n(p)=(0,0)$, which proves the lemma.
\end{proof}

We will prove finite total curvature by using a theorem of Mo-Osserman \cite[Theorem 1]{MO90}: A complete minimal surface whose Gauss map take five distinct values only a finite number of times has finite total curvature.

\begin{Asse0}
	The surface $\Sigmatilde_{\infty}$ has finite total curvature.
	\end{Asse0}

	\begin{proof}[Proof of the Claim]
After extracting a subsequence, we can assume that the projection $q_n=\pi (p_n)$ of the maximum of the curvature $p_n$ of $\Sigma_n$ over the plane $\Hbbh$, converges to a point $q_{\infty} \in \Omega$. We consider three distinct foliations of $\PSLhR$ with leaf of bounded geometry, and that are uniformly transverse in a neighborhood of $p_n$. We blow up with $p_n$ translated at $O$ and we obtain at the limit three distinct foliations of $\Rbb^3$ by parallel planes. We prove that $\Sigmatilde_\infty$ has finite number of points tangent to these foliations. 
%Hence the Gauss map take six distinct values a finite number of times and we conclude by the Mo-Osserman theorem.

	\textbf{Case 1.} $\dhr\Sigmatildevc$ is empty set. 
	%In this case $q_\infty$ is in the interior of $\Omega$. 
	Denote by $\Fcal^1=\Fcalh$ a foliation by horizontal leaves,  $\Fcal^2 =\{\gamma_t \times \Rbb\}_{t \in [0,1]}$ a foliation by vertical planes with $\gamma_t$ a foliation
from $\gamma_1$ to $\gamma_2$	by geodesics of 
$\Hbbh$, and similarly $\Fcal ^3=\{ \eta_t \times \Rbb \}_{t \in [0,1]}$
a foliation by vertical planes with $\eta_t$ a foliation
from $\eta_1$ to $\eta_2$.
% and for 
%	$\{\gamma_t\}_{t \in [0,1]}$(respectively $\{\eta_t\}_{t \in [0,1]}$) a foliation of 
%	$\Hbbh$ from $\gamma_1$ to $\gamma_2$ (respectively $\eta_1$ to $\eta_2$) we consider $\Fcal^2 =\{\gamma_t \times \Rbb\}_{t \in [0,1]}$, 
%$\Fcal ^3=\{ \eta_t \times \Rbb \}_{t \in [0,1]}$.
For $j=1,2,3$, the sequences $\lambda_n T_{p_n}(\Fcal^j)$ converge to a foliation $\Fcal_\infty^j$ of $\Rbb^3$, whose each leaf is a plane, hence a foliation by parallel planes. By compacity of $\Omega$, the three foliations $\Fcal^j$ are uniformly transverse to each other and its limits are transverse foliations of $\Rbb^3$. Let 
$q \in  \Sigmatilde_\infty$  a point of tangency of  $\Sigmatilde_{\infty}$
 and $\Fcal_{\infty}^j$ . 
  By lemma \ref{lem2.3.18}, there exists $n$ such that the surface 
$\Sigmatilde_n=\lambda_n T_{p_n} (\Sigma_n)$ is tangent to $\lambda_n T_{p_n} (\Fcal^j)$
in a neighborhood of $q$. 
This give rise to interior tangency of 
$\Sigma_n$ with $\Fcal^j$. 
These interior tangency are in $\Sigmatilde_n$ close to $q$, 
hence the number of tangency of
$\Sigmatilde_\infty$
with $\Fcal_{\infty}^j$
is bounded by the number of internal tangency of
$\Sigma_n$
with
$\Fcal^j$
for $n \gg 0$. 
This is less than 2 by Proposition \ref{pro3.1}. Hence the Gauss map of $\Sigmatilde_\infty$ is normal to the planes of foliations $\Fcal^j$ a finite number of time. For $j = 1,2,3$, it is six values on $\Sbb^2$. We deduce from Mo-Osserman theorem that $\Sigmatilde_n$ converge to a finite total curvature surface $\Sigmatilde_\infty$ with finite multiplicity.
	
\textbf{Case 2.} $\dhr\Sigmatildevc$ is a  straight line $L$. Hence
$\dist_{\PSLhR} (p_n, \partial \Sigma_n) \to 0$
as $n \to+ \infty$ and $q_\infty \in \partial \Omega$. Let be
 $\sigma_L$ the symmetry along $L$.
In this case, we use three foliations $\Fcal^j,j = 1,2,3,$ such that the limits in $\Rbb^3$, $\Fcal_\infty^j$ are invariant by the symmetry $\sigma_L$. 
%Hence from the finite number of tangency of $\Sigmatilde_\infty$ with
 %each $\Fcal_\infty^i,i = 1,2,3,$ we will deduce a finite number of tangency of $\Sigmatilde_\infty \cup \sigma_L(\Sigmatilde_\infty)$ with the three family of planes $\Fcal_\infty^i$. 
This need different foliations that depend on the position of $q_\infty$ in regards to its distance with the vertices of $\Omega$.

If $q_\infty$ is a vertex of $\Omega$ (for instance between $\gamma_1$ and 
$\eta_1$), the boundary $L$ of the surface 
$\Sigmatildevc$ is a vertical line. We consider the same foliations $\Fcal^1,\Fcal^2,\Fcal^3$ as in case 1. Then the number of interior tangency of $\Sigmatilde_\infty$ with the 
planes of $\Fcal_\infty^j$ ($j=1,2,3$) are limited for the same reason as in
case 1. It remains to prove that 
$\Sigmatilde_\infty$ has no boundary tangency along $L$. This is immediate for $\Fcal^1$ because $L$ is vertical and each leaf is horizontal. For the others, note that for each $n$ there is no interior contact with $\gamma_1 \times \Rbb$ and 
$\eta_1 \times \Rbb$. Then 
$\lambda_n T_{p_n}(\gamma_1 \times \Rbb)$ and 
$\lambda_n T_{p_n}(\eta_1 \times \Rbb)$ converges respectively to the plane $P_\gamma$ and $P_\eta$  of $\Fcal^2$ and $\Fcal^3$ that contains $L$. By convexity of $\Omega$, the surface $\Sigmatilde_\infty  \backslash L $ is on one side of $P_\gamma$ and $P_\eta$. 
Since $\Sigmatilde_\infty$ is no flat, maximun principle at the boundary prove that no tangency point is possible along $L$ with a leaf containing $L$ i.e. $P_\gamma$ and $P_\eta$.

If $q_\infty$ is on the boundary of $\Omega$, but
not at a vertex, $q_\infty$ is the limit of a sequence of points which are the projection on 
$\Hbbh$ of $p_n \in \Sigma_n$ close to a non vertical part of $\Gamma^1_n \cup \Gamma^2_n$. Hence $q_\infty$ is at some interior point of $\gamma_1$ or 
$\gamma_2$. Suppose $p_n$ accumulate to the upper part of $\Gamma^1_n$ 
(the proof of the lower part is similar).
Define $f_{i,n_0}$ on $\gamma_i$ for $i=1,2$ such that the graph of $f_{i,n_0}$ (up to a constant) represent the upper graph part of 
$\Gamma^i_{n_0}$. For $j = 1, 2, 3,$ consider $w^j$ a minimal graph on
$\Omega$ which assigns $f_{i,n_0}$ on $\gamma_i$ and $g_i^j: \eta_i \to \Rbb$ where we consider three
different sets of boundary data which satisfy the inequality $g_i^1 < g_i^2<g_i^3$ in the interior of $\eta_i$ for $i=1,2$ such that by maximum principle
$$w^1<w^2<w^3 \hbox{ on } \Omega \hbox{ and } w^1=w^2=w^3=f_{i,n_0} \hbox{ on } \gamma_i.$$
By the maximum principle at the boundary, $\Sigma_{n_0}$ is on one side of ${Gr(w^j)}$, without tangent point at the boundary.
 
Denote by $\Fcal^j :=\Fcal^{Gr(w^j)}$, the foliation described in section 3.1-(3) for the function $w^j$. For $n>n_0$, there is an upper leaf (say $\Lambda^j_0$) that meet $\Gamma^i_n$ along a vertical translation of the graph of $w^j$ 
and $\Sigma_n$ is below $\Lambda^j_0$.
Let be $\Fcal_{\infty}^j$ the limit of $\lambda_n T_{p_n} (\Fcal^j)$ for $n \to +\infty$. 
Since the  $\Fcal_{\infty}^j$ for $j=1,2,3$ are obtained by the expression of $Gr(w^j)$ at one point of the boundary, they are distinct family of parallel plane, each one with one leaf  containing $L$, without tangency point with $\Sigmatilde_\infty$ by the maximun principle at the boundary. Then constructing $\Sigmatilde_\infty$ and $\Lambda^j_0$ give rise to a plane $P^j$ passing through $L$ with $\Sigmatilde_\infty$ below this plane for $j=1,2,3$. Since the intersection is $L$, no tangency is possible along $L$ with the surface by maximum principle at the boundary.

Hence as in case 1, the number of interior tangency for $\Sigmatilde_\infty$ with foliations of parallel plane of $\Fcal_\infty^j$, $j=1,2,3,$ is less than two. The foliations 
$\Fcal_\infty^j$ are invariant by 
$\sigma_L$ (one leaf contains $L$) $j = 1,2,3$, and again, an application of Mo-Osserman theorem to $\Sigmatilde_\infty \cup \sigma_L (\Sigmatilde_\infty)$ gives the claim in this case.
\end{proof}

We next show that the surface $\Sigmatildevc$ has no boundary. Suppose the contrary that $\dhr\Sigmatilde_{\infty}\ne\emptyset$.  In this case, describe in case 2, $\dhr\Sigmatilde_{\infty}$ is a straight line $L$.
Moreover, $\Sigmatilde_{\infty}$ is located in a subspace $\Theta$ of $\Rbb^3$ delimited by two planes that have the line of intersection $L$ and whose dihedral angle is less than $\pi$ (consider the wedge bounded by $P_\gamma$ and $P_\eta$ in one case and the wedge bounded by
$P_\gamma$ and $P^1$ in the other case). Let $\Sigma^*_\infty$ (resp. $\Theta^*$) be the union of $\Sigmatildevc$ (resp. $\Theta$) and its image under the symmetry about $L$. By Claim, $\Sigma^*_\infty$ is a complete, embedded  minimal surface of finite total curvature.
	Moreover, $\Sigma^*_\infty$ is not a plane, by \cite{Sch83} and Half-space theorem \cite{HM90}, $\Sigma^*_\infty$ has an end $E$ asymptotic to  end of a catenoid. Since a catenoidal end $E$ does no contains a straightline, $E\subset\Theta$ or $E\subset \Theta^*\setminus \Theta$, both an acute wedge between two planes. Since a catenoidal end cannot be contains in a wedge, this contradicts $\Sigmatilde_\infty$ has a boundary. This finishes the proof of Proposition \ref{lem3.3.12}.	
\end{proof}

\subsection{Some geometric properties of $\Sigma_n$}
\label{subsect3.3.4}
In this section, we study some topological properties of possible limit of a subsequence of $ \Sigma_n $ bounded by 
$\Gamma^1_n \cup\Gamma^2_n$, assuming that $ \Sigma_n $ has uniform bounded curvature.  For each $t \in (-n,n)$, denote by $\Gamma (t)$ the intersection of $\Sigma_n$ with $\{z = t\}$. 
Define $h : \Sigma_n \to \Rbb$ the restriction to $\Sigma_n$ of the coordinate function $\zsf: \PSLhR \to \Rbb$ in the cylinder model. Thus  $\Gamma (t)=h^{-1}(t)$.

A point $p \in \Sigma_n$ is said to be {\it horizontal point} if $p$ is a critical point of the function $h$, i.e., $\Sigma_n$ is tangent to the plane $\{z = h(p)\}$ at $p$. By Proposition \ref{pro3.1}, there are at most two points where $\Sigma_n$ meets $\Fcal^h$ in  a  tangential  way. We denote these horizontal points by $p^+(\Sigma_n),p^-(\Sigma_n)$ which may coincide, with $h(p^+(\Sigma_n)) \geq  h(p^-(\Sigma_n))$. We will denote by $h^+(\Sigma)$(resp. $h^-(\Sigma)$) the value $h(p^+(\Sigma))$ (resp. $h(p^-(\Sigma))$) of the horizontal point $p^+(\Sigma)$ (resp. $p^-(\Sigma)$) of the surface $\Sigma$. For each $t \in (-n,n)$, we define in the cylinder model
 $\Sigma^+_n (t)= \Sigma_n \cap \{ z \geq t \}$ and  $\Sigma^-_n (t) = \Sigma_n \cap \{ z \leq t \}$.

We denote by $\Sigmacheck_n$ a vertical translation of $\Sigma_n$, i.e., $\Sigmacheck_n=\Tsf_h(\Sigma_n)$ for some 
$h\in\Rbb$ that we will mention in each case.

\begin{lem}\label{pro4.4.1}
	We have the following properties of $\Sigma_n$.
	\begin{enumerate}
		\item $\Sigma_n$ has one or two  horizontal points.
		\item If $t>h^+(\Sigma_n)$ (resp. $t<h^-(\Sigma_n)$) then $\Sigma_n^+(t)$ (resp. $\Sigma_n^-(t)$) consists of two simply connected components.
		The curve $\Gamma(t)$ has two connected components diffeomorphic to $[0,1]$. Each one of them is joining two points in a same component of $\dhr\Sigma_n$.
		\item If $h^+(\Sigma_n)-h^-(\Sigma_n)> \sqrt{1+4\tau^2}\pi$ then
		\begin{enumerate}
			\item For each $t\in (h^-(\Sigma_n),h^+(\Sigma_n))$, $\Sigma^+_n(t)$ and $\Sigma^-_n(t)$ are simply connected. Moreover, $\Gamma(t)$ consists of two components diffeomorphic to $[0,1]$. Each one is joining two points in two distinct components of $\dhr\Sigma_n$.
\item The set $\Sigma_n\cap \left\{h^-(\Sigma_n)< z <h^+(\Sigma_n) \right\}$ consists of two simply connected components.
		\end{enumerate}
	\end{enumerate}
\end{lem}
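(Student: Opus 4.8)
The strategy is to study the height function $h=\zsf|_{\Sigma_n}$ as a Morse function on the annulus $\Sigma_n$, exploiting that the horizontal slices $\{z=t\}$ form the minimal foliation $\Fcalh$. Three preliminary facts drive everything. (a) Each $\Gamma^i_n$ lies in the vertical plane $\gamma_i\times\Rbb$, in which the level sets of $\zsf$ are the horizontal geodesics $\gamma_i\times\{t\}$; since $\Gamma^i_n$ bounds a convex region there, $\zsf|_{\Gamma^i_n}$ has exactly one maximum $T^i$ and one minimum $B^i$, so for $\max\{\zsf(B^1),\zsf(B^2)\}<t<\min\{\zsf(T^1),\zsf(T^2)\}$ the set $\partial\Sigma_n\cap\{z=t\}$ is four points (two on each $\Gamma^i_n$). (b) By the interior strong maximum principle between $\Sigma_n$ and the minimal leaves $\{z=t\}$, $h$ has no interior local extremum: every interior critical point is a (possibly degenerate) saddle at which the level set is $2k$ arcs, $k\ge2$, and by Proposition \ref{pro3.1} applied to $\Fcal=\Fcalh$ there are at most two of them. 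The same principle shows there is no compact minimal surface with boundary contained in one horizontal plane other than a planar piece; applied near $T^i$ (resp. $B^i$) it forces $\Sigma_n$ to lie locally in $\{z\le\zsf(T^i)\}$ (resp. $\{z\ge\zsf(B^i)\}$) without being tangent to the leaf, so the gradient of $h$ points \emph{outward} at every $T^i$ and \emph{inward} at every $B^i$; it also excludes any closed component of a regular level set $\Gamma(t)$ that bounds a disk in $\Sigma_n$ (such a disk would be planar). Finally, by (b) the image of $h$ is $[\min_{\partial}\zsf,\max_{\partial}\zsf]$.

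For part (1): at most two horizontal points by (b). If there were none, the Morse count $\chi(\Sigma_n)=0=\sum_{\mathrm{int}}(-1)^{\mathrm{ind}}+\sum_{\partial,\ \nabla h\ \mathrm{inward}}(-1)^{\mathrm{ind}}$ forces $\#\{B^i:\nabla h\ \text{inward}\}=\#\{T^i:\nabla h\ \text{inward}\}$, which is impossible since by the preliminaries the first number is $2$ and the second is $0$. Hence $\Sigma_n$ has one or two horizontal points. For part (2): above the largest interior critical value $h^+:=\zsf(p^+(\Sigma_n))$ there is no interior critical point, so the ascending gradient flow of $h$ retracts $\Sigma_n^+(t)$ onto $\partial\Sigma_n\cap\{z\ge t\}$, which is the two top arcs of $\Gamma^1_n$ and $\Gamma^2_n$ near $T^1$ and $T^2$; these two pieces are disjoint (merging them would need a saddle above $h^+$), so $\Sigma_n^+(t)$ is two disks and $\Gamma(t)$ is two arcs, each joining the two points of a single $\Gamma^i_n$ at height $t$. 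Replacing $\zsf$ by $-\zsf$ gives the statement for $t<h^-$.

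For part (3): since $h^+-h^->\sqrt{1+4\tau^2}\,\pi>0$ there are two distinct horizontal points $p^\pm$, both saddles, with $h^\pm=\zsf(p^\pm)$, and above $h^+$ (resp. below $h^-$) the structure is that of part (2). Going downward across $p^+$, the saddle either (i) joins the two disks of $\Sigma_n^+(h^++\varepsilon)$ into one, or (ii) adds a handle to one of them. In case (ii) the set $\Sigma_n\cap\{h^-<z<h^+\}$ contains a subannulus $A'$ foliated by the closed, core-homotopic curves $c_t=A'\cap\{z=t\}$, each bounding a disk in the simply connected minimal slice $\{z=t\}$, so that $\zsf$ has no critical point on $A'$. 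Here one invokes a height estimate: since $|\nabla\zsf|^2=1+\tau^2(x^2+y^2)<1+4\tau^2$ on $\PSLhR$, a barrier argument (sliding the rotationally invariant ``catenoid type'' minimal annuli of $\PSLhR$, equivalently a vertical--flux estimate through the circles $c_t$) shows that any minimal annulus on which $\zsf$ has no critical point has vertical extent at most $\sqrt{1+4\tau^2}\,\pi$; this contradicts $h^+-h^->\sqrt{1+4\tau^2}\,\pi$. So (i) holds at $p^+$ and, by symmetry, at $p^-$. Consequently for $t\in(h^-,h^+)$ the set $\Sigma_n^+(t)$ is a single disk whose boundary circle is the top arc of $\Gamma^1_n$, an arc of $\Gamma(t)$, the top arc of $\Gamma^2_n$, and another arc of $\Gamma(t)$; hence $\Gamma(t)$ is two arcs each joining the two distinct boundary curves, and likewise for $\Sigma_n^-(t)$; and since $h$ has no critical value in $(h^-,h^+)$, the set $\Sigma_n\cap\{h^-<z<h^+\}$ is two simply connected strips. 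This gives 3(a) and 3(b).

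The main obstacle is the height estimate used to exclude case (ii): in $\Rbb^3$ no such bound holds (a tall thin annular piece of a catenoid carries $\zsf$ with no critical point and has unbounded height), so the argument must genuinely use the geometry of $\PSLhR$ -- the sharp gradient bound governed by $1+4\tau^2$ together with the confinement of catenoid-type ends to slabs -- to produce the sharp constant $\sqrt{1+4\tau^2}\,\pi$. Once that estimate is available, the remaining Morse-theoretic bookkeeping (ruling out closed level-set components, locating the interior critical values below $\min_i\zsf(T^i)$ and above $\max_i\zsf(B^i)$ for $n$ large, and deciding how each saddle resolves) is routine.
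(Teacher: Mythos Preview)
Your argument is correct, and the crucial ingredient for part (3)---sliding the rotational catenoids of Pe\~nafiel, whose asymptotic boundary is trapped in a horizontal slab of height $\sqrt{1+4\tau^2}\,\pi$, against the subannulus bounded by the closed level curves---is exactly what the paper does. For (1) and (2), however, you take a different route: you run a boundary Morse count (using that $\nabla h$ points outward at the $T^i$ and inward at the $B^i$) and a gradient-flow retraction, whereas the paper only quotes Proposition~\ref{pro3.1} for the upper bound and, to see that $\Gamma(t)$ splits into two arcs near the top, cuts $\Sigma_n$ by a fixed separating vertical plane $\eta_{12}\times\Rbb$ (with $\eta_{12}$ the geodesic joining the midpoints of $\eta_1,\eta_2$), whose intersection with $\Sigma_n$ is compact; existence of an interior critical point then follows from the forced change of topology. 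Your approach is more systematic, the paper's is shorter and avoids analysing the boundary behaviour of $\nabla h$. Two minor points to tighten: your claim that $\nabla h$ points outward at \emph{each} $T^i$ really uses that $\zsf(T^1)=\zsf(T^2)$ (true here because the $\Gamma^i_n$ are smoothed ``in the same manner'') together with the boundary Hopf lemma to exclude a horizontal tangent plane---the local argument you sketch only controls the global boundary maximum. And while your formula $|\nabla\zsf|^2=1+\tau^2(x^2+y^2)<1+4\tau^2$ is correct, it is not by itself the source of the constant $\sqrt{1+4\tau^2}\,\pi$; that number is the limiting slab height of the catenoid family, and the ``equivalent'' flux estimate you mention would need an independent argument.
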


\begin{figure}[h!]

\hskip 3cm
	\includegraphics[width=0.3\linewidth]{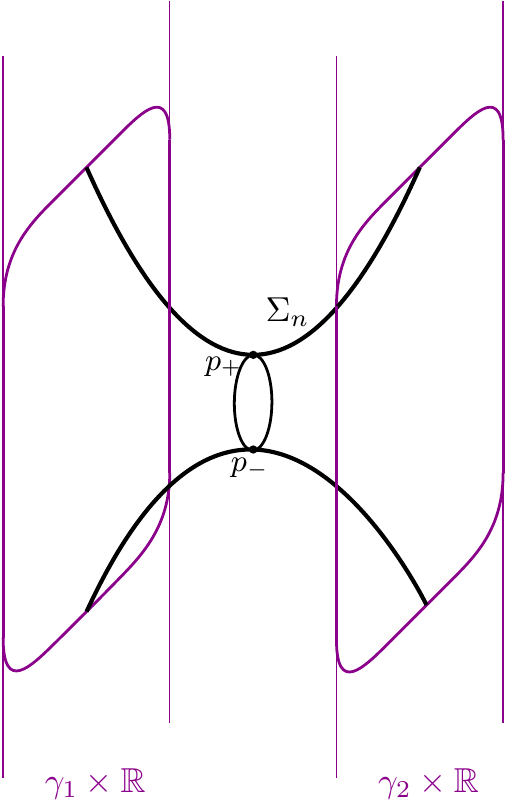}
	\hskip 2cm
	\includegraphics[width=0.3\linewidth]{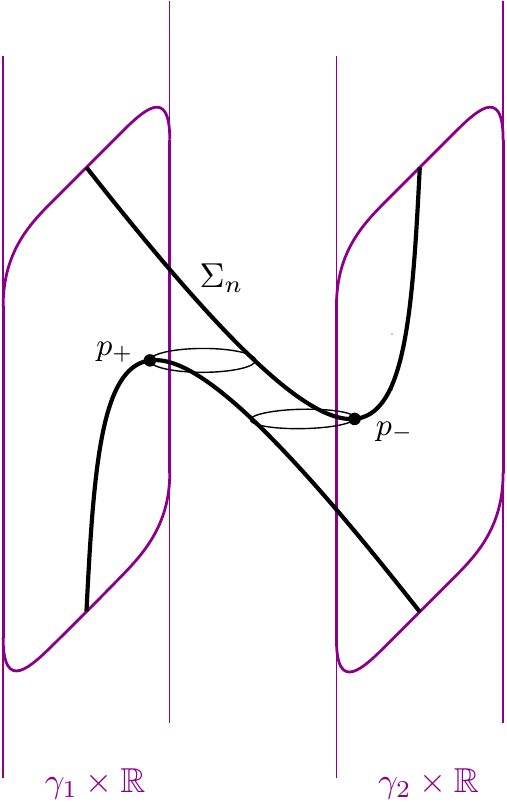}
	\caption{(Left)$\Gamma (h^+(\Sigma_n))$has no cycle.
	\hskip 0.3cm (Right) $\Gamma (h^+(\Sigma_n))$ contains a cycle. }
	\label{fig3.7}
\end{figure}
\begin{proof}By Proposition  \ref{pro3.1}, $\Sigma_n$ has at most two tangent points with the foliation $\Fcal^h$, this prove 1). By the maximum principle $\Sigma_n$ has no interior point of contact with $\{ z=n \}$. Consider the vertical plane $\eta_{12} \times \Rbb$
with $\eta_{12}$ a geodesic segment that joins the midpoints of $\eta_1$ and $\eta_2$ which doesn’t intersect  $\gamma_1 \cup \gamma_2$ (by convexity of $\Omega$). Then $(\eta_{12} \times \Rbb) \cap \Sigma_n$ is compact and $h$  attains its maximun value at $t_0 <n $. This prove that $\Gamma (t) \cap (\eta_{12} \times \Rbb) = \emptyset$ for 
$t_0 < t \leq n$.
Hence $\Gamma (t)$ has two connected components, one connecting points of $\Gamma^1_n$ and one connectiong points of $\Gamma^2_n$. By Morse theory, $\Gamma (t)$ have a change of topology with a critical point of the function $h$. This explain that $\Sigma_n^+ (t)$  has two connected components for $t>h_n^+$.

To prove 3), we remark that the situation describe in Figure \ref{fig3.7} (Right) concerns a case where $\Gamma(h^+(\Sigma_n))$ has non trivial cycle. In this case, this define an annulus $\Acal$ contains in $\Omega \times \Rbb$ with boundary contains in $\{ z= h^+(\Sigma_n)\}$
and $\{ z=h^-(\Sigma_n)\}$.  We  consider the one parameter family of rotational examples called rotational catenoid parameterized by the size of its neck (C. Penafiel  \cite{Pen}, section 3). The boundary at infinity of these annuli is contained in $\{h^-(\Sigma_n)<z<h^-(\Sigma_n)+  \sqrt{1+4\tau ^2} \pi \}$. There is one example with large neck which contains $\Omega \times \Rbb$ in one side. To find a contradiction with the maximun principle, it suffices to decrease the size of the neck up to a first point of contact with $\Acal$ which cannot be on the boundary when $h^+(\Sigma_n) -h^-(\Sigma_n) > \sqrt{1+4\tau ^2} \pi $. In this case $\Gamma (t) $ cannot contains any non  trivial cycle. Hence we are in the situation of the Figure \ref{fig3.7} (Left) describe in (3).
\end{proof}

By Theorem \ref{thm4.2.6}, there exists a minimal solution $u^\pm_i$ on $\Omega$ such that $u^\pm_i=\pm\infty$ on $\gamma_i$ and 
$u^\pm_i=0$ on  $\eta_1\cup \eta_2\cup \gamma_j$ with $i \neq j$ and  $i,j=1,2$.
Define $u^+=\sup\{u^+_1,u^+_2\}$ and $u^-=\inf\{u^-_1,u^-_2 \}$. 
The following lemma is similar in spirit to a theorem of Jenkins-Serrin \cite[Theorem 4]{JS66}.

\begin{lem}\label{lem3.3.14}
	For each $n$, the minimal surface $\Sigma_n$ is below the graph of $u^+ + h^+(\Sigma_n)$ and above the graph of $u^-+h^-(\Sigma_n)$.
\end{lem}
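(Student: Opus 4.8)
The plan is to prove the upper bound (the lower bound follows by the symmetric argument, replacing $u^+$ by $u^-$ and $h^+$ by $h^-$, or by applying a vertical reflection of the whole picture). First I would observe that the function $u^+ = \sup\{u^+_1, u^+_2\}$ is a supersolution of the minimal surface equation on $\Omega$, being the supremum of two solutions; its graph $\Gr(u^+)$ is therefore a barrier from above in the sense of the maximum principle, even though it is not itself smooth along the set where $u^+_1 = u^+_2$. Concretely, I would not use $\Gr(u^+)$ directly but rather compare $\Sigma_n$ separately with $\Gr(u^+_1 + h^+(\Sigma_n))$ and with $\Gr(u^+_2 + h^+(\Sigma_n))$, and then take the minimum of the two conclusions to recover the bound by $u^+ + h^+(\Sigma_n)$.

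The key step is the comparison of $\Sigma_n$ with $\Gr(u^+_i + c)$ for a suitable constant $c$. Fix $i$, say $i=1$. The graph $\Gr(u^+_1)$ is a complete minimal surface with boundary data $+\infty$ on $\gamma_1$ and $0$ on $\eta_1 \cup \eta_2 \cup \gamma_2$; vertical translates $\Tsf_h(\Gr(u^+_1))$ foliate $\Omega \times \Rbb$. The idea is a sliding argument: start with $h$ very large and positive, so that $\Tsf_h(\Gr(u^+_1))$ lies entirely above the compact surface $\Sigma_n$ (possible because $\Sigma_n$ is compact, contained in $\Omega\times\Rbb$, and $\Gr(u^+_1)$ tends to $+\infty$ near $\gamma_1$ and equals $0$ elsewhere on $\dhr\Omega$ while $\Sigma_n$ has bounded $z$-coordinate), and then decrease $h$ until a first point of contact occurs. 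By the interior and boundary maximum principle, since neither surface contains the other, the first contact must be at the boundary $\dhr\Sigma_n = \Gamma^1_n \cup \Gamma^2_n$. On $\Gamma^2_n \subset \gamma_2 \times \Rbb$ the surface $\Gr(u^+_1 + h)$ has $z$-coordinate exactly $h$ over $\gamma_2$; on $\Gamma^1_n \subset \gamma_1 \times \Rbb$ the translate $\Tsf_h(\Gr(u^+_1))$ has $z$-coordinate tending to $+\infty$, hence stays above $\Gamma^1_n$ as long as $\Gamma^1_n$ is confined to $\gamma_1 \times [-n,n]$. So the first contact happens over $\gamma_2$, and it happens precisely when $h$ reaches the maximal height of $\Sigma_n$ over $\gamma_2$; but the maximal height of $\Sigma_n$ over its whole domain is attained at a horizontal point (a critical point of $h$ on $\Sigma_n$), so it is at most $h^+(\Sigma_n)$. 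Wait — I need that the height of $\Sigma_n$ over $\gamma_2$ is controlled; more carefully, the relevant contact value $c$ satisfies $c \le h^+(\Sigma_n)$ because $u^+_1 \ge 0$ everywhere on $\Omega$ with equality on $\gamma_2$, so $\Tsf_c(\Gr(u^+_1))$ first touches $\Sigma_n$ only when $c$ is at least the supremum of the $z$-coordinate on $\Sigma_n$ minus the (nonnegative) value of $u^+_1$ at the projected point, and this supremum is $h^+(\Sigma_n)$. Hence $\Sigma_n$ lies below $\Gr(u^+_1 + h^+(\Sigma_n))$.

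Running the same argument with $u^+_2$ gives that $\Sigma_n$ lies below $\Gr(u^+_2 + h^+(\Sigma_n))$, and therefore below $\Gr(\min\{u^+_1,u^+_2\} + h^+(\Sigma_n))$. Here I realize the statement uses $u^+ = \sup\{u^+_1,u^+_2\}$, not the infimum, so the correct reading is that each comparison already gives the bound by $u^+_i + h^+(\Sigma_n) \le u^+ + h^+(\Sigma_n)$, and it suffices to use either one; the point of taking the sup in the definition of $u^+$ is that $\Gr(u^+)$ then serves simultaneously as a barrier whose boundary values are $+\infty$ on \emph{both} $\gamma_1$ and $\gamma_2$, which is what is needed later. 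The lower bound $\Sigma_n$ above $\Gr(u^- + h^-(\Sigma_n))$ follows identically, sliding $\Tsf_h(\Gr(u^-_i))$ from far below and stopping at the first contact, which occurs at the minimal height $h^-(\Sigma_n)$. The main obstacle is the non-smoothness of $u^\pm$ along $\{u^+_1 = u^+_2\}$ and the infinite boundary data on the $\gamma_i$: both are handled by comparing with the individual smooth solutions $u^\pm_i$ rather than with $u^\pm$ itself, and by approximating the infinite-data graphs from the interior by Jenkins--Serrin solutions on subdomains with finite data (as in the proof of Proposition \ref{pro3.3.2}), using uniqueness from Theorem \ref{thm4.2.6} to pass to the limit.
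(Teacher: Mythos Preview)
Your sliding argument has a genuine gap. When you slide $\Gr(u^+_1 + h)$ downward and look for the first contact with $\Sigma_n$, the contact indeed occurs on $\Gamma^2_n \subset \gamma_2 \times \Rbb$ where $u^+_1 = 0$; but the maximal $z$-coordinate on $\Gamma^2_n$ is essentially $n$, not $h^+(\Sigma_n)$. Your attempted fix, that the supremum over $p\in\Sigma_n$ of $z(p) - u^+_1(\pi(p))$ equals $h^+(\Sigma_n)$, is simply false: at the top of $\Gamma^2_n$ this quantity is $n - 0 = n$. More generally, $h^+(\Sigma_n)$ is the height of an \emph{interior} tangency with a horizontal leaf, not the maximum of $z$ on the compact surface $\Sigma_n$ (which is attained on the boundary near height $n$). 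So your comparison only yields the trivial bound $\Sigma_n \subset \{z \le n\} \subset \{z \le u^+_1 + n\}$.

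The paper supplies the missing idea: one must first invoke Lemma~\ref{pro4.4.1}(2) to see that $\Sigma_n \cap \{z > h^+(\Sigma_n)\}$ splits into two simply connected pieces $\Delta_1, \Delta_2$, with $\dhr\Delta_i \subset (\gamma_i \times \Rbb) \cup \{z = h^+(\Sigma_n)\}$. Now compare $\Delta_i$ alone with $\Gr(u^+_i)$ (after approximating $u^+_i$ by a solution $v_i$ on a slightly larger domain $\Omega'\supset\Omega$ with $\gamma_i\subset\subset\gamma'_i$, exactly as you suggest at the end). The point is that $\Delta_i$ has no boundary on $\gamma_j$ for $j\ne i$, so the obstruction you ran into disappears; the portion of $\dhr\Delta_i$ on $\gamma_i\times\Rbb$ is never touched because $v_i = +\infty$ there, and the portion on $\{z = h^+(\Sigma_n)\}$ lies below $\Gr(v_i + h^+(\Sigma_n))$ since $v_i \ge 0$. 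This is what pins the translation constant to $h^+(\Sigma_n)$ rather than $n$. The part $\Sigma_n \cap \{z \le h^+(\Sigma_n)\}$ is then automatically below $\Gr(u^+ + h^+(\Sigma_n))$ because $u^+ \ge 0$.
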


\begin{proof}
We prove that $\Sigma_n$ is below the graph of $u^+ + h^+(\Sigma_n)$.
Let $\Sigmacheck_n$ be the vertical translation of $\Sigma_n$ such that $h^+(\Sigmacheck_n)=0$. We need to prove that $\Sigmacheck_n$ is below $\Gr(u^+)$ for all $n$. Since $\Gr(u^+)$ is above $\{z=0\}$, it suffices to show that $\Sigmacheck_n\cap \{z>0\}$ is below $\Gr(u^+)$. By Proposition \ref{pro4.4.1}, $\Sigmacheck_n\cap \{z>0\}$ consists of the two simply connected components $\Delta_1$ and $\Delta_2$ whose boundaries are in $\left(\gammahat_1\times\Rbb\right)\cup \{z=0\}$ and $\left(\gammahat_2\times\Rbb\right)\cup \{z=0\}$ respectively. We will prove that $\Delta_i$ is below the minimal graph $\Gr(u^+_i)$ for $i=1,2$.

Let $\Omega'\supset\Omega$ be a bounded convex quadrilateral domain whose boundary is composed of open geodesics 
	$\gamma'_1,\eta'_1,\gamma'_2,\eta'_2$ in this order together with their endpoints, moreover 
	$\gamma_1\subset\subset\gamma'_1$, $\gamma_2\subset\subset\gamma'_2$.
By Theorem \ref{thm4.2.6}, there exists a minimal solution $v_i$ on $\Omega'$ that $v_i=+\infty$ in 
	$\gamma'_i$ and 
	$v_i=0$ on 
	$\eta'_1\cup \eta'_2\cup \gamma'_j$ with $i \neq j$ and  $i,j=1,2$.
	When $\Omega'$ converges to $\Omega$ then $v_i$ converges to $u^+_i$.
	So it suffices to show that $\Delta_i$ is below $\Gr(v_i)$.
	For $h\ll 0$, we see that $\Tsf_h(\Delta_i)$ is below $\Gr(v_i)$. Let increase $h$ and suppose that $\Tsf_h(\Delta_i)$ touches $\Gr(v_i)$ at the   point $p$ with $h(p)=h_0$.
	By the maximum principle, the first point of contact $p$ can not be an interior point of both $\Gr(v_i)$ and $\Tsf_{h_0}(\Delta_i)$.
	Moreover, we see that $\dhr\Gr(v_i)\cap \Tsf_h(\Delta_i)=\emptyset$ for all $h$. Hence, 
	$p\in \dhr \Tsf_{h_0}(\Delta_i)\cap \Gr(v_i)$. However, $\dhr \Tsf_{h_0}(\Delta_i)\cap \Gr(v_i)=\emptyset$ when $h\le 0$.
This means $h_0>0$. So $\Delta_i$ is below $\Gr(v_i)$. The same arguments prove that $\Sigma_n$ is above the graph of $u^-+h^-(\Sigma_n)$.
\end{proof}

\begin{lem}\label{lem3.3.15}
	Let $\Xi$ be a domain in $\Hbbh$ delimited by two half geodesics $\gamma,\eta$ starting at $O$ and forming an angle $<\pi$.
	Let $\Sigma \subset \Xi\times\Rbb$ be a minimal surface in $\PSLhR$ and a point $p\in \Sigma$ satisfying $\chuan{A_\Sigma}\le C$ and $\Dist_\Sigma(p,\dhr \Sigma)\ge \delta$.
	Then there is $\epsilon=\epsilon(C,\delta)$ such that $p\notin D_\epsilon(O)\times\Rbb$
where $D_\epsilon(O)$ is the disk of the radius  $\epsilon$ about $O$  in $\Hbbh$.
\end{lem}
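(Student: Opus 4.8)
The plan is to argue by contradiction through a compactness argument: if the lemma fails, a suitable limit produces a minimal surface of $\PSLhR$ touching the edge $\{O\}\times\Rbb$ of the wedge at an interior point, which we then rule out by a blow-up. So suppose there are constants $C,\delta>0$, minimal surfaces $\Sigma_n\subset\Xi\times\Rbb$, and points $p_n\in\Sigma_n$ with $\chuan{A_{\Sigma_n}}\le C$ and $\Dist_{\Sigma_n}(p_n,\dhr\Sigma_n)\ge\delta$, but $p_n\in D_{1/n}(O)\times\Rbb$ for every $n$. Since $\dhr_z$ is a Killing field and $\Xi\times\Rbb$ is invariant under vertical translations, after a vertical translation we may assume $\zsf(p_n)=0$, so that $p_n\to(O,0)$ in the cylinder model.

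First I would produce a local limit surface. From $\chuan{A_{\Sigma_n}}\le C$ and $\Dist_{\Sigma_n}(p_n,\dhr\Sigma_n)\ge\delta$, the Uniform graph lemma gives a radius $r_0=r_0(C,\delta)\in(0,\delta]$ such that near $p_n$ the surface $\Sigma_n$ is the graph, over the disc of radius $r_0$ in $T_{p_n}\Sigma_n$, of a function with uniformly bounded gradient and Hessian. By Arzela--Ascoli and elliptic regularity for the minimal surface equation (as in the proof of Proposition \ref{lem3.3.12}), a subsequence converges smoothly, on a fixed-size neighbourhood of $p_n$, to a smooth minimal surface $\Sigma_\infty$ of $\PSLhR$ through $(O,0)$ which is a boundary-free embedded minimal disc near $(O,0)$. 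Since $\Sigma_n\subset\Xi\times\Rbb$ for every $n$, the limit satisfies $\Sigma_\infty\subset\overline{\Xi}\times\Rbb$ near $(O,0)$.

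The geometric heart is then to contradict the existence of such a $\Sigma_\infty$. In the cylinder model the geodesics of $\Hbbh$ through the centre $O$ are Euclidean rays from the origin, because $\lambda^2(dx^2+dy^2)$ is conformal to the Euclidean metric and invariant under Euclidean rotations about $O$; moreover the angle of $\Xi$ at $O$ equals its Euclidean angle, which is $<\pi$. Hence, near $(O,0)$, the region $\overline{\Xi}\times\Rbb$ is exactly a Euclidean wedge $W=\overline{S}\times\Rbb$ where $S\subset\Rbb^2$ is an open sector of opening angle $<\pi$ with vertex $O$; in particular $W$ is a closed cone with edge the $z$-axis, invariant under the dilations $(x,y,z)\mapsto(\rho x,\rho y,\rho z)$ for $\rho>0$. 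Now $\Sigma_\infty$ is of class $C^1$ through the origin, so it has a well-defined tangent $2$-plane $T=T_{(O,0)}\Sigma_\infty$, and the dilated surfaces $\rho\Sigma_\infty$ converge to $T$ in $C^0_{\mathrm{loc}}$ as $\rho\to+\infty$. For each fixed radius $R$ and $\rho$ large, $\rho\Sigma_\infty\cap B_R(O)=\rho\left(\Sigma_\infty\cap B_{R/\rho}(O)\right)\subset\rho W=W$, so passing to the limit and using that $W$ is closed yields $T\subset W$. But no $2$-plane through the origin can lie in $W$ when the opening angle is $<\pi$: if $T$ contains the $z$-axis, then the image of $T$ under the projection $(x,y,z)\mapsto(x,y)$ is a whole line through $O$, not contained in $\overline{S}$; otherwise $T$ meets the $z$-axis only at the origin, so that projection restricts to a linear isomorphism $T\to\Rbb^2$ whose image is the whole plane, again not contained in $\overline{S}$. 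This contradiction proves the lemma.

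The main obstacle is the first step, the extraction of a limit surface genuinely \emph{passing through} $(O,0)$, and this is exactly where both hypotheses are used: the curvature bound $\chuan{A_{\Sigma_n}}\le C$ together with the intrinsic distance bound $\Dist_{\Sigma_n}(p_n,\dhr\Sigma_n)\ge\delta$ forces a graph of definite size over $T_{p_n}\Sigma_n$, so the surfaces cannot pinch off near the edge; dropping either hypothesis makes the statement false, as small catenoidal necks accumulating on $\{O\}\times\Rbb$ show. Everything after that is soft: a wedge of angle $<\pi$ admits no tangent $2$-plane along its edge. One could instead conclude via the maximum principle, using the two minimal faces $\gamma\times\Rbb$ and $\eta\times\Rbb$ as barriers, but the dilation argument is cleaner because $(O,0)$ lies on the boundary of each face.
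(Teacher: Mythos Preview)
Your proof is correct, and it takes a genuinely different route from the paper's.

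The paper argues by contradiction as you do, but instead of extracting a limit in $\PSLhR$ it \emph{rescales the ambient}: applying the conformal dilation $(x,y,z)\mapsto(nx,ny,nz)$ sends $\PSLhR$ to $\Ebb^3(-1/n^2,\tau/n)$, sends $p_n$ into $D_1(O)$, pushes the intrinsic distance to the boundary to $n\delta\to\infty$, and drives the second fundamental form down to $C/n\to 0$. The ambient spaces converge to Euclidean $\Rbb^3$, so a subsequence of the rescaled surfaces converges to a complete flat plane lying in a wedge of angle $<\pi$, which is impossible.

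Your approach stays in the fixed ambient $\PSLhR$, passes to a minimal disc $\Sigma_\infty$ through the edge point $(O,0)$, and then observes the purely first-order fact that its Euclidean tangent plane would have to lie in the closed wedge cone. This is more elementary: you never need the convergence $\Ebb^3(\kappa_n,\tau_n)\to\Rbb^3$, and in fact your contradiction does not use minimality of the limit at all, only that it is $C^1$ at $(O,0)$. The paper's rescaling, on the other hand, fits naturally with the blow-up machinery used just before in Theorem~\ref{pro4.4.4} and Proposition~\ref{lem3.3.12}, and it yields a \emph{complete} limit in one step; your route trades that for a two-stage argument (local limit, then tangent-plane dilation) but with a lighter toolbox.
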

\begin{proof}
	Assume the contrary that for each $n>0$, there exists a minimal $\Sigma_n$ and a point $p_n\in \Sigma_n$ such that $\chuan{A_{\Sigma_n}}\le C$, $\Dist_{\Sigma_n}(p_n,\dhr \Sigma_n)\ge \delta$ and $p_n\in D_{1/n}(O)\subset \Hbbh$.
Denote by $\widetilde{\Sigma}_n,\widetilde{p}_n$ the image of $\Sigma_n,p_n$ under the conformal diffeomorphism $\PSLhR\to \Ebb^3\left(\frac{-1}{n^2},\frac{\tau}{n}\right), (x,y,z)\mapsto (nx,ny,nz)$. We know that the sequence
$\Ebb^3\left(\frac{-1}{n^2},\frac{\tau}{n}\right)$ converges to the Euclidean space $\Rbb^3$ as $n\to+\infty$.
Then, in the Riemannian manifold $\Ebb^3\left(\frac{-1}{n^2},\frac{\tau}{n}\right)$, we have  $\widetilde{p}_n\in D_1(O)\subset \Hbb^2\left(\frac{-1}{n^2}\right)$, $\Dist_{\widetilde{\Sigma}_n}\left(\widetilde{p}_n,\dhr\widetilde{\Sigma}_n\right)\ge n\delta$, $\chuan{A_{\widetilde{\Sigma}_n}}\le \frac{C}{n}$ and $\widetilde{\Sigma}_n$ is minimal. 
The sequence of minimal surfaces $\widetilde{\Sigma}_n$ has a subsequence converging to a whole plane and this plane is contained in a domain of $\Rbb^3$ delimited by two half-planes forming a wedge with angle $<\pi$, which gives a contradiction.	
\end{proof}

\begin{prop}\label{pro3.3.15b}
	Let $\Sigmacheck_n$ be the image of $\Sigma_n$ under a vertical translation such that  $h^+(\Sigmacheck_n)=0$  and assume that $\left\{h^-(\Sigmacheck_n) \right\}_n$ converges to $-\vc$ as $n\to+\vc$. Then there exists a subsequence of  $\Sigmacheck_n$ converging to a simply connected minimal surface $\Sigmacheck_\vc$ with multiplicity one.
\end{prop}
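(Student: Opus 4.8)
The plan is to extract a subsequential limit of the $\Sigmacheck_n$ from the uniform curvature estimate and then control its multiplicity and topology with the foliation machinery of Sections~2--3. By Theorem~\ref{pro4.4.4} the second fundamental forms of the $\Sigmacheck_n$ are uniformly bounded (the estimate is invariant under vertical translations), and each $\Sigmacheck_n$ is properly embedded in $\Omegabar\times\Rbb$. Since $p^+(\Sigmacheck_n)$ has $\zsf$-coordinate $0$ and lies in the compact set $\Omegabar\times\{0\}$, I would first pass to a subsequence with $p^+(\Sigmacheck_n)\to p_\infty$, and then, exactly as in the proof of Proposition~\ref{lem3.3.12} (uniform local graph estimates, Schauder boundary estimates — the $\Gamma^i_n$ have uniformly bounded geometry and converge, on each fixed compact set of $\Omegabar\times\Rbb$, to vertical segments of $\{p_i,q_i\}\times\Rbb$ — analytic continuation, and a diagonal process over an exhaustion of $\Omegabar\times\Rbb$), extract a further subsequence converging in $C^\infty_{\mathrm{loc}}$ to a minimal surface $\Sigmacheck_\infty\ni p_\infty$, with some finite multiplicity $s\ge 1$ and possibly with boundary on vertical lines $\{p_i,q_i\}\times\Rbb$. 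I would immediately record that $\Sigmacheck_\infty$ is not a horizontal plane: for $t\in(h^-(\Sigmacheck_n),0)$, Lemma~\ref{pro4.4.1}(3a) gives a component of $\Sigmacheck_n\cap\{\zsf=t\}$ which is an arc from $\gammahat_1\times\Rbb$ to $\gammahat_2\times\Rbb$, whose ``middle'' point sits at $\PSLhR$-distance at least $\tfrac12\dist_{\Hbbh}(\gamma_1,\gamma_2)>0$ from $\partial\Sigmacheck_n$ (project by the submersion $\pi$); with the curvature bound these points subconverge, for every fixed $t<0$, to a point of $\Sigmacheck_\infty$ at height $t$.

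\textbf{Multiplicity one.} Since $T_{p^+(\Sigmacheck_n)}\Sigmacheck_n$ is horizontal, $\Sigmacheck_\infty$ is tangent at $p_\infty$ to the leaf $\{\zsf=0\}$ of the horizontal minimal foliation $\Fcalh$, and by the previous step it does not coincide with that leaf near $p_\infty$. I would then apply Lemma~\ref{lem2.3.18} with $M_n=\Sigmacheck_n$, $\Fcal_n=\Fcalh$, $M=\Sigmacheck_\infty$: in any fixed neighbourhood of $p_\infty$, $\Sigmacheck_n$ is tangent to $\Fcalh$ at at least $s$ points for $n$ large. But $\Sigmacheck_n$ also carries the second horizontal point $p^-(\Sigmacheck_n)$, at height $h^-(\Sigmacheck_n)\to-\infty$, hence outside that neighbourhood for $n$ large; since $\Sigmacheck_n$ meets $\Fcalh$ tangentially in at most $2$ points (Proposition~\ref{pro3.1}, applied to the translate $\Sigmacheck_n$), this forces $s\le 1$, i.e. $s=1$. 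The mult-one normal graph representation then propagates by analytic continuation over $\Sigmacheck_\infty$, which is connected as a $C^\infty_{\mathrm{loc}}$-limit of the connected $\Sigmacheck_n$ through $p^+(\Sigmacheck_n)\to p_\infty$; so the whole convergence has multiplicity one.

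\textbf{Simple connectedness.} Fix $t<0$. For $n$ large, $h^-(\Sigmacheck_n)<t<0=h^+(\Sigmacheck_n)$ and $h^+(\Sigmacheck_n)-h^-(\Sigmacheck_n)\to+\infty>\sqrt{1+4\tau^2}\,\pi$, so Lemma~\ref{pro4.4.1}(3a) gives that the truncation $\Sigmacheck_n^+(t)=\Sigmacheck_n\cap\{\zsf\ge t\}$ is (connected and) simply connected. Given a loop $\sigma\subset\Sigmacheck_\infty\cap\{\zsf\ge t\}$, say $\sigma\subset\{t\le\zsf\le R\}$, multiplicity-one convergence produces a nearby loop $\sigma_n\subset\Sigmacheck_n^+(t-\epsilon)$, which therefore bounds an embedded disk $D_n\subset\Sigmacheck_n\cap\{\zsf\ge t-\epsilon\}$. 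Because horizontal planes are minimal, the maximum principle confines $D_n$ to the fixed compact slab $\Omegabar\times[t-\epsilon,\,R+1]$, so $D_n$ subconverges to an embedded disk of $\Sigmacheck_\infty$ with boundary $\sigma$. Hence $\Sigmacheck_\infty\cap\{\zsf\ge t\}$ is simply connected for every $t<0$, and being the increasing union of these sets (all containing $p_\infty$) the surface $\Sigmacheck_\infty$ is simply connected; in particular it realizes the conclusion of the proposition.

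\textbf{Expected main obstacle.} The crux will be the topological stability of the passage to the limit: ruling out extra sheets (multiplicity $\ge 2$) and showing that the generator of $\pi_1(\Sigmacheck_n)$ escapes to $\zsf=-\infty$ rather than surviving in $\Sigmacheck_\infty$. Both are bought from Sections~2--3 — the bound on tangential intersections with a minimal foliation (Proposition~\ref{pro3.1}) together with its semicontinuity (Lemma~\ref{lem2.3.18}) and the escape of $p^-(\Sigmacheck_n)$ for the multiplicity, and the simple connectedness of the truncations $\Sigmacheck_n^+(t)$ (Lemma~\ref{pro4.4.1}(3a)) for the topology. A subsidiary technical point is that the boundary curves $\Gamma^i_n$ drift off to $\zsf=\pm\infty$, which is handled by the Schauder boundary estimates already used in Proposition~\ref{lem3.3.12}.
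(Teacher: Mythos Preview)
Your proof is correct and follows the paper's overall structure (subsequential extraction from the uniform curvature bound, then multiplicity one, then simple connectedness), but your route to multiplicity one is genuinely different. The paper first invokes Lemma~\ref{lem2.3.18} with $\Fcalh$ at $p_\infty$ only to bound the multiplicity by $2$, and then argues \emph{at the boundary}: it shows that $\Sigmacheck_\infty$ contains vertical half-lines through the vertices of $\Omega$ and applies the wedge Lemma~\ref{lem3.3.15} at such a point to rule out a second sheet of $\Sigmacheck_n$, propagating multiplicity one to all of $\Sigmacheck_\infty$ by connectivity. You instead observe that, since $h^-(\Sigmacheck_n)\to-\infty$, the second horizontal point $p^-(\Sigmacheck_n)$ lies outside any fixed neighbourhood of $p_\infty$; hence the $\ge s$ tangencies with $\Fcalh$ produced by Lemma~\ref{lem2.3.18} near $p_\infty$, together with $p^-(\Sigmacheck_n)$, already violate the bound of two from Proposition~\ref{pro3.1} unless $s=1$. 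This is shorter and avoids the boundary analysis entirely; the trade-off is that the paper's detour also establishes that $\partial\Sigmacheck_\infty$ contains the four vertical half-lines, a fact used immediately afterwards in the proof of Proposition~\ref{pro3.3.15a}. Your simple-connectedness argument (lift a loop to $\Sigmacheck_n^+(t-\epsilon)$, confine the bounding disk to a fixed slab via the maximum principle with horizontal leaves, and pass to the limit) is essentially the paper's.
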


\begin{proof}   It follows from the uniform bound for the curvature of 
$\Sigma_n$ that $\Sigmacheck_n$ has also uniform bound for the curvature. The sequence $p^+(\Sigmacheck_n) \in \Omega \times \{0\}$ has a convergent subsequence to some point $p_\infty$ and there is a subsequence of  $ \Sigmacheck_n $ which converges to a surface  $\Sigmacheckvc$ passing through $p_\infty$ with multiplicity smaller or equal to $2$ by Lemma \ref{lem2.3.18}.
	
Now we prove that the multiplicity is $1$. By Lemma \ref{pro4.4.1}, for any $t<0$, $\Sigmacheck^+_n (t)$ is simply connected for $n$ large enough.  The curve $\Gamma (0)=\Sigmacheckvc \cap \{ z =0 \}$ is passing through $p_\infty$, has no cycle and connect the four vertical lines $\{p_i\} \times \Rbb$ and $\{q_i\} \times \Rbb$ passing by the vertices of $\Omega$.  Hence
the limit $ \Sigmacheckvc$ contains half-line of each vertical line along its boundary. Consider a boundary point $p \in \dhr  \Sigmacheckvc \cap (\{q_i\} \times \Rbb)$, and a ball $B_\epsilon (p)$ which intersect the surface in a half-disc $D_\infty=
\Sigmacheckvc \cap B_\epsilon (p) $. The half disc $D_\infty$ is the limit of half-discs $D_n \subset \Sigmacheck_n $. By Lemma \ref{lem3.3.15}, $D_n$ is the unique component of  $\Sigmacheck_n \cap (D_\epsilon (q_i) \times \Rbb)$ intersecting the ball $B_\epsilon (p)$. If not (in the case where the multiplicity is 2), it would have a second component of $\Sigmacheck_n$ into $B_\epsilon (p) \cap (D_\epsilon (q_i) \times \Rbb)$ converging to $D_\infty$ and any point close to of these second component will be the center of a disc into the surface of radius  $\delta>\epsilon/2>0$. This prove that the mulitiplicity is one close to the bounndary, hence one on $\Sigmacheckvc$ by connectivity.

We next show that $\Sigmacheck_\vc$ is simply connected. 
Let $\beta :\Sbb^1\to \Sigmacheck_\vc$ be a closed continuous curve.  We have to show that $\beta$ is the boundary of a disk of $\Sigmacheck_\vc$. Since $\Sigmacheck_\vc$ is the limit of the sequence of minimal surfaces $\Sigmacheck_n$, there exists
a sequence of closed curves $\beta_n:\Sbb^1\to\Sigmacheck_n$ such that $\beta_n\to \beta$ as $n\to
+\infty$. Define $\Acal_{n}=\Sigmacheck_{n}\cap\left\{z\ge h(p^-(\Sigmacheck_{n}))/2\right\}$.
Since $\beta_n\to\beta$ as $n\to +\infty$, and  $h^-(\Sigmacheck_n)\to-\vc$ as $n\to+\vc$ so $\beta_n\subset\Acal_n$ for $n$ large enough. By the third assertion of Proposition \ref{pro4.4.1}, $\Acal_n$ is simply connected, so $\beta_n$ is the boundary of a disk $D_n\subset\Sigmacheck_n$. In fact, each $D_n$ is contained in a convex ball around $\beta_n$, so there exists a subsequence of $D_n$ converging to a minimal disk $D\subset \Sigmacheck_\vc$ whose boundary is $\beta$, this proves $\Sigmacheck_\vc$ is simply connected. 
\end{proof}

In view to prove that the limit surface $\Sigmacheck_\vc$ is a graph on $\Omega$ in the Proposition \ref{pro3.3.15a}, we need to study subset of $\Sigmacheck_\vc$ with boundary on some vertical plane.

\begin{lem}\label{lem2.1.3}
	Let $\etahat \subset \Hbbh$ be a complete geodesic with endpoints at infinity $\widehat{q}_1, \widehat{q}_2$ and $\etahat (s)$ the set of equidistant curve to $\etahat$ into $\Hbbh$ with algebraic distance $s \in \Rbb$. Let $\Sigma\subset\PSLhR$ be a proper immersed minimal surface with bounded curvature and
\begin{equation}\label{equ4.2.7}
\left(\partial \Sigma \cap \{ M_1\leq z \leq M_2 \}\right) \subset\left(\etahat \times\Rbb\right)   \hbox{ and }
\left(  \dhrinfty \Sigma \cap \{ M_1\leq z \leq M_2 \} \right) \subset \left(\{\widehat{q}_1,\widehat{q}_2\}  \times \Rbb\right).
\end{equation} 
	
%	\begin{equation}\label{equ4.2.7bis}
%	\dhrinfty \Sigma\subset\left(\eta\times\Rbb\right)\cup
%	\left(\{q_1,q_2\}\times [-M,M] \right).
%	\end{equation} 
	\noindent
	a) For any constant $\epsilon >0$ there exists a constant $d(\epsilon)>0$, such that if $M_2-M_1>2d(\epsilon)$ we have
	$$\Sigma \cap \{ M_1 +2d(\epsilon) \leq z \leq M_2- d(\epsilon) \} \subset \Omega (\epsilon) \times \Rbb,$$
	where $\Omega (\epsilon)$ is the convex domain bounded by $\etahat(-\epsilon)$ and $\etahat (\epsilon)$.
	\vskip 0.3cm
	\noindent
b)	If $M_1=-\infty$, then for any constant $\epsilon >0$ there exists a constant $d(\epsilon)>0$ such that
$$\Sigma \cap \{ z \leq M_2- d(\epsilon) \} \subset \Omega (\epsilon) \times \Rbb,$$	
and for $\epsilon>0$ small enough, each connected component of $\Sigma \cap \{ z \leq M_2- d(\epsilon) \}$ is the horizontal graph of a function $v: \etahat \times \Rbb \to \Rbb$, which satisfy uniformly that  $|v(q,z)| + |\nabla v (q,z)|  \to 0$ as $z \to -\infty$.
		\vskip 0.3cm
	\noindent
	c) If $M_2=+\infty$ and $M_1=-\infty$, then $\Sigma \subset (\etahat \times \Rbb).$

\end{lem}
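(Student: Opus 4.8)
\emph{Strategy.} The plan is to run one barrier argument, with vertical geodesic planes as barriers, which gives (a) directly, contains the first assertion of (b) as the special case $M_1=-\infty$, and reduces (c) to (a). Recall that $\Omega(\epsilon)$, the domain bounded by the hypercycles $\etahat(-\epsilon)$ and $\etahat(\epsilon)$, is geodesically convex; hence it is the intersection of the closed half-planes --- each bounded by a complete geodesic $\widehat\beta$ supporting $\etahat(\epsilon)$ or $\etahat(-\epsilon)$ --- that contain $\etahat$. Every such $\widehat\beta$ is ultraparallel to $\etahat$, the vertical plane $\widehat\beta\times\Rbb$ is minimal, and both $\widehat q_1,\widehat q_2$ and all of $\etahat\times\Rbb$ lie strictly on the $\etahat$-side of $\widehat\beta\times\Rbb$. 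Consequently $\partial\Sigma\cap\{M_1\le z\le M_2\}\subset\etahat\times\Rbb$ and $\dhrinfty\Sigma\cap\{M_1\le z\le M_2\}\subset\{\widehat q_1,\widehat q_2\}\times\Rbb$ both lie on the closed $\etahat$-side of each such $\widehat\beta\times\Rbb$. Two forms of the maximum principle for minimal surfaces will be used: the usual interior/boundary one, and the fact that, since horizontal slices $\{z=\mathrm{const}\}$ are minimal, the height function $z$ has no interior local extremum on $\Sigma$ unless $\Sigma$ is a piece of a horizontal slice (in particular $\PSLhR$ contains no compact boundaryless minimal surface).

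\emph{Proof of (a).} Fix $\epsilon>0$ and a supporting geodesic $\widehat\beta$ (say, of $\etahat(\epsilon)$), and let $\Sigma'$ be a connected component of the part of $\Sigma$ lying strictly on the far side of $\widehat\beta\times\Rbb$ inside the slab $\{M_1\le z\le M_2\}$. Properness of $\Sigma$, together with the fact that $\{\widehat q_1,\widehat q_2\}\times[M_1,M_2]$ does not touch the closure of that far half-space, forces the closure of $\Sigma'$ in $\PSLhR$ to be compact with $\partial\Sigma'\subset(\widehat\beta\times\Rbb)\cup\{z=M_1\}\cup\{z=M_2\}$. If $\Sigma'$ met neither horizontal face, then $\partial\Sigma'\subset\widehat\beta\times\Rbb$, and sliding toward $\widehat\beta$ a foliation of $\Hbbh$ by geodesics $\widehat\beta^{(t)}$ with $\widehat\beta^{(0)}=\widehat\beta$ and pairwise ultraparallel would produce a last interior tangency of $\Sigma'$ with some minimal plane $\widehat\beta^{(t)}\times\Rbb$, $t>0$, hence (interior maximum principle) $\Sigma'\subset\widehat\beta^{(t)}\times\Rbb$; but $\partial\Sigma'\subset\widehat\beta\times\Rbb$ is disjoint from $\widehat\beta^{(t)}\times\Rbb$, so $\Sigma'$ would be a compact boundaryless minimal surface --- impossible --- whence $\Sigma'=\emptyset$. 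In general this only says that each such $\Sigma'$ must reach $\{z=M_1\}$ or $\{z=M_2\}$. The crucial remaining point is a bound, \emph{depending on $\epsilon$ alone}, for the vertical extent of such a ``cap'': there is $d(\epsilon)>0$ such that a component $\Sigma'$ as above hanging from $\{z=M_2\}$ cannot reach below $z=M_2-d(\epsilon)$, and one hanging from $\{z=M_1\}$ cannot rise above $z=M_1+2d(\epsilon)$. Granting this, on the set $\{M_1+2d(\epsilon)\le z\le M_2-d(\epsilon)\}$ the surface $\Sigma$ lies on the $\etahat$-side of every supporting $\widehat\beta\times\Rbb$, i.e. inside $\Omega(\epsilon)\times\Rbb$, which is (a). The cap bound I would obtain by an explicit barrier --- placing a large-neck rotational catenoid of Pe\~nafiel \cite{Pen} so as to contain the cap on one side and shrinking its neck to a first point of contact, which cannot lie on the cap's boundary once the vertical separation exceeds the catenoid's vertical extent $\sqrt{1+4\tau^2}\pi$, exactly as in the proof of Lemma \ref{pro4.4.1} --- or, if that proves awkward, by a compactness argument: caps of unbounded vertical extent would, by the uniform curvature bound, subconverge to a complete minimal surface with no finite boundary lying strictly on one side of a vertical plane, contradicting the half-space theorem \cite{HM90}. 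This is the main obstacle in the argument.

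\emph{Proof of (b) and (c).} For $M_1=-\infty$ the lower constraint in (a) is vacuous, so (a) gives $\Sigma\cap\{z\le M_2-d(\epsilon)\}\subset\Omega(\epsilon)\times\Rbb$ for every $\epsilon>0$; this is the first assertion of (b). For the graph statement, fix a small $\epsilon_0>0$: the uniform bound $\chuan{A_\Sigma}\le C$ and the confinement to the thin neighborhood $\Omega(\epsilon_0)\times\Rbb$ of the minimal plane $\etahat\times\Rbb$ force the tangent planes of $\Sigma$ there to be nearly parallel to $\etahat\times\Rbb$, so by the uniform graph lemma each component of $\Sigma\cap\{z\le M_2-d(\epsilon_0)-1\}$ is locally a small normal graph over $\etahat\times\Rbb$; since $\Sigma$ is proper and $\etahat\times\Rbb$ is simply connected, each such component is globally the horizontal graph of a function $v$ over a subdomain of $\etahat\times\Rbb$, with $\chuan{v}+\chuan{\nabla v}$ bounded in terms of $\epsilon_0$ and $C$. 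Applying the first assertion with $\epsilon\downarrow 0$ (the fact that $d(\epsilon)$ may tend to $+\infty$ is harmless) gives $\sup\{|v(q,z)| : z\le M_2-d(\epsilon)\}\to 0$, i.e. $v\to 0$ as $z\to-\infty$; since $v$ satisfies the uniformly elliptic minimal-graph equation over $\etahat\times\Rbb$, interior Schauder estimates \cite{GT01} promote this to $|v(q,z)|+|\nabla v(q,z)|\to 0$ uniformly as $z\to-\infty$. Part (c) is then immediate: applying (a) with $M_1=-\infty$ and $M_2=+\infty$ (the hypothesis $M_2-M_1>2d(\epsilon)$ holding vacuously) gives $\Sigma\subset\Omega(\epsilon)\times\Rbb$ for all $\epsilon>0$, whence $\Sigma\subset\bigcap_{\epsilon>0}\Omega(\epsilon)\times\Rbb=\etahat\times\Rbb$.
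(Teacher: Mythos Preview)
Your approach differs from the paper's in the choice of barrier: you use vertical geodesic planes $\widehat\beta\times\Rbb$ (where $\widehat\beta$ is a supporting geodesic to the hypercycle $\etahat(\pm\epsilon)$), whereas the paper uses Pe\~nafiel's ``tall rectangle'' surfaces $S_{\etahat,\epsilon}$ \cite{Pen}, which are minimal bigraphs over the exterior of $\Omega(\epsilon)$ with boundary the horizontal lift of $\etahat(\epsilon)$ and with asymptotic boundary contained in a slab of height $2c(\epsilon)$. The crucial feature of these surfaces is that they carry \emph{built-in vertical information}: sliding $S_{\etahat,\epsilon}$ vertically and along $\etahat$ gives barriers that bound both the horizontal and the vertical position of $\Sigma$ simultaneously, yielding $d(\epsilon)=c(\epsilon)+\alpha_0$ depending only on $\epsilon$.

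Your vertical planes carry no vertical information, so everything rests on the ``cap bound'', and both arguments you propose for it fail. For the catenoid argument: a rotational catenoid always crosses the vertical plane $\widehat\beta\times\Rbb$, so when you shrink the neck the first contact with the cap $\Sigma'$ can occur on $\partial\Sigma'\cap(\widehat\beta\times\Rbb)$ and the maximum principle gives nothing. This is exactly what distinguishes the present situation from Lemma~\ref{pro4.4.1}(3), where the annulus has boundary only in two \emph{horizontal} planes and the catenoid can be placed strictly between them. For the compactness argument: \cite{HM90} is the half-space theorem in $\Rbb^3$, and there is no analogue for vertical geodesic planes in $\PSLhR$ --- any disjoint vertical plane, or a Scherk-type graph, is a complete proper minimal surface lying on one side of $\widehat\beta\times\Rbb$. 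A subconverging sequence of caps could also retain boundary on $\widehat\beta\times\Rbb$ in the limit, so the limit need not be boundaryless. Finally, even granting a bound for caps attached to a single horizontal face, nothing in your argument excludes a cap meeting \emph{both} faces $\{z=M_1\}$ and $\{z=M_2\}$, which would pass through the middle range and violate the conclusion. Without a uniform $d(\epsilon)$ your derivations of (b) and (c) from (a) (letting $M_1\to-\infty$) also collapse. The fix is precisely the paper's: replace the vertical planes by $S_{\etahat,\epsilon}$.
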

\begin{proof}
The proof of this lemma depends on the existence of a one parameter family of embedded minimal surface with parameter $\epsilon>0$ describe by C. Penafiel (see \cite{Pen}, Section 5) used as barriers. For any geodesic $\etahat$ and $\epsilon>0$, there exists an embedded minimal surface $S_{\etahat, \epsilon}$ which is a bigraph over the exterior of $\Omega (\epsilon)$, with boundary $\etahat (\epsilon)$. The surface $S_{\etahat, \epsilon}$ is foliated by horizontal lift of equidistant curve $\etahat (s)$ for $s \in [\epsilon, +\infty[$. The surface $S_{\etahat, \epsilon}$ contains the horizontal lift of $\etahat (\epsilon)$, a notion of equatorial curve for these surfaces, where the tangent plane is vertical. There exists a constant $c(\epsilon)>0$ depending on $\epsilon$ and $\alpha$ depending on  $\widehat{q}_1, \widehat{q}_2$ such that the boundary   of $S_{\etahat, \epsilon}$ at infinity is the set of two vertical segments $\{\widehat{q}_1 \}\times [-c(\epsilon), c(\epsilon)]$ and $\{\widehat{q}_2 \}\times [-c(\epsilon)+\alpha, c(\epsilon)+\alpha]$ and two horizontal lifts of a horizontal segment of $\partial_{\infty}\Hbbh$ with endpoints $(\widehat{q}_1, -c(\epsilon))$  and $(\widehat{q}_2 ,-c(\epsilon)+\alpha)$ and respectively one curve at infinity with endpoints $(\widehat{q}_1,c(\epsilon))$ and 
$(\widehat{q}_2 ,c(\epsilon)+\alpha)$. Moreover when $\epsilon \to 0$, the constant $c(\epsilon) \to + \infty$.   We have to remark that one can translate this surface using horizontal lift of hyperbolic translation in $\Hbbh$. Doing this, one obtain surfaces $S_{\etahat', \epsilon}$ with constant $\alpha$ changing but  uniformly bounded. Let denote by $\alpha_0$ the maximum value of constant $\alpha$ for  any horizontal lift of hyperbolic translation and let $d(\epsilon)=c(\epsilon)+\alpha_0$.

For $M_2-M_1>3d(\epsilon)$ large enough, we consider $h_2$ such that 
$\Tsf_{h_2} S_{\etahat, \epsilon}$ has the following properties:
it is a bigraph on the exterior  of 
$\Omega(\epsilon)$, the equator curve is contained in $\{ z \geq M_2-d(\epsilon)\}$, the surface $\Tsf_{h_2}S_{\etahat, \epsilon} \subset \{ M_2-2d(\epsilon) \leq z \leq M_2\}$   and (\ref{equ4.2.7}) imply that $( \partial_\infty \Tsf_{h_2}S_{\etahat, \epsilon}  \cap \partial_\infty \Sigma ) \subset  \left(\{\widehat{q}_1,\widehat{q}_2\}  \times \Rbb\right)$.
%---------------------------------------
%we can consider a vertical translation 
%$\Tsf_{h_2} S_{\etahat, \epsilon}$, such that it is a bigraph on the exterior  of 
%$\Omega(\epsilon)$, the equator curve is contained in $\{ z \geq M_2-d(\epsilon)\}$, the surface $\Tsf_{h_2}S_{\etahat, \epsilon} \subset \{ M_2-2d(\epsilon) \leq z \leq M_2\}$   and (\ref{equ4.2.7}) imply that $( \partial_\infty \Tsf_{h_2}S_{\etahat, \epsilon}  \cap \partial_\infty \Sigma ) \subset  \left(\{\widehat{q}_1,\widehat{q}_2\}  \times \Rbb\right)$. 
%-----------------------------------
Using horizontal lift of hyperbolic translation in $\Hbbh$, we obtain a surface 
$\Tsf_{h_2}S_{\etahat', \epsilon}$ with  $(\partial_\infty \Tsf_{h_2}S_{\etahat', \epsilon}  \cap \partial_\infty \Sigma) =\emptyset$ if $\etahat' \cap \etahat = \emptyset$ and no common points at infinity. Since $\Sigma$ is properly immersed, $\Sigma \cap  \Tsf_{h_2}S_{\etahat', \epsilon}$ is a compact curve which bound a compact subset $K$ of $\Sigma$. The set of horizontal lift of hyperbolic translation of $\Tsf_{h_2}S_{\etahat, \epsilon}$ foliate the space, hence one can find a last point of contact with $K$. This prove that $K=\emptyset$ by the maximum principle and the surface $\Sigma$ is in one side of $\PSLhR \setminus \Tsf_{h_2}S_{\etahat, \epsilon}$. Using vertical translation, $\Tsf_{h}S_{\etahat, \epsilon}$ has no point of contact with $\Sigma$ for any $h \in [h_1, h_2]$ with $h_1=h_2-(M_2-M_1)+2d(\epsilon)$ and one can conclude a) by the maximum principle.

Now when $M_1=-\infty$, for any $\epsilon >0$, there exists $h_2(\epsilon)$, such that the surface $\Tsf_{h_2}S_{\etahat, \epsilon} \subset \{ M_2-2d(\epsilon) \leq z \leq M_2\}$ which does no intersect $\Sigma$ (here the constant $d(\epsilon)$ is  large depending on $\epsilon$). For any  $\epsilon >0$, one can translate vertically the surface for any $h \in (-\infty, h_2]$ without intersecting $\Sigma$. This prove that $\Sigma \cap \{ z \leq M_2-d(\epsilon)\} \subset  \Omega (\epsilon) \times \Rbb$.
Now let use Fermi coordinate $(q,z,s)$ where $(q,z)$ are coordinate on $\etahat \times \Rbb$ and $s \in [-\epsilon, \epsilon]$ the coordinate along horizontal geodesic orthogonal to the vertical plane. The horizontal vector field $n(s)$ will denote the tangent field along this horizontal geodesic at distance $s$ of $\etahat \times \Rbb$.  Since $\Sigma$ has uniform bounded curvature, it is around any point $p$, a graph of height $\delta >0$ on its tangent plane $T_p \Sigma$ over a disc $D_{\kappa}$ of radius $\kappa >0$. If $N(p)$ denote the unit normal vector at $p \in \Sigma$, for $\epsilon>0$ small enough, there exists a constant $0<C(\epsilon,\delta)<1$ such that  $|\vh{N(p),n(s(p)}| \leq C$ imply that the graph of $\Sigma$ over $D_{\kappa}$ cannot be contains in $\Omega (\epsilon) \times \Rbb$. This prove that $\Sigma$ is the horizontal graph $(q,z, v(q,z))$ of the function $v : \etahat \times \Rbb \to \Rbb$ which satisfy $|v|  \leq \epsilon$ for $z \leq M_2 -d(\epsilon)$. Letting $\epsilon \to 0$, the constant $C(\epsilon,\delta) \to 1$  prove
that the function $|v(q,z)| \to 0$ and $|\nabla v (q,z)| \to 0$ uniformly in $q \in \etahat$, when $z \to -\infty$.

When $M_2=+\infty$ and $M_1=-\infty$, we can redo the same argument for any
$\epsilon >0$ and any vertical translation $h \in \Rbb$. This prove c).
\end{proof}

\begin{prop}\label{pro3.3.15a}
The simply connected minimal surface $\Sigmacheck_\vc$ in Proposition \ref{pro3.3.15b} is a vertical graph on $\Omega$.
\end{prop}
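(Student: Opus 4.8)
The plan is to show that the vertical projection $\pi\colon\PSLhR\to\Hbbh$ restricts to a diffeomorphism from $\Int\Sigmacheck_\vc$ onto $\Int\Omega$; since $\Sigmacheck_\vc$ is simply connected this is exactly the statement that $\Sigmacheck_\vc$ is a vertical graph over $\Omega$ (over each vertex the graph degenerates to the whole vertical line, $u$ tending to $\pm\vc$ there). Two facts are immediate. First, applying the strong maximum principle to $\Sigmacheck_\vc$ and the minimal vertical planes $\gammahat_i\times\Rbb$, $\etahat_i\times\Rbb$, each of which lies on one side of $\Omega\times\Rbb$ by convexity of $\Omega$: $\Sigmacheck_\vc$ cannot touch any of them at an interior point without coinciding with it, which is absurd, so $\Int\Sigmacheck_\vc\subset\Int\Omega\times\Rbb$ and $\dhr\Sigmacheck_\vc$ is exactly the four complete vertical lines through the vertices of $\Omega$. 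Second, by Lemma \ref{pro4.4.1} applied to the $\Sigmacheck_n$ and passed to the limit, $h=\zsf|_{\Sigmacheck_\vc}$ has a single critical point $p_\vc$ --- a saddle at level $0$ --- and $\Sigmacheck_\vc\cap\{z>0\}$ (resp. $\Sigmacheck_\vc\cap\{z<0\}$) consists of two simply connected pieces. It therefore suffices to prove that $\pi|_{\Int\Sigmacheck_\vc}$ is (i) an immersion, (ii) injective and (iii) surjective onto $\Int\Omega$; then $\Sigmacheck_\vc=\Gr(u)$ for some $u$ on $\Int\Omega$, and Lemma \ref{lem3.3.14} together with the description of $\dhr\Sigmacheck_\vc$ identifies $u$ as the Scherk-type solution which is $+\vc$ on $\gamma_1\cup\gamma_2$ and $-\vc$ on $\eta_1\cup\eta_2$.

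Step (i) is the heart of the argument. Assume $\Sigmacheck_\vc$ has a vertical tangent plane at an interior point $p$ and let $\alpha\subset\Hbbh$ be the complete geodesic through $\pi(p)$ for which the minimal plane $\alpha\times\Rbb$ is tangent to $\Sigmacheck_\vc$ at $p$; since the two surfaces are minimal and distinct, $\Sigmacheck_\vc\cap(\alpha\times\Rbb)$ is, near $p$, a set of $2k\ge4$ arcs crossing at $p$. I would then control this intersection globally, as a proper $1$-complex in the vertical band $(\alpha\cap\Omegabar)\times\Rbb$: it lies below the curve $\Gr(u^+)\cap(\alpha\times\Rbb)$ by Lemma \ref{lem3.3.14}; its interior never meets $\gammahat_i\times\Rbb$ or $\etahat_i\times\Rbb$; as $\zsf\to-\vc$ it is trapped inside arbitrarily thin neighbourhoods of $\etahat_1\times\Rbb$ or $\etahat_2\times\Rbb$ with slope tending to $0$, because the two components of $\Sigmacheck_\vc\cap\{z<0\}$ each have their two boundary lines on one of these planes and are there horizontal graphs by Lemma \ref{lem2.1.3}(b); and near the endpoints of $\alpha\cap\Omega$ and as $\zsf\to+\vc$ it is controlled by Lemma \ref{lem3.3.15}. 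Using that $h$ has only the saddle $p_\vc$ as a critical point, so that each arc of the intersection stays transverse to the horizontal foliation away from $p$ and $p_\vc$, one checks that a crossing at $p$ must generate either a closed loop in $\Sigmacheck_\vc\cap(\alpha\times\Rbb)$, which would bound a disk of the simply connected surface $\Sigmacheck_\vc$ with boundary on the minimal plane $\alpha\times\Rbb$ and hence violates the maximum principle, or a second critical point of $h$, contradicting Lemma \ref{pro4.4.1}. So there is no interior vertical tangent plane; equivalently, the level sets $\pi(\Gamma(t))$ ($t\ne0$) are embedded arcs that vary without locally overlapping, degenerating to the tree $\pi(\Gamma(0))$ only at $t=0$.

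For (ii), if some $\{q\}\times\Rbb$ with $q\in\Int\Omega$ met $\Sigmacheck_\vc$ at heights $t_1<t_2$, the embedded arcs $\pi(\Gamma(t_1))$ and $\pi(\Gamma(t_2))$ would both pass through $q$ with their endpoints on vertex lines prescribed by the combinatorics of $\Gamma(0)$; since by (i) these arcs cannot become mutually tangent without forcing an interior vertical tangency, and since as $t\to\pm\vc$ they converge respectively to $\eta_1\cup\eta_2$ (Lemma \ref{lem2.1.3}) and to $\gamma_1\cup\gamma_2$ (the upper barrier $\Gr(u^+)$ and Lemma \ref{lem3.3.15}), a continuity/sweep argument shows two distinct arcs of the family cannot cross, a contradiction. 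The same sweep shows (iii): the arcs $\pi(\Gamma(t))$ together with the tree $\pi(\Gamma(0))$ cover $\Omega$, so $\pi(\Int\Sigmacheck_\vc)\supseteq\Int\Omega$; combined with (i) this makes $\pi|_{\Int\Sigmacheck_\vc}$ a bijective local diffeomorphism onto $\Int\Omega$, which is therefore a diffeomorphism. Hence $\Sigmacheck_\vc$ is a vertical graph over $\Omega$.

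The main obstacle is step (i). The only estimate available a priori is that $\Sigmacheck_\vc$ is tangent to each of the transverse vertical-plane foliations $\Fcal^\beta$, and to the foliations $\Fcal^2,\Fcal^3$ sweeping from $\gamma_1$ to $\gamma_2$ and from $\eta_1$ to $\eta_2$, in at most two points --- from Proposition \ref{pro3.1} applied to $\Sigma_n$ and Lemma \ref{lem2.3.18} --- which does not by itself exclude interior vertical tangencies. Turning ``at most two'' into ``none'' is precisely what requires assembling all of the structural inputs at once: the single-saddle profile of $h$ (Lemma \ref{pro4.4.1}), the upper barrier $\Gr(u^+)$ (Lemma \ref{lem3.3.14}), and the two asymptotic confinement mechanisms --- Lemma \ref{lem3.3.15} near the vertices and as $\zsf\to+\vc$, and Lemma \ref{lem2.1.3} as $\zsf\to-\vc$ --- so as to rule out both loops and spurious critical points in every vertical planar section of $\Sigmacheck_\vc$.
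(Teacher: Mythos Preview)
Your overall strategy---rule out interior vertical tangencies, then deduce the graph property---is the same as the paper's, but your execution has two genuine gaps.

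\textbf{You miss one of the two boundary configurations.} You assert that $\dhr\Sigmacheck_\vc$ consists of the four complete vertical lines through the vertices of $\Omega$. This is only true when $n-h^+(\Sigma_n)\to+\vc$. When $n-h^+(\Sigma_n)$ stays bounded, the top of $\Gammacheck^i_n$ remains at finite height and the limit boundary $\dhr\Sigmacheck_\vc$ consists of two curves $\Gamma^i_\vc\subset\gamma_i\times\Rbb$, each a graph over $\gamma_i$ together with two downward half-lines. The paper treats these two cases separately, and the combinatorics of $\dhr\Sigmacheck_\vc\setminus\Gamma$ differ. Your description of the upper asymptotics (``$\pi(\Gamma(t))\to\gamma_1\cup\gamma_2$ as $t\to+\vc$'') and your use of Lemma~\ref{lem3.3.15} ``as $\zsf\to+\vc$'' simply do not apply in the bounded case.

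\textbf{The dichotomy in step (i) is not justified.} You claim that a tangential crossing at $p$ forces either a closed loop in $\Sigmacheck_\vc\cap(\alpha\times\Rbb)$ or a second critical point of $h$. The loop case is fine (simple connectedness gives a disk with boundary in $\alpha\times\Rbb$, contradicting the maximum principle), but the alternative is not: there is no mechanism by which branches of the intersection with a \emph{vertical} plane produce a tangency with a \emph{horizontal} leaf. The sentence ``each arc of the intersection stays transverse to the horizontal foliation away from $p$ and $p_\vc$'' conflates two unrelated transversalities. You yourself flag step~(i) as ``the main obstacle,'' and indeed the argument as written does not close.

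The paper avoids this dichotomy entirely. It first establishes a Claim: since $\Sigmacheck_\vc$ is simply connected, if two local components of $\Sigmacheck_\vc\setminus\Gamma$ near $p$ lay in the same global component, a short loop through $p$ would bound a disk meeting both sides of $\alpha\times\Rbb$, contradicting the maximum principle; hence $\Sigmacheck_\vc\setminus\Gamma$ has at least four components, and each must meet $\dhr\Sigmacheck_\vc$ (again by the maximum principle, now combined with Lemma~\ref{lem2.1.3}). This forces $\alpha$ to meet the interiors of both $\gamma_1$ and $\gamma_2$ and to miss $\eta_1,\eta_2$. Then, using Lemma~\ref{lem2.1.3}(b) to make the lower level curves $\epsilon$-close to $\eta_i$, one cuts off a compact disk $D=\Sigmacheck_\vc\cap\{z\ge -t\}$ (Case~1) or $D=\Sigmacheck_\vc\cap\{-t\le z\le t\}$ (Case~2) whose boundary meets $\alpha\times\Rbb$ in exactly two points; Lemma~\ref{lem1.haus}(a) then forbids any interior tangency of $D$ with $\alpha\times\Rbb$, contradicting the existence of $p$. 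This is the step your sketch is missing: reduce to a compact disk on which the already-proved Lemma~\ref{lem1.haus}(a) applies directly, rather than trying to argue about the noncompact intersection globally.
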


\begin{proof}

To prove this proposition, we assume the contrary that there is a point $ p\in\Sigmacheckvc $ such that $T_p\Sigmacheck_\vc $ is vertical.
There is a unique complete geodesic $ \gammahat \subset\Hbbh$ with end points   $\widehat{p}, \widehat{q}$ such that  $\gammahat \times\Rbb $ is tangent to $ \Sigmacheckvc  $ at $ p$, and the intersection $ (\gammahat \times\Rbb)\cap\Sigmacheckvc=\Gamma $ is locally a set of $ k $ curves passing through $p$, with $ k\ge 2 $.  

Then locally at $p$, $ \Sigmacheckvc\setminus\Gamma $ has at least four local connected components which are alternately on one side and on the other one side  of $\gammahat \times \Rbb$. If two of them, say $ \Sigma_1$ and $\Sigma_2 $ connect into $ \Sigmacheckvc\setminus\Gamma$, there is a path $ \alpha_0 $
joining two points, $x$ in $\Sigma_1 $ with $y$ in $\Sigma_2$ close to $p$. Then join  $x$ to $y$ by a local path $\beta_0$ going through $p$ to  obtain a  compact cycle $(\alpha_0 \cup \beta_0) \subset  \Sigmacheckvc\setminus\Gamma$. This cycle is in one side of $ \PSLhR\setminus(\gammahat \times\Rbb)$   and bound a compact disk (since $ \Sigmacheckvc $ is simply connected) which intersect the other side in a non emptyset, contradicting the maximum principle with a foliation by vertical planes (see Figure  \ref{fig3.4.2}-(left)).

This proves that $ \Sigmacheckvc\setminus\Gamma $ has at least 4 connected components. Since there does not exist any non empty open subset (compact or non compact) of $\Sigmacheckvc \setminus\Gamma $ whose boundary is contained in $\gammahat \times \Rbb$ and $\dhr_{\infty} \Sigma
\subset  \left(\{\widehat{p},\widehat{q}\}  \times \Rbb\right)$ by Lemma 3.11-b), we proved the claim:

\begin{Clm}
$ \Sigmacheckvc\setminus\Gamma $ has at least $ 4 $ connected components, each one has in its boundary a connected component of $ \dhr\Sigmacheckvc\setminus\Gamma $.
\end{Clm}

%\begin{Clm}
%There does not exist any non empty open subset (compact or non compact) of $\Sigmacheckvc$ whose boundary is contained in $\gamma\times \Rbb$ and its compactification $\dhrinfty \Sigmacheckvc \subset \left(\{q_1,q_2\}  \times \Rbb\right)$.
%\end{Clm}The proof of Claim 2 comes from the case b) in the next Lemma \ref{lem2.1.3}:

We consider two cases depending on the number of components of the boundary of $ \Sigmacheckvc $.
\vskip 0.25cm

\textbf{Case 1.} When $ n-h^+(\Sigma_n) $ is bounded, the boundary $ \dhr\Sigmacheckvc $ consists of two connected components $ \Gamma^1_\infty $ and $ \Gamma^2_\infty $ with  $ \Gamma^i_\infty\subset\gamma_i\times\Rbb $ for $ i=1,2 $.

\begin{figure}[!h]
	\centering
	\definecolor{ffqqqq}{rgb}{1.,0.,0.}
	\definecolor{xdxdff}{rgb}{0.49019607843137253,0.49019607843137253,1.}
	\definecolor{ffxfqq}{rgb}{1.,0.4980392156862745,0.}
	\definecolor{qqqqff}{rgb}{0.,0.,1.}
	\begin{tikzpicture}[line cap=round,line join=round,>=triangle 45,x=1.0cm,y=1.0cm]
	\clip(-6.34,0.92) rectangle (6.96,4.84);
	\fill[line width=0.4pt,color=ffxfqq,fill=ffxfqq,fill opacity=0.1] (-6.4,3.9) -- (7.06,3.92) -- (7.06,1.92) -- (-6.4,1.9) -- cycle;
	\draw [line width=0.4pt,color=ffxfqq] (7.06,3.92)-- (7.06,1.92);
	\draw [line width=0.4pt,color=ffxfqq] (-6.4,1.9)-- (-6.4,3.9);
	\draw [line width=2.8pt] (-0.9000418607731877,3.908172300355463)-- (0.4599848541717275,3.9101931424281897);
	\draw [line width=2.8pt] (-0.9399526196100938,1.908112997593447)-- (0.42016324818404627,1.9101339721369748);
	\draw [shift={(-6.33960342515995,2.972041416850006)},dash pattern=on 5pt off 5pt]  plot[domain=-0.4417291342498535:0.38186868216627784,variable=\t]({1.*2.4996160949468047*cos(\t r)+0.*2.4996160949468047*sin(\t r)},{0.*2.4996160949468047*cos(\t r)+1.*2.4996160949468047*sin(\t r)});
	\draw [shift={(7.535089275870371,2.970805584196651)},dash pattern=on 5pt off 5pt]  plot[domain=2.806380367417612:3.5169492449827824,variable=\t]({1.*2.875237495953841*cos(\t r)+0.*2.875237495953841*sin(\t r)},{0.*2.875237495953841*cos(\t r)+1.*2.875237495953841*sin(\t r)});
	\draw (1.54,4.68) node[anchor=north west] {$\Gamma^1_\infty$};
	\draw (1.58,1.84) node[anchor=north west] {$\Gamma^2_\infty$};
	\draw (-3.66,3.5) node[anchor=north west] {$\eta^\infty_1(-t)$};
	\draw (3.5,3.48) node[anchor=north west] {$\eta^\infty_2(-t)$};
	\draw (-6.16,3.46) node[anchor=north west] {$\infty_1$};
	\draw (6.22,3.44) node[anchor=north west] {$\infty_2$};
	\draw (-1.46,2.88) node[anchor=north west] {$\check{\Sigma}_\infty$};
	\draw [color=ffqqqq,domain=-6.34:6.96] plot(\x,{(--5.317037577550618--0.0020208420727265697*\x)/1.3600267149449152});
	\draw [color=ffqqqq,domain=-6.34:6.96] plot(\x,{(--2.5971543858874435--0.002020974543527787*\x)/1.36011586779414});
	\begin{scriptsize}
	\draw [fill=qqqqff] (-6.4,3.9) circle (1.0pt);
	\draw [fill=qqqqff] (7.06,3.92) circle (1.0pt);
	\draw [fill=qqqqff] (7.06,1.92) circle (1.0pt);
	\draw [fill=qqqqff] (-6.4,1.9) circle (1.0pt);
	\draw [fill=xdxdff] (-0.9000418607731877,3.908172300355463) circle (1.0pt);
	\draw [fill=xdxdff] (0.4599848541717275,3.9101931424281897) circle (1.0pt);
	\draw [fill=xdxdff] (-0.9399526196100938,1.908112997593447) circle (1.0pt);
	\draw [fill=xdxdff] (0.42016324818404627,1.9101339721369748) circle (1.0pt);
	\draw [fill=xdxdff] (-4.020034972291524,3.9035363521957036) circle (1.0pt);
	\draw [fill=xdxdff] (-4.0799159693550875,1.9034473759742123) circle (1.0pt);
	\draw [fill=xdxdff] (4.819886075110945,3.916671450334489) circle (1.0pt);
	\draw [fill=xdxdff] (4.860034574879121,1.9167311063519747) circle (1.0pt);
	\draw [fill=qqqqff] (1.32,3.04) circle (1.0pt);
	\draw[color=qqqqff] (1.46,3.28) node {$p$};
	\end{scriptsize}
	\end{tikzpicture}
	\caption{Case $\dhr\Sigmacheckvc$ consists of two components.}\label{fig3.3.4}
\end{figure}

By Claim 1, $ \dhr\Sigmacheckvc\setminus\Gamma $ has at least $4$ connected components and the geodesic $\gammahat$ has to intersect the interior of $\gamma_1 $ and $ \gamma_2$ (If $ \gammahat$ passes through a vertex of $ \Omega $, $ \dhr\Sigmacheckvc\setminus\Gamma $ will have at most $3$ connected components, contradicting Claim 1). Since $ \Omega $ is convex, $\gammahat$ does not intersect $\eta_1$ and  $\eta_2$.
Since $h^+(\Sigmacheck_n)-h^-(\Sigmacheck_n) \to -\infty$, Lemma \ref{pro4.4.1}-3) assure that for any $t>0$, the set
$\Gamma(-t)=\Sigmacheckvc \cap\{ z=-t\}$ will be two curves  
$\eta_1^{\infty}(-t)$ and $\eta_2^{\infty}(-t)$ connecting $\Gamma^1_\infty$ and $ \Gamma^2_\infty$ (see Figure \ref{fig3.3.4} for a conformal representation of
$\Sigmacheckvc$). We consider $\Sigma_1$ the connected component of
$\Sigmacheckvc \cap \{ z<0\}$ with $\partial \Sigma_1 \subset (\eta_1 \times \Rbb) \cup \{z=0\}$. $\Sigma _1$ satisfy hypothesis of  Lemma \ref{lem2.1.3}-b).
For any $\epsilon >0$, there exists $M<0$ such that  $(\Sigma_1 \cap \{z\leq M  \})\subset \Omega(\epsilon) \times \Rbb$ and then for any $p \in \Sigma_1 \cap \{z\leq M \}$, we have  $ \dist_{\Hbbh}(\pi(p),\eta)\le\epsilon$. 
This imply that for $t>0$ large enough, the curve $\eta_1^{\infty}(-t)$ is a curve
close enough to $\eta_1$.

Doing the same for $\eta_2$, this shows that $ (\gammahat \times\Rbb) \cap\Sigmacheckvc $ is compact and for $t>0$ large enough $ \gammahat \times\Rbb  $ separates the two components of $ \{z=-t\}\cap\Sigmacheckvc = \{\eta^{\infty}_1(-t),\eta^{\infty}_2(-t)\} $. We remark now that $ D=\Sigmacheckvc\cap\{z\ge -t\} $ is a disk bounded by one curve $ (\dhr\Sigmacheckvc\cap\{z\ge -t\}) \cup \eta_1(-t)\cup\eta_2(-t)=\dhr D $ and $ \gammahat \times\Rbb $ intersects $ \dhr D $ in exactly two points, contradicting Lemma \ref{lem1.haus}-a).

\vskip 0.25cm
\textbf{Case 2} When $ n-h^+(\Sigma_n)\to+\infty $ as $ n\to +\infty $, the boundary of $ \Sigmacheckvc $ consists of $ 4 $ vertical lines passing through the vertex of $ \Omega $.

	\begin{figure}[!h]
		\centering
		\definecolor{qqqqff}{rgb}{0.,0.,1.}
		\definecolor{xdxdff}{rgb}{0.49019607843137253,0.49019607843137253,1.}
		\definecolor{ffqqqq}{rgb}{1.,0.,0.}
		\begin{tikzpicture}[line cap=round,line join=round,>=triangle 45,x=1.0cm,y=1.0cm]
		\clip(-4.35777777777778,-3.346666666666676) rectangle (4.348888888888887,3.3466666666666662);
		\draw [samples=50,domain=-0.99:0.99,rotate around={-135.:(0.,0.)},xshift=0.cm,yshift=0.cm,color=ffqqqq] plot ({1.4142135623730951*(1+(\x)^2)/(1-(\x)^2)},{1.4142135623730951*2*(\x)/(1-(\x)^2)});
		\draw [samples=50,domain=-0.99:0.99,rotate around={-135.:(0.,0.)},xshift=0.cm,yshift=0.cm,color=ffqqqq] plot ({1.4142135623730951*(-1-(\x)^2)/(1-(\x)^2)},{1.4142135623730951*(-2)*(\x)/(1-(\x)^2)});
		\draw [samples=50,domain=-0.99:0.99,rotate around={-45.:(0.,0.)},xshift=0.cm,yshift=0.cm,color=ffqqqq] plot ({1.4142135623730951*(1+(\x)^2)/(1-(\x)^2)},{1.4142135623730951*2*(\x)/(1-(\x)^2)});
		\draw [samples=50,domain=-0.99:0.99,rotate around={-45.:(0.,0.)},xshift=0.cm,yshift=0.cm,color=ffqqqq] plot ({1.4142135623730951*(-1-(\x)^2)/(1-(\x)^2)},{1.4142135623730951*(-2)*(\x)/(1-(\x)^2)});
		\draw [shift={(-0.01671499191002556,1.145258200566257)},dash pattern=on 3pt off 3pt]  plot[domain=1.0161879621170677:2.079367507570306,variable=\t]({1.*0.9882165040856301*cos(\t r)+0.*0.9882165040856301*sin(\t r)},{0.*0.9882165040856301*cos(\t r)+1.*0.9882165040856301*sin(\t r)});
		\draw [shift={(-0.00592688272573929,-1.0821296692483628)},dash pattern=on 3pt off 3pt]  plot[domain=4.226184062899762:5.213370636398994,variable=\t]({1.*1.0512203723884561*cos(\t r)+0.*1.0512203723884561*sin(\t r)},{0.*1.0512203723884561*cos(\t r)+1.*1.0512203723884561*sin(\t r)});
		\draw [shift={(-3.31243229583139,0.0022124311375819506)},dash pattern=on 3pt off 3pt]  plot[domain=-0.8413914251540398:0.8282746599520041,variable=\t]({1.*0.4479933145057295*cos(\t r)+0.*0.4479933145057295*sin(\t r)},{0.*0.4479933145057295*cos(\t r)+1.*0.4479933145057295*sin(\t r)});
		\draw [shift={(3.367437804540414,0.009823527734983907)},dash pattern=on 3pt off 3pt]  plot[domain=2.4323869520533634:3.899000872666865,variable=\t]({1.*0.49864568883574195*cos(\t r)+0.*0.49864568883574195*sin(\t r)},{0.*0.49864568883574195*cos(\t r)+1.*0.49864568883574195*sin(\t r)});
		\draw (-2.8644444444444472,0.3333333333333289) node[anchor=north west] {$\eta^\infty_1(-t)$};
		\draw (1.7355555555555528,0.3733333333333289) node[anchor=north west] {$\eta^\infty_2(-t)$};
		\draw (-0.5577777777777806,2.1466666666666647) node[anchor=north west] {$\gamma^\infty_1(t)$};
		\draw (-0.5444444444444473,-1.5066666666666737) node[anchor=north west] {$\gamma^\infty_2(t)$};
		\draw (-4.171111111111114,0.2933333333333288) node[anchor=north west] {$\infty_1$};
		\draw (3.6422222222222196,0.32) node[anchor=north west] {$\infty_2$};
		\draw (-0.3711111111111139,3.226666666666666) node[anchor=north west] {$\infty_3$};
		\draw (-0.39777777777778056,-2.813333333333342) node[anchor=north west] {$\infty_4$};
		
		\draw (0.34888888888888614,-0.16) node[anchor=north west] {$\check{\Sigma}_\infty$};
		\begin{scriptsize}
		\draw [fill=xdxdff] (-0.4979070119967356,2.008407144116613) circle (1.0pt);
		\draw [fill=xdxdff] (0.5036901052580469,1.9853477159089454) circle (1.0pt);
		\draw [fill=xdxdff] (-0.49713481112272473,-2.011526808475973) circle (1.0pt);
		\draw [fill=xdxdff] (0.49896037726362874,-2.0041671554846614) circle (1.0pt);
		\draw [fill=xdxdff] (-3.009522540213389,0.33227862115599716) circle (1.0pt);
		\draw [fill=xdxdff] (-3.0138778741516807,-0.33179844763334065) circle (1.0pt);
		\draw [fill=xdxdff] (2.989025869731999,0.33455715794445795) circle (1.0pt);
		\draw [fill=xdxdff] (3.005112319049486,-0.3327662642294511) circle (1.0pt);
		\draw [fill=qqqqff] (-0.504444444444445,0.10666666666666298) circle (1.0pt);
		\draw[color=qqqqff] (-0.4111111111111139,0.2666666666666621) node {$p$};
		\end{scriptsize}
		\end{tikzpicture}
		\caption{Case $\dhr\Sigmacheckvc$ consists of four components}\label{fig3.3.5}
	\end{figure}

Consider the plane $ \gammahat \times\Rbb $, tangent to $ \Sigmacheckvc $. Again $ \gammahat $ cannot pass through a vertex of $ \Omega $ since $ \dhr\Sigmacheckvc\setminus\Gamma $ will have at most only $ 3 $ connected components, contradicting Claim 1.
We use a vertical translation of one-parameter family of surfaces $ S_{\etahat_i,\epsilon} $ and $ S_{\gammahat_i,\epsilon} $ to prove that $ \Sigmacheckvc\cap\{-t\le z \le t\} $ is a disk bounded by $ 4 $ vertical segments and $ 4 $ horizontal curves, $ \eta^\infty_1(-t),\eta^\infty_2(-t)\subset\{z=-t\} $ connecting each one two vertical segments and $ \gamma^\infty_1(t), \gamma^\infty_2(t)\subset\{z=t\} $ connecting to two other one in such way that $ (\eta^
{\infty}_1(-t),\eta^{\infty}_2(-t) )$ has horizontal projection $ \epsilon $-close to $ \eta_1 $ and $ \eta_2 $ (same for $ \gamma^{\infty}_1(t) $ and $ \gamma^{\infty}_2(t) $). The result is a compact disk bounded by one boundary curve such that for any leaf $ \gammahat \times\Rbb $, we have $ (\gammahat \times\Rbb) \cap\dhr\Sigmacheckvc(t)$ in only two points (we use $ \gammahat $ disjoint from vertices of $ \Omega $ and curves $ \eta^\infty_1(-t),\eta^\infty_2(-t), \gamma^\infty_1(t), \gamma^\infty_2(t) $ have bounded curvature, $ \epsilon $-close to $ \eta_1, \eta_2,\gamma_1, \gamma_2 $). Hence $ \gammahat \times\Rbb $ cannot be tangent by lemma \ref{lem1.haus}-a). This conclude the Case 2.

\vskip 0.25cm
In summary of Case 1 and Case 2, the surface $\Sigmacheckvc$ is the graph of a function $u: U\subset \Omega \to \Rbb$. Letting $\epsilon \to 0$ in  Lemma \ref{equ4.2.7}-b) show that the  boundary curve of $U$ are $\eta_i$ and $\gamma_i$ for $i=1,2$ and $U=\Omega$.
\end{proof}

\begin{cor}\label{pro3.3.13}
	If $h^+(\Sigma_n)-h^-(\Sigma_n)\to+\infty$, then the two sequences $n-h^+(\Sigma_n)$ and $n+h^-(\Sigma_n)$ are bounded.
\end{cor}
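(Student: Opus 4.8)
The plan is to argue by contradiction and to reduce everything to Propositions \ref{pro3.3.15b} and \ref{pro3.3.15a} together with the Jenkins--Serrin obstruction of Theorem \ref{thm4.2.6}. Suppose first that $n-h^+(\Sigma_n)$ is not bounded; passing to a subsequence I may assume $n-h^+(\Sigma_n)\to+\infty$. Translating vertically, set $\Sigmacheck_n:=\Tsf_{-h^+(\Sigma_n)}(\Sigma_n)$, so that $h^+(\Sigmacheck_n)=0$; the hypothesis $h^+(\Sigma_n)-h^-(\Sigma_n)\to+\infty$ then gives $h^-(\Sigmacheck_n)=h^-(\Sigma_n)-h^+(\Sigma_n)\to-\infty$. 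By Proposition \ref{pro3.3.15b} a subsequence of $\Sigmacheck_n$ converges, with multiplicity one, to a simply connected minimal surface $\Sigmacheckvc$, and by Proposition \ref{pro3.3.15a} this $\Sigmacheckvc$ is the graph $\Gr(u)$ of a solution $u$ of the minimal surface equation on $\Omega$.

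Next I would identify $\dhr\Sigmacheckvc$. The curve $\Gamma^i_n$ is a fixed small smoothing of $\dhr(\gamma_i\times[-n,n])$, so after the translation its top lies at height $\approx n-h^+(\Sigma_n)\to+\infty$, while its bottom lies at height $\approx -n-h^+(\Sigma_n)=-(n+h^-(\Sigma_n))-(h^+(\Sigma_n)-h^-(\Sigma_n))\to-\infty$ (here $n+h^-(\Sigma_n)$ is bounded below, since $z$ is bounded below on $\Sigma_n$ by $-n$ up to a fixed constant by the maximum principle, and $h^+(\Sigma_n)-h^-(\Sigma_n)\to+\infty$). Thus over each vertex $v$ of $\Omega$ the translated boundary accumulates to the whole vertical line $\{v\}\times\Rbb$; this is precisely Case 2 in the proof of Proposition \ref{pro3.3.15a}, and $\dhr\Sigmacheckvc$ consists exactly of the four complete vertical lines through the vertices of $\Omega$.

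Now for the contradiction. Since $\Gr(u)$ is a graph over the \emph{bounded} domain $\Omega$ whose boundary is the four vertical lines through the vertices, $u$ cannot have finite continuous data on any of the arcs $\gamma_1,\eta_1,\gamma_2,\eta_2$ (otherwise $\dhr\Gr(u)$ would contain a bounded curve projecting onto that arc). At each vertex the whole vertical line forces $u\to+\infty$ on one of the two adjacent arcs and $u\to-\infty$ on the other, so the signs alternate around $\dhr\Omega$, and $u$ is a Jenkins--Serrin solution as in case (2) of Theorem \ref{thm4.2.6} with the $+\infty$ arcs equal to $\{\gamma_1,\gamma_2\}$ and the $-\infty$ arcs equal to $\{\eta_1,\eta_2\}$ (or vice versa). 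Taking $\Pcal=\Omega$ in the existence condition gives $a(\Omega)=b(\Omega)$, i.e. $\ellH{\gamma_1}+\ellH{\gamma_2}=\ellH{\eta_1}+\ellH{\eta_2}$, contradicting the standing inequality $\ellH{\gamma_1}+\ellH{\gamma_2}>\ellH{\eta_1}+\ellH{\eta_2}$. Hence $n-h^+(\Sigma_n)$ is bounded.

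For $n+h^-(\Sigma_n)$ I would run the same three steps starting from the lowest horizontal point: put $\Sigmacheck'_n:=\Tsf_{-h^-(\Sigma_n)}(\Sigma_n)$, so that $h^-(\Sigmacheck'_n)=0$ and $h^+(\Sigmacheck'_n)=h^+(\Sigma_n)-h^-(\Sigma_n)\to+\infty$. The main point — and the only place where I expect real work — is to check that Lemma \ref{pro4.4.1}(3), Proposition \ref{pro3.3.15b} and Proposition \ref{pro3.3.15a} all admit ``upside--down'' versions: their proofs treat $\Sigma_n^+(t)$ and $\Sigma_n^-(t)$ symmetrically (Lemma \ref{pro4.4.1}(3)(a) is itself symmetric in $h^\pm$, and the convergence and graph arguments only use that the relevant truncations are simply connected and that the level set $\Gamma(0)$ carries no cycle), so none of them relies on a vertical reflection of $\PSLhR$ — which matters because $\PSLhR$ has no isometry reversing the orientation of the fibres. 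Granting these mirrored statements, the argument of the first three paragraphs applies verbatim and yields that $n+h^-(\Sigma_n)$ is bounded as well.
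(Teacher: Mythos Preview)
Your proof is correct and follows essentially the same route as the paper: argue by contradiction, translate so that $h^+(\Sigmacheck_n)=0$, apply Propositions \ref{pro3.3.15b} and \ref{pro3.3.15a} to obtain a Scherk-type graph on $\Omega$ bounded by the four full vertical lines, and derive the Jenkins--Serrin equality $\ellH{\gamma_1}+\ellH{\gamma_2}=\ellH{\eta_1}+\ellH{\eta_2}$ contradicting the standing hypothesis. The paper is somewhat terser --- it reads the assignment $u=+\infty$ on $\gamma_1\cup\gamma_2$ and $u=-\infty$ on $\eta_1\cup\eta_2$ directly off the barrier structure in Case~2 of the proof of Proposition \ref{pro3.3.15a}, rather than via your alternation argument at the vertices --- and it dispatches the second sequence with a single ``by the same way,'' whereas you (correctly) flag that the mirrored versions of the auxiliary results must be checked by hand in the absence of a fibre-reversing isometry.
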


\begin{proof}
	We now show that $n-h^+(\Sigma_n)$ is bounded. The bound of $n-h^-(\Sigma_n)$ will follow by the  same way.
Let's assume the contrary that there exists a subsequence of $n-h^+(\Sigma_n)$ converging to $+\vc$ as $n\to+\infty$. Without loss of generality, we still denote this subsequence by $\Sigma_n$. Denote by $\Sigmacheck_n$ the image of  $\Sigma_n$ under the translation such that $h^+(\Sigmacheck_n)=0$ for all $n$ and  $h^-(\Sigmacheck_n)\to-\vc$ as $n\to+\vc$. By Proposition \ref{pro3.3.15a}, the sequence $\Sigmacheck_n$ has a subsequence converging to a minimal graph $\Sigmacheck_\vc$ of a function $u:U\to\Rbb$ where $U$ is a subdomain of $\Omega$.
Moreover, since $n-h^+(\Sigma_n)\to+\vc$ as $n\to+\vc$, the boundary of $\Sigmacheck_\vc$ consists of the four vertical lines passing through the end-points of $\gamma_1$ and $\gamma_2$. By the Case 2 of the proof of Proposition \ref{pro3.3.15a}, we have  $U=\Omega$  and  $u=-\infty$ on $\eta_1\cup\eta_2$ and $u=+\vc$ on $\gamma_1\cup\gamma_2$. By  Theorem  \ref{thm4.2.6}, we have $\ellH{\gamma_1}+\ellH{\gamma_2}=\ellH{\eta_1}+\ellH{\eta_2}$ which is a contradiction with the geometric hypothesis on the domain $\Omega$. Therefore $n-h^+(\Sigma_n)$ is bounded.
\end{proof}

\section{Minimal annuli bounded by four vertical lines.}
\subsection{The convergence of the sequence $\Sigma_{n}$} \label{subsect3.1.6}

In this section, we first show one of the two sequences $ \Sigma^s_n $ and $ \Sigma^u_n $ has a subsequence such that $h^+(\Sigma_{n})-h^-(\Sigma_{n}) $ is bounded. Then, we next prove that such a subsequence converges to a minimal annulus with boundary four vertical lines.

\begin{prop}\label{pro3.3.14}
	There does not exist simultaneously a sequence of minimal surfaces $\Sigmas_{n}$ and a sequence of minimal surfaces $\Sigmau_n$ satisfying $h^+(\Sigmas_{n})-h^-(\Sigmas_{n})\to+\infty$ and $h^+(\Sigmau_{n})-h^-(\Sigmau_{n})\to+\infty$. If $\Sigmas_{n}$ is stable-unstable the sequence $h^+(\Sigmas_{n})-h^-(\Sigmas_{n})$ is bounded.
\end{prop}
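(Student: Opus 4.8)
The plan is to argue by contradiction, and to prove the ``stable--unstable'' assertion first, since it is what makes a second, distinct annulus available for the main assertion. The common technical input is a region‑by‑region picture of a minimal annulus $\Sigma_{n}$ with boundary $\Gamma^{1}_{n}\cup\Gamma^{2}_{n}$, uniformly bounded curvature (Theorem~\ref{pro4.4.4}), and $h^{+}(\Sigma_{n})-h^{-}(\Sigma_{n})\to+\infty$. By Corollary~\ref{pro3.3.13} the numbers $n-h^{+}(\Sigma_{n})$ and $n+h^{-}(\Sigma_{n})$ stay bounded, so (after subsequences) three things happen. Translating by $\Tsf_{-h^{+}(\Sigma_{n})}$ and invoking Propositions~\ref{pro3.3.15b} and~\ref{pro3.3.15a} in the regime ``$n-h^{\pm}$ bounded'' (Case~1 of the proof of Proposition~\ref{pro3.3.15a}), the top of $\Sigma_{n}$ converges smoothly with multiplicity one to a vertical graph over $\Omega$ with data $-\infty$ on $\eta_{1}\cup\eta_{2}$ and a prescribed finite profile on $\gamma_{1}\cup\gamma_{2}$; likewise, via $\Tsf_{-h^{-}(\Sigma_{n})}$, the bottom converges to a vertical graph with data $+\infty$ on $\eta_{1}\cup\eta_{2}$; these exist and are unique by Theorem~\ref{thm4.2.6}, and being minimal graphs they are \emph{strictly} stable, hence carry no nonzero bounded Jacobi field vanishing on their boundary. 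In the intermediate ``tube'' region Lemma~\ref{pro4.4.1}(3) shows $\Sigma_{n}\cap\{h^{-}<z<h^{+}\}$ is two simply connected strips with cycle‑free $z$‑slices joining $\gamma_{1}$ to $\gamma_{2}$; translating by any $t_{n}$ with $|t_{n}|\le h^{+}(\Sigma_{n})$, the uniform curvature bound gives subconvergence to a complete minimal surface in the slab $\overline{\Omega}\times\Rbb$ with boundary on the vertical lines $\{p_{i}\}\times\Rbb$, $\{q_{i}\}\times\Rbb$; being a limit of a stable $\Sigma_{n}$ it is stable, and (this is the point one must justify) in fact strictly stable, so again admits no nonzero bounded nonnegative Jacobi field vanishing on its boundary.

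Granting this model, suppose first $\Sigmas_{n}$ is stable--unstable with $h^{+}(\Sigmas_{n})-h^{-}(\Sigmas_{n})\to+\infty$. Let $\phi_{n}>0$ be the first eigenfunction of its Jacobi operator (eigenvalue $0$, $\phi_{n}|_{\partial}=0$), normalized by $\|\phi_{n}\|_{C^{0}}=1$. Translating $\Sigmas_{n}$ so the point where $\|\phi_{n}\|_{C^{0}}=1$ is attained sits at bounded height places it in one of the model regions (top, bottom, or tube), where $\phi_{n}$ subconverges to a bounded nonnegative Jacobi field vanishing on the boundary of a strictly stable minimal surface, hence to $0$ --- contradicting $\|\phi_{n}\|_{C^{0}}=1$. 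So a stable--unstable $\Sigmas_{n}$ cannot neck off. For the main assertion, suppose both sequences neck off. By the previous step $\Sigmas_{n}$ is strictly stable along the relevant subsequence, so $\Sigmau_{n}\neq\Sigmas_{n}$ by Proposition~\ref{pro4.3.6}, and $\Sigmau_{n}\subset\overline{B_{n}}$, the closed bounded component of $(\Omegabar\times\Rbb)\setminus\Sigmas_{n}$ (Proposition~\ref{pro4.3.2}). Applying the model to both sequences and using the uniqueness in Theorem~\ref{thm4.2.6} (at the top the two limit graphs have the same finite profile on $\gamma_{i}$ and both equal $-\infty$ on $\eta_{i}$, their positions matched through the saddle level; similarly at the bottom and in the tube), $\Sigmas_{n}$ and $\Sigmau_{n}$ become smoothly close, uniformly on $\overline{\Omega}\times\Rbb$, with $\Sigmau_{n}$ strictly inside $\Sigmas_{n}$. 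Hence for $n\gg 0$ the surface $\Sigmas_{n}$ is the normal graph over $\Sigmau_{n}$ of a function $f_{n}>0$ in the interior, $\equiv 0$ on $\partial$, with $\|f_{n}\|_{C^{0}}\to 0$; the rescalings $g_{n}:=f_{n}/\|f_{n}\|_{C^{0}}$ are positive, vanish on the boundary, have $C^{0}$‑norm $1$, and (the nonlinear terms being $O(\|f_{n}\|_{C^{0}})$) satisfy an equation converging to the Jacobi equation on $\Sigmau_{n}$, equivalently --- the surfaces being smoothly close --- on $\Sigmas_{n}$. Since $\Sigmas_{n}$ is \emph{stable}, the region‑by‑region argument of the first paragraph applies to it and forces $g_{n}\to 0$ everywhere, contradicting $\|g_{n}\|_{C^{0}}=1$. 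This proves both assertions.

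The step I expect to be the main obstacle is the analysis of the ``tube'': showing that after arbitrary bounded vertical translations it subconverges to a minimal surface in the slab $\overline{\Omega}\times\Rbb$ bounded by the vertical lines, and --- crucially --- that this limit is \emph{strictly} stable (not merely stable), so that the limiting Jacobi field vanishes; this is where one has to use convexity of $\Omega$, the rotational‑catenoid barriers appearing in Lemma~\ref{pro4.4.1}(3), and the fine structure of complete stable minimal surfaces of $\PSLhR$ in a vertical slab. A secondary point is to check that the smooth closeness of $\Sigmas_{n}$ to $\Sigmau_{n}$ holds on \emph{all} of $\overline{\Omega}\times\Rbb$, not only in the two graph regions, so that $\Sigmas_{n}$ is genuinely a normal graph over $\Sigmau_{n}$. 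Granting these, everything follows from Corollary~\ref{pro3.3.13}, Propositions~\ref{pro3.3.15b} and~\ref{pro3.3.15a}, Theorem~\ref{thm4.2.6}, and the maximum principle.
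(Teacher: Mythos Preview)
Your overall architecture matches the paper's: assume necking off, pass to limits after vertical translation, manufacture a bounded nonnegative Jacobi field on the limit vanishing on its boundary (from the first eigenfunction in the stable--unstable case, from the rescaled normal-graph distance in the strictly stable case), and derive a contradiction. The two places you flag as obstacles are exactly where the paper does concrete work that your proposal does not supply.

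First, the tube limit is not an unidentified ``strictly stable surface in the slab''. The paper determines it explicitly (Lemma~\ref{lem3.3.18}(1)): after translating so that $h^{+}(\Sigmacheck_{n})\to+\infty$ and $h^{-}(\Sigmacheck_{n})\to-\infty$, Lemma~\ref{pro4.4.1}(3) forces each slab $\{-m\le z\le m\}$ to meet $\Sigmacheck_{n}$ in two simply connected pieces whose boundaries contain the vertical segments over the endpoints of $\eta_{i}$; each piece therefore subconverges to a complete minimal surface with boundary $\partial(\eta_{i}\times\Rbb)$, and Lemma~\ref{lem2.1.3}(c) forces that surface to be $\eta_{i}\times\Rbb$ itself. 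On $\eta_{i}\times\Rbb$ one computes directly $|A|^{2}+\Ric(N,N)=2\tau^{2}-(1+2\tau^{2})=-1$, so the limiting Jacobi equation reads $\Delta w_{\infty}=w_{\infty}\ge 0$; the interior maximum $w_{\infty}(\pcheck_{\infty})=1$ then contradicts $w_{\infty}=0$ on the boundary. No abstract stability statement about a mystery limit is needed, and none would suffice without this identification.

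Second, for the Scherk-type graph limits your line ``being minimal graphs they are strictly stable, hence carry no nonzero bounded Jacobi field vanishing on their boundary'' is not a proof. On a noncompact surface the existence of a positive Jacobi field (here $N_{3}=\langle\xi,N\rangle$) gives stability but does not by itself exclude another bounded nonnegative Jacobi field vanishing on $\partial\Sigmacheckvc$. The paper uses a Wronskian/flux argument: since $w_{\infty}$ and $N_{3}$ are both Jacobi fields, $X=w_{\infty}\nabla N_{3}-N_{3}\nabla w_{\infty}$ is divergence-free; integrating over $\Sigmacheckvc\cap\{z\ge -t\}$, the contribution from the finite graphical boundary $J_{1}\cup J_{2}$ is a fixed strictly negative number (Hopf lemma for $w_{\infty}$, and $N_{3}>0$ there), the vertical boundary contributes nothing ($w_{\infty}=N_{3}=0$), and the contribution from the level curves $I_{i}(-t)$ tends to $0$ because $N_{3}$ and $\nabla N_{3}$ vanish along the asymptotically vertical ends (Lemma~\ref{lem2.1.3}(b)). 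This is the actual mechanism ruling out $w_{\infty}$, and it rests on the specific asymptotic geometry of the Scherk piece, not on a generic ``strict stability'' principle.

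Once these two identifications are in place, your remaining scheme --- choose $p_{n}$ at the max of the first eigenfunction or of the normal distance, translate so $h(p_{n})$ stays bounded, split into three cases according to whether $n\pm h(p_{n})$ is bounded, and pass the normalized function to a limit Jacobi field with the Schauder machinery --- is exactly Lemma~\ref{lem3.3.18a} of the paper.
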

The proof of this proposition is divided into three steps. 
It follows from Corollary \ref{pro3.3.13} that for $ \Sigma_n$ (which is either $ \Sigma^s_n $ or $ \Sigma^u_n $), if $h^+(\Sigma_{n})-h^-(\Sigma_{n})\to+\infty$ we have $n-h^+(\Sigma_{n}),h^-(\Sigma_{n})+n$ are bounded sequences.
Lemma \ref{lem3.3.18} gives us three possible limits for subsequences of translations of $\Sigma_n$. We next construct a Jacobi field on this limit, see Lemma \ref{lem3.3.18a}. Finally we finish the proof  by showing such a Jacobi field cannot exist on the limits obtained in Lemma \ref{lem3.3.18}.

\begin{lem}\label{lem3.3.18} Let $\Sigma_n$ be a sequence of minimal surface with $h^+(\Sigma_{n})-h^-(\Sigma_{n})\to+\infty$.
	\begin{enumerate}
	\item Let $\Sigmacheck_n$ be the image of $\Sigma_n$ under a vertical translation such that  $\left\{h^+(\Sigmacheck_n)\right\}_n$ converges to $+\vc$ as $n\to+\vc$ and $\left\{h^-(\Sigmacheck_n) \right\}_n$ converges to $-\vc$ as $n\to+\vc$.
	The sequence of minimal annuli $\Sigmacheck_n$ has a subsequence converging to a minimal surface $\Sigmacheck_\vc$ where $\Sigmacheckvc=(\eta_1\times\Rbb) \cup(\eta_2\times\Rbb)$.
	\item The sequence of minimal annuli $\Sigmacheck_n:=\Tsf_{-n}(\Sigma_n)$ has a subsequence converging to a minimal surface $\Sigmacheck_\vc$ where $\Sigmacheckvc$ is a vertical graph of type Scherk on $\Omega$ assuming the continuous data on $\gamma_1\cup\gamma_2$ and $-\vc$ on $\eta_1\cup\eta_2$.
	\item The sequence of minimal annuli $\Sigmacheck_n:=\Tsf_{n}(\Sigma_n)$ has a subsequence converging to a minimal surface $\Sigmacheck_\vc$ where $\Sigmacheckvc$ is a vertical graph of type Scherk on $\Omega$ assuming the continuous data on $\gamma_1\cup\gamma_2$ and $+\vc$ on $\eta_1\cup\eta_2$.
	\end{enumerate}
\end{lem}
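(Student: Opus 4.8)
The plan is to obtain each of the three limits as a subsequential limit of the translated annuli, using the uniform second fundamental form bound of Theorem~\ref{pro4.4.4} for compactness, and then to identify the limit by means of Corollary~\ref{pro3.3.13}, the level--set description of Lemma~\ref{pro4.4.1}, Propositions~\ref{pro3.3.15b}--\ref{pro3.3.15a}, and the barrier surfaces $S_{\etahat,\epsilon}$ of Lemma~\ref{lem2.1.3}. Throughout, since $h^+(\Sigma_n)-h^-(\Sigma_n)\to+\infty$, Corollary~\ref{pro3.3.13} forces $n-h^+(\Sigma_n)$ and $n+h^-(\Sigma_n)$ to remain bounded; hence $h^+(\Sigma_n)\to+\infty$, $h^-(\Sigma_n)\to-\infty$, and the two horizontal points of $\Sigma_n$ sit at bounded height from the top and bottom of the boundary curves $\Gamma^1_n\cup\Gamma^2_n$, which themselves span heights $\approx[-n,n]$.

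For \textbf{(2)}, the translation $\Tsf_{-n}$ brings the top horizontal point of $\Sigma_n$ to bounded height and pushes the bottom one to $-\infty$; after one further bounded translation normalizing the top horizontal point to height $0$, this is exactly the hypothesis of Propositions~\ref{pro3.3.15b} and \ref{pro3.3.15a}, so a subsequence of $\Tsf_{-n}(\Sigma_n)$ converges to a vertical graph over $\Omega$. Its boundary over $\gamma_i$ is the limit of the upper parts of $\Gamma^i_n$, and since those are the graphs of $f_{i,n}=\Tsf_{n-n_0}(f_{i,n_0})$, after $\Tsf_{-n}$ they converge to fixed continuous data. Over $\eta_i$ there is no boundary of $\Tsf_{-n}(\Sigma_n)$ at all, so the limit graph takes an infinite value on $\eta_1\cup\eta_2$; it is unbounded below because $h^-\to-\infty$, and since for heights below both horizontal points $\Gamma(t)$ is two arcs joining $\gamma_1$ to $\gamma_2$ (Lemma~\ref{pro4.4.1}(3)(a)) --- arcs which as $t\to-\infty$ must hug $\eta_1$ and $\eta_2$ --- the data is $-\infty$ on \emph{both} $\eta_1$ and $\eta_2$. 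For \textbf{(3)}, the translation $\Tsf_n$ puts the bottom horizontal point at bounded height and pushes the top one to $+\infty$; this is the mirror situation, and the proofs of Propositions~\ref{pro3.3.15b}--\ref{pro3.3.15a} apply verbatim with ``top'' and ``bottom'' interchanged (legitimate even though $\PSLhR$ has no isometry reversing the fibers, since those proofs only use the maximum principle and the barriers of Lemma~\ref{lem2.1.3}, which operate in either vertical direction), yielding a subsequential limit that is a vertical Scherk graph over $\Omega$ with continuous data on $\gamma_1\cup\gamma_2$ and $+\infty$ on $\eta_1\cup\eta_2$.

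For \textbf{(1)} --- the central case --- I would center the annulus, translating $\Sigma_n$ by the bounded amount $-\tfrac12\bigl(h^+(\Sigma_n)+h^-(\Sigma_n)\bigr)$ to get $\Sigmacheck_n$ with $h^\pm(\Sigmacheck_n)\to\pm\infty$. By Theorem~\ref{pro4.4.4} a subsequence converges to a minimal surface $\Sigmacheck_\infty$, which has no boundary (the boundary curves, still near heights $\pm n$, escape), lies in $\overline{\Omega}\times\Rbb$, and is nonempty since $h^-(\Sigmacheck_n)<0<h^+(\Sigmacheck_n)$ forces $\Sigmacheck_n$ to meet the compact set $\overline{\Omega}\times\{0\}$. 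By Lemma~\ref{pro4.4.1}(3)(b) the set $\Sigmacheck_n\cap\{h^-(\Sigmacheck_n)<z<h^+(\Sigmacheck_n)\}$ is a union of two simply connected ``bands'' $\Delta_1^n,\Delta_2^n$, the remaining pieces of $\Sigmacheck_n$ being caps near $z=\pm n$; hence on any compact set $\Sigmacheck_n=\Delta_1^n\cup\Delta_2^n$ for $n$ large, so $\Sigmacheck_\infty=\Delta_1^\infty\cup\Delta_2^\infty$. Each $\Delta_i^n$ meets every plane $\{z=t\}$ with $h^-<t<h^+$ in one arc joining $\Gamma^1_n$ to $\Gamma^2_n$ (one of the two arcs of Lemma~\ref{pro4.4.1}(3)(a)); because $\Sigma_n$ is embedded and contained in $\Omega\times\Rbb$, the two arc--families cannot cross, so they join $\Gamma^1_n$ to $\Gamma^2_n$ along the sides $\eta_1$ and $\eta_2$, i.e. the two ``vertical sides'' of $\Delta_i^n$ lie on the fibers over the endpoints of $\eta_i$. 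Letting $n\to\infty$, the caps escape and $\Delta_i^\infty$ is a proper bounded--curvature minimal surface in $\overline{\Omega}\times\Rbb$ whose boundary is exactly the two vertical fibers over the endpoints of $\eta_i$, and which has no asymptotic boundary on $\{x^2+y^2=4\}\times\Rbb$ since $\overline{\Omega}$ is compact. Then Lemma~\ref{lem2.1.3}(c), applied with $\etahat=\etahat_i$ the complete geodesic containing $\eta_i$, gives $\Delta_i^\infty\subset\etahat_i\times\Rbb$; a connected proper minimal surface inside the totally geodesic plane $\etahat_i\times\Rbb$ with boundary the two fibers over the endpoints of $\eta_i$ is forced to be the strip $\eta_i\times\Rbb$. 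Hence $\Sigmacheck_\infty=(\eta_1\times\Rbb)\cup(\eta_2\times\Rbb)$.

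The hard part will be item (1): controlling the degenerating ``waist''. Two points need care. First, one must exclude the limit being a nontrivial lamination or having extra sheets --- handled by the observation that on compact sets $\Sigmacheck_n$ is just $\Delta_1^n\cup\Delta_2^n$, so the convergence argument is exactly that of Proposition~\ref{pro3.3.15b}. Second, and this is the genuinely delicate point, one must rule out the bands joining $\Gamma^1_n$ to $\Gamma^2_n$ along a diagonal of $\Omega$ instead of along $\eta_1,\eta_2$; the non--crossing argument using embeddedness and $\Sigma_n\subset\Omega\times\Rbb$ is what forces the correct matching, after which Lemma~\ref{lem2.1.3}(c) closes the argument. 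For (2) and (3), by contrast, the analytic content is already provided by Propositions~\ref{pro3.3.15b}--\ref{pro3.3.15a} (and their mirror), and only the identification of the infinite data on $\eta_1\cup\eta_2$ remains, for which the two--arc picture of Lemma~\ref{pro4.4.1}(3)(a) suffices.
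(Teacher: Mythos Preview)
Your proof is correct and follows essentially the same route as the paper's: Propositions~\ref{pro3.3.15b}--\ref{pro3.3.15a} (together with Corollary~\ref{pro3.3.13}) handle (2)--(3), and for (1) you use the two-band decomposition of Lemma~\ref{pro4.4.1}(3) and then Lemma~\ref{lem2.1.3}(c) to pin each band to $\eta_i\times\Rbb$, exactly as the paper does; your non-crossing argument for why the bands sit over $\eta_1,\eta_2$ rather than the diagonals is a detail the paper leaves implicit. Two small slips to clean up: in (1) the limit does have boundary---the four vertical fibers over the vertices of $\Omega$, which you in fact use correctly a few lines later---and $\etahat_i\times\Rbb$ is minimal but not totally geodesic when $\tau\ne0$; neither affects the argument.
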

\begin{proof}
(1) For $m,n\in\Nbb$, define $\Acal_{n,m}=\Sigmacheck_n\cap \left\{-m\le z\le m\right\}$. We have 
	\begin{equation*}
	\Acal_{n,1}\subset \Acal_{n,2}\subset\cdots\subset \Acal_{n,n}\subset \dots \subset \Acal_{n,2n}=\Sigmacheck_n.
	\end{equation*}
	Since $h^+(\Sigma_n)-h^-(\Sigma_n)\to +\vc$, by Proposition \ref{pro4.4.1},
	for each $n\gg 0$, 
	$\Acal_{n,1}$ consists of two components $\Acal_{n,1}^1$ and $\Acal_{n,1}^2$. Each $\Acal_{n,1}$ contains in its boundary a vertical part of the boundary of $\eta_i \times [-1,1]$. Using a subsequence of $\Acal_{n,1}$, we can ensure the convergence of $\{ \Acal_{n,1}^1   \}$ and $\{ \Acal_{n,1}^2  \}$.
	%
%By the compactness theorem, there
%	exists a convergent subsequence  $\left\{\Acal^i_{k^1_n,1} \right\}_n$ of %the minimal disks $ \{\Acal^i_{n,1}\}_{n}$. 
Continuing extracting subsequence from $\Acal_{n,2}, \Acal_{n,3}...$, diagonal process gives a subsequence of minimal annuli of $\left\{\Sigmacheck_{n} \right\}_n$ that converges to a minimal surface
$\Sigmacheckvc$.

%	By induction, for each $j\ge 1$, the sequence
%	$\left\{\Acal^i_{k^{j-1}_n,j} \right\}_n$ has a convergent subsequence $\left\{\Acal^i_{k^j_n,n} \right\}_n$.
%	By a diagonal process, the subsequence $\left\{\Sigmacheck_{k^n_n} \right\}_n$ of the minimal annuli $\left\{\Sigmacheck_n\right\}_n$ converges to a minimal surface $\Sigmacheckvc$. 
By construction, $\Sigmacheckvc$ consists of two components $\Acal^1,\Acal^2$ where $\Acal^i$ is a minimal surface whose boundary is the boundary of $\eta_i\times\Rbb$.
	By Lemma \ref{lem2.1.3}-c),
	$\Acal^i=\eta_i\times\Rbb$ and we can conclude that $\Sigmacheckvc=(\eta_1\times\Rbb)\cup(\eta_2\times\Rbb)$.
	
(2) Since $\Sigmacheck_n=\Tsf_{-n}(\Sigma_n)$, the sequence $h^+(\Sigmacheck_n)$ is bounded and the sequence $h^-(\Sigmacheck_n)\to-\vc$ as $n\to+\vc$.
	By Proposition \ref{pro3.3.15a} and Lemma \ref{lem2.1.3}  , the sequence $\Sigmacheck_n$ has a subsequence converging to a minimal surface $\Sigmacheckvc$. Moreover, $\Sigmacheckvc$ is the graph of a function $u:\Omega  \to\Rbb$ where  the function $ u $ assumes the continuous value on $\gamma_1\cup\gamma_2$ and $-\infty$ on $\eta_1\cup\eta_2$.
	
	% \Sigmacheckvc\cap\{\zsf=t\} $ is close to $ \eta_1 $ and $ \eta_2 $ for $ t\le -H $ ($ H $ large enough).
 %Hence $U=\Omega$ and $\Sigmacheckvc$ is a  Jenkins-Serrin graph on $\Omega$ where the function $ u $ assumes the continuous value on $\gamma_1\cup\gamma_2$ and $-\infty$ on $\eta'_1\cup\eta'_2$ as we wish.

%As in Since $\dhr\Sigmacheckvc$ is the limit of $\dhr\Sigmacheck_n=\Tsf_{-n}(\Gamma^1_n)\cup\Tsf_{-n}(\Gamma^2_n)$ as $n\to\infty$ and $\Sigmacheckvc$ is simply connected, the boundary of domain $U\subset\Omega$ consists of the two geodesic arcs $\gamma_1,\gamma_2$ and two arcs $\eta'_1,\eta'_2$ whose endpoints are the same with $\eta_i$, $i=1,2$.
%	Moreover, $u$ assumes the continuous value on $\gamma_1\cup\gamma_2$ and $-\infty$ on $\eta'_1\cup\eta'_2$.
%	This yields $\eta'_i=\eta_i$, $i=1,2$..
%	
(3) The proof is definitely similar to the one in (2).
\end{proof}

By extracting a subsequence, we can assume that  $\Sigmas_n$ is either strictly stable for all $n$ or $\Sigmas_n$ is stable-unstable for all $n$. Fix $n$, we  define a point $p_n\in\Sigmas_n$ as follows.
\begin{enumerate}
\item
In the case that $\Sigmas_n$ is stable-unstable, let $u_n$ be the eigenfunction of the Jacobi operator associate to the first eigenvalue $0$. The function $u_n=0$
on $\dhr\Sigmas_n$ and $u_n >0$ on $\Sigmas_n$. Let $p_n$ 
be a point at which $u_n$ attains its maximum value on $\Sigma_n$.

\item
In the case that $\Sigmas_n$ is strictly stable, the continuous function $\Sigmau_n\to \Rbb, q\mapsto \Distaa{\PSLhR}{q,\Sigmas_n}$ attains its maximum value at $q_n$. We call the maximum value the "distance" between $\Sigmas_n$ and $\Sigmau_n$, denoted by $d_n$.
Let $p_n\in \Sigmas_n$ such that $\Distaa{\PSLhR}{q_n,p_n}=\Distaa{\PSLhR}{q_n,\Sigmas_n}$. 
\end{enumerate}

We consider the two sequences $\left\{n-h(p_n) \right\}_n, \left\{n+h(p_n) \right\}_n$ of  real numbers. Since $\left(n-h(p_n)\right)+\left(n+h(p_n)\right)=2n\to+\vc$ as $n\to+\vc$, so after extracting  a subsequence, there are the following three possibilities:
\begin{description}
	\item[Case 1:] Both of the sequences $\left\{n-h(p_n) \right\}_n, \left\{n+h(p_n) \right\}_n$ converge to $+\vc$ as $n\to+\vc$.
	\item[Case 2: ] The sequence $\left\{n-h(p_n) \right\}_n$ is bounded while the sequence $ \left\{n+h(p_n) \right\}_n$ converges to $+\vc$ as $n\to+\vc$.
	\item[Case 3: ] The sequence $\left\{n-h(p_n) \right\}_n$ converges to $+\vc$ as $n\to+\vc$ while the sequence $ \left\{n+h(p_n) \right\}_n$  is bounded.
\end{description}

For each $n$, we denote the images of $\Sigmas_n,\Sigmau_n,\Gamma^1_n,\Gamma^2_n$ and $p_n$
under the translation $\Tsf_{h}:\PSLhR\to\PSLhR$
by respectively
$\Sigmachecks_n,\Sigmachecku_n,\Gammacheck^1_n,\Gammacheck^2_n$ and $\pcheck_n$ with $h=-h(p_n)$ in Case 1, $h=-n$ in Case 2 and $h=n$ in Case 3.
By Lemma \ref{lem3.3.18}, the two sequences $\Sigmachecks_n$ and $\Sigmachecku_n$ have a subsequence which converges to the same minimal surface $\Sigmacheckvc$, where $\Sigmacheckvc$ is $(\eta_1\times\Rbb)\cup(\eta_2\times\Rbb)$ if Case 1 occurs and is a vertical graph of type Scherk on $\Omega$ if the others occur. In any case, the sequence $h(\pcheck_n)$ is bounded. Hence, by extracting a subsequence, we can assume that the sequence $\pcheck_n$ converges to the point $\pcheckvc\in\Sigmacheckvc$. 

Since $\Sigmau_n$ and $\Sigmas_n$ has the same limit
by Lemma \ref{lem3.3.18} and since their curvature  is uniformly bounded, $\Sigmau_n$ is a local graph over $\Sigmas_n$ of a function $u_n$. By Proposition \ref{pro4.3.2},
$u_n >0$ ($\Sigmau_n$ is one side of  $\Sigmas_n$) and we have  $u_n \to 0$, uniformly as $n \to +\infty$ ($d_n \to 0$). We  prove that a subsequence of a renormalization of $u_n$  will converge to a Jacobi field on $\Sigmacheckvc$. In the case where $\Sigmas_n$ is stable-unstable, we  will consider a subsequence of a renormalization of $u_n$ where $u_n$ is the first eigenvalue of the Jacobi operator.

\begin{lem}\label{lem3.3.18a}
	There exists a Jacobi field $w_\vc$ on $\Sigmacheckvc$ satisfying
	$0\le w_\vc\le 1$ on $\Sigmacheckvc$, $w_\vc=0$ on $\dhr\Sigmacheckvc$ and $w_\vc=1$ at $\pcheckvc$.	 
\end{lem}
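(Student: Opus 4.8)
The plan is to renormalize $u_n$ so that its maximum equals $1$, to observe that the renormalized functions solve a sequence of uniformly elliptic equations converging to the Jacobi equation of $\Sigmacheckvc$, and then to extract a $C^2_{\mathrm{loc}}$-convergent subsequence by elliptic estimates.

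First we set $v_n:=u_n/u_n(p_n)$. By the choice of $p_n$ as a point where $u_n$ attains its maximum on $\Sigmas_n$, we have $0\le v_n\le 1$, $v_n=0$ on $\dhr\Sigmas_n$ and $v_n(p_n)=1$, and, after applying the translation used to define $\Sigmachecks_n$ (an isometry, so we keep the notation $v_n$), the same holds on $\Sigmachecks_n$ with $v_n(\pcheck_n)=1$. Next we identify the equation satisfied by $v_n$. If $\Sigmas_n$ is stable--unstable, $u_n$ is by construction the first eigenfunction of the Jacobi operator with eigenvalue $0$, so $v_n$ satisfies $L_nv_n=0$ with $L_n=\Delta_{\Sigmachecks_n}+\chuan{A_{\Sigmachecks_n}}^2+\Ric_{\PSLhR}(\Nsf,\Nsf)$. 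If $\Sigmas_n$ is strictly stable, $\Sigmachecku_n$ is the normal graph over $\Sigmachecks_n$ of $u_n$, both surfaces are minimal, and since $d_n\to 0$ and the curvatures are uniformly bounded, the uniform graph lemma and Schauder estimates for the minimal surface equation give $u_n\to 0$ in $C^{2}$; writing $\mathcal M_n$ for the minimal surface operator for normal graphs over $\Sigmachecks_n$ and using $\mathcal M_n(0)=0$ we obtain
\[
0=\mathcal M_n(u_n)=L_nu_n,\qquad L_n:=\int_0^1 D\mathcal M_n(tu_n)\,dt,
\]
so $v_n$ again satisfies $L_nv_n=0$, where $L_n$ is a linear second order operator whose coefficients, depending on $u_n$ and $\nabla u_n$, converge to those of the Jacobi operator of the limit.

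To pass to the limit, I would use the uniform bound on the second fundamental form (Theorem \ref{pro4.4.4}) and elliptic regularity to see that $\Sigmachecks_n$ converges smoothly, as normal graphs, to $\Sigmacheckvc$, and up to the boundary since the curves $\Gammacheck^i_n$ have uniformly bounded geometry; hence the $L_n$ are uniformly elliptic with coefficients bounded and convergent in $C^{0,\alpha}_{\mathrm{loc}}$. Since $0\le v_n\le 1$ and $v_n=0$ on $\dhr\Sigmachecks_n$, interior and boundary Schauder estimates (\cite[Corollary 6.7]{GT01}) give a uniform $C^{2,\alpha}_{\mathrm{loc}}$ bound for $v_n$ up to the boundary, and the Arzela-Ascoli theorem together with a diagonal argument produces a subsequence converging in $C^2_{\mathrm{loc}}$ to a function $w_\vc$ on $\Sigmacheckvc$. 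Passing to the limit in $L_nv_n=0$ shows $w_\vc$ is a Jacobi field on $\Sigmacheckvc$, with $0\le w_\vc\le1$ and $w_\vc=0$ on $\dhr\Sigmacheckvc$. Finally, the uniform gradient bound $\chuan{\nabla v_n}\le C$ together with $v_n(\pcheck_n)=1$ and $v_n=0$ on $\dhr\Sigmachecks_n$ forces $\Dist_{\Sigmachecks_n}(\pcheck_n,\dhr\Sigmachecks_n)\ge 1/C$, so $\pcheckvc$ is an interior point of $\Sigmacheckvc$, and $w_\vc(\pcheckvc)=\lim_n v_n(\pcheck_n)=1$.

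The main difficulty will be the uniform estimates up to the boundary — this is where the uniformly bounded geometry of the curves $\Gamma^i_n$ and the homogeneous Dirichlet condition $v_n=0$ enter — together with, in the strictly stable case, checking that the renormalized equation $L_nv_n=0$ genuinely converges to the Jacobi equation of $\Sigmacheckvc$, which relies on $d_n\to 0$ forcing $u_n\to 0$ in $C^2$. The choice of $p_n$ as the maximum of $u_n$ is exactly what guarantees that the limiting Jacobi field $w_\vc$ is nontrivial rather than identically zero.
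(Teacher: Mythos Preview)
Your proposal is correct and follows essentially the same route as the paper: renormalize $u_n$ by its maximum, distinguish the stable--unstable case (where $u_n$ is already a Jacobi field) from the strictly stable case (where $u_n$ is the graph function of $\Sigmachecku_n$ over $\Sigmachecks_n$), and use Schauder interior and boundary estimates together with Arzel\`a--Ascoli and a diagonal argument to extract a $C^{2,\alpha}_{\mathrm{loc}}$ limit satisfying the Jacobi equation. The only notable stylistic difference is that in the strictly stable case you package the linearization as $L_n=\int_0^1 D\mathcal M_n(tu_n)\,dt$, whereas the paper writes the minimal surface equation for $u_n$ explicitly in Fermi coordinates over $\Sigmachecks_n$ and tracks the convergence of each term by hand; your formulation is more compact, the paper's is more self-contained since it relies only on the explicit formulas derived in its appendix. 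Your gradient-bound argument that $\pcheckvc$ stays away from the boundary is also a slight rearrangement of the paper's logic (which first proves $w_\vc=0$ on $\dhr\Sigmacheckvc$ via boundary Schauder and then concludes $\pcheck_n\not\to\dhr\Sigmacheckvc$), but both are valid.
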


\begin{proof}
	In order to prove this result, we need some local computation in any $3$- Riemannian manifold, see for example \cite[Chapter 7 \S 1]{CM11}.
	{\color{black}We give in Appendix \ref{sectB} the necessary results.}
	\vskip 0.25cm
	\textbf{1.} $ C^{2,\alpha} $-convergence of $ \Sigmacheck^s_n $.
	\vskip 0.25cm	
	Fix $p\in \Sigmacheckvc$.
	Since $\Sigmachecks_n$ converges to $\Sigmacheckvc$ as $n\to+\infty$, there are $r,\epsilon>0$ such that for all $n\gg 0$, $\Sigmachecks_n\cap W_{\Sigmacheckvc}(p,r,\epsilon)$ consists of one component which is a normal graph of a function $v_n$ on $\mathcal{U}:=D_{\Sigmacheckvc}(p,r)$.
	Moreover, $\chuan{v_n}_{C^0(\mathcal{U})}$ tends to $0$ as $n\to+\infty$.
	Shrinking $r$ if necessary, there exists Fermi coordinates $(x_1,x_2,x_3)$ on $W_{\Sigmacheckvc}(p,r,\epsilon)$ such that $(x_1,x_2)$ is the coordinates on $\mathcal{U}$.
	Since the sequence of minimal surfaces $\Sigmachecks_n$ has uniform bound for the curvature, the sequence of functions $v_n$ satisfies $K:=\sup_n\chuan{v_n}_{C^1(\mathcal{U})}<+\infty$.
	
	We first prove that $v_n\to 0$ in $C^{2,\alpha}(\mathcal{U})$-topology with some $0<\alpha<1$.
	Since $\chuan{v_n}_{C^{0}(\mathcal{U})}\to 0$ as $n\to+\infty$, it suffices to prove there is $\alpha\in (0,1)$ such that for all $\mathcal{U}'\subset\subset\mathcal{U}$, we have estimate
	\begin{equation}\label{equ3.3.25}
	\chuan{v_n}_{C^{2,\alpha}(\mathcal{U}')}\le C\chuan{v_n}_{C^0(\mathcal{U})}
	\end{equation}
	with $C=C\left(K,\Distaa{}{\mathcal{U}',\dhr\mathcal{U}}\right)$.
	Indeed, since $\Gr(v_n)$ is minimal, $v_n$ satisfies the minimal surface equation (\ref{equ4.5.4}) on $\mathcal{U}$:
	\begin{align}\label{equ3.3.26}
	0
	=&\sum_{i,j=1}^{2}(g^{v_n}) ^{ij}\left(\frac{\dhr^2v_n}{\dhr x_i\dhr x_j}+\Gamma^3_{ij}+\frac{\dhr v_n}{\dhr x_i}\Gamma^3_{3j}+\frac{\dhr v_n}{\dhr x_j}\Gamma^3_{i3}+\frac{\dhr v_n}{\dhr x_i}\frac{\dhr v_n}{\dhr x_j}\Gamma^3_{33}\right)\notag\\
	&-\sum_{i,j,m=1}^{2}\frac{\dhr v_n}{\dhr x_m}(g^{v_n}) ^{ij}\left(\Gamma^m_{ij}+\frac{\dhr v_n}{\dhr x_i}\Gamma^m_{3j}+\frac{\dhr v_n}{\dhr x_j}\Gamma^m_{i3}+\frac{\dhr v_n}{\dhr x_i}\frac{\dhr v_n}{\dhr x_j}\Gamma^m_{33}\right)
	\end{align}
	where $v_n$ and its partial derivatives are calculated at $(x_1,x_2)\in\mathcal{U}$ while $(g^{v_n})^{ij},\Gamma^k_{ij}$ are calculated at $(x_1,x_2,v_n)$.
	Since $\chuan{v_n}_{C^{1}(\mathcal{U})}<K$ for all $n$, $v_n$ is a solution of a uniformly elliptic equation.
	By H\"older interior estimate \cite[Theorem 13.3]{GT01}, there exists $\alpha\in (0,1)$ such that for all $\mathcal{U}'\subset\subset\mathcal{U}$ we have estimates
	\begin{equation}\label{equ3.3.27}
	\chuan{v_n}_{C^{1,\alpha}(\mathcal{U}')}\le C_1
	\end{equation}
	with $C_1=\left(K,\Distaa{}{\mathcal{U}',\dhr\mathcal{U}}\right)$.
In an other respect, from \eqref{equ3.3.26}, (\ref{equ3.3.16}) and \eqref{equ3.3.10}, $v_n$ satisfies the following uniformly elliptic equation
	\begin{equation*}
	\sum_{i,j=1}^{2}a^{ij}_{v_n}(x_1,x_2)\frac{\dhr^2 v_n}{\dhr x_i\dhr x_j}+ \sum_{i=1}^{2}b^i_{v_n}(x_1,x_2)\dhrou{v_n}{ x_i}=f_{v_n}(x)
	\end{equation*}
	where $a^{ij}_{v_n}(x_1,x_2)=(g^{v_n}) ^{ij}(x_1,x_2,v_n)$,  $f_{v_n}(x)=-\sum_{i,j=1}^{2}g^{ij}\Gamma^{3}_{ij}$.
	It follows from \eqref{equ3.3.27} and Schauder interior estimate \cite[Theorem 6.2]{GT01} that, for all $\mathcal{U}'\subset\subset\mathcal{U}$ we have the estimate
	\begin{equation}\label{equ3.3.28}
	\chuan{v_n}_{C^{2,\alpha}(\mathcal{U}')}\le C_2 \left(\chuan{v_n}_{C^0(\mathcal{U})}+\chuan{f_{v_n}}_{C^{0,\alpha}(\mathcal{U})}\right)
	\end{equation}	
	with $C_2=C_2\left(\alpha,C_1\right)$.
	
	The estimate \eqref{equ3.3.25} is obtained from \eqref{equ3.3.28} and an estimate of 
	$\chuan{f_{v_n}}_{C^{0,\alpha}(\mathcal{U})}$ in terms of $\chuan{v_n}_{C^{0}(\mathcal{U})}$.
	Following the notations of Section 6, $H$ and $A$ are respectively the mean curvature and the second fundamental form of the level sets of the function $x_3$, given by the Fermi coordinates of $ \Sigmacheckvc $.
	By \eqref{equ3.3.17},  $f_{v_n}(x_1,x_2)=-2H(x_1,x_2,v_n)$.
    Hence, by \eqref{equ4.5.7a}
	\begin{equation}
	\td{f_{v_n}(x_1,x_2)}=\td{2H(x_1,x_2,v_n)-2H(x_1,x_2,0)}\le C_3 \chuan{v_n(x_1,x_2)}
	\end{equation}
	with $C_3=\sup_{W_{\Sigmacheckvc}(p,r,\epsilon)}\left(\chuan{A}^2+\Ric(N,N)\right)<+\infty $.
	So, $\chuan{f_{v_n}}_{C^0(\mathcal{U})}\le C_3\chuan{v_n}_{C^{0}(\mathcal{U})}$.
	Finally, we need an estimate of $$\left[f_{v_n}\right]_{\alpha;\mathcal{U}}
	=\sup_{x,y\in\mathcal{U},x\ne y}\frac{\td{f_{v_n}(x)-f_{v_n}(y)}}{\td{x-y}^\alpha}
	=2\sup_{x,y\in\mathcal{U},x\ne y}\frac{\td{H(x,v_n(x))-H(y,v_n(y))}}{\td{x-y}^\alpha}$$
	in terms of $\chuan{v_n}_{C^0(\mathcal{U})}$.
	We have
	\begin{equation}\label{equ3.3.29}
	\left[f_{v_n}\right]_{\alpha;\mathcal{U}}
	\le 2\sup_{x,y\in\mathcal{U},x\ne y}\frac{\td{H(x,v_n(x))-H(y,v_n(x))}}{\td{x-y}^\alpha}+2\sup_{x,y\in\mathcal{U},x\ne y}\frac{\td{H(y,v_n(x))-H(y,v_n(y))}}{\td{x-y}^\alpha}
	\end{equation}
	Since $\Sigmacheckvc$ is minimal, i.e., $H(x,0)=0$, so $H(x,t)=\int_{0}^{t}\dhrou{H}{x_3}(x,\tau)d\tau$.
	Thus
	\begin{align}\label{equ3.3.30}
	\td{H(x,v_n(x))-H(y,v_n(x))}&\le \int_{0}^{v_n(x)}
	\td{\dhrou{H}{x_3}(x,\tau)-\dhrou{H}{x_3}(y,\tau)}d\tau\notag\\
	&\le \chuan{H}_{C^{1,\alpha}\left(W_{\Sigmacheckvc}(p,r,\epsilon)\right)}\td{x-y}^\alpha\chuan{v_n}_{C^0(\mathcal{U})}.
	\end{align}
Since the sequence of minimal surfaces $\Sigmachecks_n$ has uniformly bounded curvature, there is a constant $C_4$ satisfying $\chuan{v_n}_{C^1(\mathcal{U})}\le C_4\chuan{v_n}_{C^0(\mathcal{U})}$ for all $n$.
	This yields
	\begin{align}\label{equ3.3.31}
	\td{H(y,v_n(x))-H(y,v_n(y))}
	&\le \chuan{H}_{C^1\left(W_{\Sigmacheckvc}(p,r,\epsilon)\right)}\td{v_n(x)-v_n(y)}\notag\\
	&\le \chuan{H}_{C^1\left(W_{\Sigmacheckvc}(p,r,\epsilon)\right)}\chuan{v_n}_{C^1(\mathcal{U})}\td{x-y}\notag\\
	&\le \chuan{H}_{C^1\left(W_{\Sigmacheckvc}(p,r,\epsilon)\right)}C_4\chuan{v_n}_{C^0(\mathcal{U})}\max\left\{1,\mathrm{diam}(\mathcal{U})^{1-\alpha} \right\}\td{x-y}^\alpha.
	\end{align}
	The estimates \eqref{equ3.3.29},\eqref{equ3.3.30} and \eqref{equ3.3.31} deduces
	an estimate of $\left[f_{v_n}\right]_{\alpha;\mathcal{U}}$ in terms of $\chuan{v_n}_{C^0(\mathcal{U})}$, hence, we get
	\eqref{equ3.3.25}.

We now denote by $g^{v_{\infty }}_{ij}$, $A^{v_\infty}_{ij}$ and $N_{v_{\infty}}$ respectively the first, second fundamental form and the unit normal vector field of $\mathcal{U}\subset\Sigmacheckvc$.
	Since $v_n\to 0$ in $C^{2,\alpha}(\mathcal{U})$-topology, from the formulas \eqref{equ3.3.9}, \eqref{equ3.3.13} and \eqref{equ3.3.12} we have
	\begin{Asse}\label{C}
 $g^{v_n}_{ij}\to g^{v_\infty}_{ij}$ in $C^{1,\alpha}(\mathcal{U})$-topology, $A^{v_n}_{ij}\to A^{v_\infty}_{ij}$ in $C^{\alpha}(\mathcal{U})$-topology and $N_{v_n}\to N_{v_\infty}$ in $C^{1,\alpha}(\mathcal{U})$-topology.
	\end{Asse}	
	
	\vskip 0.25cm	
	\textbf{2.} We now construct the Jacobi field $ w_\infty $ from $ w_n=\frac{u_n}{\max_{\Sigmachecks_n} u_n} $ the renormalization of $ u_n $.
	\vskip 0.25cm
	The smooth function $w_n$ on $\Sigmachecks_n$ satisfies $0\le w_n\le 1$ on $\Sigmachecks_n$, $w_n(p_n)=1$, $w_n=0$ on $\dhr\Sigmachecks_n$.
	Since $\Sigmachecks_n$ is a normal graph on $\Sigmacheckvc$, we can see $u_n,w_n$ as functions on $\Sigmacheckvc$, i.e.,  $u_n(x_1,x_2):=u_n((x_1,x_2,v_n(x_1,x_2)))$, $w_n(x_1,x_2):=w_n((x_1,x_2,v_n(x_1,x_2)))$.
	The first, second fundamental forms of $\Sigmachecks_n,\Sigmacheckvc$ denoted by $g^n_{ij}, g^\vc_{ij}$ and $A^n_{ij},A^\vc_{ij}$.
	We can also consider these functions on $\Sigmacheckvc$.
	Hence, we prove that the sequence $w_n$ has a subsequence converging in $C^{2,\alpha}$-topology to a  Jacobi field $\vvvc$ on $\Sigmacheckvc$.
	We consider the following two cases
	\vskip 0.25cm	
	\textbf{2a.} The case $\Sigmas_n$ is stable-unstable for each $n$.
	In this case, $u_n$ is a Jacobi field on $\Sigmachecks_n$, then so is $w_n$, i.e., $\Lrm_{\Sigmachecks_n}(w_n)=0$.
	We write the stability operator of $\Sigmachecks_m$, $m\in\Nbb\cup\{\infty\}$ under form
	\begin{gather}
	\Lrm_{\Sigmachecks_m}=\sum_{i,j=1}^{2}\overline{a}_{v_m}^{ij}(x_1,x_2)\frac{\dhr^2}{\dhr x_i\dhr x_j}+\sum_{i=1}^{2}\overline{b}_{v_m}^i(x_1,x_2)\frac{\dhr}{\dhr x_i}+ \overline{c}_{v_m}(x_1,x_2).
	\end{gather}
	Since $\Lrm_{\Sigmachecks_m}=\Delta_{\Sigmachecks_m}+\td{A^{v_m}}^2+\Ric(N_{v_m},N_{v_m})$ we have
	\begin{equation}
	\overline{a}_{v_m}^{ij}=(g^{v_m})^{ij},\quad \overline{b}_{v_m}^i=\frac{1}{\sqrt{\det(g^{v_m})}}
	\frac{\dhr}{\dhr x_i}\left((g^{v_m})^{ij}\sqrt{\det(g^{v_m})}\right)
	\end{equation}
	\begin{equation}
	\overline{c}_{v_m}=\Ric(N_{v_m},N_{v_m})+\sum_{i,j,k,\ell=1}^{2}A^{v_m}_{ij}A^{v_m}_{k\ell}(g^{v_m})^{ik}(g^{v_m})^{j\ell} 
	\end{equation}
	It follows from the assertion \ref{C} that $\overline{a}^{ij}_{v_n}\to \overline{a}^{ij}_{v_\infty}$, $\overline{b}^i_{v_n}\to \overline{b}^i_{v_\infty}$ and $\overline{c}_{v_n}\to \overline{c}_{v_\infty}$ in $C^{\alpha}(\mathcal{U})$-topology.
	
	By Schauder interior estimates \cite[Theorem 6.2]{GT01}, the sequence $\td{w_n}_{C^{2,\alpha}(\mathcal{U}')}$ is uniformly bounded for all $\mathcal{U}'\subset\subset\mathcal{U}$.
	By Arzela-Ascoli theorem together with a diagonal process, we can conclude that there exists a subsequence of $w_n$  converging uniformly to $w_\vc$ in $C^{2,\alpha}$-topology, moreover, $\Lrm_{\Sigmacheckvc}(\vvvc)=0$.
	
	\vskip 0.25cm
	\textbf{2b.} 
	The case $\Sigmas_n$ is strictly stable for all  $n$. Thus, $\Sigmachecku_n$ is a normal graph of a function $u_n$ on $\Sigmachecks_n$.
	For each $n$, the restriction of the projection $(x_1,x_2,x_3)\mapsto 
	(x_1,x_2)$ to $\Sigmas_{n}$ gives us a local coordinates $(x^n_1,x^n_2)$ of $\Sigmachecks_{n}$.
	We can extend the coordinates to Fermi coordinates $\left(x^n_1,x^n_2,x^n_3\right)$ of $\Sigmachecks_{n}$ in $\PSLhR$.

	Since $\Gr(u_n)$ is minimal, by \eqref{equ4.5.4}, $u_n$ satisfies the minimal surface equation on $\Gr(v_n)$:
	\begin{align}\label{equ3.3.32}
	0
	=&\sum_{i,j=1}^{2}(g^{u_n}) ^{ij}\left(\frac{\dhr^2u_n}{\dhr x^n_i\dhr x^n_j}+\Gamma^3_{ij}+\frac{\dhr u_n}{\dhr x^n_i}\Gamma^3_{3j}+\frac{\dhr u_n}{\dhr x^n_j}\Gamma^3_{i3}+\frac{\dhr u_n}{\dhr x^n_i}\frac{\dhr u_n}{\dhr x^n_j}\Gamma^3_{33}\right)\notag\\
	&-\sum_{i,j,m=1}^{2}\frac{\dhr u_n}{\dhr x^n_m}(g^{u_n}) ^{ij}\left(\Gamma^m_{ij}+\frac{\dhr u_n}{\dhr x^n_i}\Gamma^m_{3j}+\frac{\dhr u_n}{\dhr x^n_j}\Gamma^m_{i3}+\frac{\dhr u_n}{\dhr x^n_i}\frac{\dhr u_n}{\dhr x^n_j}\Gamma^m_{33}\right)
	\end{align}
	where $u_n$ and its partial derivatives are calculated at $(x^n_1,x^n_2)\in Gr(v_n)$ while $(g^{u_n})^{ij},\Gamma^k_{ij}$ are calculated at $(x^n_1,x^n_2,u_n)$.
	We know that $ \Sigmacheck^s_n $ converges to $ \Sigmacheckvc $ in $ C^{2,\alpha} $ topology by the first part of the proof, hence $ \chuan{v_n}_{C^{2,\alpha}(U)}\to 0 $.
	Moreover, $\chuan{u_n}_{C^0(\mathcal{U})}$ converges to $0$ as $n\to+\infty$, so by a similar argument to \ref{equ3.3.25},  using Holder estimate and Schauder estimate, there is $\gamma\in (0,\alpha)$ such that for all $\mathcal{U}'\subset\subset\mathcal{U}$, we have the estimate
	\begin{equation}\label{equ3.3.36}	\chuan{u_n}_{C^{2,\gamma}(\mathcal{U}')}\le C_5 \chuan{u_n}_{C^0(\mathcal{U})}.
	\end{equation}
	This yields that $\chuan{w_n}_{C^{2,\gamma}(\mathcal{U}')}\le C_5$
	for all $\mathcal{U}'\subset\subset\mathcal{U}$.
	It follows from Arzelà-Ascoli theorem  and a diagonal process that $w_n$ converges to $\vvvc:\Sigmacheckvc\to\Rbb$ in $C^{2,\frac{\gamma}{2}}$-topology.
	For each $n$, by \eqref{equ3.3.32} and \eqref{equ3.3.16}, $w_n$ satisfies
	\begin{align*}
	0=&\left(\sum_{i,j=1}^{2}(g^{u_n}) ^{ij}\frac{\dhr^2 w_n}{\dhr x^n_i\dhr x^n_j}-\sum_{i,j,m=1}^{2}(g^{u_n}) ^{ij}\Gamma^m_{ij}\dhrou{w_n}{x^n_m}\right)+\frac{1}{\max_{\Sigmachecks_n}(u_n)}\left(\sum_{i,j=1}^{2}(g^{u_n}) ^{ij}\Gamma^3_{ij} \right)\\
	&-\left(\sum_{i,j,m=1}^{2}\dhrou{w_n}{x^n_m}(g^{u_n}) ^{ij}\left(\dhrou{u_n}{x^n_i}\Gamma^m_{3j}+\dhrou{u_n}{x^n_j}\Gamma^m_{i3}+\dhrou{u_n}{x^n_i}\dhrou{u_n}{x^n_j}\Gamma^m_{33}\right)\right)\\
	=:& \mathrm{I}_n+\mathrm{II}_n+\mathrm{III}_n	\end{align*}
	where $u_n, w_n$ and its partial derivatives are calculated at $(x^n_1,x^n_2)\in Gr(v_n)$ while $(g^{u_n})^{ij},\Gamma^k_{ij}$ are calculated at $(x^n_1,x^n_2,u_n)$.
	Since $u_n\to 0$ in $C^{2,\gamma}(\mathcal{U})$-topology, from the formulas \eqref{equ3.3.9}, \eqref{equ3.3.13} and \eqref{equ3.3.12} we have
	\begin{Asse}
		$g^{u_n}_{ij}\to g^{\infty}_{ij}$ in $C^{1,\gamma}(\mathcal{U})$-topology, $A^{u_n}_{ij}\to A^{_\infty}_{ij}$ in $C^{\gamma}(\mathcal{U})$-topology and $N_{u_n}\to N_{_\infty}$ in $C^{1,\gamma}(\mathcal{U})$-topology.
	\end{Asse}
	Since $u_n\to 0$ as $n\to +\infty$ in $C^{2,\gamma/2}$-topology and $w_n$ is uniformly bounded in $C^{2,\gamma/2}$-topology,  so $\mathrm{III}_n\to 0$ as $n\to+\infty$. 
	We have
	\begin{equation*}
	\lim\limits_{n\to+\infty}\mathrm{I}_n=\sum_{i,j=1}^{2}(g^{\infty})^{ij}\left(\frac{\dhr^2\vvvc}{\dhr x_i\dhr x_j}-\sum_{k=1}^{2}\Gamma^k_{ij}\dhrou{\vvvc}{x_k}\right)=\sum_{i,j=1}^{2}(g^{\infty})^{ij}\vh{\nabla_{\dhr/\dhr x_i}(\nabla_{\Sigmacheckvc} \vvvc) ,\frac{\dhr}{\dhr x_j}}=\Delta_{\Sigmacheckvc} \vvvc.
	\end{equation*}
	Using \eqref{equ3.3.10}, we obtain
	\begin{equation*}
	\mathrm{II}_n=\frac{u_n}{\max_{\Sigmachecks_n}(u_n)}\left(\frac{1}{u_n}\sum_{i,j=1}^{2}(g^{v_n})^{ij}\Gamma^3_{ij} \right)-\sum_{i,j,k,\ell=1}^{2}\frac{(g^{v_n})^{ik}(g^{v_n})^{j\ell}}{W^2}\dhrou{w_n}{x^n_k}\dhrou{u_n}{x^n_\ell}\Gamma^3_{ij}. 	\end{equation*}
	Since $u_n\to 0$ as $n\to +\infty$ in $C^{2,\gamma/2}$-topology and $w_n$ is uniformly bounded in $C^{2,\gamma/2}$-topology, the second term of $\mathrm{II}_n$ converges to $0$ as $n\to+\infty$. 
	By \eqref{equ3.3.17}, 
	\begin{equation}
	\sum_{i,j=1}^{2}(g^{v_n})^{ij}\Gamma^3_{ij}=2H_n(x^n_1,x^n_2,u_n)
	\end{equation}
with $H_n(x^n_1,x^n_2,u_n)$ the mean curvature at $(x^n_1,x^n_2,u_n)$ of level set $\{x_3^n=u_n \}$. We have
$H_n(x^n_1,x^n_2,u_n)= u_n\dhrou{ H}{x_3}(x_1^n,x^n_2,\theta_n u_n)$	for $0<\theta_n<1$. Then $\frac{2}{u_n}H_n(x^n_1,x^n_2,u_n)=2\dhrou{ H}{x_3}(x_1^n,x^n_2,\theta_n u_n)=(\chuan{A}^2+\Ric(N,N))$ for the level set evaluated at $(x^n_1,x^n_2,\theta_n u_n)$ by equality \eqref{equ4.5.7a}. Since $u_n$ and $v_n$ converge to $0$ in $C^{2,\gamma}$-topology

%	with $H_n(x^n_1,x^n_2,u_n)$ the mean curvature at $(x^n_1,x^n_2,u_n)$ of level set $\{x_3^n=u_n \}$.
%	Since $u_n$ and $v_n$ converge to $0$ in $C^{2,\gamma}$-topology, $\frac{1}{u_n}H_n(x^n_1,x^n_2,u_n)$ converges to $\dhrou{H}{x_3}(x_1,x_2,0)$ as $n\to+\infty$.
%Combining with the equality \eqref{equ4.5.7a} we have
	\begin{equation*}
	\lim\limits_{n\to+\infty}\mathrm{II}_n=
	\left\{\chuan{A}^2+\Ric(N,N)\right\}\vvvc.
	\end{equation*}
	So $w_\vc $ satisfies 
	\begin{equation}\label{equ4.3.24}
	\Lrm_{\Sigmacheckvc} w_\vc =\Delta_{\Sigmacheckvc} w_\vc +\left(\chuan{A}^2+\Ric(N,N)\right)w_\vc =0.
	\end{equation}	
	Hence, $w_\vc$ is a Jacobi field on $\Sigmacheckvc$.
	Moreover, $0\le w_\vc\le 1$ on $\Sigmacheckvc$ and $w_\vc=1$ at $\pcheckvc$.

	\textbf{3.} We have $w_\vc=0$ on $\dhr\Sigmacheckvc$.
	Moreover, the sequence  $\pcheck_n$ does not converge to $\dhr\Sigmacheckvc$ as $n\to+\vc$.

	Fix $\xi\in \dhr\Sigmacheckvc$. We will prove that  $w_\vc(\xi)=0$.
	Let $\xi_n\in\dhr\Sigmachecks_n$ satisfy $\xi_n\to\xi$.
	We have $u_n(\xi_n)=0$.
	We recall that 
	$u_n$ is here the smooth function on $\Sigmachecks_n$ satisfying the equation $\Lrm_{\Sigmachecks_n}(u_n)=0$ if $\Sigmachecks_n$ is stable-unstable and $u_n$ determines the normal minimal graph $\Sigmachecku_n$ on  $\Sigmachecks_n$ in the case that  $\Sigmachecks_n$ is strictly stable.
	
	Since the sequence $\Sigmacheck_n$ converges to $\Sigmacheckvc$, $\xi_n\to\xi$ as $n\to+\vc$, we choose $s>0$ small enough such that the sequence of the disks $D_s(\xi_n)$ in $\Sigmacheck_n$ converges to the disk $D_s(\xi)$ in $\Sigmacheckvc$. Moreover,  $\dhr D_s(\xi_n)\cap\dhr\Sigmachecks_n$ consists of only one component which is a smooth arc $T_n$ for all $n$. Since $w_n$ is a function in class $C^{2,\gamma}(D_s(\xi_n)\cup T_n)$ with $\gamma\in (0,1)$, it follows from Schauder boundary estimate, see \cite[Corollary 6.7]{GT01} that there is a constant $\bar{C}$ such that $\chuan{w_n}_{C^{2,\gamma}(D_s(\xi_n))}\le \bar{C}$.
    By theorem  Arzela-Ascoli, see \cite[Theorem 1.34]{AF03},
	the sequence $w_n$ has a subsequence converging  in $C^{2,\gamma/2}$ topology to the function $w_\vc$.
	
	Since $w_n(\xi_n)=0$, so $w_\vc(\xi)=0$. Moreover, the sequence $\pcheck_n$ does not converge to $\dhr\Sigmacheckvc$ as $n\to+\vc$.
	
\end{proof}

\begin{proof}[{\rm \bfseries Proof of Proposition \ref{pro3.3.14}}]
	By the previous lemmas, there exists a minimal surface $\Sigmacheckvc$ of $\PSLhR$ together with a Jacobi field $\vvvc$ on $\Sigmacheckvc$ where
	\begin{itemize}
	\item $\Sigmacheckvc$ is $(\eta_1\times\Rbb)\cup(\eta_2\times\Rbb)$ or a minimal graph of type Scherk of a function defined on $\Omega$ which assumes continuous data on $\gamma_1\cup\gamma_2$ and $\pm\vc$ on $\eta_1\cup\eta_2$.
	\item $0\le \vvvc\le 1$, $\vvvc=0$ on $\dhr\Sigmacheckvc$ and $\vvvc\ne 0$.
	\end{itemize}
	To conclude, we will show that there does \emph{not} exist such a Jacobi field $\vvvc$ on $\Sigmacheckvc$.

\vskip 0.25cm	
	\textbf{1.} We consider the case $\Sigmacheckvc=(\eta_1\times\Rbb)\cup(\eta_2\times\Rbb)$. Since $\vvvc$ is a Jacobi field on $\Sigmacheckvc$, it satisfies the equation:
	\begin{equation*}
	\Delta\vvvc+q\vvvc=0\quad \text{where } q=\td{A}^2+\Ric(N,N).
	\end{equation*}
We denote here by $\Delta,A,N$ resp. for the  Laplace operator, the second fundamental form and a unit normal vector field of the surface $\Sigmacheckvc$.
	We calculate the function $q$. Let $(T,E_3,N)$ be an orthonormal basis of $\PSLhR$. The Ricci curvature $\Ric(N,N)$ is
	\begin{equation*}
	\Ric(N,N)=K(N,T)+K(N,E_3).
	\end{equation*}
By Section 2.1,  the sectional curvature of a $2$-dimensional vector space $P$ of $T\PSLhR$ is $K(P)=(1+4\tau^2)a-(1+3\tau^2)$ where $a$ is square of the length of the orthogonal projection of the vector $E_3$ on $P$. Hence,
	\begin{equation*}
	\Ric(N,N)=-(1+3\tau^2)+\tau^2=-1-2\tau^2.
	\end{equation*}
In order to compute$\td{A}^2$, without loss of generality, we consider 
$\gammahat_1\times\Rbb=\{x=0\}$ in the cylinder model with
$\{E_2,E_3\}$ is the orthonormal basis of its tangent plane  $P \subset T\PSLhR$. Using formula (\ref{equ1.1.2as}), the second fundamental form in this basis is given by
$\begin{pmatrix}
	0& \tau \\ 
	\tau & 0
	\end{pmatrix}$. 
Hence, $\td{A}^2=2\tau^2$ and it follows that $q=2\tau^2-\left(1+2\tau^2\right)=-1$. Since $\vvvc\ge 0$ is a positive Jacobi field we have,  
$\Delta \vvvc\ge \Delta\vvvc +q\vvvc=0$.
The function $\vvvc$ attains its maximum $1$ in the interior of $\Sigmacheckvc$ and by the maximum principle
$\vvvc$ is constant $1$ in the interior of $\Sigmacheckvc$.
This contradicts the fact that $\vvvc=0$ on the boundary of $\Sigmacheckvc$.
	
	\vskip 0.25cm
	\textbf{2.} 
	We consider the case where $\Sigmacheckvc$ is a graph of type Scherk of a function $u$ defined on $\Omega$  with boundary data $u=-\vc$ on $\eta_1\cup\eta_2$ and $u=f_i$ on $\gamma_i$ with $f_i$ continuous (the case  where $u=+\vc$ follows from the same arguments).
Since $E_3=\partial_z$ is a Killing vector field, $N_3:=\vh{E_3,N}$ is a Jacobi field on $\Sigmacheckvc$. Moreover, $\vvvc$ is also a Jacobi field on $\Sigmacheckvc$, thus
$X:=\vvvc\nabla N_3-N_3\nabla \vvvc$ is a tangent vector field of $\Sigmacheckvc$ whose divergence vanishes.
	
	For each $t> 0$,  $\Sigmacheckvc^+(-t)=\Sigmacheckvc\cap \left\{z\ge -t \right\}$ is the graph of a function $u$ on a domain $\Omega_t \subset\Omega$ whose boundary is made up of the two geodesics $\gamma_1,\gamma_2$ and two arcs $\eta_1(-t),\eta_2(-t)$ together with their endpoints. Moreover $u$ assumes the value $-t$ on $\eta_1(-t)\cup\eta_2(-t)$.
Using that $\vvvc$ and $N_3$ vanish on the vertical part of the boundary of $\Sigmacheckvc$ and $w_\infty=0$ on $\partial \Sigmacheckvc$ , the divergence theorem gives us
	\begin{equation}\label{equ3.3.22}
	0=\int_{\Sigmacheckvc^+ (-t)}\Div X=\int_{I_1(-t)\cup I_2(-t)}\Big(\vvvc\vh{\nabla N_3,\nu}-N_3\vh{\nabla \vvvc,\nu}\Big)- \int_{J_1\cup J_2}
	N_3\vh{\nabla\vvvc,\nu}
	\end{equation}
where $\nu$ is the outer conormal vector of $\dhr\Sigmacheckvc^+(-t)$, $J_i$ is the graph of the function $u$ over $\gamma_i$ and $I_i(-t)$ is the graph of $u$ on $\eta_i(-t)$ for $i=1,2$.
Since on $J_1\cup J_2$, we have $N_3>0$ and $\vh{\nabla\vvvc,\nu}<0$ we have
that $\displaystyle\int_{J_1\cup J_2}  N_3\vh{\nabla\vvvc,\nu}$ is a negative constant. Since $\vvvc,\td{\nabla\vvvc}$ are bounded on $\Sigmacheckvc$ while the values  $N_3$ and $\vh{\nabla N_3,\nu}$  converge to $0$  on $I_i(-t)$ as $t\to +\vc$ (since $\Sigmacheckvc$ is asymptotically flat at infinity by Lemma \ref{lem2.1.3}-b)), we have that $\displaystyle\int_{I_1(-t) \cup I_2 (-t)}\Big(\vvvc\vh{\nabla N_3,\nu}-N_3\vh{\nabla \vvvc,\nu}\Big)\to 0$ as $t\to+\vc$. This contradicts  \eqref{equ3.3.22}.	
	
	\end{proof}

It follows from proposition \ref{pro3.3.14} that if $\Sigmas_n$ is a sequence of stable-unstable minimal surface then $h^+(\Sigmas_{n})-h^-(\Sigmas_{n})$ is bounded.
In the case where $\Sigmas_n$ is strictly stable and $h^+(\Sigmas_{n})-h^-(\Sigmas_{n})$
diverges, then $\Sigmau_n$ is a sequence with $h^+(\Sigmau_{n})-h^-(\Sigmau_{n})$ bounded. We describe now the limit of a subsequence of $\Sigma_n$ with $h^+(\Sigma_{n})-h^-(\Sigma_{n})$ bounded.

\begin{prop}\label{pro3.3.17}
If $h^+(\Sigma_{n})-h^-(\Sigma_{n})$ is bounded, the two sequences 
$n-h^+(\Sigma_{n})$ and $n+h^-(\Sigma_{n})$ converge to $+\infty$ as $n\to+\infty$.
\end{prop}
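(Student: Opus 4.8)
The plan is to argue by contradiction, reducing first to a single statement via a symmetry of the ambient space. The map $\varphi\colon(x,y,z)\mapsto(x,-y,-z)$ is an isometry of $\PSLhR$ (it fixes $\lambda$ and sends both $y\,dx-x\,dy$ and $dz$ to their negatives, hence preserves the metric \eqref{equ4.2.1}); it carries a minimal annulus bounded by $\Gamma^1_n\cup\Gamma^2_n$ to one bounded by the corresponding curves over the reflected quadrilateral $\varphi(\Omega)$, which satisfies the same hypotheses, and it interchanges $h^+(\Sigma_n)$ with $-h^-(\Sigma_n)$. So it suffices to prove $n-h^+(\Sigma_n)\to+\infty$. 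Suppose this fails; after passing to a subsequence, $n-h^+(\Sigma_n)\le C$ for all $n$, and since $h^+(\Sigma_n)-h^-(\Sigma_n)\le D$ by hypothesis we also get $n-h^-(\Sigma_n)\le C+D$, so both horizontal points lie in the fixed slab $\{n-C-D\le z\le n\}$ while $\Gamma^i_n$ still descends to $z\approx-n$. By Lemma~\ref{pro4.4.1}(2) applied at level $t=h^-(\Sigma_n)-1$, the set $\Sigma_n\cap\{z\le t\}$ has two simply connected components $\Delta^1_n,\Delta^2_n$, and since $t\ge n-C-D-1$ the boundary of $\Delta^i_n$ is a large arc of $\Gamma^i_n$ — lying on the vertical plane $\gammahat_i\times\Rbb$ and running from $z\approx-n$ up to $z\approx n$ — together with one level curve near $z\approx n$. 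Each $\Delta^i_n$ has uniformly bounded second fundamental form by Theorem~\ref{pro4.4.4}, meets the compact slice $\overline\Omega\times\{0\}$, and lies in $\overline\Omega\times\Rbb$; so a subsequence converges to a properly immersed minimal surface with boundary the two vertical lines through the endpoints of $\gamma_i$ and with empty boundary at infinity, which by Lemma~\ref{lem2.1.3}(b),(c) must be the vertical slab $\gamma_i\times\Rbb$. Thus, on every fixed compact subset of $\PSLhR$, $\Sigma_n\to(\gamma_1\times\Rbb)\cup(\gamma_2\times\Rbb)$, and the neck of the annulus escapes to $z=+\infty$.

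Next I translate and take a limit near the neck: set $\Sigmacheck_n:=\Tsf_{-n}(\Sigma_n)$, so its horizontal points and neck lie in the fixed slab $\{-C-D\le z\le0\}$, its boundary curves $\Gammacheck^i_n$ converge smoothly to the proper curve bounding the half-slab $\gamma_i\times(-\infty,0]$ (its two corners smoothed as in the construction of $\Gamma^i_n$ — four vertical rays running to $z=-\infty$ plus two horizontal top arcs in $\{z=0\}$), and — by Theorem~\ref{pro4.4.4} and the confinement in $\overline\Omega\times\Rbb$ — a subsequence of $\Sigmacheck_n$ converges to a properly immersed minimal surface $\Sigmacheck_\infty\subset\overline\Omega\times(-\infty,0]$ that still carries the neck. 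Combining Lemma~\ref{pro4.4.1}, the previous paragraph, and Lemma~\ref{lem2.1.3}(b), below its horizontal points $\Sigmacheck_\infty$ splits into two components asymptotic, as $z\to-\infty$, to the two \emph{disjoint} vertical slabs $\gamma_1\times\Rbb$ and $\gamma_2\times\Rbb$; they are joined by a ``bridge'' contained in $\{-C-D\le z\le0\}$ whose level curves join the two families of vertical rays, and near $z=0$ the surface $\Sigmacheck_\infty$ touches $\{z=0\}$ transversally (boundary maximum principle) along its two top arcs.

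It remains to contradict the existence of this bridge. The point is that it joins the two slabs $\gamma_1\times\Rbb$ and $\gamma_2\times\Rbb$ while staying in a fixed-width slab of heights, whereas in that slab the boundary of $\Sigmacheck_\infty$ consists only of short sub-arcs of the four vertical rays at the vertices of $\Omega$. I sweep the bridge with barriers: as in the proof of Lemma~\ref{pro4.4.1}(3) one uses C.~Penafiel's rotational catenoids (parametrised by neck size, each contained in a horizontal slab of height $\sqrt{1+4\tau^2}\,\pi$), starting from a member of large neck having $\overline\Omega\times\Rbb$ to one side, positioned by a vertical translation so that its slab meets a level $z_0\in[h^-(\Sigmacheck_\infty),h^+(\Sigmacheck_\infty)]$, and shrinking the neck until first contact with $\Sigmacheck_\infty$; using in addition horizontal translates of the barrier surfaces $S_{\etahat,\epsilon}$ from the proof of Lemma~\ref{lem2.1.3} to keep the family off the two asymptotic strips and away from the vertical rays, one forces the first contact to occur at an interior point of $\Sigmacheck_\infty$, which is impossible for two minimal surfaces. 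This rules out $n-h^+(\Sigma_n)\le C$, so $n-h^+(\Sigma_n)\to+\infty$; by the symmetry of the first paragraph, $n+h^-(\Sigma_n)\to+\infty$ as well.

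I expect the last step to be the main obstacle: one has to position the Penafiel catenoids and the surfaces $S_{\etahat,\epsilon}$ precisely enough that, as the sweeping family is slid, its first contact with $\Sigmacheck_\infty$ cannot lie on the (escaped) vertical-ray boundary nor on the top arcs, leaving an interior contact as the only possibility — this is exactly where the boundedness of $h^+-h^-$ and the geometry of the ideal quadrilateral $\Omega$ are used.
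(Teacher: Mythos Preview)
Your reduction and limit-taking are essentially fine, but the last step is a genuine gap, not a detail to be filled in. A Penafiel catenoid lives in a horizontal slab of height $\sqrt{1+4\tau^2}\,\pi$; the argument of Lemma~\ref{pro4.4.1}(3) works precisely when that slab fits strictly between the two horizontal levels, i.e.\ when $h^+-h^-$ is \emph{large}. Here $h^+-h^-$ is bounded, possibly small, so there is no reason the catenoid can be placed so that its slab meets the bridge but misses both the top arcs $J_i\subset\{z\approx 0\}$ and the two half-infinite strips asymptotic to $\gamma_i\times\Rbb$. Your suggestion to ``keep the family off the asymptotic strips'' with the $S_{\etahat,\epsilon}$ is not an argument: those surfaces are barriers for $\Sigmacheckvc$, not for the sweeping catenoids, and they do nothing to prevent a first contact on $\dhr\Sigmacheckvc$ (the top arcs or the vertical half-lines). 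As you yourself note, this is the main obstacle, and I do not see a way to make it work.

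The paper avoids barriers entirely at this point and uses a flux argument for the vertical Killing field $E_3=\dhr_z$. On the limit $\Sigmacheckvc$ (obtained from $\Tsf_{-n}(\Sigma_n)$), the boundary consists of two arcs $J_i$ which are graphs over $\gamma_i$, together with four vertical half-lines. For $t\gg 0$, the divergence theorem on $\Sigmacheckvc\cap\{z\ge -t\}$ gives
\[
0=\sum_{i=1}^2\Big(\int_{J_i}\vh{E_3,\nu}+\int_{L_i(-t)}\vh{E_3,\nu}\Big),
\]
the vertical rays contributing nothing. By the boundary maximum principle $\Sigmacheckvc$ is transverse to $\gamma_i\times\Rbb$ along $J_i$, so $\int_{J_i}\vh{E_3,\nu}<\ellH{\gamma_i}$ strictly. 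On the other hand Lemma~\ref{lem2.1.3}(b) (which you already invoked) shows the lower strips are uniformly asymptotic to $\gamma_i\times\Rbb$, hence $\int_{L_i(-t)}\vh{E_3,\nu}\to-\ellH{\gamma_i}$ as $t\to+\infty$. These two facts are incompatible with the flux identity, giving the contradiction in one line. This is both shorter and more robust than any barrier scheme: it uses exactly the asymptotic information you already extracted, plus the strict transversality at the top which your barrier approach never exploits.
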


\begin{proof}
	Without loss of generality, we assume that  $n-h^+(\Sigma_n)$ is bounded.
	For each $n$, denote  $\Sigmacheck_n=\Tsf_{-n}(\Sigma_n)$.
	Since the sequence of minimal surfaces $\Sigmacheck_n$ has uniform bound for the curvature and an accumulation point, by Lemmas \ref{lem3.3.14}, \ref{lem3.3.15}, there exists a subsequence (we still denote by $\Sigmacheck_n$) converging to a minimal surface $\Sigmacheckvc$, with multiplicity $ 1 $, as in the proof of Proposition \ref{pro3.3.15b}.
	Moreover, assume that the sequence  $h^+(\Sigmacheck_n)$ converges to $h^+(\Sigmacheckvc)\in\Rbb$ and $h^-(\Sigmacheck_n)$ converges to $h^-(\Sigmacheckvc)\in\Rbb$ as $n\to+\vc$.
	The boundary of $\Sigmacheckvc$ consists of the two components contained in the two vertical planes $\gamma_1\times\Rbb$ and $\gamma_2\times\Rbb$.
	The component $\Gammacheckvc^i\subset\gamma_i\times\Rbb$ is a smooth curve consisting of an arc $J_i$ which is a graph on $\gamma_i$ together with two straight half-lines
	
For each $t> 0$, the boundary of $\Sigmacheckvc^+ (-t)=\Sigmacheckvc \cap \{ z \geq -t \}$ consists of
$\left(\dhr\Sigmacheckvc  \cap \{ z\geq -t \}\right)$ 
and the two arcs $L_1(-t), L_2(-t)$ contained in $\{ z=-t\}$ where $\pi (L_i (-t) )\subset\Hbbh$ has the same endpoints as $\gamma_i$ by Lemma  \ref{pro4.4.1}-(2). Using Lemma \ref{lem2.1.3}-b), the part $\Sigmacheckvc \cap \{ z \leq -t\}$ has two connected components, bounded by four vertical halflines and $L_1(-t),L_2(-t)$. These components are uniformly asymptotic to $\gamma_1 \times \Rbb$ and $\gamma_2 \times \Rbb$ at infinity. This prove that $\pi(L_i(-t))$ is converging to $\gamma_i$ when $t \to +\vc$.
Since $E_3$ is a Killing vector field, the divergence theorem gives us
\begin{equation}\label{divequation}
0=\int_{\Sigmacheckvc^+ (-t)}\Div E_3=\int_{\dhr\Sigmacheckvc ^+(-t)} \vh{E_3,\nu}
=\sum_{i=1}^{2}\left(\int_{J_i}\vh{E_3,\nu}+\int_{L_i  (-t)}\vh{E_3,\nu}\right)
\end{equation}
where $\nu$ is the outer conormal vector of $\dhr\Sigmacheckvc^+ (-t)$, remark that $\vh{E_3,\nu}=0$ on the vertical part of the boundary of $\Sigmacheckvc^+ (t)$.
By the boundary maximum principle, $\Sigmacheckvc$ is not tangent to $\gamma_i\times\Rbb$ on $J_i$, hence, {for }$ i=1,2 $, $ \int_{J_i}\vh{E_3,\nu}<\ellH{\gamma_i} $. In an other respect, $\vh{E_3,\nu} \to -1$ on $L_1(-t) \cup L_2(-t)$ (the surface is asymptotic to $\gamma_i \times \Rbb$) when $t \to +\vc$ and $\int_{L_i  (-t)}\vh{E_3,\nu} \to -\ellH{\gamma_i}$ for $t \to +\vc$ giving a contradiction with (\ref{divequation}). A similar proof for $n+h^-(\Sigma_n)$ yields the conclusion of the proposition.

\end{proof}

\begin{proof}[\rm \bfseries Proof of Theorem \ref{thm3.3.1}]
From Proposition \ref{pro3.3.14}, there exists a sequence
of surfaces $\Sigma_n=\Sigmas_n$ or $\Sigma_n=\Sigmau_n$ with $h^+(\Sigma_{n})-h^-(\Sigma_{n})$ bounded. 
%and $n-h^+(\Sigma_{n})$, $n+h^-(\Sigma_{n})$ converging to $+\infty$ as $n\to\infty$.
We recall that $\Sigma_n$ is tangent to $\left\{z=h^+(\Sigma_n)\right\}$ at the point $p^+(\Sigma_n)$. Denote by $\Sigmacheck_n$ and $\pcheck_n$  the image of $\Sigma_n$ and $p^+(\Sigma_n)$ under the vertical translation such that $h^+(\Sigmacheck_n)=0$. By extracting a subsequence, we can assume that the sequence  $\Sigmacheck_n$ passing through $\pcheck_n$ converges to  $\Sigmacheckvc$ passing through $\pcheckvc \in \Omega \times \{0\}$. 
By Proposition \ref{pro3.3.17}, $n-h^+(\Sigma_{n})$, $n+h^-(\Sigma_{n})$ is converging to $+\infty$, the surface $\Sigmacheckvc$ is bounded by the four vertical line passing through the vertex of $\Omega$.
%By proposition \ref{pro3.3.14}  and  \ref{pro3.3.17 }, one of the surface $\Sigmacheck^s_n$ or 
%$\Sigmacheck^u_n$ has a convergent subsequence converging to some $\Sigmacheckvc$ bounded by the four vertical line passing through the vertex of $\Omega$ with $h^+(\Sigmacheckvc)=0$ and
%$h^-(\Sigmacheckvc)<0$ bounded.  
Using arguments of Proposition \ref{pro3.3.15b}, the sequence $\Sigmacheck_n$ converges to  $\Sigmacheckvc$ with multiplicity $1$.

By Lemma  \ref{lem3.3.14}, for $M>0$ large enough
$\Sigmacheck_n\setminus \left(\Omega \times[-M,M]\right)$ is between $\left(\gamma_1\times\Rbb\right)\cup \left(\gamma_2\times\Rbb\right)$  and the graph of the function $u^{\pm}$ which take value $\pm \vc$ on $\gamma_i$. This prove with Lemma \ref{lem2.1.3}-b) that outside a compact set $\Sigmacheckvc$ is a normal graph on $\gamma_i \times \Rbb$ of a function converging uniformly to zero at infinity.

To prove that $\Sigmacheckvc$ is  not simply connected,   we exhibit a curve $ \alpha_\infty \subset\Sigmacheckvc$ homotopically non trivial.
For $n_0>M$ large enough, we consider the rectangle $R=\Gamma^1_
{n_0} \subset \gamma_1 \times \Rbb$. The rectangle $R$ is homotopic to $\Gamma^1_n$ for any $n >n_0$. We denote by $ \alpha_n, \alpha_\infty $ the horizontal graph on $R$ in $ \Sigmacheck_n, \Sigmacheckvc$ respectively.

If $\alpha_\infty$ bounds a disk $D_\infty$ in $\Sigmacheckvc$, 
then there is a sequence of disks $D_n\subset\Sigmacheck_n$ whose boundaries are $\alpha_n$ and converging to $D_\infty$. For $n$ large enough, the homotopy from $R$ to $ \Gamma^1_n$ lifts to a homotopy in $ \Sigmacheck_n\setminus(\Omega\times[-M,M]) $ from $ \alpha_n $ to $ \Gamma^1_n $, contradicts the existence of the disk $ D_n$.

%$ R=\{p\in\gamma_1\times\Rbb: \dist(p,\dhr(\gamma_1\times\Rbb))>\epsilon \hbox{ and }-M<\zsf<M\} $ and $ \alpha=\dhr R $.  
%	Notice that $ \alpha $ is homotopic to each $ \Gamma^1_n $ for $ n\gg 0 $ outside $ R $.
%	We denote by $ \alpha_n, \alpha_\infty $ the graph  on $ \alpha$ in $ %\Sigmacheck_n, \Sigmacheckvc$ respectively.
%If $\alpha_\infty$ bounds a disk $D_\infty$ in $\Sigmacheckvc$, 
%	then there is a sequence of disks $D_n\subset\Sigmacheck_n$ whose boundaries are $\alpha_n$ and converging to $D_\infty$.
	%For a $ n $ large enough, the homotopy from $ \alpha $ to $ \Gamma^1_n$ lifts to an homotopy in $ \Sigmacheck_n\setminus(\Omega_\epsilon\times[-M,M]) $ from $ \alpha_n $ to $ \Gamma^1_n $, contradicts the existence of the disk $ D_n $.

	In order to prove $\Sigmacheckvc$ is an annulus, it suffices to verify that any pair of smooth Jordan curves $\alpha_\infty,\beta_\infty$ non intersecting and homotopically nontrivial on $\Sigmacheckvc$ will be the boundary of an annulus in $\Sigmacheckvc$.
	%It follows from Lemma \ref{lem3.2.12} that
	There is a convex compact set $ K $ and two sequences of smooth Jordan curves $\alpha_n,\beta_n\subset \Sigmacheck_n\cap K$ with $\alpha_n\to \alpha_\infty$ and $\beta_n\to\beta_\infty$, $\alpha_n\cap \beta_n=\emptyset$ for all $n\gg 0$.
	Moreover, for $n\gg 0$, $\alpha_n,\beta_n$ are homotopically nontrivial. If not there exists a sequence of disks $D_{n}\subset\Sigmacheck_n$ whose boundary is $\alpha_n$ and $D_n$ is contained in the convex hull of 
	$\alpha_n$, so $D_n\subset K$ for $n\gg 0$.
	 Then the sequence of disks $D_{k_n}$ has a subsequence converging to a disk in $\Sigmacheckvc$ whose boundary is $\alpha_\infty$, a contradiction.
Since $\Sigmacheck_n$ is a minimal annulus, the pair of non-intersecting, homotopically nontrivial Jordan curves $\alpha_n,\beta_n$ (for $n\gg 0$) delimits a compact sub-annulus $\Acal_n$ of $\Sigmacheck_n\cap K$. Since $\alpha_n ,\beta_n$ are properly contained in a convex compact set of $\PSLhR$, there exists a subsequence of $\Acal_n$ converging to an annulus on $\Sigmacheckvc$ with boundary $\alpha_\infty \cup\beta_\infty$.
This completes the proof of Theorem.
\end{proof}

% % % % % % % % % % % % % % % % % % % % %

\section{Minimal annulus on unbounded domain and Riemann's type examples}
\label{sect3.4}
In this section we prove Theorem \ref{thm1}.
Let $\gammahat_1$, $\gammahat_2$ be two ultraparallel complete geodesics of $\Hbbh$.
Recall that $\Omegahat \subset \Hbbh$ is an ideal quadrilateral domain whose ideal boundary 
$\partial \Omegahat$ is composed of four complete geodesics $\gammahat_1,\etahat_1, \gammahat_2,\etahat_2$ in this order together with distincts endpoints $\widehat{p}_1,\widehat{q}_1,\widehat{p}_2,\widehat{q}_2$  at infinity. 
The condition $\dist_{\Hbbh}(\gammahat_1, \gammahat_2) < 2 \ln (\sqrt{2} +1)$
is equivalent to $\dist_{\Hbbh}(\gammahat_1, \gammahat_2) < \dist_{\Hbbh}(\etahat_1, \etahat_2)$. 
We consider a sequence of bounded quadrilateral subdomains $\Omega_n \subset \Omegahat$ whose boundary is composed  of four open geodesic arcs $\gamma^n_1, \eta^n_1, \gamma^n_2 ,\eta^n_2$ in this order together with their endpoints $p^n_1,q^n_1,p^n_2,q^n_2$.

Let $\gammahat_{12}$ (resp. $\etahat_{12}$) the unique complete geodesic of $\Hbbh$  perpendicular to $\gammahat_1$ and $\gammahat_2$ (resp. $\etahat_1$ and $\etahat_2$).
The two geodesics $\gammahat_{12}$ and $\etahat_{12}$ are perpendicular at $O\in\Hbbh$, see Figure \ref{fig5.1}.
Then $2\Distaa{\Hbbh}{O,\gammahat_1}=\Distaa{\Hbbh}{\gammahat_1,\gammahat_2}$ and $2\Distaa{\Hbbh}{O,\etahat_1}=\Distaa{\Hbbh}{\etahat_1,\etahat_2}$.
Since $\Distaa{\Hbbh}{\gammahat_1,\gammahat_2}<\Distaa{\Hbbh}{\etahat_1,\etahat_2}$,  $\Distaa{\Hbbh}{O,\gammahat_1}<\Distaa{\Hbbh}{O,\etahat_1}$.
Let $\eta'_1,\eta'_2$ be two distinct complete geodesics of $\Hbbh$ perpendicular to $\eta_{12}$ and having the same distance $\Distaa{\Hbbh}{O,\gammahat_1}$  to $O$.
Let $\Omega_0$ be the compact quadrilateral sub-domain of $\Omega$ delimited by $\gamma'_1,\gamma'_2$ and $\eta'_1,\eta'_2$ and vertices $p_1',q_1',p_2',q_2'$. By construction, $\Omega_0$ is a regular hyperbolic quadrilateral  which satisfy

$$\ellH{\gamma'_1}+\ellH{\gamma'_2}=\ellH{\eta'_1}+\ellH{\eta'_2}.$$
\noindent
\begin{lem}
\label{lem5.1}
Any quadrilateral domain $\Omega$ bounded by compact geodesic $\gamma_i \subset  \gammahat_i$ and compact  geodesic $\eta_1,  \eta_2$, such that $\Omega_0$ is strictly contained in $\Omega$ satisfies
$$\ellH{\gamma_1}+\ellH{\gamma_2}>\ellH{\eta_1}+\ellH{\eta_2}.$$
\end{lem}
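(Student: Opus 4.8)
The plan is to compare the two quadrilaterals $\Omega_0$ and $\Omega$ through a monotonicity property of the quantity $\ellH{\gamma_1}+\ellH{\gamma_2}-\ellH{\eta_1}-\ellH{\eta_2}$ under enlargement of the domain, starting from the fact that this quantity vanishes for the symmetric quadrilateral $\Omega_0$. Concretely, since $\Omega_0 \subsetneq \Omega$ and both are bounded by geodesic arcs $\gamma_i \subset \gammahat_i$ (same complete geodesics $\gammahat_1,\gammahat_2$) together with compact geodesics $\eta_1,\eta_2$, the arc $\gamma_i$ of $\Omega$ strictly contains $\gamma'_i$, so $\ellH{\gamma_i} > \ellH{\gamma'_i}$. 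Thus it suffices to show that passing from $\eta'_i$ to $\eta_i$ does not increase the sum $\ellH{\eta_1}+\ellH{\eta_2}$ by as much as the sum $\ellH{\gamma_1}+\ellH{\gamma_2}$ has increased — equivalently, to control $\ellH{\eta_1}+\ellH{\eta_2}$ from above in terms of the feet of $\eta_i$ on $\gammahat_1,\gammahat_2$.

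The key step I would carry out is a deformation argument: move the endpoints of $\gamma_1,\gamma_2$ (the vertices $p_1,q_1,p_2,q_2$) outward one at a time along the complete geodesics $\gammahat_1,\gammahat_2$, each time re-closing the quadrilateral by the geodesic segment joining the new vertex to the (fixed) opposite vertex, and check that each such elementary move strictly increases $\ellH{\gamma_1}+\ellH{\gamma_2}-\ellH{\eta_1}-\ellH{\eta_2}$. This reduces to a two-dimensional hyperbolic-trigonometry computation in the triangle (or Saccheri-type quadrilateral) formed by the old edge $\eta_i$, the short piece of $\gammahat_i$ being added, and the new edge $\eta_i$: one extends $\gamma_i$ by a small length $\delta$ at one vertex while keeping the opposite vertex fixed, and one must verify that the resulting increase in the length of the adjacent $\eta$-edge is strictly smaller than $\delta$. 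Geometrically this is the statement that in $\Hbbh$ the "hypotenuse" of a right-angled configuration grows slower than the leg along which we push, which follows from the hyperbolic law of cosines (or, more cleanly, from the fact that orthogonal projection onto a geodesic is distance-nonincreasing, strictly so away from that geodesic). Summing these elementary strict inequalities over a finite chain of moves taking $\Omega_0$ to $\Omega$, and using $\ellH{\gamma_i}>\ellH{\gamma'_i}$ together with $\ellH{\gamma'_1}+\ellH{\gamma'_2}=\ellH{\eta'_1}+\ellH{\eta'_2}$, yields the claimed strict inequality.

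An alternative, perhaps cleaner, route is to avoid discretization and argue directly with the projection map. Let $P_i:\Hbbh\to\gammahat_{12}$ be the nearest-point projection onto the common perpendicular $\gammahat_{12}$ of $\gammahat_1,\gammahat_2$; since $\eta_1,\eta_2$ each cross $\gammahat_{12}$ and $P_i$ is $1$-Lipschitz and strictly contracting off $\gammahat_{12}$, each $\ellH{\eta_i}$ is bounded below by the $\gammahat_{12}$-distance between its two feet, and one can compare $\ellH{\gamma_1}+\ellH{\gamma_2}$ with these projected lengths using the formula for the length of an equidistant arc. Combined with the equality case at $\Omega_0$ (where $\Omega_0$ is a regular quadrilateral, so the projections match exactly), strict containment forces strict inequality. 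I would present whichever of the two is shorter once the trigonometric identities are written down.

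The hard part will be the elementary monotonicity lemma: one must be careful that the edges $\eta_1,\eta_2$ of $\Omega$ are genuinely shorter than what a naive comparison would give, because $\eta_1$ and $\eta_2$ are \emph{not} assumed perpendicular to anything (unlike $\eta'_1,\eta'_2$, which are perpendicular to $\etahat_{12}$ and symmetric about $O$). So the inequality $\ellH{\eta_i}\le(\text{distance between feet on }\gammahat_{12})$ goes the wrong way in general, and the correct statement needs the feet to lie on $\gammahat_1,\gammahat_2$ respectively, using convexity of $\Omega$ and the ultraparallel hypothesis on $\gammahat_1,\gammahat_2$ to ensure the configuration is non-degenerate. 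Getting this bookkeeping right — i.e.\ isolating exactly which geometric quantity is monotone under the enlargement $\Omega_0 \subset \Omega$ — is the crux; once it is pinned down, the inequality chain and the reduction to the equality case $\Omega_0$ are routine.
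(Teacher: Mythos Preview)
Your first approach is essentially the paper's proof, but you are overcomplicating the elementary step. The monotonicity you want is nothing more than the strict triangle inequality, and no hyperbolic trigonometry, law of cosines, or projection argument is needed.

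Concretely: suppose a quadrilateral with vertices $\bar p_1,\bar q_1,\bar p_2,\bar q_2$ (in cyclic order, with $\bar p_1,\bar q_1\in\gammahat_1$ and $\bar p_2,\bar q_2\in\gammahat_2$) satisfies
\[
\dist(\bar p_1,\bar q_2)+\dist(\bar q_1,\bar p_2)\le \dist(\bar p_1,\bar q_1)+\dist(\bar p_2,\bar q_2).
\]
Move $\bar p_1$ outward along $\gammahat_1$ to $p_1$, keeping the other three vertices fixed. Since $\bar p_1$ lies between $p_1$ and $\bar q_1$ on $\gammahat_1$,
\[
\dist(p_1,\bar q_1)=\dist(p_1,\bar p_1)+\dist(\bar p_1,\bar q_1),
\]
while, because $\bar q_2\in\gammahat_2$ does not lie on $\gammahat_1$, the triangle inequality is \emph{strict}:
\[
\dist(p_1,\bar q_2)<\dist(p_1,\bar p_1)+\dist(\bar p_1,\bar q_2).
\]
Adding $\dist(\bar q_1,\bar p_2)$ to both sides and using the assumed inequality gives the strict inequality for the new quadrilateral. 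Repeat once for each of the four vertices to pass from $\Omega_0$ (where equality holds) to $\Omega$.

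So your outline is right, but the ``hard part'' you flag is not hard: there is no need for a small-$\delta$ discretization (each vertex can be moved all the way in one step), and the worries in your last paragraph about $\eta_1,\eta_2$ not being perpendicular to anything are irrelevant, since the argument never uses any orthogonality, only that the opposite vertex lies off $\gammahat_i$. Your alternative projection route is unnecessary.
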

\begin{proof} 
Consider a quadrilateral domain
$\Omega_1$ with vertices $p_1,\bar{q}_1,\bar{p}_2,\bar{q}_2$
which strictly contains a quadrilateral domain $\bar{\Omega}_1$ with vertices $\bar{p}_1,\bar{q}_1,\bar{p}_2,\bar{q}_2$ satisfying
%$$\dist(\bar{q}_2, \bar{p}_1)+\dist(\bar{q}_1,\bar{p}_2) \leq
$$\dist(\bar{p}_1,\bar{q}_2) +\dist(\bar{q}_1,\bar{p}_2) \leq \dist(\bar{p}_1, \bar{q}_1) +\dist(\bar{p}_2, \bar{q}_2) .$$
By triangle inequality and this inequality we have
$$\dist({p}_1,\bar{q}_2)+\dist(\bar{q}_1,\bar{p}_2) < \dist({p}_1, \bar{p}_1) + \dist( \bar{p}_1,\bar{q}_2)+  \dist(\bar{q}_1,\bar{p}_2) \leq \dist(p_1, \bar{q}_1)+    \dist(\bar{p}_2, \bar{q}_2).$$
This imply that $\Omega_1$  satisfy the conclusion of the lemma. One can apply this argument recursively to each vertex of $\Omega$ to prove the lemma.

\end{proof}
We  consider in the following  a sequence of quadrilateral domain $\{\Omega_n\}_{n \in \Nbb}$ with

%$\Omega_0 \subset \Omega_n \subset \Omega_{n+1} \subset \Omegahat$ and $\displaystyle{\Omega_{\vc}=\cup_{n \in \Nbb} \Omega_n}$,

\begin{enumerate}
\item $\Omega_0 \subset \Omega_n \subset \Omega_{n+1} \subset \Omegahat$ ,
\item for $i=1,2$, $\gamma^n_i\subset\gamma^{n+1}_i\subset\gamma_i$ for all $n$, 
%	and $\bigcup_n \gamma^n_i=\gamma_i$
\item $\displaystyle{\cup_{n \in \Nbb} \Omega_n=\Omega_{\vc}\subset  \Omegahat}$.
%	\item $\ellH{\gamma_1^n}+\ellH{\gamma_2^n}>\ellH{\eta_1^n}+\ellH{\eta_2^n}$.
\end{enumerate}
Hence by Lemma \ref{lem5.1}, $\Omega_n$ satisfy
$\ellH{\gamma_1^n}+\ellH{\gamma_2^n}>\ellH{\eta_1^n}+\ellH{\eta_2^n}.$

\begin{figure}[h!]
	\centering
	\includegraphics[width=0.25\linewidth]{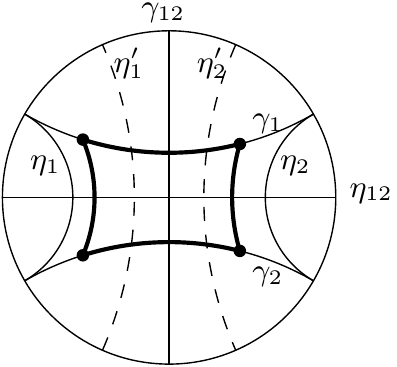}
	\caption{The quadrilateral domain $\Omega_n$}
	\label{fig5.1}
\end{figure}

\noindent

For $i=1,2$, by  Theorem \ref{thm4.2.6}  there are  minimal solutions $u^\pm_{n,i}$ on $\Omega_n$ such that $u^\pm_{n,i}=\pm\infty$ on $\gamma^n_i$ and $u^\pm_{n,i}=0$ on  $\eta^n_1\cup \eta^n_2\cup \gamma^n_j$ with $\{i,j\}=\{1,2\}$.
Define $u^+_n=\sup\{u^+_{n,1},u^+_{n,2}\}$ and $u^-_n=\inf\{u^-_{n,1},u^-_{n,2} \}$.

\begin{prop}\label{cor3.4.3}
	For each $n$ there is a proper minimal annulus $\Acal_n$ in $\Omega_n \times \Rbb$ whose boundary is the four vertical geodesics passing through the endpoints $p^n_1,q^n_1,p^n_2,q^n_2$ of  $\gamma_1^n$ and $\gamma_2^n$ such that for each complete geodesic  $\gammahat$ that meets $\eta_1^n$ and $\eta_2^n$, the intersection of this annulus $\Acal_n$ with $\gammahat\times\Rbb$ is compact. Moreover, $\Acal_n$ is below {{{}}} the graph of  the function $u^+_n+h^+(\Acal_n)$ and above the graph of the function $u^-_n+h^-(\Acal_n)$ where $h^+(\Acal_n)$ and $h^-(\Acal_n)$ denote the height where the tangent
planes are horizontal at $p_n^+$ and $p_n^-$.
\end{prop}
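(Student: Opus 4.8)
The plan is to obtain $\Acal_n$ directly from Theorem \ref{thm3.3.1} and then to transfer the barrier estimate of Lemma \ref{lem3.3.14} to it by passing to the limit. First I would observe that each $\Omega_n$ satisfies the hypotheses of Theorem \ref{thm3.3.1}: by construction it is a bounded convex quadrilateral domain with sides the geodesic arcs $\gamma_1^n,\eta_1^n,\gamma_2^n,\eta_2^n$, and since $\Omega_0\subset\Omega_n$, Lemma \ref{lem5.1} gives $\ellH{\gamma_1^n}+\ellH{\gamma_2^n}>\ellH{\eta_1^n}+\ellH{\eta_2^n}$. Theorem \ref{thm3.3.1} then yields a proper minimal annulus $\Acal_n\subset\PSLhR$ whose boundary is the four vertical geodesics through $p_1^n,q_1^n,p_2^n,q_2^n$, which is asymptotic to $\gamma_1^n\times\Rbb$ and $\gamma_2^n\times\Rbb$ and meets $\gammahat\times\Rbb$ in a compact set for every complete geodesic $\gammahat$ crossing both $\eta_1^n$ and $\eta_2^n$; this is the first assertion. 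Recall moreover from the proof of Theorem \ref{thm3.3.1} that $\Acal_n$ is produced, with multiplicity one, as the limit of a sequence $\{\Sigma_m\}_m$ of compact minimal annuli of Section 2 built over $\Omega_n$, for which $h^+(\Sigma_m)-h^-(\Sigma_m)$ stays bounded; since $\Sigma_m\subset\overline{\Omega_n}\times\Rbb$ we get $\Acal_n\subset\overline{\Omega_n}\times\Rbb$, and since by Lemma \ref{pro4.4.1} each $\Sigma_m$ has one or two horizontal points, the horizontal points $p_n^\pm$ of $\Acal_n$ arise as the limits of $p^\pm(\Sigma_m)$ and the heights $h^+(\Acal_n)\ge h^-(\Acal_n)$ are finite (the difference being bounded and the central part $\Acal_n\cap(\eta_{12}\times\Rbb)$ compact, $\eta_{12}$ a geodesic joining the midpoints of $\eta_1^n$ and $\eta_2^n$, which keeps both points in a fixed slab). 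Equivalently one applies the intersection estimate of Proposition \ref{pro3.1} to $\Acal_n$ and the horizontal foliation $\Fcalh$.

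It remains to prove the two inequalities. The functions $u_n^\pm$ are well defined: since $\ellH{\gamma_1^n}+\ellH{\gamma_2^n}>\ellH{\eta_1^n}+\ellH{\eta_2^n}$, the Jenkins--Serrin conditions of Theorem \ref{thm4.2.6} hold for the data $\pm\infty$ on $\gamma_i^n$ and $0$ on the three remaining edges, so $u_{n,i}^\pm$, hence $u_n^+=\sup\{u_{n,1}^+,u_{n,2}^+\}$ and $u_n^-=\inf\{u_{n,1}^-,u_{n,2}^-\}$, exist. Normalise $\Acal_n$ vertically so that $h^+(\Acal_n)=0$; then the correspondingly normalised $\Sigma_m$ satisfy, by Lemma \ref{lem3.3.14} applied with $\Omega=\Omega_n$, that $\Sigma_m$ lies below $\Gr(u_n^+)$ and above $\Gr(u_n^-+h^-(\Sigma_m))$. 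Since $h^-(\Sigma_m)\to h^-(\Acal_n)$ and $\Sigma_m\to\Acal_n$ along the convergent subsequence, these inequalities pass to the limit, so $\Acal_n$ is below $\Gr(u_n^+)$ and above $\Gr(u_n^-+h^-(\Acal_n))$; undoing the normalisation gives $\Acal_n$ below $\Gr(u_n^++h^+(\Acal_n))$ and above $\Gr(u_n^-+h^-(\Acal_n))$. Alternatively the proof of Lemma \ref{lem3.3.14} applies verbatim to the proper annulus $\Acal_n$: after the same normalisation, $\Acal_n\cap\{z>0\}$ splits, by the slice description of Lemma \ref{pro4.4.1}, into two simply connected pieces $\Delta_1,\Delta_2$ with $\partial\Delta_i\subset(\gammahat_i\times\Rbb)\cup\{z=0\}$, and one compares $\Delta_i$ with the barrier $\Gr(v_i)$ constructed on a slightly larger quadrilateral $\Omega'\supset\overline{\Omega_n}$ ($v_i=+\infty$ on $\gamma_i'$, $v_i=0$ on the other three sides) by the sliding argument and the interior and boundary maximum principles, then lets $\Omega'\to\Omega_n$.

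There is essentially no new difficulty beyond the machinery of Sections 2--4. The one point that needs care is that $\Acal_n$ is non-compact, so one has to (i) check the hypotheses of Theorem \ref{thm3.3.1} for every $\Omega_n$ (handled by Lemma \ref{lem5.1}) and (ii) ensure that the horizontal points $p_n^\pm$ and the heights $h^\pm(\Acal_n)$ make sense for $\Acal_n$ and arise as limits of the corresponding data of the approximating compact annuli, so that the slice structure of Lemma \ref{pro4.4.1} and the sliding barrier argument of Lemma \ref{lem3.3.14} carry over. This last verification, which rests on the uniform curvature bound of Theorem \ref{pro4.4.4} and on Proposition \ref{pro3.1}, is where I would expect most of the work to lie.
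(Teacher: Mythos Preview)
Your approach is correct and essentially identical to the paper's: obtain $\Acal_n$ from Theorem \ref{thm3.3.1} applied to $\Omega_n$, recall that it is the limit of the compact annuli $\Sigma_{n,m}$ whose horizontal heights converge to $h^\pm(\Acal_n)$, and pass the barrier inequality of Lemma \ref{lem3.3.14} to the limit. The paper's proof is considerably terser (it omits the check via Lemma \ref{lem5.1} and the alternative direct argument), but the strategy is the same.
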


\begin{proof} 
The existence of  $\Acal_n$ is given by Theorem \ref{thm3.3.1}, for the bounded quadrilateral domains $\Omega_n$. By construction each $\Acal_n$ is the limit of a sequence of $\{\Sigma_{n,m}\}
_{m \in \Nbb}$ that satisfy Lemma 3.8 with $h^+(\Sigma_{n,m})$ and $h^-(\Sigma_{n,m})$ converging to $h^+(\Acal_{n})$ and $h^-(\Acal_n)$. Hence
$u^+_n+h^+(\Acal_{n})$ (and $u^-_n+h^-(\Acal_{n})$) is a barrier for the limit $\Acal_n$.

\end{proof}

 Each $\Acal_n$ is the limit with multiplicity $1$ of the sequence of compact minimal annuli $\left\{\Sigma_{n,m}\right\}_{m\in \Nbb}$ constructed on the bounded convex quadrilateral domain $\Omega_n$ whose boundary is $\Gamma^1_{n,m}\cup\Gamma^2_{n,m}$, where $\Gamma^i_{n,m}$ is the boundary of a smooth convex domain obtained after smoothing the four corners of the convex domain $ \gamma^n_i \times[-m,m]$.

We recall that $\Fcal^h$ is the foliation of $\Omegahat \times \Rbb$ by horizontal leaves, 
$\Fcal^{\gammahat_{12}}$ (resp. $\Fcal^{\etahat_{12}}$) is the foliation by vertical planes with each leaf orthogonal to the geodesic $\gammahat_{12}$ (resp. $\etahat_{12}$) and $\Fcal^{\etahat_1}$ (resp. $\Fcal^{\etahat_2}$) is the foliation by vertical planes with each leaf orthogonal to the geodesic $\etahat_1$ (resp. $\etahat_2$).

%, $\Fcal^2=\{ \gamma_t \times \Rbb  \}$ and $\Fcal^3=\{ \eta_t \times \Rbb\}$.

%We recall that  $\Fcal^\gamma$ (resp. $\Fcal^\eta$) the minimal foliation of $\PSLhR$ whose leaves are minimal of the form 
%$\alpha\times\Rbb$ where $\alpha$ is a complete geodesic perpendicular to the geodesic $\gamma_{1|2}$ (resp. $\eta_{1|2}$).

\begin{prop}
\label{foliationcomplete}
Each annulus $\Acal_n$  is tangent to the foliations  $\Fcal^h$, $\Fcal^{\etahat_1}$, $\Fcal^{\etahat_2}$, $\Fcal^{\etahat_{12}}$ and $\Fcal^{\gammahat_{12}}$  of $\PSLhR$ at most at two points.
\end{prop}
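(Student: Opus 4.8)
The plan is to transfer the tangency bound already established for the compact annuli $\Sigma_{n,m}$ (via Proposition \ref{pro3.1} and its consequence Lemma \ref{lem2.3.18}) to the limit annulus $\Acal_n$, for each of the five listed foliations. First I would fix $n$ and one of the foliations, say $\Fcal$, and recall that $\Acal_n$ is the multiplicity-one limit of the sequence $\left\{\Sigma_{n,m}\right\}_{m\in\Nbb}$ of compact minimal annuli with boundary $\Gamma^1_{n,m}\cup\Gamma^2_{n,m}$ constructed on $\Omega_n$. The key point is that each of the foliations $\Fcal^h$, $\Fcal^{\etahat_1}$, $\Fcal^{\etahat_2}$, $\Fcal^{\etahat_{12}}$, $\Fcal^{\gammahat_{12}}$ is a minimal foliation of $\Omega_n\times\Rbb$ (or of $\PSLhR$) by \emph{simply connected} leaves (vertical planes $\alpha\times\Rbb$ or horizontal slices $\{z=t\}$), and each satisfies the hypotheses (1)--(2) of Proposition \ref{pro3.1} relative to the compact annulus $\Sigma_{n,m}$ for $m\gg 0$. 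Indeed: for $\Fcal^h$ the two "cap" leaves $\Lambda_0,\Lambda_1$ are horizontal planes above and below $\gamma^n_i\times[-m,m]$; for each foliation $\Fcal^{\betahat}$ by vertical planes orthogonal to a geodesic $\betahat\in\{\etahat_1,\etahat_2,\etahat_{12},\gammahat_{12}\}$, the extreme leaves $\Lambda_0,\Lambda_1$ are the vertical planes tangent to (and not meeting the interior of) the convex compact region $\Omega_n\times[-m,m]$, and the passage through the configurations $\{2\text{ points}\}$, $\{1\text{ segment/point}\}$, $\{4\text{ points}\}$ on $\Gamma^1_{n,m}\cup\Gamma^2_{n,m}$ is exactly the combinatorics required by hypothesis (2), using convexity of $\Omega_n$ and the fact that the quadrilateral edges $\gamma^n_1,\eta^n_1,\gamma^n_2,\eta^n_2$ appear in this cyclic order. (For $\Fcal^{\gammahat_{12}}$ and $\Fcal^{\etahat_{12}}$ one uses that $\gammahat_{12}$ meets both $\gamma^n_1,\gamma^n_2$ and $\etahat_{12}$ meets both $\eta^n_1,\eta^n_2$, so the leaves cut the two boundary components in a symmetric way; for $\Fcal^{\etahat_1},\Fcal^{\etahat_2}$ one uses ultraparallelism and convexity to see each leaf meets the relevant sides transversally in the prescribed number of points.) Hence Proposition \ref{pro3.1} gives: $\Sigma_{n,m}$ meets each $\Fcal$ in a tangential way at most at $2$ interior points, uniformly in $m$.

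Next I would run the limiting argument. Suppose for contradiction that $\Acal_n$ is tangent to one of these foliations, say $\Fcal$, at three distinct interior points $q_1,q_2,q_3$ (with $\Acal_n$ not coinciding with a leaf in any neighbourhood of $q_j$, which is automatic since $\Acal_n$ is a properly embedded annulus asymptotic to $\gamma^n_1\times\Rbb$ and $\gamma^n_2\times\Rbb$, not a vertical plane or horizontal slice). Pick disjoint small balls $B_j\ni q_j$. Applying Lemma \ref{lem2.3.18} with $N_m=N=\PSLhR$ (constant sequence, trivial convergence), $M_m=\Sigma_{n,m}$, $M=\Acal_n$, $\Fcal_m=\Fcal=\Fcal$, and multiplicity $s=1$: since $\Acal_n$ is tangent to a leaf of $\Fcal$ at each $q_j$ and $\Sigma_{n,m}\to\Acal_n$ with multiplicity one, for $m$ large $\Sigma_{n,m}$ has at least one tangency with $\Fcal$ inside each $B_j$. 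The three balls being disjoint, $\Sigma_{n,m}$ has at least $3$ interior tangency points with $\Fcal$, contradicting the bound of $2$ from the previous paragraph. Therefore $\Acal_n$ has at most $2$ interior tangencies with each of $\Fcal^h,\Fcal^{\etahat_1},\Fcal^{\etahat_2},\Fcal^{\etahat_{12}},\Fcal^{\gammahat_{12}}$.

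The main obstacle I expect is checking that each of the five foliations genuinely satisfies \emph{all} the combinatorial hypotheses of Proposition \ref{pro3.1} when tested against the smoothed boundary curves $\Gamma^i_{n,m}$ --- in particular controlling the transition leaves $\Lambda_a,\Lambda_b$ where a vertical plane first touches the compact region, and verifying that no leaf meets a boundary component $\Gamma^i_{n,m}$ in more than $2$ points. For the vertical-plane foliations orthogonal to $\etahat_1,\etahat_2$ this requires knowing that the smoothing of the corners of $\gamma^n_i\times[-m,m]$ was done so that each $\Gamma^i_{n,m}$ is the boundary of a convex domain (which is granted in the construction in Section~2.3), so that a geodesic plane meets it in at most $2$ points. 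A secondary subtlety is the degenerate case where a tangency of $\Acal_n$ occurs at a point lying on a leaf that also contains part of the asymptotic boundary $\gamma^n_i\times\Rbb$; but here one invokes the boundary maximum principle exactly as in Case~2 of the proof of the Claim in Section~3.2 --- since $\Acal_n$ lies on one side of such a leaf by convexity of $\Omega_n$, no tangency along that leaf is possible, so it does not contribute. Once these points are in place the conclusion follows, and the proposition is proved. $\hfill\square$
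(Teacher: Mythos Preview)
Your overall strategy---bound the number of tangencies of the compact approximants $\Sigma_{n,m}$ with each foliation, then transfer this bound to $\Acal_n$ via Lemma~\ref{lem2.3.18}---is exactly the paper's approach, and the limiting step is correct. There is, however, a genuine gap in the first step: the assertion that all five foliations satisfy the hypotheses of Proposition~\ref{pro3.1} is false for $\Fcal^{\etahat_1}$, $\Fcal^{\etahat_2}$ and $\Fcal^{\gammahat_{12}}$.

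For $\Fcal^{\gammahat_{12}}$: the geodesics $\gammahat_1,\gammahat_2$ are themselves perpendicular to $\gammahat_{12}$, so $\gammahat_1\times\Rbb$ and $\gammahat_2\times\Rbb$ are leaves of this foliation. The boundary curves $\Gamma^i_{n,m}\subset\gammahat_i\times\Rbb$ therefore lie \emph{entirely} on the extreme leaves $\Lambda_0,\Lambda_1$, and every interior leaf satisfies $\Lambda_t\cap\dhr\Sigma_{n,m}=\emptyset$; this is not the configuration of Proposition~\ref{pro3.1}. For $\Fcal^{\etahat_i}$: as one sweeps the leaves perpendicular to $\etahat_i$ across $\Omega_n$, a leaf near one end of the sweep meets only $\gamma^n_1$ (hence $\Gamma^1_{n,m}$ in two points), a leaf near the other end meets only $\gamma^n_2$, and in the middle range the leaf exits $\Omega_n$ through the opposite side $\eta^n_j$ and meets \emph{neither} $\gamma^n_1$ nor $\gamma^n_2$. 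So in that middle range $\Lambda_t\cap\dhr\Sigma_{n,m}=\emptyset$, not four points, and hypothesis~(2c) of Proposition~\ref{pro3.1} fails.

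The paper repairs this by bypassing Proposition~\ref{pro3.1} and invoking Lemma~\ref{lem1.haus} directly. For $\Fcal^{\etahat_i}$: on each of the two outer ranges one has $\Lambda_t\cap\Gamma^1=\{2\text{ points}\}$ and $\Lambda_t\cap\Gamma^2=\emptyset$ (or vice versa), so Lemma~\ref{lem1.haus}(c) gives at most one tangency per outer range; on the middle range $\Lambda_t\cap\dhr\Sigma_{n,m}=\emptyset$, so Lemma~\ref{lem1.haus}(b) forbids any tangency. This yields at most two tangencies in total. For $\Fcal^{\gammahat_{12}}$ the same Lemma~\ref{lem1.haus}(b) shows there are in fact no interior tangencies at all, so the bound is trivial. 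With this correction in place your limiting argument via Lemma~\ref{lem2.3.18} goes through unchanged.
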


\begin{proof}
The foliation $\Fcal^h$, $\Fcal^{\etahat_{12}}$ satisfy the hypothesis of Proposition \ref{pro3.1}, hence they meet tangentially  each $\Sigma_{n,m}$ at most at two points.
Following notations of Proposition \ref{pro3.1}, leaves of $\Fcal^{\etahat_1}$, $\Fcal^{\etahat_2}$ satisfy that for $0<t<a$ and $b<t<1$ intersects only one of the boundary curves $\Gamma^1_n$ and $\Gamma^2_n$.
By Lemma \ref{lem1.haus}-c), the foliation can have only one tangent point for $0<t\leq a$ and one other tangent point for $b\leq t<1$.  The leaves for $a<t<b$ don't intersect the boundary curves. By Lemma  \ref{lem1.haus}-b) there is no tangency.  In summary $\Fcal^{\etahat_1}$ and $\Fcal^{\etahat_2}$ has at most two tangent points with $\Sigma_{n,m}$.

By Lemma \ref{lem2.3.18}, since $\Acal_n$ is the limit of a subsequence of $\Sigma_{n,m}$, each tangent point $q$ of $\Acal_n$ with a leaf of a foliation $\Fcal$, will produce in the neighborhood of $q$ a tangency between $\Sigma_{n,m}$ and $\Fcal$ for $m>0$ large enough. This prove the proposition.
\end{proof}

By arguments of Lemma \ref{pro4.4.1}  we have the geometric structure of $\Acal_n$:

\begin{prop}\label{pro3.4.4}
	We have the following properties 
	\begin{enumerate}
	\item $\Acal_n$ has one or two horizontal points.
	\item If $t>h^+(\Acal_n)$ (resp. $t<h^-(\Acal_n)$) then $\Acal_n^+(t)=\Acal_n \cap \{ z \geq t \}$ (resp. $\Acal_n^-(t)$) consists of two simply connected components.
	Then, $\Gamma (t)$ consists of two components diffeomorphic to $[0,1]$ and joining two straight lines that pass through the endpoints of $\gamma^n_i$ for $i=1,2$.
	\item If $h^+(\Acal_n)-h^-(\Acal_n)>\sqrt{1+4\tau^2}\pi$, then
	\begin{enumerate}
	\item[\rm (a)] For each $t\in (h^-(\Acal_n),h^+(\Acal_n))$, $\Acal^+_n(t)$ and $\Acal^-_n(t)$ are simply connected. Moreover, $\Gamma(t)$ consists of two components diffeomorphic to $[0,1]$, joining two straight lines that pass through the endpoints of $\eta^n_i$ for $i=1,2$.
	
	\item[\rm (b)] The set $\Acal_n\cap \left\{h^-(\Acal_n)<z<h^+(\Acal_n) \right\}$ consists of two simply connected components.
	\end{enumerate}
	\end{enumerate}
\end{prop}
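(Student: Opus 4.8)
The plan is to transcribe the proof of Lemma \ref{pro4.4.1} to the present non-compact setting, making three substitutions. First, the compact boundary $\Gamma^1_n\cup\Gamma^2_n$ is replaced by the four vertical lines forming $\dhr\Acal_n$. Second, the role of the compact slab $\{|z|\le n\}$ — which in Lemma \ref{pro4.4.1} acted as a barrier at the top and bottom of $\Sigma_n$ and confined its level sets — is now played by the graphs of $u^+_n+h^+(\Acal_n)$ and $u^-_n+h^-(\Acal_n)$ provided by Proposition \ref{cor3.4.3}, together with the fact (Theorem \ref{thm3.3.1} and Lemma \ref{lem2.1.3}-b)) that outside a compact set $\Acal_n$ is a normal graph over $(\gamma^n_1\times\Rbb)\cup(\gamma^n_2\times\Rbb)$ converging uniformly to it. Third, the input ``at most two tangencies'', which in Lemma \ref{pro4.4.1} came from Proposition \ref{pro3.1}, is now supplied by Proposition \ref{foliationcomplete}.

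First I would establish (1). The bound of two horizontal points is immediate from Proposition \ref{foliationcomplete} applied to $\Fcalh$. For the existence of at least one, I would argue that if $h:=\zsf|_{\Acal_n}$ had no critical point, then $\Acal_n$ would be diffeomorphic to $\Gamma(t_0)\times\Rbb$ for any $t_0$; but $\Gamma(t_0)\subset\Omega_n\times\{t_0\}$ is compact (since $\Omega_n$ is bounded) and meets each of the four vertical boundary lines transversally, so it is a finite union of compact arcs, whence $\Gamma(t_0)\times\Rbb$ is a disjoint union of strips and cannot be an annulus. Next, for (2), I would use that the only critical values of $h$ are $h^+(\Acal_n)$ and $h^-(\Acal_n)\le h^+(\Acal_n)$, so above $h^+(\Acal_n)$ the function $h$ is a submersion and the topology of $\Gamma(t)$ and of $\Acal^+_n(t)$ is constant for $t>h^+(\Acal_n)$; evaluating at $t\to+\infty$, where the graph-at-infinity description forces $\Gamma(t)$ into a small neighbourhood of $\gamma^n_1\cup\gamma^n_2$, identifies $\Gamma(t)$ as two arcs joining the vertical lines through the endpoints of $\gamma^n_1$ and of $\gamma^n_2$ respectively, and $\Acal^+_n(t)$ as two disks. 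The case $t<h^-(\Acal_n)$ is symmetric.

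For (3), I would first rerun the Penafiel rotational-catenoid barrier argument of Lemma \ref{pro4.4.1}-(3): under the hypothesis $h^+(\Acal_n)-h^-(\Acal_n)>\sqrt{1+4\tau^2}\pi$ (which forces exactly two horizontal points $p^+,p^-$ and makes $h$ critical-value-free on the open interval between $h^-(\Acal_n)$ and $h^+(\Acal_n)$), a hypothetical homotopically nontrivial cycle in $\Gamma(h^+(\Acal_n))$ would, together with the matching level of $\Gamma(h^-(\Acal_n))$, bound a compact sub-annulus inside $(\Omega_n\times\Rbb)\cap\{h^-(\Acal_n)\le z\le h^+(\Acal_n)\}$ — compact precisely because $\Omega_n$ is bounded — contradicting the maximum principle against a shrinking rotational catenoid of \cite[Section 3]{Pen}, whose boundary at infinity lies in a horizontal slab of height $\sqrt{1+4\tau^2}\pi$. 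Consequently $\Gamma(h^+(\Acal_n))$ is a tree with one degree-four vertex at $p^+$ joining the four vertical lines; crossing this saddle from above glues the two disks of part (2) by a band joining distinct components, which forces, for $t$ strictly between $h^-(\Acal_n)$ and $h^+(\Acal_n)$, that $\Gamma(t)$ consists of two arcs joining the vertical lines through the endpoints of $\eta^n_i$ and that $\Acal^\pm_n(t)$ is a disk (the argument from below being symmetric); finally, $h$ being critical-value-free there, $\Acal_n\cap\{h^-(\Acal_n)<z<h^+(\Acal_n)\}$ fibres over the interval with two-arc fibre, hence is two simply connected strips, giving (3)(b).

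The main obstacle — the only genuinely new point compared with Lemma \ref{pro4.4.1} — is controlling the level sets of $h$ ``near infinity'' on the non-compact annulus $\Acal_n$: in the compact case the slab $\{|z|\le n\}$ did this automatically, whereas here one must invoke the barriers $u^\pm_n+h^\pm(\Acal_n)$ of Proposition \ref{cor3.4.3} and the uniform graph behaviour of $\Acal_n$ over $\gamma^n_i\times\Rbb$ to pin down $\Gamma(t)$ for $|t|$ large and to guarantee that the sub-annulus appearing in the catenoid argument is compact. Once these two facts are in place, the Morse-theoretic bookkeeping and the maximum-principle arguments are identical to those of Lemma \ref{pro4.4.1}.
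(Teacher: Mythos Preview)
Your proposal is correct, but the route differs from the paper's. The paper's proof is essentially two lines: part (1) is immediate from Proposition \ref{foliationcomplete} applied to $\Fcalh$, and for parts (2) and (3) it simply observes that $\Acal_n$ is the limit (with multiplicity one) of the compact annuli $\Sigma_{n,m}$, each of which already satisfies Lemma \ref{pro4.4.1}, and that the horizontal-point heights $h^\pm(\Sigma_{n,m})$ converge to $h^\pm(\Acal_n)$ by Proposition \ref{cor3.4.3}; the topological conclusions then pass to the limit.

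You instead re-run the entire argument of Lemma \ref{pro4.4.1} directly on the non-compact $\Acal_n$: you supply an explicit existence argument for a horizontal point (via properness of $h$ and Ehresmann), you read off the topology of $\Gamma(t)$ for large $|t|$ from the asymptotic graph description over $\gamma^n_i\times\Rbb$, and you rerun the Penafiel catenoid barrier directly, using the boundedness of $\Omega_n$ to guarantee compactness of the hypothetical sub-annulus. Your approach is more self-contained and avoids the (slightly delicate) question of whether the Morse-theoretic decomposition of $\Sigma_{n,m}$ survives the limit; the paper's approach is shorter but leans on the fact that the convergence $\Sigma_{n,m}\to\Acal_n$ is good enough (multiplicity one, bounded curvature, convergence of $h^\pm$) to carry the conclusions over. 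Both are valid; yours makes the analytic content more transparent, while the paper's avoids repetition.
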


\begin{proof}
The assertion (1) is a direct application of Proposition \ref{foliationcomplete}  with the foliation $\Fcalh$. By passing at the limit with $\Sigma_{n,m}$, the limit surfaces $\Acal_{n}$ satisfy with the use of Proposition \ref{cor3.4.3} all the conclusions of Lemma \ref{pro4.4.1}.

\end{proof}
Now we prove uniform curvature bound for the sequence $\Acal_n$:
\begin{prop}\label{pro3.4.6}
If the geodesics $\gammahat_1,\gammahat_2$ are ultraparallel, the sequence of minimal annuli $\Acal_n$ has a curvature uniformly bounded, i.e., we have
	\begin{equation}
	\sup_n\sup_{\Acal_n}\td{A_{\Acal_n}}<+\infty.
	\end{equation}
\end{prop}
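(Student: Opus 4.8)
The plan is to repeat, in the present non-compact setting, the blow-up scheme of the proof of Theorem~\ref{pro4.4.4}, with Propositions~\ref{foliationcomplete} and \ref{pro3.4.4} now playing the roles of Proposition~\ref{pro3.1} and Lemma~\ref{pro4.4.1}. Suppose for contradiction that $\sup_n\sup_{\Acal_n}\td{A_{\Acal_n}}=+\infty$. For each fixed $n$ the surface $\Acal_n$ has bounded second fundamental form: by Theorem~\ref{thm3.3.1} it is the multiplicity-one limit of the compact annuli $\Sigma_{n,m}$, which have uniformly bounded curvature by Theorem~\ref{pro4.4.4} applied to $\Omega_n$; and since $\Acal_n$ is asymptotic, as a normal graph, to the flat vertical planes $\gamma_1^n\times\Rbb$ and $\gamma_2^n\times\Rbb$, its curvature decays to $0$ along its two ends, so $\td{A_{\Acal_n}}$ attains its supremum at an interior point $p_n$. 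After passing to subsequences we may assume $\lambda_n:=\td{A_{\Acal_n}(p_n)}=\sup_{\Acal_n}\td{A_{\Acal_n}}\to+\infty$ and $q_n:=\pi(p_n)\to q_\infty\in\overline{\Omega_\infty}$.

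Next I would normalize exactly as in Theorem~\ref{pro4.4.4}: put $\Sigmatilde_n:=\lambda_n\Tsf_{p_n}(\Acal_n)$, a minimal surface in $\Ebb^3_n:=\Ebb^3(-1/\lambda_n^2,\tau/\lambda_n)$ with $\td{A_{\Sigmatilde_n}}\le 1$ and $\td{A_{\Sigmatilde_n}(O)}=1$, and recall $\Ebb^3_n\to\Rbb^3$ smoothly. Following the proof of Proposition~\ref{lem3.3.12}, a subsequence of $\Sigmatilde_n$ converges to a complete embedded minimal surface $\Sigmatilde_\infty\subset\Rbb^3$ with $\td{A_{\Sigmatilde_\infty}(O)}=1$ (hence not a plane), possibly with boundary a straight line $L$, which is vertical since it is the blow-up of one of the four vertical lines of $\dhr\Acal_n$. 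To see $\Sigmatilde_\infty$ has finite total curvature I would run the Mo--Osserman argument of Proposition~\ref{lem3.3.12}: by Proposition~\ref{foliationcomplete} each of the five minimal foliations $\Fcalh,\Fcal^{\etahat_1},\Fcal^{\etahat_2},\Fcal^{\etahat_{12}},\Fcal^{\gammahat_{12}}$ is tangent to $\Acal_n$ in at most two points, so I select three of them that are uniformly transverse near $p_n$ (for instance $\Fcalh$, which is transverse to every vertical-plane foliation, together with two of $\Fcal^{\etahat_1},\Fcal^{\etahat_2},\Fcal^{\etahat_{12}},\Fcal^{\gammahat_{12}}$ whose leaves through $q_n$ remain uniformly transverse), choosing them $\sigma_L$-invariant in the rescaled limit when $\dhr\Sigmatilde_\infty=L$; Lemma~\ref{lem2.3.18} then bounds the tangencies of $\Sigmatilde_\infty$ (resp.\ of $\Sigmatilde_\infty\cup\sigma_L(\Sigmatilde_\infty)$) with each limiting foliation of parallel planes by two, so the Gauss map misses six directions off a finite set. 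As in Proposition~\ref{lem3.3.12} (reflecting across $L$ and using \cite{Sch83} and the half-space theorem \cite{HM90}: a catenoidal end cannot lie in an acute wedge), this also forces $\dhr\Sigmatilde_\infty=\emptyset$.

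Finally I would obtain the contradiction as at the end of the proof of Theorem~\ref{pro4.4.4}. Being complete, embedded, of finite total curvature and non-flat, $\Sigmatilde_\infty$ has at least two ends by \cite{HK97}; let $\mutilde\subset\Sigmatilde_\infty$ be a homotopically nontrivial Jordan curve bounding one of them and $\mu_n\subset\Acal_n$ with $\lambda_n\Tsf_{p_n}(\mu_n)\to\mutilde$, so $\ellH{\pi(\mu_n)}\le\ella{\PSLhR}{\mu_n}\to 0$ and, for $n\gg0$, $\mu_n$ is homotopically nontrivial in $\Acal_n$ (else a disk $D_n\subset\Acal_n$ with $\dhr D_n=\mu_n$ would converge to a disk of $\Sigmatilde_\infty$ bounded by $\mutilde$). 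Since $\dist_{\Hbbh}(\gamma_1^n,\gamma_2^n)\ge\dist_{\Hbbh}(\gammahat_1,\gammahat_2)>0$ for all $n$ while $\ellH{\pi(\mu_n)}\to0$, we may assume $\dist_{\Hbbh}(\pi(\mu_n),\gamma_1^n)\ge d$ for some fixed $d>0$; the curve $\mu_n$ splits $\Acal_n$ into two components, and the one whose boundary contains the two vertical lines over the endpoints of $\gamma_1^n$ projects, by convexity and the maximum principle with $\dhr\triangle_n\times\Rbb$, into a geodesic triangle $\triangle_n$ with those two vertices and a third vertex $\xi_n$ with $\dist_{\Hbbh}(\xi_n,\gamma_1^n)\ge d$, whose apex angle at $\xi_n$ lies in $[\theta,\pi-\theta]$ for a fixed $\theta>0$. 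Then $\lambda_n\Tsf_{p_n}(\triangle_n\times\Rbb)$ subconverges to a closed wedge of $\Rbb^3$ between two vertical half-planes of dihedral angle in $[\theta,\pi-\theta]$ and $\lambda_n\Tsf_{p_n}$ of that component subconverges to a piece of $\Sigmatilde_\infty$ containing an end, so an end of $\Sigmatilde_\infty$ lies in a wedge of angle $<\pi$ — impossible for a planar or catenoidal end. This contradiction yields the uniform bound.

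The step I expect to be the main obstacle is extracting three uniformly transverse minimal foliations near $p_n$ from the list of Proposition~\ref{foliationcomplete}. Unlike in Theorem~\ref{pro4.4.4}, the $\Omega_n$ exhaust the \emph{ideal} quadrilateral $\Omegahat$, so one must first rule out that the curvature concentration point escapes into the flat asymptotic ends of $\Acal_n$ or that $q_\infty$ lies on the interior of $\gammahat_1$ or $\gammahat_2$ or near an ideal vertex of $\Omegahat$ — precisely the places where $\Acal_n$ is already $C^2$-close to a vertical plane and where the graph foliations $\Fcal^{\Gr(w^j)}$ used for bounded domains are unavailable — and this seems to require the barrier estimates of Proposition~\ref{cor3.4.3} to be uniform in $n$; then one must verify that near any admissible $q_\infty$ at least two of $\Fcal^{\etahat_1},\Fcal^{\etahat_2},\Fcal^{\etahat_{12}},\Fcal^{\gammahat_{12}}$ stay uniformly transverse to each other along the blow-up.
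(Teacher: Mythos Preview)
Your outline matches the paper's proof closely: blow up at a curvature-maximizing point, establish finite total curvature via Proposition~\ref{foliationcomplete} and Mo--Osserman, rule out boundary, then trap an end of the limit in a wedge. You also correctly locate the key difficulty at the ideal-vertex case $q_n\to\widehat{p}_j$ and correctly guess its resolution for the foliation step: the paper replaces $\Fcal^{\gammahat_{12}}$ by $\Fcal^{\etahat_1}$ or $\Fcal^{\etahat_2}$ there, since near an ideal vertex the leaves of $\Fcal^{\gammahat_{12}}$ and $\Fcal^{\etahat_{12}}$ become tangent while $\Fcal^{\etahat_i}$ stays uniformly transverse to $\Fcal^{\etahat_{12}}$.

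There is, however, a genuine gap you do not anticipate: the ideal-vertex case also breaks your \emph{wedge} arguments, not just the foliation transversality. In the final step, your claim that the apex angle of $\triangle_n$ at $\xi_n$ lies in $[\theta,\pi-\theta]$ fails when $q_n\to\widehat{p}_1$. After your renaming so that $\Distaa{\Hbbh}{\pi(\mu_n),\gamma_2^n}\ge d$, one has $\xi_n\to\widehat{p}_1$ while the other two vertices tend to $\widehat{p}_2,\widehat{q}_2$; this is an ideal-triangle configuration, so the apex angle tends to $0$ and your limiting wedge degenerates. The paper handles this case separately: it replaces the triangle sides by two complete geodesics touching $\pi(\mu_n)$, one close to a diagonal of $\Omegahat$ and one close to $\etahat_2$; since both share the ideal endpoint $\widehat{p}_1$, they blow up to \emph{parallel} vertical planes, trapping the end of $\widetilde{\Acal}_\infty$ in a slab rather than in a wedge of definite angle. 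Your no-boundary step has the same defect: the interior angle of $\Omega_n$ at $p_1^n$ between $\gammahat_1$ and $\eta_2^n$ tends to $0$ when $p_1^n\to\widehat{p}_1$, so ``as in Proposition~\ref{lem3.3.12}'' does not go through verbatim. The paper repairs this by taking instead the wedge between $\gammahat_1\times\Rbb$ and the leaf $\Lambda_n$ of $\Fcal^{\etahat_2}$ passing through $p_1^n$; since $\Lambda_n$ is perpendicular to $\etahat_2$, its angle with $\gammahat_1$ at $p_1^n$ stays uniformly in $(0,\pi)$ even as $p_1^n\to\widehat{p}_1$.

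Two of your flagged worries are non-issues. The case ``$q_\infty$ on the interior of $\gammahat_i$'' is a purely interior blow-up, because $\dhr\Acal_n$ consists only of four vertical lines with no horizontal arcs; the graph foliations $\Fcal^{\Gr(w^j)}$ from Theorem~\ref{pro4.4.4} are therefore never needed here, and the standard triple $\Fcalh,\Fcal^{\etahat_{12}},\Fcal^{\gammahat_{12}}$ suffices whenever $q_\infty\in\Hbbh$. And the curvature maximum does not escape into the flat ends of $\Acal_n$, exactly for the reason you give.
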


\begin{proof}
The proof is similar to that of Theorem \ref{pro4.4.4}. Since $\Acal_n$ is the limit of a sequence of compact annuli $\Sigma_{n,m}$ with uniform bounded curvature independent of $m$ by Theorem \ref{pro4.4.4}, the curvature is uniformly bounded on each $\Acal_n$.
Since $\Acal_n$ is asymptotically flat, the maximum value of the curvature is attained at some point $p_n$.

Assume now, that the curvature at $p_n$ diverges. After an isometry 
$\Tsf_{p_n}$ which send the point $p_n$ at $O$, and using a dilatation, we construct a complete embedded minimal surface $\widetilde{\Acal}_\infty$  similar to the one of the Proposition \ref{lem3.3.12} and satisfying conclusions (1) to (4). There remains to prove that $\widetilde{\Acal}_\infty$ is a finite total curvature surface without boundary to satisfy conclusion (5).

We consider foliations $\Fcal^h$, $\Fcal^{\etahat_{12}}$ and $\Fcal^{\gammahat_{12}}$. If the horizontal projection $q_n=\pi(p_n)$ remains in a compact set of $\Omegahat$, after the blow up, we recover $\widetilde{\Acal}_\infty$ and three family of parallel planes induced by 
the blow up of the three foliations. These parallel planes are transverse
because  $q_n$ is away from the ideal vertices of $\Omegahat$. 
As in the proof of Proposition \ref{lem3.3.12}, we use
 the Propostion \ref{foliationcomplete} to prove that the Gauss map take five values a finite number of times and
with the Mo-Osserman theorem we prove that
$\widetilde{\Acal}_\infty$ has finite total curvature.
%Notice that in this case $\widetilde{\Acal}_\infty$ is  without boundary.
In the case where $q_n$ converge to an ideal point of $\Omegahat$ at infinity, the foliations $\Fcal^{\etahat_{12}}$ and $\Fcal^{\gammahat_{12}}$ converge to the same foliation of $\Rbb^3$ (the  leaves are almost tangential at $p_n$). Hence we consider instead of $\Fcal^{\gammahat_{12}}$, the foliation $\Fcal^{\etahat_1}$ if $q_n$ converge to an endpoint of $\etahat_1$ or  $\Fcal^{\etahat_2}$ if  $q_n$ converge to an endpoint of $\etahat_2$. As in the preceding case, we recover with $\widetilde{\Acal}_\infty$, three family of parallel planes and we conclude in the same way the finite total curvature of the limit. 

There remains to prove that $\widetilde{\Acal}_\infty$ has no boundary. Assume the contrary, $\partial \widetilde{\Acal}_\infty$ is a vertical straight line $L$.
There is a subsequence of $q_n$ which converges to $q_{\vc}$ and stay at a finite distance of a vertex of $\Omega_n$, suppose $p_1^n$ without loss of generality.
We observe that $L$ is the limit of  
$\Tsf_{p_n}( \{p_1^n\} \times \Rbb)$.
%by ${\cal{I}}_n$.
%hence $\pi(p_n)$ converges to an ideal vertices of $\Omegahat$. 
%To prove that $\widetilde{\Acal}_\infty$ has no boundary, assume  the contrary. 
%Without loss  of generality, assume  that
%$\pi(p_n) \to \widehat{p}_1$,  the  common end points of 
%$\gammahat_1$ and $\etahat_1$. 
%After  the  which send $p_n$ at $O$, we observe that the translation of $\{ p^n_1 \} \times \Rbb \in \partial \Acal_n$ converges to the  boundary  of 
%$\widetilde{\Acal}_\infty$.
For any $n$, there is a leaf $\Lambda_n$ in $\Fcal^{\etahat_2}$  such that $\{  p^n_1 \} \times \Rbb=\Lambda_n \cap \{ \gammahat_1 \times \Rbb \}$ and $\Acal_n$ is contained in a wedge bounded by $\Lambda_n \cup \{ \gammahat_1 \times \Rbb \}$. Hence, the surface $\widetilde{\Acal}_\infty$ is contained in a wedge bounded by the  limit of $\Tsf_{p_n} (\Lambda_n)$ and  $\Tsf_{p_n}(\{ \gammahat_1 \times \Rbb \})$.  Since angle between $\Lambda_n$ and $\gammahat_1 \times \Rbb$ is less than $\pi$, there is no finite total curvature end contained in the wedge and this give a contradiction exactly as at the end of the proof of Proposition \ref{lem3.3.12}. This prove that $\widetilde{\Acal}_\infty$ satisfy all properties of Proposition \ref{lem3.3.12}.

To finish the proof of the estimate, we study the geometry of $\widetilde{\Acal}_\infty$. Finite total curvature imply that there is a non trivial curve $\mutilde$ in $\widetilde{\Acal}_\infty$, which is the limit of $\mutilde_n$ a sequence of non trivial curve in $\widetilde{\Acal}_n$ contains in a neighborhood of $p_n$ (see the end of the proof of  Theorem \ref{pro4.4.4}). When $q_n$ remains in a compact part of $\Omegahat$, we prove as in Theorem \ref{pro4.4.4}, that $\mutilde$ bound a finite total curvature end contains in an acute wedge of $\Rbb^3$. The main point is that $q_n$ is at a distance strictly positive of
$\gamma_1$ or $\gamma_2$.

In the case, where $q_n$ converges to an ideal vertex of $\Omegahat$ say $\widehat{p}_1$ without loss of generality.
%Since $\widetilde{\Acal}_\infty$, has finite total curvature, there is a non trivial curve $\mu(p_n)$ which is contains in a small neighborhood of 
%$p_n$, which diverge to  an ideal vertex. 
The curve $\mutilde_n$ with two straight lines in  $\gammahat_2 \times \Rbb$ bound an annulus.
This annulus is contained into a wedge bounded by vertical planes joining $\mutilde_n$ and the end points of 
$\gammahat_2$.  At the limit when $n \to +\vc$, the annulus converges to a non flat ends in $\Rbb^3$ which is contained in a slab. This slab comes from the limit of  ${\Tsf_{p_n}}(\gammahat_3 \times \Rbb)$ where $\gammahat_3$ has a contact point with $\mutilde_n$ and is close to a diagonal of $\Omegahat$  and ${\Tsf_{p_n}}(\gammahat_4 \times \Rbb)$, where $\gammahat_4$ has a contact point with $\mutilde_n$ and is close to $\etahat_2$. This finish the proof of estimate of curvature.
\end{proof}
%\begin{rem}
%\label{rem5.4}
%When the geodesics $\gammahat_1$ and $\gammahat_2$ has the same end point at infinity, the estimate of curvature is not uniform and the
%sequence of points $p_n$ can escape at $\infty$. In this case the end blow up to a finite total curvature end which is contains in one half-space of $\Rbb^3$, which give no contradiction. This depict a situation where the limit surface is empty and the geodesic no bound any annulus.
%\end{rem}

%\begin{prop}\label{pro3.4.4}
%	We have the following properties 
%	\begin{enumerate}
%	\item $\Acal_n$ has one or two horizontal points.
%	\item If $t>h^+(\Acal_n)$ (resp. $t<h^-(\Acal_n)$) then $\Acal_n^+(t)=\Acal_n \cap \{ z \geq t \}$ (resp. $\Acal_n^-(t)$) consists of two simply connected components.
%	Then, $\Gamma (t)$ consists of two components diffeomorphic to $[0,1]$ and joining two straight lines that pass through the endpoints of $\gamma^n_i$ for $i=1,2$.
%	\item If $h^+(\Acal_n)-h^-(\Acal_n)$ is sufficiently large, then
%	\begin{enumerate}
%	\item[\rm (a)] For each $t\in (h^-(\Acal_n),h^+(\Acal_n))$, $\Acal^+_n(t)$ and $\Acal^-_n(t)$ are simply connected. Moreover, $\Gamma(t)$ consists of two components diffeomorphic to $[0,1]$, joining two straight lines that pass through the endpoints of $\eta^n_i$ for $i=1,2$.
%	
%	\item[\rm (b)] The set $\Acal_n\cap \left\{h^-(\Acal_n)<z<h^+(\Acal_n) \right\}$ consists of two simply connected components.
%	\end{enumerate}
%	\end{enumerate}
%\end{prop}

Now using Proposition \ref{cor3.4.3}, \ref{foliationcomplete},  \ref{pro3.4.6}  we have the main theorem

\begin{proof}[\rm \bfseries Proof of Theorem \ref{thm1}] The proof is similar to the one of Theorem \ref{thm3.3.1}. 
The  minimal surfaces $\Sigma_1$, $\Sigma_2$ and $\Sigma_3$ in Theorem \ref{thm1} will be constructed as a limit of a subsequence of the vertical translations of annulus $\Acal_n$ contains in $\Omega_n \times \Rbb$. We need to prove that the topology does not disappear at the limit. The idea is to prove that if its disappear, the limit is a simply connected graph on  $\Omega_\vc=\cup_{n \in \Nbb} \Omega_n$ which contains $\Omega_0$ and contradict the conclusion of Lemma
\ref{lem5.1}.

We construct the surface $\Sigma_1$ as the limit  of annuli contains in $\Omega_n \times \Rbb$ with $\cup_{n \in \Nbb}\gamma_i^n=\gammahat_i$ and $\Omega_\infty=\Omegahat$ (see Proposition \ref{cor3.4.3}). 
For the surface $\Sigma_2$, we fix a point $p_1 \in \gammahat_1$ and consider the sequence of geodesics $\gamma_1^n$ with endpoints $p_1$ and $q_1^n$ such that $\gamma_1' \subset \gamma_1^n \subset \gamma_1^{n+1}$ and $\cup _{n \in \Nbb}\gamma_1^n$ is an half geodesic with end point $p_1$ and ideal point $\hat{q}_1$. We consider a sequence of geodesics $\gamma_2^n$, such that $\cup_{n \in \Nbb}\gamma_2^n=\gammahat_2$ and get a sequence $ \Omega_n$  with $\Omega_\vc\subset \Omegahat$ a quadrilateral domain with 3 vertices $\hat{q}_1, \hat{p}_2, \hat{q}_2$ at infinity and $p_1 \in \gammahat_1$.

For the surface $\Sigma_3$, we fix a point $p_2  \in  \gammahat_2$  and consider the sequence of geodesics $\gamma_2^n$ with endpoints $p_2$ and $q_2^n$ such that $\gamma_2' \subset \gamma_2^n \subset \gamma_2^{n+1}$. We consider the same sequence of geodesics $\gamma_1^n$ as for the surface $\Sigma_2$, with end points $p_1$ and $q_1^n$. The sequence 
$\Omega_n$ bounded by $p_1, q_1^n, p_2, q_2^n$ contain $\Omega_0$, and $\Omega_\vc\subset \Omegahat$ in this case is a quadrilateral  domain with 2 vertices $\hat{q}_1,   \hat{q}_2$ at infinity and $p_1,  p_2$.

Let $\Acalcheck_n$ be the vertical translation of $\Acal_n$ such that $h^+(\Acalcheck_n)=0$ and $\Acalcheck_n$ is tangent to $\{z=0\}$ at the  point $p^+_n$. 
Uniform bound of the curvature imply that $\Acalcheck_n$ is locally a bounded graph with bounded slope on a disc of radius $r>0$, centered at $p^+_n$.  When $\Omega_n$ converge to $\Omegahat$, the sequence $p^+_n$ stays at distance $r$ of $\partial \Omegahat$. %If not the disc of radius $r>0$, centered at  $p^+_n$ cannot be contains into $\Omegahat$, contradicting the maximum principle and convexity of $\Omegahat \times \Rbb$.
Then, the sequence of the minimal surfaces $\Acalcheck_n$ has an accumulation point at $p_\infty$ and $\Acalcheck_n$ has a subsequence converging to a minimal surface $\Acalcheck_\infty$ tangent to $\{z=0\}$ at $p_\infty$ by Proposition  \ref{pro3.4.6}, with multiplicity less than 2, by Proposition \ref{foliationcomplete}.

Assume  that $h^-(\Acalcheck_n) \to -\vc$ and consider $\Acalcheck^+_n(t)=\Acalcheck_{n} \cap \{ z \geq t\}$. By Proposition  \ref{cor3.4.3}, $\Acalcheck^+_{n}(0)$ has two connected components which are asymptotic to $\gammahat_i \times \Rbb$. By Lemma \ref{lem2.1.3}, the limit $\Acalcheck^+_{\vc}(t)$ has two connected components which stays at geodesic distance 
$\epsilon$  from $\gammahat_i \times \Rbb$ for  $t \geq d(\epsilon)>0$. By Proposition \ref{pro3.4.4}-3a) and Lemma \ref{lem2.1.3}, 
$\Acalcheck^-_{\vc}(0)= \Acalcheck_{\vc} \cap \{ z \leq 0\}$  has two connected components whose boundary is in $\{z=0\}\cup (\etahat_i \times \Rbb)$ and its boundary at infinity is in $\partial _{\vc}( \etahat_i \times \Rbb)$. Each of these connected components stays at distance $\epsilon$ from $\etahat_i  \times \Rbb$ for $t  \leq d(\epsilon)<0$ by Lemma \ref{lem2.1.3}.

Assume that $\Acalcheck_{\vc}$ is not a graph, then there exists a vertical plane $\gammahat \times \Rbb$ tangent at $p$ to $\Acalcheck_{\vc}$ and $\Gamma= (\gammahat \times \Rbb) \cap \Acalcheck_{\vc}$ separate the surface in at least four non compact connected components. We prove that there is one connected component which has its boundary in $\gammahat \times \Rbb$. Assume the contrary, each one of the connected component has one vertical line $\{\widehat{p}_i\} \times \Rbb$, $\{\widehat{q}_i\} \times \Rbb$ in its boundary at infinity or $\{p_i\} \times \Rbb$ for $\Sigma_2, \Sigma_3$. But, by the multiplicity one property and the asymptotic geometry of $\Acalcheck_{\vc}$, due mainly to Lemma \ref{lem2.1.3}, two vertical lines at the vertices of $\Omega_\vc$ located in one side of $\gammahat \times \Rbb$ are connected in the same component. We conclude that there is at most two connected components which contains the four vertical line located at the vertices, hence the two others non compact connected components have its boundary in $\gammahat \times \Rbb$ and its boundary at infinity  in $\partial _{\vc} \gammahat \times \Rbb$. By Lemma  \ref{lem2.1.3}-c), these connected components would be entirely contains in $\gammahat \times \Rbb$.

This prove with Proposition \ref{pro3.4.4} and arguments of \ref{pro3.3.15a} that $\Acalcheck_{\vc}$  is the vertical graph of a function $u:\Omega_{\vc} \to\Rbb$. 
where $u$ assumes the value $\pm\infty$ on $\partial \Omega_{\vc}$.
%$\gammahat_1\cup\gammahat_2$ and $-\infty$ on $\etahat_1\cup\etahat_2$. 
By Theorem \ref{thm4.2.6}  for vertical graph, $\Distaa{\Hbbh}{\gammahat_1,\gammahat_2}=\Distaa{\Hbbh}{\eta_1,\eta_2}$, contradicts Lemma \ref{lem5.1}. This prove that $h^-(\Acalcheck_n)$ converges to
a finite value and $h^-(\Acalcheck_\vc)$ is finite for the three sequences of domains $\Omega_n$ we consider above.

Now we describe the asymptotic geometry. 
%The barriers functions $u_n^+$ and $u_n^-$ of proposition \ref{cor3.4.3} converge to minimal barriers $u_\vc^{\pm}$ . 
Solutions $\tilde{u}^\pm_i$  on $\Omega_\vc$ such that $\tilde{u}^\pm_i=\pm\infty$ on $\partial \Omega_\vc \cap \gammahat_i$ and  $\tilde{u}^\pm_i=0$ on the rest of $\partial \Omega_\vc$
%$ (\etahat_1\cup \etahat_2\cup \gammahat_j)$ 
with $\{i,j\}=\{1,2\}$ define functions $u^+_\vc=\sup\{\tilde{u}^+_1,\tilde{u}^+_2\}$ and  $u^-_\vc=\inf\{\tilde{u}^-_1,\tilde{u}^-_2\}$.
Since $u_\vc^{\pm}$ are limit of the function $u_{n}^{\pm}$, they are  barriers for $\Acalcheck^+_{\vc}(t_1)$ and $\Acalcheck^-_{\vc}(t_2)$ for $t_1>0$ and $t_2 < h^-(\Acalcheck_{\vc})$. This barrier with Lemma  \ref{lem2.1.3}-b) imply that
for $t_1$ and $t_2$ large enough, $\Acalcheck^+_{\vc}(t_1)$ and $\Acalcheck^-_{\vc}(t_2)$ are horizontal graphs on $\gammahat_i \times \Rbb$ eventually bounded by vertical lines $\{p_i\} \times \Rbb$.

For a ball $B_R(0)$ large enough, the domain $\Omegahat \setminus B_R(0)$ has four (resp. three and two) connected component for $\Sigma_1$ (resp. for $\Sigma_2$ and $\Sigma _3$) bounded by $\gammahat_i $ and $\etahat_i$, such that any points of $\Omegahat  \setminus B_R(0)$ are at distance less than $\epsilon >0$ of the boundary $\partial \Omegahat$. 

This imply by uniform bounded curvature estimate that around each point in $(\Omegahat \setminus B_R(0)) \times \Rbb$, the surface 
$\Acalcheck_n$ is a local horizontal graph over a disc of radius $c>0$ in $\gammahat_i \times \Rbb$ , with unit normal vector converging uniformly to the unit normal vector to the vertical plane. In particular limit surfaces $\Sigma_1, \Sigma_2,\Sigma_3$ are uniform graph on  $\gammahat_i \times \Rbb$ in the domain $(\Omegahat \setminus B_R(0)) \times [t_2,t_1]$. This prove that the constructed surfaces $\Sigma_1, \Sigma_2,\Sigma_3$ are uniformly asymptotic to $\gamma_i \times \Rbb$ outside a compact set with boundaries at infinity four vertical lines located at the vertices of $\Omegahat$ for $\Sigma_1$,
three ideal vertical lines at infinity and $\{p_1\} \times \Rbb$ for $\Sigma_2$,  two ideal vertical lines at infinity and $\{p_1\} \times \Rbb$, $\{p_2\} \times \Rbb$  for $\Sigma_3$.

Finally, by a similar argument to the one in the proof of the Theorem \ref{thm3.3.1}, we obtain that 
$\Acalcheck_{\vc}$ is not simply connected and is topologically an annulus. This prove the Theorem.

\end{proof}

\section{Local calculus in $3$-Riemannian manifold}
\label{sectB}

In order to compute the mean curvature $ H $ of minimal graph in a local coordinates,  we need some local calculus in any $3$- Riemannian manifold, see for example \cite[Chapter 7 \S 1]{CM11}.
Equivalently, we consider $\Rbb^3$ endowed with a Riemannian metric $g$.
A frame of $\Rbb^3$ induced from the coordinates $(x_1,x_2,x_3)$ is $\dhro{x_1},\dhro{x_2},\dhro{x_3}$.
We denote by $g_{ij}=\vh{\dhro{x_i},\dhro{x_j}}$ the coefficients of the metric and
$g^{ij}$ the coefficients of its inverse matrix $\left(g_{ij}\right)_{i,j=1}^3$.
The  Christoffel symbols of the metric $g$ is denoted by $\Gamma^k_{ij}$ where $\nabla_{\dhr/\dhr x_i}\dhro{x_j}=\sum_{k=1}^{3}\Gamma^k_{ij}\dhro{x_k}$.
By definition, $g_{ij}$ and $\Gamma^k_{ij}$ are smooth functions on $\Rbb^3$.
We note that 
$\dhro{x_3}$ is a Killing vector field if and only if $g_{ij}$ does not depend on $x_3$ for all $i,j$,
so do the Christoffel symbols $\Gamma^k_{ij}$.
Let $\mathcal{U}\subset\Rbb^2$ be an open set and $v:\mathcal{U}\to\Rbb$ be a map of class $C^2$.
The graph of a function  $v$ (w.r.t.  the coordinates $(x_1,x_2,x_3)$) is the surface $\Gr(v)=\left\{(x_1,x_2,v(x_1,x_2)) ; (x_1,x_2)\in\mathcal{U} \right\}$ in $\Rbb^3$.
We need to determine the first, the second fundamental forms of $\Gr(v)$ and the condition for $v$ such that $\Gr(v)$ is minimal surface w.r.t.  the Riemannian metric $g$ of $\Rbb^3$.
The canonical frame $ \{E_1,E_2\} $ of $\Gr(v)$ is  
\begin{equation}\label{equ3.3.8}
E_i(x_1,x_2,v)=\Rest{\dhro{x_i}}{(x_1,x_2,v)}+\dhrou{v}{x_i}(x_1,x_2)\Rest{\dhro{x_3}}{(x_1,x_2,v)}=\sum_{j=1}^{3}T^j_i(x_1,x_2)\Rest{\dhro{x_j}}{(x_1,x_2,v)}
\end{equation}
where $T^j_i:\Gr(v)\to\Rbb$ is given by $T^j_i(x_1,x_2,v)=\delta^j_i+\delta^j_3\dhrou{v}{x_i}(x_1,x_2)$ and $\delta^j_i$ are Kronecker symbols.
Thus the first fundamental form $g^v$ of  $\Gr(v)$ in frame $(E_1,E_2)$ is 
\begin{align}
g^v_{ij}(x_1,x_2,v):=&\vh{E_i(x_1,x_2,v),E_j(x_1,x_2,v)}\notag\\
=&g_{ij}(x_1,x_2,v)+\dhrou{v}{x_i}(x_1,x_2)g_{j3}(x_1,x_2,v)+\dhrou{v}{x_j}(x_1,x_2)g_{i3}(x_1,x_2,v)\notag\\
&+\dhrou{v}{x_i}(x_1,x_2)\dhrou{v}{x_j}(x_1,x_2)g_{33}(x_1,x_2,v).\label{equ3.3.9}
\end{align}

%The function $v$ and its partial derivatives $\dhrou{v}{x_i}$ are calculated at $(x_1,x_2)$ while the functions $g_{ij}$ are calculated at $(x_1,x_1,v)$.
We remark that the function $v$ and its partial derivatives $\dhrou{v}{x_i},\frac{\dhr^2v}{\dhr x_i\dhr x_j}$ are calculated at $(x_1,x_2)$ while $g_{ij},g^{ij},
T^j_i,\Gamma^m_{ij}$ and the vector fields $\dhro{x_i}$ are calculated at $(x_1,x_2,v)$. This remark  hold for the rest
of the section.

In order to determine the second fundamental form of $\Gr(v)$, we need to know its unit normal vector field.
Denote by $N=N_{\Gr(v)}$ the upward unit normal vector field of $\Gr(v)$.
We know that $N$ is the restriction to $\Gr(v)$ of the vector field $\frac{\nabla \Phi}{\chuan{\nabla\Phi}}$ where $\Phi:\mathcal{U}\times\Rbb\to\Rbb, (x_1,x_2,x_3)\mapsto x_3-v(x_1,x_2)$.
Since 
\begin{equation}
\vh{\nabla\Phi,\dhro{x_i}}(x_1,x_2,x_3)=\dhrou{\Phi}{x_i}(x_1,x_2,x_3)
=\begin{cases}
-\dhrou{v}{x_i}(x_1,x_2) & i=1,2,\\
1& i=3,
\end{cases}
\end{equation}
then, 
\begin{equation}\label{equ3.3.13}
\vh{N,\dhro{x_i}}(x_1,x_2,v)
=\begin{cases}
-\frac{1}{W}\dhrou{v}{x_i}(x_1,x_2)& i=1,2,\\
\frac{1}{W}& i=3
\end{cases}
\end{equation}
with $W(x_1,x_2)=\chuan{\nabla\Phi}(x_1,x_2,v)$.
Since  
$\chuan{\nabla\Phi}^2=\sum_{i,j=1}^{3}g^{ij}\vh{\nabla\Phi,\dhro{x_i}}\vh{\nabla\Phi,\dhro{x_j}}$ on $\mathcal{U}\times\Rbb$ so
\begin{equation}\label{equ3.3.12a}
W^2= g^{33}-2\sum_{i=1}^{2}g^{i3}\dhrou{v}{x_i}+\sum_{i,j=1}^{2}g^{ij}\dhrou{v}{x_i}\dhrou{v}{x_j}
\end{equation}
%where $v$ and its partial derivatives  $\dhrou{v}{x_i}$ are calculated  at $(x_1,x_2)$ while  $g^{ij}$ are calculated at $(x_1,x_2,v)$.
The second fundamental form of $\Gr(v)$ w.r.t. the frame $(E_1,E_2)$ is given by
\begin{equation}\label{equ3.3.14}
A_{ij}=A^{\Gr(v)}_{ij}=\vh{N,\nabla_{E_i}E_j}.
\end{equation}
It follows from the formula \eqref{equ3.3.8} of $E_i$ and from the definition of $T^j_i$ that
\begin{align*}
\left(\nabla_{E_i}E_j\right)(x_1,x_2,v)&
=\sum_{\ell=1}^{3}\nabla_{E_i}\left(T^\ell_j\dhro{x_\ell}\right)=\sum_{\ell=1}^{3}E_i\left(T^\ell_j\right) \dhro{x_\ell}+\sum_{\ell=1}^{3}T^\ell_j\nabla_{E_i}\dhro{x_\ell}\\
&=\sum_{\ell=1}^{3}\dhro{x_i}\left(\delta^\ell_j+\delta^\ell_3\dhrou{v}{x_j}\right)\dhro{x_\ell}+\sum_{k,\ell=1}^{3}T^k_iT^\ell_j\nabla_{\dhr/\dhr x_k}\dhro{x_\ell} \\
&=\frac{\dhr^2v}{\dhr x_i\dhr x_j}\dhro{x_3}+\sum_{k,\ell,m=1}^{3}T^k_iT^\ell_j\Gamma^m_{k\ell}\dhro{x_m}
\end{align*}
%where $v$ and the partial derivatives $\dhrou{v}{x_i},\frac{\dhr^2v}{x_ix_j}$ are calculated at $(x_1,x_2)$ while $T^j_i,\Gamma^k_{ij}$ and the vector fields $\dhro{x_i}$ are calculated at $(x_1,x_2,v)$.
From this and the two formulas \eqref{equ3.3.13}, \eqref{equ3.3.14} we obtain
\begin{align}
A_{ij}(x_1,x_2,v)
&= \frac{\dhr^2v}{\dhr x_i\dhr x_j}\vh{N,\dhro{x_3}}+\sum_{k,\ell,m=1}^{3}T^k_iT^\ell_j\Gamma^m_{k\ell}\vh{N,\dhro{x_m}}\notag\\
&=\frac{1}{W}\left(\frac{\dhr^2v}{\dhr x_i\dhr x_j}+\sum_{k,\ell=1}^{3}T^k_iT^\ell_j\Gamma^3_{k\ell}\right)-\frac{1}{W}\sum_{m=1}^{2}\sum_{k,\ell=1}^{3} T^k_i T^\ell_j\Gamma^m_{k\ell}\dhrou{v}{x_m}\label{equ3.3.12}
\end{align}
%where $W,v$ and the partial derivatives $\dhrou{v}{x_i},\frac{\dhr^2v}{\dhr x_i\dhr x_j}$ are calculated at $(x_1,x_2)$ while $T^j_i,\Gamma^m_{ij}$ are calculated at (x_1,x_2,v)$.
From the formula of $T^j_i$, for each $m=1,2,3$, we have
\begin{align}
\sum_{k,\ell=1}^{3} T^k_i T^\ell_j\Gamma^m_{k\ell}
&=\sum_{k,\ell=1}^{3}\left(\delta^k_i+\delta^k_3\dhrou{v}{x_i}\right)\left(\delta^\ell_j+\delta^\ell_3\dhrou{v}{x_j}\right)\Gamma^m_{k\ell}\notag\\
&=\sum_{k,\ell=1}^{3}\left(\delta^k_i\delta^\ell_j+\delta^k_i\delta^\ell_3\dhrou{v}{x_j}+\dhr^k_3\delta^\ell_j\dhrou{v}{x_i}+\delta^k_3\delta^\ell_3\dhrou{v}{x_i}\dhrou{v}{x_j} \right)\Gamma^m_{k\ell}\notag\\
&= \Gamma^m_{ij}+\Gamma^m_{i3}\dhrou{v}{x_j}+\Gamma^m_{3j}\dhrou{v}{x_i}+\Gamma^m_{33}\dhrou{v}{x_i}\dhrou{v}{x_j}\label{equ3.3.16a}
\end{align}
%where $T^j_i,\Gamma^k_{ij}$ are calculated at $(x_1,x_2,v)$ while $v$ and its partial derivatives are calculated at $(x_1,x_2)$.

We deduce the expression of the mean curvature $ H $ in local coordinates.
Since the mean curvature  $H$ of $\Gr(v)$ is given by
$2H=\sum_{i,j=1}^{2}(g^v)^{ij}A_{ij}$
where
$(g^v)^{ij}$ are the coefficients of the inverse of the matrix $\left(g^v_{ij}\right)_{i,j=1}^2$, it follows from the formulas of $A_{ij}$ and \eqref{equ3.3.12}, \eqref{equ3.3.16a} that

\begin{align}\label{equ4.5.4}
2HW
=&\sum_{i,j=1}^{2}(g^v) ^{ij}\left(\frac{\dhr^2v}{\dhr x_i\dhr x_j}+\Gamma^3_{ij}+\frac{\dhr v}{\dhr x_i}\Gamma^3_{3j}+\frac{\dhr v}{\dhr x_j}\Gamma^3_{i3}+\frac{\dhr v}{\dhr x_i}\frac{\dhr v}{\dhr x_j}\Gamma^3_{33}\right)\notag\\
&-\sum_{i,j,m=1}^{2}\frac{\dhr v}{\dhr x_m}(g^v) ^{ij}\left(\Gamma^m_{ij}+\frac{\dhr v}{\dhr x_i}\Gamma^m_{3j}+\frac{\dhr v}{\dhr x_j}\Gamma^m_{i3}+\frac{\dhr v}{\dhr x_i}\frac{\dhr v}{\dhr x_j}\Gamma^m_{33}\right).
\end{align}

%We assume now that the coordinates $(x_1,x_2,x_3)$ satisfies
%\begin{equation}\label{equ3.3.19}
%g_{3i}=\delta_{3i}\quad\forall \,i=1,2,3.
%\end{equation}

%In the following, we recall Fermi coordinates for a Riemannian manifold, a generalizationof normal coordinates, see \cite[Chapter 2]{Gray04} or \cite[Chapter 3 \S 6]{Chavel06}.
Let $M^3$ be a Riemannian manifold and  $\Sigma^2$ be a submanifold of $M$.
Lets introduce Fermi coordinates $(x_1,x_2,x_3)$.
Let $N\Sigma\to \Sigma$ be the normal vector bundle of $\Sigma$ in $M$.
By identifying $\Sigma$ with the image of zero section of $N\Sigma$, we can see $\Sigma$ as a submanifold of $N\Sigma$.
The exponential map  $\exp_\Sigma^\bot:N\Sigma\to M$ fixes all points of $\Sigma$, moreover, it is local diffeomorphism on $\Sigma$.
Hence, each point of $\Sigma$ belongs to a local chart $(U,x)$ of $M$ where $(U\cap\Sigma, (x_1,x_2))$ is a local chart of $\Sigma$ and $x_3\mapsto (x_1,x_2,x_3)$ is a unit-speed geodesic of $M$  perpendicular to $\Sigma$.
Then in this  Fermi coordinates we have $g_{3i}=g_{i3}=\delta_{3i}$
for all $i=1,2,3$.
It follows that  for $i=1,2,3$
\begin{equation}\label{equ3.3.16}
	\Gamma^3_{3i}=\Gamma^3_{i3}=\vh{\nabla_{\dhr/\dhr x_i}\dhro{x_3},\dhro{x_3}}=\frac{1}{2}\dhro{x_i}g_{33}=0.
\end{equation}

We now show in the next paragraph that the coefficients of the inverse of the matrix $\left(g^v_{ij}\right)^2_{ i,j=1}$ satisfy the following equality
\begin{equation}\label{equ3.3.10}
\left(g^v\right)^{ij}=g^{ij}-\sum_{k,\ell=1}^{2}\frac{g^{ik}g^{j\ell}}{W^2}\dhrou{v}{x_k}\dhrou{v}{x_\ell}.
\end{equation}
For simplicity, we denote the matrix 
$\left(g^v_{ij}\right)^2_{ i,j=1},\left(g_{ij}\right)^2_{ i,j=1}$ and $\left(\dhrou{v}{x_i}\dhrou{v}{x_j}\right)^2_{ i,j=1}$ by respectively $G^v,\widehat{G}$ and $B$ and
the inverse $\widehat{G}^{-1} = \left(g^{ij}\right)^2_{ i,j=1}$.
From \eqref{equ3.3.9},
% and the hypotheses \eqref{equ3.3.19},
  we obtain the following equality of the matrix
$G^v=\widehat{G}+B$.
This yields
\begin{equation}\label{equ3.3.19a}
\left(G^v\right)^{-1}=\left(I+ \widehat{G}^{-1}B\right)^{-1}\widehat{G}^{-1}
\end{equation}
where $I$ is the $2$-by-$2$ identity matrix.
%From the hypotheses \eqref{equ3.3.19}, 
%The inverse $\widehat{G}^{-1} = \left(g^{ij}\right)^2_{ i,j=1}$.
The equality \eqref{equ3.3.10}  will be attained from \eqref{equ3.3.19a} and a calculation of the inverse of $I+\widehat{G}^{-1}B$.
Denote by $C$ the $2$-by-$1$ matrix of the partial derivatives $\dhrou{v}{x_i}$ of $v$.
Hence, $B=CC^\mathsf{T}$ where $C^\mathsf{T}$ is the transpose of $C$.
It follows from \eqref{equ3.3.12a} that
$C^\mathsf{T} \widehat{G}^{-1}C=\sum_{i=1}^{2}g^{ij}\dhrou{v}{x_i}\dhrou{v}{x_j}=W^2-1$.
Hence
\begin{align*}
\left(\widehat{G}^{-1}B\right)^2&
=\left(\widehat{G}^{-1}CC^\mathsf{T}\right)\left(\widehat{G}^{-1}CC^\mathsf{T}\right)=\widehat{G}^{-1}C\left(C^\mathsf{T}\widehat{G}^{-1}C\right)C^\mathsf{T}\\
&=\left(W^2-1\right)\widehat{G}^{-1}CC^\mathsf{T}=\left(W^2-1\right)\widehat{G}^{-1}B
\end{align*}
So the inverse of $I+\widehat{G}^{-1}B$  is $I-\frac{1}{W^2}\widehat{G}^{-1}B$.
Replacing it to \eqref{equ3.3.19a} and comparing the coefficients of the matrix, we obtains \eqref{equ3.3.10}.

\medspace

Each level set $\left\{p\in U; x_3(p)=x_3 \right\}$ of the coordinate function $x_3:U\to\Rbb$ is an embedded surface whose unit normal vector, denoted by $N$, is the restriction to this level set of  $\dhro{x_3}$.
Since $\dhro{x_1},\dhro{x_2}$ is a frame of the level sets, the first fundamental form of each level set is $\left(g_{ij}\right)^2_{ i,j=1}$.
Define $H$ and $A$ respectively the mean curvature and the second fundamental form of level sets
of the coordinate function $x_3$.
Then $H$ and $A$ are calculated as follows
\begin{equation}\label{equ3.3.17}
A_{ij}=\vh{\nabla_{\dhr/\dhr x_i}\dhro{x_j},\dhro{x_3}}=\Gamma^3_{ij},\quad 2H=\sum_{i,j=1}^{2}g^{ij}A_{ij}=\sum_{i,j=1}^{2}g^{ij}\Gamma^3_{ij}
\end{equation}
The partial derivative of the mean curvature function $H:U\to\Rbb$ w.r.t.   $x_3$ is given by:
\begin{equation}\label{equ4.5.7a}
2\dhrou{H}{x_3}=\chuan{A}^2+\Ric(N,N).
\end{equation}
Indeed, let $S=\nabla N:V\mapsto \nabla_VN$ be the shape operator of level sets of the coordinate function $x_3$.
For all tangent vector fields $V,W$ of each level sets, we have $A(V,W)=-\vh{SV,W}$.
So
\begin{equation}\label{equ3.3.22a}
\tr (S)=-2H,\quad \tr \left(S^2\right)=\chuan{A}^2.
\end{equation}
By the radial curvature equation, see \cite[Theorem 2 page 44]{Pet06}, we have the equality
\begin{equation*}
\nabla_N S+ S^2=- R(\cdot,N)N. 
\end{equation*}
Taking trace of both sides of this equality, by using \eqref{equ3.3.22a}, we obtain the desired formula \eqref{equ4.5.7a}.

\appendix


\begin{thebibliography}{MMR12}
	
	\bibitem[AF03]{AF03}
	Robert~A. Adams and John J.~F. Fournier.
	\newblock {\em Sobolev spaces}, volume 140 of {\em Pure and Applied Mathematics
		(Amsterdam)}.
	\newblock Elsevier/Academic Press, Amsterdam, second edition, 2003.
	
%	\bibitem[AMR88]{AMR88}
%	R.~Abraham, J.~E. Marsden, and T.~Ratiu.
%	\newblock {\em Manifolds, tensor analysis, and applications}, volume~75 of {\em
%		Applied Mathematical Sciences}.
%	\newblock Springer-Verlag, New York, second edition, 1988.
	
	\bibitem[And85]{Anderson85}
	Michael~T. Anderson.
	\newblock Curvature estimates for minimal surfaces in {$3$}-manifolds.
	\newblock {\em Ann. Sci. \'Ecole Norm. Sup. (4)}, 18(1):89--105, 1985.
	
%	\bibitem[Cha06]{Chavel06}
%	Isaac Chavel.
%	\newblock {\em Riemannian geometry}, volume~98 of {\em Cambridge Studies in
%		Advanced Mathematics}.
%	\newblock Cambridge University Press, Cambridge, second edition, 2006.
%	\newblock A modern introduction.
	
%	\bibitem[CM04]{CM04IV}
%	Tobias~H. Colding and William~P. Minicozzi, II.
%	\newblock The space of embedded minimal surfaces of fixed genus in a
%	3-manifold. {IV}. {L}ocally simply connected.
%	\newblock {\em Ann. of Math. (2)}, 160(2):573--615, 2004.
	
	\bibitem[CM11]{CM11}
	Tobias~Holck Colding and William~P. Minicozzi, II.
	\newblock {\em A course in minimal surfaces}, volume 121 of {\em Graduate
		Studies in Mathematics}.
	\newblock American Mathematical Society, Providence, RI, 2011.
	
	\bibitem[CR10]{CR10}
	Pascal Collin and Harold Rosenberg.
	\newblock Construction of harmonic diffeomorphisms and minimal graphs.
	\newblock {\em Ann. of Math. (2)}, 172(3):1879--1906, 2010.
	
	\bibitem[Dan07]{Daniel07}
	Beno{\^{\i}}t Daniel.
	\newblock Isometric immersions into 3-dimensional homogeneous manifolds.
	\newblock {\em Comment. Math. Helv.}, 82(1):87--131, 2007.
	
	\bibitem[DH09]{DH09}
	Beno{\^{\i}}t Daniel and Laurent Hauswirth.
	\newblock Half-space theorem, embedded minimal annuli and minimal graphs in the Heisenberg group. 
	\newblock Proc. London Math. Soc.(3) 98: 445-470, 2009.
	
	\bibitem[Des]{Des}
	Christophe Desmonts
	\newblock  Constructions of periodic minimal surfaces and minimal annuli in $\Sol_3$.
	\newblock  Pacific J. Math. 276 (2015), no. 1, 143–166.
	
	
%	\bibitem[Ful95]{Fulton95}
%	William Fulton.
%	\newblock {\em Algebraic topology}, volume 153 of {\em Graduate Texts in
%		Mathematics}.
%	\newblock Springer-Verlag, New York, 1995.
%	\newblock A first course.
	
%	\bibitem[GP74]{GP74}
%	Victor Guillemin and Alan Pollack.
%	\newblock {\em Differential topology}.
%	\newblock Prentice-Hall, Inc., Englewood Cliffs, N.J., 1974.
	
%	\bibitem[Gra04]{Gray04}
%	Alfred Gray.
%	\newblock {\em Tubes}, volume 221 of {\em Progress in Mathematics}.
%	\newblock Birkh\"auser Verlag, Basel, second edition, 2004.
%	\newblock With a preface by Vicente Miquel.
	
	\bibitem[GT01]{GT01}
	David Gilbarg and Neil~S. Trudinger.
	\newblock {\em Elliptic partial differential equations of second order}.
	\newblock Classics in Mathematics. Springer-Verlag, Berlin, 2001.
	\newblock Reprint of the 1998 edition.
	
	\bibitem[HK97]{HK97}
	David Hoffman and Hermann Karcher.
	\newblock Complete embedded minimal surfaces of finite total curvature.
	\newblock In {\em Geometry, {V}}, volume~90 of {\em Encyclopaedia Math. Sci.},
	pages 5--93. Springer, Berlin, 1997.
	
%	\bibitem[HM88]{HM88}
%	D.~Hoffman and W.~H. Meeks, III.
%	\newblock A variational approach to the existence of complete embedded minimal
%	surfaces.
%	\newblock {\em Duke Math. J.}, 57(3):877--893, 1988.
	
	\bibitem[HM90]{HM90}
	D.~Hoffman and W.~H. Meeks, III.
	\newblock The strong halfspace theorem for minimal surfaces.
	\newblock {\em Invent. Math.}, 101(2):373--377, 1990.
	
	\bibitem[Jos85]{Jos85}
	J{\"u}rgen Jost.
	\newblock Conformal mappings and the {P}lateau-{D}ouglas problem in
	{R}iemannian manifolds.
	\newblock {\em J. Reine Angew. Math.}, 359:37--54, 1985.
	
	\bibitem[JS66]{JS66}
	Howard Jenkins and James Serrin.
	\newblock Variational problems of minimal surface type. {II}. {B}oundary value
	problems for the minimal surface equation.
	\newblock {\em Arch. Rational Mech. Anal.}, 21:321--342, 1966.
	
%	\bibitem[Lan99]{Lang99}
%	Serge Lang.
%	\newblock {\em Fundamentals of differential geometry}, volume 191 of {\em
%		Graduate Texts in Mathematics}.
%	\newblock Springer-Verlag, New York, 1999.
	
	\bibitem[Mel14]{Mel14}
	Sofia Melo.
	\newblock Minimal graphs in {$\widetilde{\rm PSL_2(\mathbb{R})}$} over
	unbounded domains.
	\newblock {\em Bull. Braz. Math. Soc. (N.S.)}, 45(1):96--116, 2014.
	
%	\bibitem[Mil63]{Milnor63}
%	J.~Milnor.
%	\newblock {\em Morse theory}.
%	\newblock Based on lecture notes by M. Spivak and R. Wells. Annals of
%	Mathematics Studies, No. 51. Princeton University Press, Princeton, N.J.,
%	1963.
	
%	\bibitem[Mil65]{Milnor65}
%	John~W. Milnor.
%	\newblock {\em Topology from the differentiable viewpoint}.
%	\newblock Based on notes by David W. Weaver. The University Press of Virginia,
%	Charlottesville, Va., 1965.
	
	\bibitem[MMR12]{MR12}
	Filippo Morabito and M.~Magdalena~Rodr{\'{\i}}guez.
	\newblock Saddle towers and minimal {$k$}-noids in {$\Bbb H^2\times\Bbb R$}.
	\newblock {\em J. Inst. Math. Jussieu}, 11(2):333--349, 2012.
	
	\bibitem[MO90]{MO90}
	Xiaokang Mo and Robert Osserman.
	\newblock On the {G}auss map and total curvature of complete minimal surfaces
	and an extension of {F}ujimoto's theorem.
	\newblock {\em J. Differential Geom.}, 31(2):343--355, 1990.
	
	\bibitem[MMR]{MMR}
	F. Martin R. Mazzeo, M. M. Rodriguez. 
	\newblock  Minimal surfaces with positive genus and finite total curvature in $\Hbbh \times \Rbb$. 
	\newblock Geometry and topology, 18:141-177, 2014.
	
	\bibitem[MRR11]{MRR11}
	L.~Mazet, M.~M. Rodr{\'{\i}}guez, and H.~Rosenberg.
	\newblock The {D}irichlet problem for the minimal surface equation, with
	possible infinite boundary data, over domains in a {R}iemannian surface.
	\newblock {\em Proc. Lond. Math. Soc. (3)}, 102(6):985--1023, 2011.
	
	\bibitem[MW91]{MW91}
	William~H. Meeks, III and Brian White.
	\newblock Minimal surfaces bounded by convex curves in parallel planes.
	\newblock {\em Comment. Math. Helv.}, 66(2):263--278, 1991.
	
	\bibitem[MY82]{MY82c}
	William~H. Meeks, III and Shing~Tung Yau.
	\newblock The classical {P}lateau problem and the topology of three-dimensional
	manifolds. {T}he embedding of the solution given by {D}ouglas-{M}orrey and an
	analytic proof of {D}ehn's lemma.
	\newblock {\em Topology}, 21(4):409--442, 1982.
	
	\bibitem[NR02]{NR02}
	Barbara Nelli and Harold Rosenberg.
	\newblock Minimal surfaces in {${\Bbb H}\sp 2\times\Bbb R$}.
	\newblock {\em Bull. Braz. Math. Soc. (N.S.)}, 33(2):263--292, 2002.
	
	\bibitem[Pen]{Pen}
                      Carlos Pe{\~n}afiel.
	\newblock Invariant surfaces in {$\widetilde{PSL}_2(\Bbb R,\tau)$} and
	applications.
	\newblock {\em Bull. Braz. Math. Soc. (N.S.)}, 43(4):545--578, 2012.
	
	
	
	
	
	
	
	
	\bibitem[Pet06]{Pet06}
	Peter Petersen.
	\newblock {\em Riemannian geometry}, volume 171 of {\em Graduate Texts in
		Mathematics}.
	\newblock Springer, New York, second edition, 2006.
	
%	\bibitem[PR02]{PR02}
%	Joaqu{\'{\i}}n P{\'e}rez and Antonio Ros.
%	\newblock Properly embedded minimal surfaces with finite total curvature.
%	\newblock In {\em The global theory of minimal surfaces in flat spaces
%		({M}artina {F}ranca, 1999)}, volume 1775 of {\em Lecture Notes in Math.},
%	pages 15--66. Springer, Berlin, 2002.
	
%	\bibitem[PW84]{PW84}
%	Murray~H. Protter and Hans~F. Weinberger.
%	\newblock {\em Maximum principles in differential equations}.
%	\newblock Springer-Verlag, New York, 1984.
%	\newblock Corrected reprint of the 1967 original.
	
	\bibitem[Pyo11]{Pyo11}
	Juncheol Pyo.
	\newblock New complete embedded minimal surfaces in {$\Bbb H\sp 2\times\Bbb
		R$}.
	\newblock {\em Ann. Global Anal. Geom.}, 40(2):167--176, 2011.
	
	\bibitem[Sch83]{Sch83}
	Richard Schoen.
	\newblock Estimates for stable minimal surfaces in three-dimensional manifolds.
	\newblock In {\em Seminar on minimal submanifolds}, volume 103 of {\em Ann. of
		Math. Stud.}, pages 111--126. Princeton Univ. Press, Princeton, NJ, 1983.
	
	\bibitem[Sco83]{Scott83}
	Peter Scott.
	\newblock The geometries of {$3$}-manifolds.
	\newblock {\em Bull. London Math. Soc.}, 15(5):401--487, 1983.
	
%\bibitem[Sma65]{Smale65}
%	S.~Smale.
%	\newblock An infinite dimensional version of {S}ard's theorem.
%	\newblock {\em Amer. J. Math.}, 87:861--866, 1965.

\bibitem[HST]{HST}
 L. Hauswirth, R. Sa Earp, E. Toubiana.
 \newblock Associate and conjugate minimal immersions in
$M \times \Rbb$. 
\newblock{\em Tohoku Math. J.}, (2) 60 (2008), no. 2, 267–286.


	
	\bibitem[Whi87a]{Whi87}
	B.~White.
	\newblock Curvature estimates and compactness theorems in {$3$}-manifolds for
	surfaces that are stationary for parametric elliptic functionals.
	\newblock {\em Invent. Math.}, 88(2):243--256, 1987.
	
	\bibitem[Whi87b]{White87a}
	Brian White.
	\newblock The space of {$m$}-dimensional surfaces that are stationary for a
	parametric elliptic functional.
	\newblock {\em Indiana Univ. Math. J.}, 36(3):567--602, 1987.
	
	\bibitem[Whi89]{White89}
	Brian White.
	\newblock New applications of mapping degrees to minimal surface theory.
	\newblock {\em J. Differential Geom.}, 29(1):143--162, 1989.
	
	\bibitem[You10]{You10}
	Rami Younes.
	\newblock Minimal surfaces in {$\widetilde{PSL\sb 2(\Bbb R)}$}.
	\newblock {\em Illinois J. Math.}, 54(2):671--712, 2010.
	
\end{thebibliography}
\end{document}